\documentclass{amsart}


\usepackage{amscd}
\usepackage{amsmath}
\usepackage{amssymb}
\usepackage{amsthm}
\usepackage{epsf}
\usepackage{latexsym}
\usepackage{verbatim}
\usepackage[all, cmtip]{xy}
\usepackage{tikz}
\usetikzlibrary{positioning}
\usetikzlibrary{matrix}

\tikzstyle{bsq}=[rectangle, draw, thick, minimum width=1cm, minimum height=1cm]
\tikzstyle{bver}=[rectangle, draw, thick, minimum width=1cm, minimum height=2cm]
\tikzstyle{bhor}=[rectangle, draw, thick, minimum width=2cm, minimum height=1cm]

\setlength{\textheight}{8.5in} \setlength{\topmargin}{0.0in}
\setlength{\headheight}{0.5in} \setlength{\headsep}{0.3in}
\setlength{\leftmargin}{1.5in}

\newtheorem{theorem}{Theorem}[section]
\newtheorem{definition}[theorem]{Definition}
\newtheorem{lemma}[theorem]{Lemma}

\newtheorem{corollary}[theorem]{Corollary}
\newtheorem{proposition}[theorem]{Proposition}
\newtheorem{question}[theorem]{Question}
\newtheorem{varexample}[theorem]{Example}
\theoremstyle{definition}
\newtheorem{remark}[theorem]{Remark}
\newtheorem*{BNthm}{Brill-Noether Theorem}
\newtheorem*{GPthm}{Gieseker-Petri Theorem}
\newtheorem*{TropicalRR}{Tropical Riemann-Roch Theorem}
\newtheorem*{MetrizedRR}{Riemann-Roch for Metrized Complexes}
\newtheorem*{TropicalAJ}{Tropical Abel-Jacobi Theorem}
\newtheorem*{TropicalClifford}{Tropical Clifford's Theorem}
\newtheorem*{SpecializationThm}{Specialization Theorem}
\newtheorem*{MRC}{Maximal Rank Conjecture}

\newtheorem*{WtedSpecializationThm}{Weighted Specialization Theorem}
\newtheorem*{MetrizedSpecialization}{Specialization Theorem for Metrized Complexes}

\newcommand{\Spec}{\mathrm{Spec}\,}

\newcommand{\CC}{\mathbb{C}}
\newcommand{\PP}{\mathbb{P}}
\newcommand{\QQ}{\mathbb{Q}}
\newcommand{\RR}{\mathbb{R}}
\newcommand{\ZZ}{\mathbb{Z}}

\newcommand{\cC}{\mathcal{C}}
\newcommand{\cD}{\mathcal{D}}
\newcommand{\cG}{\mathcal{G}}
\newcommand{\cL}{\mathcal{L}}

\newcommand{\cW}{\mathcal{W}}

\newcommand{\cF}{\mathcal{F}}

\newcommand{\fC}{\mathfrak{C}}

\newcommand{\f}{\mathfrak f}
\newcommand{\g}{\mathfrak g}

\newcommand{\ord}{\operatorname{ord}}
\newcommand{\Trop}{\operatorname{Trop}}
\newcommand{\trop}{\operatorname{trop}}
\newcommand{\ddiv}{\operatorname{div}}
\newcommand{\Div}{\operatorname{Div}}
\newcommand{\PL}{\operatorname{PL}}
\newcommand{\Jac}{\operatorname{Jac}}
\newcommand{\val}{\operatorname{val}}
\newcommand{\Pic}{\operatorname{Pic}}
\newcommand{\outdeg}{\mathrm{outdeg}}
\newcommand{\indeg}{\mathrm{indeg}}
\newcommand{\Cliff}{\mathrm{Cliff}}
\newcommand{\br}{\mathrm{br}}

\newcommand{\an}{\mathrm{an}}

\newenvironment{example}{\begin{varexample}
\begin{normalfont}}{\end{normalfont}
\end{varexample}}
\begin{document}
\title[Degenerations of Linear Series from the Tropical Point of View]{Degeneration of Linear Series From the Tropical Point of View and Applications}
\author{Matthew Baker}
\author{David Jensen}
\date{\today}
\bibliographystyle{alpha}

\thanks{The authors would like to thank Omid Amini and Sam Payne for enlightening discussions, and Eric Katz and Joe Rabinoff for helpful feedback on our summary of \cite{KRZB15}.  They also thank Spencer Backman, Dustin Cartwright, Melody Chan, Yoav Len, and Sam Payne for their comments on an early draft of this paper.}

\maketitle

\begin{abstract}
We discuss linear series on tropical curves and their relation to classical algebraic geometry, describe the main techniques of the subject, and survey some of the recent major developments in the field, with an emphasis on applications to problems in Brill-Noether theory and arithmetic geometry.
\end{abstract}

\tableofcontents

\section{Introduction}

Algebraic curves play a central role in the field of algebraic geometry.  Over the past century, curves have been the focus of a significant amount of research, and despite being some of the most well-understood algebraic varieties, there are still many important open questions.  The goal of classical Brill-Noether theory is to study the geometry of a curve $C$ by examining all of its maps to projective space, or equivalently the existence and behavior of all line bundles on $C$.  Thus, we have classical results such as Max Noether's Theorem \cite[p 117]{ACGH} and the Enriques-Babbage Theorem \cite{Babbage39} that relate the presence of linear series on a curve to its geometric properties.  A major change in perspective occurred during the twentieth century, as the field shifted from studying \emph{fixed} to \emph{general} curves -- that is, general points in the moduli space of curves $M_g$.  Many of the major results in the field, such as the Brill-Noether \cite{GriffithsHarris80} and Gieseker-Petri \cite{Gieseker82} Theorems, remained open for nearly a century as they awaited this new point of view.

\medskip

A major milestone in the geometry of general curves was the development of limit linear series by Eisenbud and Harris \cite{EisenbudHarris86}.  This theory allows one to study linear series on general curves by studying one-parameter degenerations where the central fiber is a special kind of singular curve, known as a curve of compact type.  One property of curves of compact type is that if they have positive genus then they must have components of positive genus.  Shortly after the development of limit linear series, many researchers became interested in a different type of nodal curve, which have only rational components, and where the interesting geometric data is encoded by the combinatorics of how the components meet each other.  Early examples using so-called graph curves to establish properties of general curves include Bayer and Eisenbud's work on Green's Conjecture for the general curve \cite{BayerEisenbud91}, and the Ciliberto-Harris-Miranda result on the surjectivity of the Wahl map \cite{CHM88}.

\medskip

Much like the theory of limit linear series does for curves of compact type, the recent development of tropical Brill-Noether theory provides a systematic approach to this kind of degeneration argument \cite{BakerNorine07, Baker08, AminiBaker12}.  A major goal of this survey is to introduce the basic definitions and techniques of this theory, as well as describing some recent applications to the geometry of general curves and the behavior of Weierstrass points in degenerating families.
Degeneration arguments also play a major role in arithmetic geometry, and we also survey how linear series on tropical curves can be used to study rational points on curves.

\medskip

Here are just a few of the interesting theorems which have been proved in recent years with the aid of the theory of linear series on tropical curves.

\medskip

1. {\em The Maximal Rank Conjecture for quadrics.} In \cite{MRC}, Jensen and Payne prove that for fixed $g,r,$ and $d$, if $C$ is a general curve of genus $g$ and $V \subset \cL(D)$ is a general linear series on $C$ of rank $r$ and degree $d$, then the multiplication map $\mu_2: {\rm Sym}^2 V \rightarrow \cL(2D)$
is either injective or surjective.

\smallskip

2. {\em Uniform boundedness for rational points of curves of small Mordell-Weil rank.} In \cite{KRZB15}, Katz, Rabinoff, and Zureick--Brown prove that if $C/\QQ$ is a curve of genus $g$ with Mordell-Weil rank at most $g-3$, then $\# C(\QQ) \leq 76g^2 -82g + 22.$  This is the first such bound depending only on the genus of $C$.

\smallskip

3. {\em Non-Archimedean equidistribution of Weierstrass points.} In \cite{Amini14}, Amini proves that if $C$ is an algebraic curve over $\CC ((t))$ and $L$ is an ample line bundle on $C$, then the Weierstrass points of $L^{\otimes n}$ become equidistributed with respect to the Zhang measure on the
Berkovich analytic space $C^{\an}$ as $n$ goes to infinity.  This gives precise asymptotic information on the limiting behavior of Weierstrass points in degenerating one-parameter families.

\smallskip

4. {\em Mn{\"e}v universality for the lifting problem for divisors on graphs.} In \cite{Cartwright15}, Cartwright shows that if $X$ is a scheme of finite type over $\Spec \ZZ$, there exists a graph $G$ and a rank 2 divisor $D_0$ on $G$ such that, for any infinite field $k$, there are a curve $C$ over $k((t))$ and a rank 2 divisor $D$ on $C$ tropicalizing to $G$ and $D_0$, respectively, if and only if $X$ has a $k$-point.

\medskip

We will discuss the proofs of these and other results after going through the foundations of the basic theory.
To accommodate readers with various interests, this survey is divided into three parts.  The first part covers the basics of tropical Brill-Noether theory, with an emphasis on combinatorial aspects and the relation to classical algebraic geometry.  The second part covers more advanced topics, including the  nonarchimedean Berkovich space perspective, tropical moduli spaces, and the theory of metrized complexes.  Each of these topics is an important part of the theory, but is not strictly necessary for many of the applications discussed in Part \ref{Part:Applications}, and the casual reader may wish to skip Part \ref{Part:Advanced} on the first pass.  The final part covers applications of tropical Brill-Noether theory to problems in algebraic and arithmetic geometry.  For the most part, the sections in Part \ref{Part:Applications} are largely independent of each other and can be read in any order.  Aside from a few technicalities, the reader can expect to follow the applications in Sections \ref{Section:Applications}, \ref{Section:Lifting} and \ref{Section:Arithmetic},
as well as most of Section \ref{Section:Weierstrass}, without reading Part \ref{Part:Advanced}.

\part{Introductory Topics}
\label{Part:Intro}

\section{Jacobians of Finite Graphs}
\label{Section:Finite}

\subsection{Degeneration of line bundles in one-parameter families}
A recurring theme in the theory of linear series on curves is that it is very important to understand the behavior of line bundles on generically smooth one-parameter families of curves.  One of the key facts about such families is the semistable reduction theorem, which asserts that after a finite base change, one can guarantee that the singularities of the family are ``as nice as possible'', i.e., the total space is regular and the fibers are reduced and have only nodal singularities (see \cite[Chapter 3C]{HarrisMorrison98} or \cite[\S{10.4}]{LiuBook}).  The questions we want to answer about such families are all local on the base, so it is convenient to consider the following setup.  Let $R$ be a discrete valuation ring with field of fractions $K$ and algebraically closed residue field $\kappa$, let $C$ be a smooth proper and geometrically integral curve over $K$, and let $\mathcal{C}$ be a {\em regular strongly semistable model} for $C$, that is, a proper flat $R$-scheme with general fiber $C$ satisfying:
\begin{enumerate}
\item The total space $\mathcal{C}$ is regular.
\item The central fiber $C_0$ of $\mathcal{C}$ is \emph{strongly semistable}, i.e., the irreducible components of $C_0$ are all smooth and $C_0$ has only nodes as singularities.\footnote{We say $C_0$ is \emph{semistable} if it is reduced and has only nodes as singularities.  The term \emph{strongly semistable} is not completely standard.}
\end{enumerate}
By the semistable reduction theorem, a regular strongly semistable model for $C$ always exists after passing to a finite extension of $K$.

\medskip

Let $L$ be a line bundle on the general fiber $C$.  Because the total space $\mathcal{C}$ is regular, there exists an extension $\mathcal{L}$ of $L$ to the family $\mathcal{C}$.  One can easily see, however, that this extension is not unique -- one can obtain other extensions by twisting by components of the central fiber.  More concretely, if $\mathcal{L}$ is an extension of $L$ and $Y \subset C_0$ is an irreducible component of the central fiber, then $\mathcal{L} (Y) = \mathcal{L} \otimes \mathcal{O}_{\mathcal{C}} (Y)$ is also an extension of $L$.

\medskip

To understand the effect of the twisting operation, we consider the \emph{dual graph} of the central fiber $C_0$.  One constructs this graph by first assigning a vertex $v_Z$ to each irreducible component $Z$ of $C_0$, and then drawing an edge between two vertices for every node at which the corresponding components intersect.

\begin{example}
\label{Ex:CompleteGraph}
If $C_0$ is a union of $m$ general lines in $\PP^2$, its dual graph is the complete graph on $m$ vertices.  Indeed, every pair of lines meets in one point, so between every pair of vertices in the dual graph, there must be an edge.
\end{example}

\begin{example}
\label{Ex:CompleteBipartite}
If $C_0 \subset \PP^1 \times \PP^1$ is a union of $a$ lines in one ruling and $b$ lines in the other ruling, then the dual graph is the complete bipartite graph $K_{a,b}$.  This is because a pair of lines in the same ruling do not intersect, whereas a pair of lines in opposite rulings intersect in one point.
\end{example}

\begin{example}
\label{Ex:PetersenGraph}
Let $C_0$ be the union of the $-1$ curves on a del Pezzo surface of degree 5.  In this case, the dual graph of $C_0$ is the well-known {\em Petersen graph}.
\end{example}

\subsection{Divisors and linear equivalence on graphs}
In this paper, by a {\em graph} we will always mean a finite connected graph which is allowed to have multiple edges between pairs of vertices but is not allowed to have any loop edges.  Given a graph $G$, we write $\Div (G)$ for the free abelian group on the vertices of $G$.  An element $D$ of $\Div (G)$ is called a \emph{divisor} on $G$, and is written as a formal sum
$$ D = \sum_{v \in V(G)} D(v) v , $$
where the coefficients $D(v)$ are integers.  The \emph{degree} of a divisor $D$ is defined to be the sum
$$ \deg (D) = \sum_{v \in V(G)} D(v) . $$

Returning now to our family of curves, let us fix for a moment an extension $\mathcal{L}$ of our line bundle $L$.  We define a corresponding divisor ${\rm mdeg}(\mathcal{L})$ on $G$, called the {\em multi-degree} of $\mathcal{L}$, by the formula
\[
{\rm mdeg}(\mathcal{L}) = \sum_Z \left( \deg \mathcal{L} \vert_Z \right) v_Z,
\]
where the sum is over all irreducible components $Z$ of $C_0$.
The quantity $\deg \mathcal{L} \vert_Z$ can also be interpreted as the intersection multiplicity of $\mathcal{L}$ with $Z$ considered as a (vertical) divisor on the surface $\mathcal{C}$.  Note that the degree of ${\rm mdeg}(\mathcal{L})$ is equal to ${\rm deg}(L)$.

\medskip

We now ask ourselves how ${\rm mdeg}(\mathcal{L})$ changes if we replace $\mathcal{L}$ by a different extension.
Since any two such extensions differ by a sequence of twists by components of the central fiber, it suffices to study the effect of twisting by one such component $Y$.  Given a vertex $v$ of a graph, let $\val (v)$ denote its valence.
As the central fiber $C_0$ is a principal divisor on $\cC$, we have
\[
{\rm mdeg}(\mathcal{L}(Y)) = {\rm mdeg}(\mathcal{L}) + \sum_Z \left( Y \cdot Z \right) v_Z
\]
with
\begin{displaymath}
Y \cdot Z = \left\{ \begin{array}{ll}
- \val(v_Y) & \textrm{if $Z = Y$}\\
\vert Z \cap Y \vert & \textrm{if $Z \neq Y$}
\end{array} \right\} .
\end{displaymath}


\medskip

The corresponding operation on the dual graph is known as a \emph{chip-firing move}. This is because we may think of a divisor on the graph as a configuration of chips (and anti-chips) on the vertices.  In this language, the effect of a chip-firing move is that a vertex $v$ ``fires'' one chip along each of the edges emanating from $v$.  This decreases the number of chips at $v$ by the valence of $v$, and increases the number of chips at each of the neighbors $w$ of $v$ by the number of edges between $v$ and $w$.

\medskip

Motivated by these observations, we say that two divisors $D$ and $D'$ on a graph $G$ are \emph{equivalent}, and we write $D \sim D'$, if one can be obtained from the other by a sequence of chip-firing moves.
We define the {\em Picard group} $\Pic(G)$ of $G$ to be the group of divisors on $G$ modulo equivalence.
Note that the degree of a divisor is invariant under chip-firing moves, so there is a well defined homomorphism
$$ \deg : \Pic (G) \to \mathbb{Z} . $$
We will refer to the kernel $\Pic^0(G)$ of this map as the \emph{Jacobian} $\Jac (G)$ of the graph $G$.
This finite abelian group goes by many different names in the mathematical literature -- in combinatorics, it is commonly referred to as the \emph{sandpile group} or the \emph{critical group} of $G$ (see for example \cite{Biggs99,Dhar-et-al,LevinePropp,Perkinson-et-al}).

\begin{remark}
\label{Remark:CombLit}
There is a tremendous amount of combinatorial literature concerning Jacobians of graphs.  As our focus is on applications in algebraic geometry, however, we will not go into many details here -- we refer the reader to \cite{BakerNorine07,HLMPPW08} and the references therein.
We cannot resist mentioning one remarkable fact, however: the cardinality of $\Jac(G)$ is the number of {\em spanning trees} in $G$.  (This is actually a disguised form of Kirchhoff's celebrated {\em Matrix-Tree Theorem}.)
\end{remark}

Our discussion shows that for any two extensions of the line bundle $L$, the corresponding multidegrees are equivalent divisors on the dual graph $G$.  There is therefore a well-defined degree-preserving map
$$ \Trop : \Pic (C) \to \Pic (G) , $$
which we refer to as the \emph{specialization} or \emph{tropicalization} map.

\medskip

For later reference, we note that there is a natural homomorphism $\rho : \Div(C) \to \Div(G)$ defined by setting $\rho(D) = {\rm mdeg}({\mathcal D})$, where ${\mathcal D}$ is the Zariski closure of $D$ in ${\mathcal C}$.
The map $\rho$ takes principal (resp. effective) divisors to principal (resp. effective) divisors, and the map $\Jac(C) \to \Jac(G)$ induced by $\rho$ coincides with $\Trop$ (cf. \cite[\S{2.1}]{Baker08}).

\medskip

It is useful to reformulate the definition of equivalence of divisors on $G$ as follows.
For any function $f: V(G) \to \mathbb{Z}$, we define
$$ \ord_v (f) : = \sum_{w \text{ adjacent to } v} f(v) - f(w) $$
and
$$ \ddiv (f) := \sum_{v \in V(G)} \ord_v (f) v . $$
Divisors of the form $\ddiv (f)$ are known as \emph{principal} divisors, and two divisors are equivalent if and only if their difference is principal.
The reason is that the divisor $\ddiv (f)$ can be obtained, starting with the 0 divisor, by firing each vertex $v$ exactly $f(v)$ times.

\subsection{Limit linear series and N{\'e}ron models}

The Eisenbud-Harris theory of limit linear series focuses on the case where the dual graph of $C_0$ is a tree.  In this case, the curve $C_0$ is said to be of \emph{compact type}.\footnote{The reason for the name {\em compact type} is that the Jacobian of a nodal curve $C_0$ is an extension of the Jacobian of the normalization of $C_0$ by a torus of dimension equal to the first Betti number of the dual graph.  It follows that the Jacobian of $C_0$ is compact if and only if its dual graph is a tree.}  Although they would not have stated it this way, a key insight of the Eisenbud-Harris theory is that the Jacobian of a tree is trivial.  Given a line bundle $\mathcal{L}$ of degree $d$ on $\mathcal{C}$, if the dual graph of $C_0$ is a tree then one can repeatedly twist to obtain a line bundle with any degree distribution summing to $d$ on the components of the central fiber.  In particular, given a component $Y \subset C_0$, there exists a twist $\mathcal{L}_Y$ such that
\begin{displaymath}
\deg \mathcal{L}_Y \vert_Z = \left\{ \begin{array}{ll}
d & \textrm{if $Z = Y$}\\
0 & \textrm{if $Z \neq Y$}
\end{array} \right\} .
\end{displaymath}
This observation is the jumping-off point for the basic theory of limit linear series.

\medskip

At the other end of the spectrum is the {\em maximally degenerate} case where all of the components of $C_0$ have genus zero.  In this case, the first Betti number of the dual graph $G$ (which we refer to from now on as the \emph{genus} of $G$) is equal to the geometric genus of the curve and
essentially all of the interesting information about degenerations of line bundles is combinatorial.  At its core, tropical Brill-Noether theory studies the behavior of line bundles on the curve $C$ using the combinatorics of their specializations to the graph $G$.

\medskip

This discussion can also be understood in the context of N{\'e}ron models.  An important theorem of Raynaud \cite{Raynaud70} asserts (in the language of this paper) that the group $\Phi$ of connected components of the special fiber $\bar{ \mathcal J}$ of the N{\'e}ron model of $J = \Jac(C)$ is canonically isomorphic to $\Jac(G)$, where $G$ is the dual graph of the special fiber of  $\mathcal{C}$.   This result can be summarized by the commutativity of the following diagram:
  \begin{equation} \label{eq:discrete.retract.ses}
    \xymatrix @R=.25in{
      0 \ar[r] &
      {{\rm Prin}(C)} \ar[r] \ar[d]^{\rm \rho} &
      {{\rm Div}^0(C)} \ar[r] \ar[d]^{\rm \rho} &
      {J(K)} \ar[r] \ar[d] & 0 \\
      0 \ar[r] &
      {{\rm Prin}(G)} \ar[r] &
      {{\rm Div}^0(G)} \ar[r] &
      {\Jac(G)=\Phi} \ar[r] & 0
    }\end{equation}
where the right vertical arrow is the canonical quotient map
$$ J(K)\to J(K)/{\mathcal J}^0(R)=\Phi . $$

\medskip

The tropical approach to Brill-Noether theory and the approach via the theory of limit linear series are in some sense orthogonal.  The former utilizes the component group $\Phi$, or its analytic counterpart, the tropical Jacobian, whereas classical limit linear series are defined only when $\Phi$ is trivial.  On the other hand,
limit linear series involve computations in the compact part of $\bar{\mathcal J}$, which is trivial in the maximally degenerate case.

\medskip

Recent developments have led to a sort of hybrid of tropical and limit linear series that can be used to study degenerations of line bundles to arbitrary nodal curves.  We will discuss these ideas in Section~\ref{Section:MetrizedComplexes}.

\section{Jacobians of Metric Graphs}
\label{Section:Metric}

\subsection{Behavior of dual graphs under base change}
The field of fractions of a DVR is never algebraically closed.  For many applications, we will be interested in $\Pic (C_{\overline{K}})$ rather than $\Pic (C_K)$, and we must therefore study the behavior of the specialization map under base change.

\medskip

Let $K'$ be a finite extension of $K$, let $R'$ be the valuation ring of $K'$, and let $C_{K'} = C \times_K K'$.  An important issue is that the new total space $\mathcal{C}_{K'} = \mathcal{C} \times_K K'$ may not be regular; it can pick up singularities at the nodes of the central fiber.
More specifically, if a point $z$ on ${\mathcal C}$ corresponding to a node of the central fiber has a local analytic equation of the form $xy = \pi$, where $\pi$ is a uniformizer for $R$, then
a local analytic equation for $z$ over $R'$ will be $xy=(\pi')^e$, where $\pi'$ is a uniformizer for $R'$ and $e$ is the ramification index of the extension $K' / K$.
A standard computation shows that we can resolve such a singularity by a chain of $e-1$ blowups.  Repeating this procedure for each singular point of the special
fiber, we obtain a regular strongly semistable model $\mathcal{C}'$ for $C_{K'}$.   The dual graph $G'$ of the central fiber of $\mathcal{C}' $ is obtained by subdividing each edge of the original dual graph $G$ $e-1$ times.  In other words, if we assign a length of 1 to each edge of $G$, and a length of $\frac{1}{e}$ to each edge of $G'$, then $G$ and $G'$ are isomorphic as {\em metric graphs}.

\medskip

A {\em metric graph} $\Gamma$ is, roughly speaking, a finite graph $G$ in which each edge $e$ has been identified with a real interval $I_e$ of some specified length $\ell_e > 0$.  The points of $\Gamma$ are the vertices of $G$ together with all points in the relative interiors of the intervals $I_e$.  More precisely, a metric graph is an equivalence class of finite edge-weighted graphs, where two weighted graphs $G$ and $G'$ give rise to the same metric graph if they have a common length-preserving refinement.  A finite weighted graph $G$ representing the equivalence class of $\Gamma$ is called a \emph{model} for $\Gamma$.

\begin{example}
\label{Ex:FamilyOfConics}
Let $K = \mathbb{C}((t))$, and consider the family $\cC : xy = tz^2$ of smooth conics degenerating to a singular conic in $\PP^2$.
The dual graph $G$ of the central fiber consists of two vertices $v,v'$ connected by a single edge.
Let $D = P + Q$ be the divisor on the general fiber cut out by the line $y=x$.  In homogeneous coordinates, we have $P = (\sqrt{t}:\sqrt{t}:1)$ and $Q = (-\sqrt{t}:-\sqrt{t}:1)$.  Although the divisor $D$ itself is $K$-rational, $P$ and $Q$ are not, and both points specialize to the node of the special fiber.
It is not hard to check that $\rho(D) = v + v' \in \Div(G)$.  If $K' = \mathbb{C}((\sqrt{t}))$, then the total space of the family $\mathcal{C} \times_K K'$ has a singularity at the node of the central fiber.  This singularity can be resolved by blowing up the node, and the dual graph $G'$ of the new central fiber is a chain of 3 vertices connected by two edges, with the new vertex $v''$ corresponding to the exceptional divisor of the blowup (see Figure~\ref{Fig:BaseChange}).  The base change $D'$ of $D$ to $K'$ specializes to a sum of two smooth points on the exceptional divisor, and in particular $\rho_{{\mathcal C}'}(D') = 2v''$ is not the image of $\rho_{\mathcal C}(D)$
with respect to the natural inclusion map $\Div(G) \hookrightarrow \Div(G')$.
Note, however, that $\rho_{{\mathcal C}'}(D')$ and $\rho_{\mathcal C}(D)$ are linearly equivalent on $G'$ (this turns out to be a general phenomenon).
\begin{figure}
\begin{tikzpicture}

\draw [ball color=black] (0,0) circle (0.55mm);
\draw [ball color=black] (0,2) circle (0.55mm);
\draw [ball color=black] (2,0) circle (0.55mm);
\draw [ball color=black] (2,1) circle (0.55mm);
\draw [ball color=black] (2,2) circle (0.55mm);
\draw (0,0)--(0,2);
\draw (2,0)--(2,2);
\draw (-0.25,0) node {$v$};
\draw (-0.25,2) node {$v'$};
\draw (2.25,0) node {$v$};
\draw (2.25,1) node {$v''$};
\draw (2.25,2) node {$v'$};

\end{tikzpicture}
\caption{The dual graph of the central fiber in Example \ref{Ex:FamilyOfConics} initially (on the left), and after base change followed by resolution of the singularity (on the right).  If
we give the segment on the left a length of $1$ and each of the segments on the right a length of $1/2$, then both weighted graphs are models for the same underlying metric graph $\Gamma$, which is a closed segment of length 1.}
\label{Fig:BaseChange}
\end{figure}
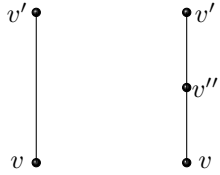
\end{example}

\begin{remark}
A similar example, with a semistable family of curves of genus $3$, is given in \cite[\S{4.4}]{Baker08}.
\end{remark}

There are two possible ways to address the lack of functoriality with respect to base change illustrated in Example~\ref{Ex:FamilyOfConics}.  One is to only consider the induced maps on Picard groups, rather than divisors.  The other is to replace $\rho: \Div(C) \to \Div(G)$ with a map $\Div(C_{\bar K}) \to \Div(\Gamma)$, where $\Gamma$ is the metric graph underlying $G$, defined by first base-changing to an extension $K'$ over which all points in the support of $D$ are rational and then applying $\rho$.  The most natural way to handle these simultaneous base-changes and prove theorems about the resulting map is to work on the Berkovich analytification of $C$; see
Section~\ref{Section:Berkovich} for details.

\subsection{Divisors and linear equivalence on metric graphs}
\label{sec:MetricGraphDivisors}

Let $\Gamma$ be a metric graph.  A \emph{divisor} $D$ on $\Gamma$ is a formal linear combination
$$ D = \sum_{v \in \Gamma} D(v) v $$
with $D(v) \in \ZZ$ for all $v \in \Gamma$ and $D(v) = 0$ for all but finitely many $v \in \Gamma$.
Let ${\rm PL} (\Gamma)$ denote the set of continuous, piecewise linear functions $f: \Gamma \to \mathbb{R}$ with integer slopes.  The \emph{order} $\ord_v (f)$ of a function $f$ at a point $v \in \Gamma$ is the sum of its incoming\footnote{As a caution, some authors use the opposite sign convention.} slopes along the edges containing $v$.  As in the case of finite graphs, we write
$$ \ddiv (f) := \sum_{v \in \Gamma} \ord_v (f) v . $$
A divisor is said to be \emph{principal} if it is of the form $\ddiv (f)$ for some $f \in {\rm PL} (\Gamma)$, and two divisors $D, D'$ are \emph{equivalent} if $D-D'$ is principal.
We let ${\rm Prin}(\Gamma)$ denote the subgroup of $\Div^0(\Gamma)$ (the group of degree-zero divisors on $\Gamma$) consisting of principal divisors.
By analogy with the case of finite graphs, the group
$$ \Jac (\Gamma) := \Div^0(\Gamma) / {\rm Prin}(\Gamma) $$
is called the (tropical) {\em Jacobian} of $\Gamma$.

\begin{example}
\label{Ex:MetricTree}
The Jacobian of a metric tree $\Gamma$ is trivial.  To see this, note that given any two points $P,Q \in \Gamma$, there is a unique path from $P$ to $Q$.  One can construct a continuous, piecewise linear function $f$ that has slope 1 along this path, and has slope 0 everywhere else.  We then see that $\ddiv (f) = Q-P$, so any two points on the tree are equivalent.

\begin{figure}
\begin{tikzpicture}

\draw [ball color=black] (0,0) circle (0.55mm);
\draw [ball color=black] (1,0) circle (0.55mm);
\draw [ball color=black] (1,1) circle (0.55mm);
\draw [ball color=black] (2,0) circle (0.55mm);
\draw [ball color=black] (2,1) circle (0.55mm);
\draw [ball color=black] (2,-1) circle (0.55mm);
\draw [ball color=black] (3,0) circle (0.55mm);
\draw (0,0)--(1,0);
\draw[->] (1,1)--(1,.25);
\draw (1,.25)--(1,0);
\draw[->] (1,0)--(1.75,0);
\draw (1.75,0)--(2,0);
\draw (1,1)--(2,1);
\draw[->] (2,0)--(2,-.75);
\draw (2,-.75)--(2,-1);
\draw (2,0)--(3,0);
\draw (1,1.25) node {$P$};
\draw (2,-1.25) node {$Q$};
\draw (1.15,.5) node {$1$};
\draw (1.5,.15) node {$1$};
\draw (2.15,-.5) node {$1$};
\draw (.5,.15) node {$0$};
\draw (2.5,.15) node {$0$};
\draw (1.5,1.15) node {$0$};

\end{tikzpicture}
\caption{Slopes of a function $f$ on a tree with $\ddiv (f) = Q-P$.}
\label{Fig:Metric Tree}
\end{figure}
\end{example}

\begin{example}
\label{Ex:Circle}
A circle $\Gamma$ is a torsor for its own Jacobian.  To see this, fix a point $O \in \Gamma$.  Given two points $P,Q \in \Gamma$, there exists a continuous, piecewise linear function $f$ that has slope 1 on the interval from $O$ and $P$, slope $-1$ on the interval from some 4th point $R$ to $Q$, and slope 0 everywhere else.  We then have $\ddiv (f) = O+Q-(P+R)$, so $P-Q \sim R-O$.  It follows that every divisor of degree zero is equivalent to a divisor of the form $R-O$ for some point $R$.  It is not difficult to see that the point $R$ is in fact unique.

\begin{figure}
\begin{tikzpicture}
\draw (0,0) circle (1);
\draw [->] (-1,0) arc [radius=1, start angle=180, end angle = 250];
\draw [->] (0,1) arc [radius=1, start angle=90, end angle = 20];
\draw [ball color=black] (-1,0) circle (0.55mm);
\draw [ball color=black] (1,0) circle (0.55mm);
\draw [ball color=black] (0,-1) circle (0.55mm);
\draw [ball color=black] (0,1) circle (0.55mm);
\draw (-1.25,0) node {$P$};
\draw (1.25,0) node {$Q$};
\draw (0,-1.25) node {$O$};
\draw (0,1.25) node {$R$};
\draw (-.8,-.8) node {1};
\draw (-.8,.8) node {0};
\draw (.8,.8) node {1};
\draw (.8,-.8) node {0};

\end{tikzpicture}
\caption{A function $f$ on the circle with $\ddiv (f) = O+Q-(P+R)$.}
\label{Fig:Circle}
\end{figure}
\end{example}

Note that if two divisors are equivalent in the finite graph $G$, then they are also equivalent in the corresponding metric graph $\Gamma$, called the {\em regular realization} of $G$, in which every edge of $G$ is assigned a length of $1$.  It follows that there is a natural inclusion $\iota : \Pic (G) \hookrightarrow \Pic (\Gamma)$.  As we saw above,
the multidegree of a line bundle $L$ on $C_{\overline{K}}$ can be identified with a divisor on some subdivision of $G$, which can in turn be identified with a divisor on $\Gamma$.  
One can show that this yields a well-defined map
$$ \Trop : \Pic (C_{\overline{K}}) \to \Pic (\Gamma ) $$
whose restriction to $\Pic(C)$ coincides with the previously defined map
$$\Trop : \Pic (C) \to \Pic (G)$$
via the inclusion $\iota$.

One can see from this construction that principal divisors on $C$ specialize to principal divisors on $\Gamma$.
More precisely, there is a natural way to define a map
$$ \trop : \overline{K}(C)^* \to \PL ( \Gamma ) $$
on rational functions such that
\[
\Trop(\ddiv(f)) = \ddiv(\trop(f))
\]
for every $f \in \overline{K}(C)^*$.  This is known as the {\em Slope Formula}, cf. Theorem~\ref{thm:SlopeFormula} below.
We refer the reader to \S \ref{Subsection:AnalyticTrop} for a formal definition of the map $\trop$.


As already discussed in the remarks following Example~\ref{Ex:FamilyOfConics},
there is also a natural way to define a map ${\rm Div}(C_{\overline{K}}) \to {\rm Div}(\Gamma)$ which
induces the map $\Trop : \Pic (C_{\overline{K}}) \to \Pic (\Gamma)$.  As with the map $\trop$ on rational functions, the map $\Trop$ on divisors is most conveniently described using Berkovich's theory of non-Archimedean analytic spaces, and we defer a detailed discussion to Section~\ref{Section:Berkovich}.

\subsection{The tropical Abel-Jacobi Map}
\label{Subsection:AbelJacobi}

A \emph{1-form} on a graph $G$ is an element of the real vector space generated by the formal symbols $de$, as $e$ ranges over the oriented edges of $G$, subject to the relations that if $e,e'$ represent the same edge with opposite orientations then $de' = -de$.  After fixing an orientation on $G$, a 1-form $\omega = \sum \omega_e de$ is called \emph{harmonic} if, for all vertices $v$, the sum $\sum \omega_e$ over the outgoing edges at $v$ is equal to $0$.
Denote by $\Omega (G)$ the space of harmonic 1-forms on $G$.  It is well-known that $\Omega (G)$ is a real vector space of dimension equal to the \emph{genus} $g$ of $G$, which can also be defined combinatorially as the number of edges of $G$ minus the number of vertices of $G$ plus one, or topologically as the dimension of $H_1(G,\RR)$.

\medskip

If $G$ and $G'$ are models for the same metric graph $\Gamma$, then $\Omega (G')$ is canonically isomorphic to $\Omega (G)$.  We may therefore define the space $\Omega (\Gamma)$  of harmonic 1-forms on $\Gamma$ as $\Omega (G)$ for any weighted graph model $G$.  (We also define the \emph{genus} of a metric graph $\Gamma$ to be the genus of any model for $\Gamma$.)  Given an isometric path $\gamma : [a,b] \to \Gamma$, any harmonic 1-form $\omega$ on $\Gamma$ pulls back to a classical 1-form on the interval, and we can thus define the integral $\int_{\gamma} \omega$.  Note that the definition of a harmonic $1$-form does not depend on the metric, but the integral $\int_{\gamma} \omega$ does.

\medskip

Fix a base point $v_0 \in \Gamma$.  For any point $v \in \Gamma$, the integral
$$ \int_{v_0}^v \omega $$
is well-defined up to a choice of path from $v_0$ to $v$.
This gives a map
$$ AJ_{v_0} : \Gamma \to \Omega (\Gamma)^* / H_1 (\Gamma , \mathbb{Z})$$
known as the \emph{tropical Abel-Jacobi map}.
Extending linearly to $\Div(\Gamma)$ and then restricting to $\Div^0(\Gamma)$, we obtain a map
$$ AJ : \Div^0(\Gamma) \to \Omega (\Gamma)^* / H_1 (\Gamma , \mathbb{Z})$$
which does not depend on the choice of a base point.  As in the classical case of Riemann surfaces, we have (cf.  \cite{MikhalkinZharkov08}):

\begin{TropicalAJ}
\label{Thm:AbelJacobi}
The map $AJ$ is surjective and its kernel is precisely ${\rm Prin}(\Gamma)$.  Thus there is a canonical isomorphism
$$ AJ : \Div^0(\Gamma) / {\rm Prin}(\Gamma) \cong \Omega (\Gamma)^* / H_1 (\Gamma , \mathbb{Z}) $$
between the Jacobian of $\Gamma$ and a $g$-dimensional real torus.
\end{TropicalAJ}

\begin{example}
\label{Ex:Genus2}
We consider the two metric graphs of genus 2 pictured in Figure \ref{Fig:Genus2}.  In the first case, we can choose a basis $\omega_1 , \omega_2$ of harmonic 1-forms by assigning the integer 1 to one of the (oriented) loops, and the integer 0 to the other.  We let $\eta_1 , \eta_2$ be elements of the dual basis.  We then see from the tropical Abel-Jacobi Theorem that
$$ \Jac (\Gamma) \cong \mathbb{R}^2 / (\mathbb{Z} \eta_1 + \mathbb{Z} \eta_2 ) . $$
A similar argument in the second case yields
$$ \Jac (\Gamma') \cong \mathbb{R}^2 / \left( \mathbb{Z} [ \eta_1 + \frac{1}{2} \eta_2 ] + \mathbb{Z} [ \frac{1}{2} \eta_1 + \eta_2 ]\right) . $$

\begin{figure}
\begin{tikzpicture}

\draw [ball color=black] (0,0) circle (0.55mm);
\draw (-1,0) circle (1);
\draw (1,0) circle (1);
\draw (-.25,0) node {$v_0$};

\draw [ball color=black] (3,0) circle (0.55mm);
\draw [ball color=black] (5,0) circle (0.55mm);
\draw (3,0)--(5,0);
\draw (4,0) circle (1);
\draw (2.75,0) node {$v_0$};

\end{tikzpicture}
\caption{Two metric graphs $\Gamma$, $\Gamma '$ of genus 2.}
\label{Fig:Genus2}
\end{figure}
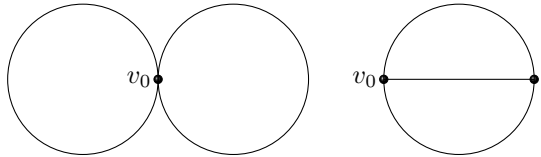
\end{example}

Although the Jacobians of the metric graphs $\Gamma$ and $\Gamma'$ from Example~\ref{Ex:Genus2} are isomorphic as abstract real tori, they are non-isomorphic as
{\em principally polarized} real tori in the sense of \cite{MikhalkinZharkov08}.  In fact, there is an analogue of the Torelli theorem in this context saying that up to certain
``Whitney flips'', the Jacobian as a principally polarized real torus determines the metric graph $\Gamma$; see \cite{CaporasoViviani10}.
The map $AJ_{v_0} : \Gamma \to \Omega (\Gamma)^* / H_1 (\Gamma , \mathbb{Z})$ is {\em harmonic} (or {\em balanced}) in a certain natural sense; see e.g. \cite[Theorem 4.1]{BakerFaber11}.  This fact is used in the paper \cite{KRZB15}, about which we will say more in \S\ref{Section:KRZB}.
Basic properties of the tropical Abel-Jacobi map are also used in important ways in \cite{Coppens14} and \cite{lifting}.




\section{Ranks of Divisors}
\label{Section:Rank}

\subsection{Linear systems}
\label{Subsection:LinearSystems}

By analogy with algebraic curves, a divisor $D = \sum D(v) v$ on a (metric) graph is called \emph{effective} if $D(v) \geq 0$ for all $v$, and we write $D \geq 0$.  The complete linear series of a divisor $D$ is defined to be
$$ \vert D \vert = \{ E \geq 0 \; \vert \; E \sim D \} . $$
Similarly, we write
$$ R(D) = \{ f \in \PL (\Gamma ) \; \vert \; \ddiv (f) + D \geq 0 \} $$
for the set of tropical rational functions with poles along the divisor $D$.

\medskip

As explained in \cite{GathmannKerber08}, the complete linear series $|D|$ has the structure of a compact polyhedral complex.  However, this polyhedral complex often fails to be equidimensional, as the following example shows.

\begin{example}
\label{Ex:NotEquidimensional}
Consider the metric graph pictured in Figure \ref{Fig:NotEquidimensional}, consisting of two loops attached at a point $v$.  Let $D = 2v + w$, where $w$ is a point on the interior of the first loop.  The complete linear system is the union of two tori.  The first, which consists of divisors equivalent to $D$ that are supported on the first loop, has dimension two, while the other, which consists of divisors equivalent to $D$ that have positive degree on the second loop, has dimension one.

\begin{figure}
\begin{tikzpicture}

\draw [ball color=black] (0,0) circle (0.55mm);
\draw (-1,0) circle (1);
\draw (1,0) circle (1);
\draw (-.25,0) node {$v$};
\draw [ball color=black] (-2,0) circle (0.55mm);
\draw (-2.25,0) node {$w$};

\end{tikzpicture}
\caption{The linear system $\vert 2v+w \vert$ is not equidimensional.}
\label{Fig:NotEquidimensional}
\end{figure}
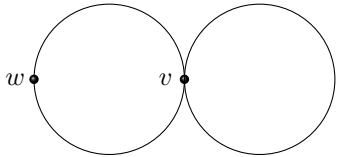
\end{example}

We now wish to define the rank of a divisor on a graph.  As the previous example shows, the appropriate definition should not be the dimension of the linear system $\vert D \vert$, considered as a polyhedral complex.\footnote{Another natural idea would be to try to define $r(D)$ as one less than the ``dimension'' of $R(D)$ considered as a semimodule over the tropical semiring ${\mathbf T}$ consisting of $\RR \cup \{ \infty \}$ together with the operations of min and plus.  However, this approach also faces significant difficulties.  See \cite{HMY12} for a detailed discussion of the tropical semimodule structure on $R(D)$.}
Instead, we note that a line bundle $L$ on an algebraic curve $C$ has rank at least $r$ if and only if, for every collection of $r$ points of $C$, there is a non-zero section of $L$ that vanishes at those points.  This motivates the following definition.

\begin{definition}
\label{Def:Rank}
Let $D$ be a divisor on a (metric) graph.  If $D$ is not equivalent to an effective divisor, we define its rank to be $-1$.  Otherwise, we define $r(D)$ to be the largest non-negative integer $r$ such that $\vert D-E \vert \neq \emptyset$ for all effective divisors $E$ of degree $r$.
\end{definition}

\begin{example}
\label{Ex:RankNotDimension}
Even when a linear series is equidimensional, its dimension may not be equal to the rank of the corresponding divisor.  For example, consider the metric graph pictured in Figure \ref{Fig:RankNotDimension}, consisting of a loop meeting a line segment in a vertex $v$.  The linear system $\vert v \vert$ is 1-dimensional, because $v$ is equivalent to any point on the line segment.  The rank of $v$, however, is 0, because if $w$ lies in the interior of the loop, then $v$ is not equivalent to $w$.

\begin{figure}
\begin{tikzpicture}

\draw [ball color=black] (0,0) circle (0.55mm);
\draw (-1,0) circle (1);
\draw (0,0)--(2,0);
\draw [ball color=black] (2,0) circle (0.55mm);
\draw (-.25,0) node {$v$};

\end{tikzpicture}
\caption{The vertex $v$ moves in a one-dimensional family, but has rank zero.}
\label{Fig:RankNotDimension}
\end{figure}
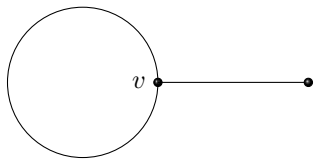
\end{example}

\begin{remark}
\label{Remark:OtherRanks}
There are several other notions of rank in the literature related to our setup of a line bundle on a degenerating family of curves.  We mention, for example, the generalized rank functions of Katz and Zureick-Brown \cite{KZB13} and the algebraic rank of Caporaso \cite{Caporaso13}.  The rank as defined here is sometimes referred to as the \emph{combinatorial rank} to distinguish it from these other invariants.
\end{remark}

As in the case of curves, we write
$$ W^r_d ( \Gamma ) := \{ D \in \Pic^d (\Gamma) \; \vert \; r(D) \geq r \} . $$
We similarly define the \emph{gonality} of a graph to be the smallest degree of a divisor of rank at least one.  The \emph{Clifford index} of the graph is
$$ \Cliff (\Gamma) := \min \{ \deg (D) - 2r(D) \; \vert \; r(D) > \max \{ 0, \deg (D) - g+1 \} \} . $$

\begin{remark}
The definition of gonality above is sometimes called the \emph{divisorial} gonality, to distinguish it from the \emph{stable gonality}, which is the smallest degree of a harmonic morphism from a modification of the given metric graph to a tree.
The divisorial gonality is always less than or equal to the stable gonality, see e.g. \cite{ABBR14b}.
\end{remark}

\subsection{Specialization}
\label{Subsection:Specialization}

One of the key properties of the combinatorial rank is its behavior under specialization.  Note that the specialization map takes effective line bundles to effective divisors.  Combining this with the fact that it takes principal divisors to principal divisors, we see that, for any divisor $D$ on $C$, we have
$$ \Trop \vert D \vert \subseteq \vert \Trop (D) \vert \mbox{ \ and \ } $$
$$ \trop \mathcal{L} (D) \subseteq R ( \Trop (D)) . $$
Combining this with the definition of rank yields the following semicontinuity result.

\begin{SpecializationThm} \cite{Baker08}
\label{Thm:Specialization}
Let $D$ be a divisor on $C$.  Then
\[
r(D) \leq r( \Trop (D)).
\]
\end{SpecializationThm}

Another way of stating this is that $\Trop (W^r_d (C)) \subseteq W^r_d ( \Gamma ).$
The power of the Specialization Theorem lies in the fact that the rank of the divisor $D$ is an algebro-geometric invariant, whereas the rank of $\Trop (D)$ is a combinatorial invariant.  We can therefore use techniques from each field to inform the other.  For example, an immediate consequence of specialization is the following fact.

\begin{theorem}
\label{Thm:WrdNonempty}
Let $\Gamma$ be a metric graph of genus $g$, and let $d,r$ be positive integers such that $g-(r+1)(g-d+r) \geq 0$.  Then $W^r_d (\Gamma) \neq \emptyset$.
\end{theorem}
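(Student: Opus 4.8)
The quantity $\rho := g - (r+1)(g-d+r)$ is the classical Brill--Noether number, and the hypothesis is precisely $\rho \geq 0$. The plan is to reduce this combinatorial assertion to its algebro-geometric counterpart and then transport the latter down to $\Gamma$ via the Specialization Theorem. The crucial input is the \emph{existence half} of the classical Brill--Noether theorem (Kempf, Kleiman--Laksov), which is considerably stronger than the general-curve statement: it guarantees that \emph{every} smooth proper curve $C$ of genus $g$ satisfies $W^r_d(C) \neq \emptyset$ as soon as $\rho \geq 0$. Consequently it suffices to produce a single smooth proper curve $C$ over a suitable complete valued field $K$ whose associated metric graph (skeleton) is $\Gamma$; choosing a divisor $D$ on $C$ with $\deg D = d$ and $r(D) \geq r$, the Specialization Theorem gives $r(\Trop D) \geq r(D) \geq r$, while $\Trop$ preserves degree, so that $\Trop D \in W^r_d(\Gamma)$.

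It remains to realize $\Gamma$ as the tropicalization of such a curve, and I would first treat the case of integer edge lengths. After subdividing, I may assume $\Gamma$ is the regular realization of a finite graph $G$, with every edge of length $1$. I then build a totally degenerate strongly semistable curve $C_0$ over an algebraically closed field $\kappa$ whose dual graph is $G$, taking a copy of $\PP^1$ for each vertex and gluing these along nodes prescribed by the edges; since every component has genus $0$, the arithmetic genus of $C_0$ equals the first Betti number of $G$, namely $g$. Standard deformation theory shows that $C_0$ is smoothable, so there exist a DVR $R$ with residue field $\kappa$, a smooth proper geometrically integral curve $C$ of genus $g$ over $K = \mathrm{Frac}(R)$, and a regular strongly semistable model $\mathcal{C}$ with central fiber $C_0$; choosing the smoothing so that the total space is regular makes every node have thickness $1$, so that the dual graph with its unit edge lengths is exactly $\Gamma$. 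This places us in the precise setting of the Specialization Theorem, and the previous paragraph applies.

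Finally I would remove the integrality hypothesis on the edge lengths. Since $W^r_d(\Gamma)$ is invariant under a global rescaling of the metric, rational edge lengths are handled by clearing denominators and subdividing as above. The arbitrary real case requires the most care: the Specialization Theorem as stated lives over a DVR and therefore produces metric graphs with rational edge lengths only (possibly after base change). I see two routes. One is to work from the outset over an algebraically closed, spherically complete valued field with value group all of $\RR$ --- for instance a field of Hahn series $\kappa(\!(t^{\RR})\!)$ --- and to invoke the Berkovich-skeleton form of specialization developed later in the paper, so that the construction above goes through verbatim for any real lengths. The other is a density argument: approximate $\Gamma$ by metric graphs $\Gamma_n$ with rational lengths on the same model, pick $D_n \in W^r_d(\Gamma_n)$, and pass to a limit using compactness of $\Pic^d$ (a real torus) together with upper semicontinuity of the rank as the lengths vary. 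I expect this last point --- showing that nonemptiness of $W^r_d$ is a closed condition on edge lengths, i.e. that the rank does not drop in the limit --- to be the main technical obstacle, since the rank is genuinely metric-dependent and its behavior under degeneration of lengths is subtle. The cleaner route is therefore to enlarge the value group and defer to the general specialization machinery.
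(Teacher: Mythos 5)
Your proposal is correct and follows essentially the same route as the paper: realize $\Gamma$ as the skeleton of a smooth curve $C$, invoke the Kempf/Kleiman--Laksov existence theorem to get $W^r_d(C)\neq\emptyset$, and push forward via the Specialization Theorem. The realization step you labor over is exactly what the paper outsources to Corollary~\ref{Cor:Surjective} (a curve over a complete algebraically closed field with value group $\RR$ whose skeleton is isometric to $\Gamma$), which is your ``cleaner route''; the deformation-theoretic construction and the density argument are unnecessary detours.
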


\begin{proof}
As we will se in Corollary \ref{Cor:Surjective}, there exists a curve $C$ over a discretely valued field $K$, and a semistable $R$-model $\cC$ of $C$ such that the dual graph of the central fiber is isometric to $\Gamma$.
A well-known theorem of Kempf and Kleiman-Laksov \cite{Kempf71, KleimanLaksov74} asserts that $W^r_d (C) \neq \emptyset$. It follows from Theorem \ref{Thm:Specialization} that $W^r_d (\Gamma) \neq \emptyset$ as well.
\end{proof}

\begin{corollary}
\label{Cor:Gonality}
A metric graph of genus $g$ has gonality at most $\lceil \frac{g+2}{2} \rceil$.
\end{corollary}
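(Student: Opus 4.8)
The plan is to obtain this as an immediate consequence of Theorem~\ref{Thm:WrdNonempty} by specializing to the case $r=1$. Recall that the gonality of $\Gamma$ is the smallest degree of a divisor of rank at least one, so it suffices to exhibit a single integer $d \leq \lceil \frac{g+2}{2} \rceil$ for which $W^1_d(\Gamma) \neq \emptyset$. First I would impose the hypothesis of Theorem~\ref{Thm:WrdNonempty} with $r = 1$, namely $g - (r+1)(g-d+r) \geq 0$, and simplify the Brill--Noether inequality in this case.

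The key computation is entirely arithmetic: setting $r=1$ in $g-(r+1)(g-d+r) \geq 0$ gives $g - 2(g-d+1) \geq 0$, which rearranges to $2d \geq g+2$, i.e. $d \geq \frac{g+2}{2}$. Since $d$ must be an integer, the smallest admissible value is $d = \lceil \frac{g+2}{2} \rceil$, and I would verify that this $d$ is a positive integer for all $g \geq 0$ so that the hypotheses of Theorem~\ref{Thm:WrdNonempty} (which requires $d,r$ to be positive integers) are genuinely satisfied.

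Having chosen $d = \lceil \frac{g+2}{2}\rceil$ and $r=1$, Theorem~\ref{Thm:WrdNonempty} yields $W^1_d(\Gamma) \neq \emptyset$, i.e. there exists a divisor $D$ on $\Gamma$ with $\deg(D) = d$ and $r(D) \geq 1$. By the definition of gonality as the minimal degree of a divisor of rank at least one, this immediately gives $\mathrm{gon}(\Gamma) \leq d = \lceil \frac{g+2}{2} \rceil$, completing the argument.

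There is essentially no serious obstacle here, since all the substantive work is already packaged in Theorem~\ref{Thm:WrdNonempty}; the only point requiring care is the passage from the real inequality $d \geq \frac{g+2}{2}$ to the optimal integer bound, where one must take the ceiling and confirm the resulting $d$ still satisfies the positivity and integrality conditions needed to invoke the theorem.
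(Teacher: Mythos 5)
Your proposal is correct and is exactly the intended derivation: the paper gives no separate proof of Corollary~\ref{Cor:Gonality} precisely because it follows from Theorem~\ref{Thm:WrdNonempty} by the computation you carry out, namely setting $r=1$, simplifying $g-2(g-d+1)\geq 0$ to $d \geq \frac{g+2}{2}$, and taking $d = \lceil \frac{g+2}{2}\rceil$. Nothing is missing.
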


\begin{remark}
\label{Remark:FiniteGraphGonality}
We are unaware of a purely combinatorial proof of Theorem~\ref{Thm:WrdNonempty}.
There are many reasons that such a proof would be of independent interest.  For example, a combinatorial proof could shed some light on whether the analogous statement is true for finite graphs, as conjectured in \cite[Conjectures 3.10, 3.14]{Baker08}.\footnote{Sam Payne has pointed out that there is a gap in the proof of Conjectures 3.10 and 3.14 of \cite{Baker08} given in \cite{Caporaso12}.  The claim on page 82 that $W^r_{d, \phi}$ has nonempty fiber over $b_0$ does not follow from the discussion that precedes it.  {\em A priori}, the fiber of $\overline{W^r_{d, \phi}}$ over $b_0$ might be contained in the boundary $\overline{P^d_\phi} \smallsetminus \Pic^d_\phi$ of the compactified relative Picard scheme.}
\end{remark}

\subsection{Riemann-Roch}
\label{Subsection:RiemannRoch}

Another key property of the combinatorial rank is that it satisfies a tropical analogue of the Riemann-Roch theorem:

\begin{TropicalRR}[\cite{GathmannKerber08, MikhalkinZharkov08}]
\label{Thm:RiemannRoch}
Let $\Gamma$ be a metric graph of genus $g$, and let $K_{\Gamma} := \sum_{v \in \Gamma} (\val(v) - 2)v$ be the {\em canonical divisor} on $\Gamma$.  Then for every divisor $D$ on $\Gamma$,
$$ r(D) - r(K_{\Gamma} - D) = \deg (D) - g + 1 . $$
\end{TropicalRR}

The first result of this kind was the discrete analogue of Theorem~\ref{Thm:RiemannRoch} for finite (non-metric) graphs proved in \cite{BakerNorine07}.  In the Baker-Norine Riemann-Roch theorem, one defines $r(D)$ for a divisor $D$ on a graph $G$ exactly as in Definition~\ref{Def:Rank}, the subtle difference being that the effective divisor $E$ is restricted to the vertices of $G$.  Gathmann and Kerber \cite{GathmannKerber08} showed that one can deduce Theorem~\ref{Thm:RiemannRoch} from the Baker-Norine theorem using a clever approximation argument, whereas Mikhalkin and Zharkov \cite{MikhalkinZharkov08} generalized the method of proof from \cite{BakerNorine07} to the metric graph setting.  Later on, Hladky-Kral-Norine \cite{HladkyKralNorine10} and Luo \cite{Luo11} proved theorems which imply that one can also deduce Riemann-Roch for graphs from tropical Riemann-Roch.  (We will discuss Luo's theory of rank-determining sets in \S\ref{Subsection:RankDetermining}.)
So in retrospect, one can say that in some sense the Baker-Norine theorem and the theorem stated above are equivalent.

\medskip

Baker and Norine's strategy of proof for Theorem~\ref{Thm:RiemannRoch}, as modified by Mikhalkin and Zharkov, is to first show that if $\mathcal{O}$ is an orientation of the graph (i.e., a choice of a head vertex and tail vertex for each edge of $G$), then
$$ D_{\mathcal{O}} := \sum_{v \in V(G)} (\indeg_{\mathcal O} (v) - 1)v $$
is a divisor of degree $g-1$, and this divisor has rank $-1$ if and only if the orientation $\mathcal{O}$ is acyclic.  This fact helps to establish the Riemann-Roch Theorem in the case of divisors of degree $g-1$, which serves as the base case for the more general argument.
It is interesting to note that the Tropical Riemann-Roch Theorem has thus far has resisted attempts to prove it via classical algebraic geometry.  At present, neither the Tropical Riemann-Roch Theorem nor the classical Riemann-Roch Theorem for algebraic curves is known to imply the other.

\medskip

If $\cC$ is a strongly semistable $R$-model for a curve $C$ over a discretely valued field $K$ with the property that all irreducible components of the special fiber $C_0$ have genus $0$, then the multidegree of the relative dualizing sheaf $\Omega^1_{\cC / R}$ is equal to the canonical divisor of the graph $G$.  This is a simple consequence of the adjunction formula, which shows more generally that ${\rm mdeg}(\Omega^1_{\cC / R}) = K_{(G,\omega)}^\#$ in the terminology of \S\ref{Subsection:vertexweight} below.
If $K$ is not discretely valued, this is still true with the right definition of the sheaf $\Omega^1_{\cC / R}$ (see \cite{KRZB15}).
This ``explains'' in some sense why there is a canonical {\em divisor} on a metric graph while on an algebraic curve there is merely a canonical {\em divisor class}.

\medskip

It is clear from the definition of rank that if $D$ and $E$ are divisors on a metric graph $\Gamma$ having non-negative rank, then $r(D+E) \geq r(D) + r(E)$.  Combining this with tropical Riemann-Roch, one obtains a tropical version of Clifford's inequality:
\begin{TropicalClifford}
\label{Thm:Clifford}
Let $D$ be a {\em special} divisor on a metric graph $\Gamma$, that is, a divisor such that both $D$ and  $K_{\Gamma} - D$ have nonnegative rank.  Then
$$ r(D) \leq \frac{1}{2} \deg (D) . $$
\end{TropicalClifford}

\begin{remark}
The classical version of Clifford's Theorem is typically stated in two parts.  The first part is the inequality above, while the second part states that, when equality holds, either $D \sim 0$, $D \sim K_C$, or the curve $C$ is hyperelliptic and the linear equivalence class of $D$ is a multiple of the unique $\g^1_2$.
The same conditions for equality hold in the tropical case as well, by a recent theorem of Coppens, but the proof is quite subtle as the classical methods do not work in the tropical context.   See \cite{Coppens14} for details.
\end{remark}

Note that, as in the case of curves, the Riemann-Roch Theorem significantly limits the possible ranks that a divisor of fixed degree on a metric graph may have.  For example, a divisor of negative degree necessarily has rank $-1$, so a divisor of degree $d > 2g-2$ must have rank $d-g$.  It is only in the intermediate range $0 \leq d \leq 2g-2$ where there are multiple possibilities for the rank.

\begin{example}
\label{Ex:CircleRiemannRoch}
The canonical divisor on a circle is trivial, and it is the only divisor of degree 0 with non-negative rank.  If $D$ is a divisor of degree $d > 0$, then by Riemann-Roch $D$ has rank $d-1$.
This can also be seen using the fact that the circle is a torsor for its Jacobian, as in Example~\ref{Ex:Circle}:
if $E$ is an effective divisor of degree $d-1$, then there is a unique point $P$ such that $D-E \sim P$, and hence $r(D) \geq d-1$.
\end{example}

\begin{example}
\label{Ex:Hyperelliptic}
The smallest genus for which the rank of an effective divisor is not completely determined by the degree is genus 3.  Pictured in Figure \ref{Fig:Hyperelliptic} are two examples of genus 3 metric graphs, the first of which is {\em hyperelliptic}, meaning that
it admits a divisor of degree 2 and rank 1, and the second of which is not.
For the first graph, one can check by hand that the sum of the two vertices on the left has rank at least 1, and it cannot have rank higher than 1 by Clifford's Theorem.  We will show that the second graph is not hyperelliptic in Example \ref{Ex:Nonhyperelliptic}, as the argument will require some techniques for computing ranks of divisors that we will discuss in the next section.

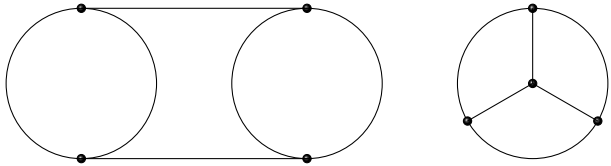
\begin{figure}
\begin{tikzpicture}

\draw [ball color=black] (0,1) circle (0.55mm);
\draw [ball color=black] (0,-1) circle (0.55mm);
\draw (0,0) circle (1);
\draw [ball color=black] (3,1) circle (0.55mm);
\draw [ball color=black] (3,-1) circle (0.55mm);
\draw (3,0) circle (1);
\draw (0,1)--(3,1);
\draw (0,-1)--(3,-1);

\draw (6,0) circle (1);
\draw [ball color=black] (6,0) circle (0.55mm);
\draw [ball color=black] (6,1) circle (0.55mm);
\draw [ball color=black] (5.134,-.5) circle (0.55mm);
\draw [ball color=black] (6.866,-.5) circle (0.55mm);
\draw (6,0)--(6,1);
\draw (6,0)--(5.134,-.5);
\draw (6,0)--(6.866,-.5);

\end{tikzpicture}
\caption{Two metric graphs of genus 3, the first of which is hyperelliptic, and the second of which is not.  (All edges have length 1.)}
\label{Fig:Hyperelliptic}
\end{figure}
\end{example}

\subsection{Divisors on vertex-weighted graphs}
\label{Subsection:vertexweight}
In \cite{AminiCaporaso13}, Amini and Caporaso formulate a refinement of the Specialization Theorem which takes into account the genera of the components of the special fiber.  In this section we describe their result following the presentation in \cite{AminiBaker12}.

\medskip

A {\em vertex-weighted metric graph} is a pair $(\Gamma,\omega)$ consisting of a metric graph $\Gamma$ and a weight function $\omega : \Gamma \to \ZZ_{\geq 0}$ such that $\omega(x)=0$ for all but finitely many $x \in \Gamma$.
Following~\cite{AminiCaporaso13}, we define a new metric graph $\Gamma^\#$ by attaching $\omega(x)$ loops of arbitrary positive length at each point $x \in \Gamma$.  There is a natural inclusion of $\Gamma$ into $\Gamma^\#$.
The {\em canonical divisor} of $(\Gamma,\omega)$ is defined to be
\[
K^\# = K_\Gamma + \sum_{x \in \Gamma} 2\omega(x),
\]
which can naturally be identified with the canonical divisor of $K_{\Gamma^\#}$ restricted to $\Gamma$.
Its degree is $2g^\# -2$, where $g^\# = g(\Gamma) + \sum_{x \in \Gamma} \omega(x)$ is the genus of $\Gamma^\#$.

\medskip

Following~\cite{AminiCaporaso13}, the {\em weighted rank} $r^\#$ of a divisor $D$ on $\Gamma$ is defined to be $r^\#(D):= r_{\Gamma^\#}(D)$.
By \cite[Corollary 4.12]{AminiBaker12}, we have the more intrinsic description
\[
r^\#(D) = \min_{0\leq E \leq \mathcal W} \bigl(\deg(E)+ r_\Gamma(D-2E)\bigr),
\]
where $\mathcal W = \sum_{x \in \Gamma} \omega(x)(x)$.

\medskip

The Riemann-Roch theorem for $\Gamma^\#$ implies the following ``vertex-weighted'' Riemann-Roch theorem for $\Gamma$:
\[
r^\#(D) - r^\#(K^\# - D) = \deg(D) + 1 - g^\#.
\]

\medskip

If $C$ is a curve over $K$, together with a semistable model $\cC$ over $R$, we define the associated vertex-weighted metric graph $(\Gamma,\omega)$ by taking $\Gamma$ to be the skeleton of $\cC$
and defining the weight function $\omega$  by $\omega(v) = g_v$.
With this definition, the genus of the weighted metric graph $(\Gamma,\omega)$ is equal to the genus of $C$ and we have $\trop(K_C) \sim K^\#$ on $\Gamma$ \cite[\S{4.7.1}]{AminiBaker12}.
The following weighted version of the Specialization Theorem, inspired by the results of ~\cite{AminiCaporaso13}, is proved in \cite[Theorem 4.13]{AminiBaker12}:

\begin{WtedSpecializationThm}
\label{thm:weightedmetricspecialization}
For every divisor $D \in \Div(C)$, we have $r_C(D) \leq r^\#(\trop(D))$.
\end{WtedSpecializationThm}

\section{Combinatorial Techniques}
\label{Section:Techniques}

The tropical approach to degeneration of line bundles in algebraic geometry derives its power from the combinatorial tools which one has available, many of which have no classical analogues.  We describe some of these tools in this section.

\subsection{Reduced divisors and Dhar's burning algorithm}
\label{Subsection:ReducedDivisors}

\begin{definition}
\label{def:v-reduced}
Let $G$ be a finite graph.  Given a divisor $D \in {\rm Div}(G)$ and a vertex $v$ of $G$, we say that $D$ is \textit{$v$-reduced} if
\begin{itemize}
\item[(RD1)] $D(w) \geq 0$ for every $w \neq v$, and
\item[(RD2)] for every non-empty set $A \subseteq V(G) \smallsetminus \{ v \}$ there is a vertex $w \in A$ such that $\mathrm{outdeg}_A (w) > D(w)$.
\end{itemize}
\end{definition}

Here $\mathrm{outdeg}_A (w)$ denotes the {\rm outdegree} of $w$ with respect to $A$, i.e., the number of edges connecting $w \in A$ to a vertex not in $A$.
The following important result (cf. \cite[Proposition 3.1]{BakerNorine07}) shows that $v$-reduced divisors form a distinguished set of representatives for linear equivalence classes of divisors on $G$:

\begin{lemma}
\label{Lem:ReducedDivisors}
Every divisor on $G$ is equivalent to a unique $v$-reduced divisor.
\end{lemma}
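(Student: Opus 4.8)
The statement to prove is Lemma~\ref{Lem:ReducedDivisors}: every divisor on a finite graph $G$ is equivalent to a unique $v$-reduced divisor.

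The proof naturally splits into existence and uniqueness. For existence, the plan is to start with any divisor $D$ and produce an equivalent $v$-reduced divisor by a combination of two moves. First I would arrange condition (RD1) by bringing all the "debt" to the vertex $v$: since any two points on a connected graph are equivalent (one can fire subsets of vertices to push chips around), one can transfer negative coefficients away from every $w \neq v$. Concretely, so long as some $w \neq v$ has $D(w) < 0$, one fires the complementary set $V(G) \smallsetminus \{w\}$ (or iterates a Dhar-type procedure), which strictly increases $\sum_{w \neq v} \min(D(w),0)$ or some similar nonnegative integer-valued potential, so the process terminates with (RD1) satisfied. Second, to achieve (RD2), I would use the following superlevel/firing argument: if (RD2) fails, there is a nonempty $A \subseteq V(G) \smallsetminus \{v\}$ with $\mathrm{outdeg}_A(w) \leq D(w)$ for every $w \in A$; firing the whole set $A$ (that is, applying $\ddiv(f)$ with $f = \mathbf{1}_A$) keeps all coefficients on $A$ nonnegative while moving chips toward $v$. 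To see termination, I would track the quantity $\sum_{w} \lambda(w) D(w)$ for a suitable weighting $\lambda$ measuring graph distance to $v$, which strictly decreases under such firings and is bounded below; this guarantees the process halts at a divisor satisfying both (RD1) and (RD2).

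For uniqueness, the key input is the structure of principal divisors. Suppose $D$ and $D'$ are both $v$-reduced and $D \sim D'$, so $D - D' = \ddiv(f)$ for some $f : V(G) \to \ZZ$. I would consider the set $A$ on which $f$ attains its maximum value; if $A \neq V(G)$, the definition of $\ord_v(f)$ shows that every vertex $w \in A$ satisfies $\ord_w(f) \geq \mathrm{outdeg}_A(w)$, because each edge leaving $A$ contributes a strictly positive term $f(w) - f(w') > 0$ with integer slopes, hence at least $1$. Assuming without loss of generality that $v \notin A$ (handling the case $v \in A$ by applying the symmetric argument to $-f$ and swapping the roles of $D, D'$), I would derive that $D(w) = D'(w) + \ord_w(f) \geq D'(w) + \mathrm{outdeg}_A(w)$ for $w \in A$, which combined with (RD2) for $D'$ forces a contradiction unless $A = V(G)$, i.e. $f$ is constant and $D = D'$.

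The main obstacle is the uniqueness direction, and specifically the combinatorial lemma that at a maximum set $A$ of $f$ one has $\ord_w(f) \geq \mathrm{outdeg}_A(w)$ for $w \in A$. This is where the integrality of slopes and the precise sign convention in the definition of $\ord_v(f) = \sum_{w \sim v}\bigl(f(v) - f(w)\bigr)$ are essential: edges internal to $A$ contribute nonpositively bounded-by-zero (in fact exactly the internal structure cancels appropriately), while the boundary edges each contribute at least $1$. Getting this inequality exactly right, and correctly reducing the two cases ($v \in A$ versus $v \notin A$) to a single contradiction with (RD2), is the subtle part; the existence direction, while requiring a termination argument, is comparatively routine once one identifies a monovariant. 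I would remark that this is essentially \cite[Proposition 3.1]{BakerNorine07}, so I expect the cleanest exposition to follow their argument.
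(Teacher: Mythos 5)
The paper itself does not prove this lemma: it is quoted verbatim from \cite[Proposition 3.1]{BakerNorine07}, so the only benchmark is the Baker--Norine argument, which is indeed what you are reconstructing. Your uniqueness half is that argument and is essentially correct: writing $D - D' = \ddiv(f)$ with $f$ nonconstant and letting $A$ be the set where $f$ attains its maximum, edges internal to $A$ contribute exactly $0$ to $\ord_w(f)$ (since $f$ is constant on $A$, not merely ``nonpositive''), while each edge leaving $A$ contributes at least $1$ by integrality, giving $\ord_w(f) \geq \outdeg_A(w)$ for $w \in A$. The contradiction then comes from (RD1) for $D'$ together with (RD2) for $D$ applied to the set $A$; your text invokes ``(RD2) for $D'$,'' which is the wrong divisor, but this is a bookkeeping slip rather than a substantive error.

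The existence half, by contrast, has two genuine gaps, both in the termination arguments. First, the claim that any two points of a connected graph are equivalent is false (on an $n$-cycle, $\Jac(G) \cong \ZZ/n\ZZ$ is generated by the difference of two vertices), and your proposed potential $\sum_{w \neq v} \min(D(w),0)$ need not strictly increase: on the path $v$--$a$--$b$ with $D = -b$, firing $V(G) \smallsetminus \{b\}$ produces $-a$ and the potential stays at $-1$; borrowing at a vertex can simply shift the debt onto its neighbors. The standard repair is to order vertices by distance to $v$ and, for $k$ decreasing from the maximal distance, repeatedly fire $U_k = \{w : d(w,v) < k\}$ to make the distance-$k$ shell nonnegative without disturbing farther shells. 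Second, for the (RD2) stage, no linear weighting $\sum_w \lambda(w) D(w)$ with $\lambda$ a distance-like function can serve as a strict monovariant: on the $4$-cycle $v$--$a$--$b$--$c$--$v$ with $D = 2a$, the set $A = \{a\}$ violates (RD2), and firing it sends one chip to $v$ and one to $b$, leaving $\sum_w d(w,v)D(w)$ unchanged at $2$; more structurally, a given edge may be crossed in either direction by different violating sets, so a single linear potential cannot strictly decrease in all cases. One needs a lexicographic or inductive argument instead --- for instance, $D(v)$ weakly increases under these firings and is bounded above by $\deg(D)$, so vertices adjacent to $v$ fire only finitely often, and one inducts on distance from $v$; alternatively, one selects among all representatives satisfying (RD1) the one maximizing $D(v)$ and then minimizing a suitable secondary quantity, and shows it is $v$-reduced. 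Until one of these is supplied, the existence direction is incomplete.
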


If $D$ has nonnegative rank, the $v$-reduced divisor equivalent to $D$ is the divisor in $\vert D \vert$ that is lexicographically ``closest'' to $v$.  It is a discrete analogue of the unique divisor in a classical linear series $|D|$ with the highest possible order of vanishing at a given point $p \in C$.

\medskip

There is a simple algorithm for determining whether a given divisor satisfying (RD1) above is $v$-reduced, known as \textit{Dhar's burning algorithm}.  For $w \neq v$, imagine that there are $D(w)$ buckets of water at $w$.  Now, light a fire at $v$.  The fire starts spreading through the graph, burning through an edge as soon as one of its endpoints is burnt, and burning a vertex $w$ if the number of burnt edges adjacent to $w$ is greater than $D(w)$ (that is, there is not enough water to fight the fire).  The divisor $D$ is $v$-reduced if and only if the fire consumes the whole graph.  For a detailed account of this algorithm, we refer to \cite{Dhar90} and the more recent \cite[Section 5.1]{BS13}.

\medskip

There is a completely analogous set of definitions and results for metric graphs.  Let $\Gamma$ be a metric graph, let $D$ be a divisor on $\Gamma$, and choose a point $v \in \Gamma$ (which need not be a vertex).
We say that $D$ is {\em $v$-reduced} if the two conditions from Definition~\ref{def:v-reduced} hold, with the second condition replaced by
\medskip
\begin{itemize}
\item[(RD2$^\prime$)] for every closed, connected, non-empty set $A \subseteq \Gamma \smallsetminus \{ v \}$ there is a point $w \in A$ such that $\mathrm{outdeg}_A (w) > D(w)$.
\end{itemize}
\medskip

It is not hard to see that condition (RD2$^\prime$) is equivalent to requiring that for every non-constant tropical rational function $f \in R(\Gamma)$ with a global maximum at $v$, the divisor $D' := D+{\rm div}(f)$ does not satisfy (RD1), i.e., there exists $w\neq v$ in $\Gamma$ such that $D'(w) < 0$.

\medskip

The analogue of Lemma~\ref{Lem:ReducedDivisors} remains true in the metric graph context: every divisor on $\Gamma$ is equivalent to a unique $v$-reduced divisor.  Moreover, Dhar's burning algorithm as formulated above holds almost {\em verbatim} for metric graphs:
the fire starts spreading through $\Gamma$, getting blocked at a point $w \in \Gamma$ iff the number of burnt tangent directions at $w$ is less than or equal to $D(w)$; the divisor $D$ is $v$-reduced if and only if the fire consumes all of $\Gamma$.

\medskip

We note the following important fact, which is useful for computing ranks of divisors.

\begin{lemma}
\label{Lem:ReducedRank}
Let $D$ be a divisor on a finite or metric graph, and suppose that $D$ is $v$-reduced for some $v$.  If $D$ has non-negative rank, then $D(v) \geq r(D)$.
\end{lemma}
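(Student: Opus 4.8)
The plan is to reduce the statement to the single fact, recorded in the discussion following Lemma~\ref{Lem:ReducedDivisors}, that a $v$-reduced divisor of nonnegative rank is effective. The bridge is to translate the hypothesis $r(D) \geq r$, where $r := r(D) \geq 0$, into a statement about the \emph{shifted} divisor $D - rv$. First I would apply Definition~\ref{Def:Rank} to the particular effective divisor $E = rv$ of degree $r$: since $r(D) \geq r$ we have $|D - rv| \neq \emptyset$, so $D - rv$ has nonnegative rank.

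The crux of the argument is the observation that $D - rv$ is again $v$-reduced. This is where one uses that the reducedness conditions are ``local away from $v$'': both (RD1) and (RD2) (respectively (RD2$'$) in the metric setting) only constrain the coefficients $D(w)$ and the outdegrees $\outdeg_A(w)$ at points $w \neq v$, and the divisors $D$ and $D - rv$ agree at every such point. Hence $D - rv$ inherits (RD1) and (RD2)/(RD2$'$) from $D$ verbatim, so it too is $v$-reduced.

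Finally I would invoke the stated consequence of Lemma~\ref{Lem:ReducedDivisors}: if a divisor has nonnegative rank, then its (unique) $v$-reduced representative lies in $|D|$ and is therefore effective. Since $D - rv$ is itself $v$-reduced and has nonnegative rank, it must be effective, and in particular $(D - rv)(v) \geq 0$. As $(D - rv)(v) = D(v) - r$, this gives $D(v) \geq r = r(D)$, as desired; the argument is identical in the finite and metric cases. The only genuinely delicate point is the middle step -- checking that subtracting chips at $v$ preserves $v$-reducedness -- together with the correct citation of the ``reduced plus nonnegative rank implies effective'' principle underlying Lemma~\ref{Lem:ReducedDivisors}.
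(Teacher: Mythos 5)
Your proof is correct, and in fact the paper states Lemma~\ref{Lem:ReducedRank} without proof, so there is nothing to diverge from: your argument (apply the rank condition to $E = rv$, observe that (RD1) and (RD2)/(RD2$'$) only constrain points other than $v$ so that $D - rv$ remains $v$-reduced, then invoke the fact that the unique $v$-reduced representative of a class of nonnegative rank is effective) is exactly the standard one the authors are implicitly relying on. All three steps check out in both the finite and metric settings.
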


\begin{example}
\label{Ex:Nonhyperelliptic}
Let $\Gamma$ be the complete graph on 4 vertices endowed with arbitrary edge lengths.  We can use the theory of reduced divisors to show that $\Gamma$ is not hyperelliptic, justifying one of the claims in Example \ref{Ex:Hyperelliptic}.
Suppose that there exists a divisor $D$ on $\Gamma$ of degree 2 and rank 1 and choose a vertex $v$.  Since $D$ has rank 1, $D \sim D' := v+v'$ for some $v' \in \Gamma$.  Now, let $w \neq v,v'$ be a vertex.  Note that there are at least two paths from $w$ to $v$ that do not pass through $v'$, and if $v = v'$, there are three.  It follows by Dhar's burning algorithm that $D'$ is $w$-reduced.  But $D'(w) = 0$, contradicting the fact that $D$ (and hence $D'$) has rank 1.

\end{example}

If a given divisor is not $v$-reduced, Dhar's burning algorithm provides a method for finding the unique equivalent $v$-reduced divisor.  In the case of finite graphs, after performing Dhar's burning algorithm, if we fire the vertices that are left unburnt we obtain a divisor that is lexicographically closer to the $v$-reduced divisor, and after iterating the procedure a finite number of times, it terminates with the $v$-reduced divisor (cf. \cite{BS13}).  For metric graphs, there is a similar procedure but with additional subtleties --- we refer the interested reader to \cite[Algorithm 2.5]{Luo11} and \cite{Backman14a}.

\begin{example}
\label{Ex:Dhar}
Consider the finite graph depicted in Figure \ref{Fig:Dhar}, consisting of two triangles meeting at a vertex $v_3$.  We let $D = v_1 + v_2$, and compute the $v_5$-reduced divisor equivalent to $D$.  After performing Dhar's burning algorithm once, we see that vertices $v_1$ and $v_2$ are left unburnt.  Firing these, we see that $D$ is equivalent to $2v_3$.  Performing Dhar's burning algorithm a second time, all three of the vertices $v_1, v_2, v_3$ are not burnt.  Firing these, we obtain the divisor $v_4 + v_5$.  A third run of Dhar's burning algorithm shows that $v_4 + v_5$ is $v_5$-reduced.

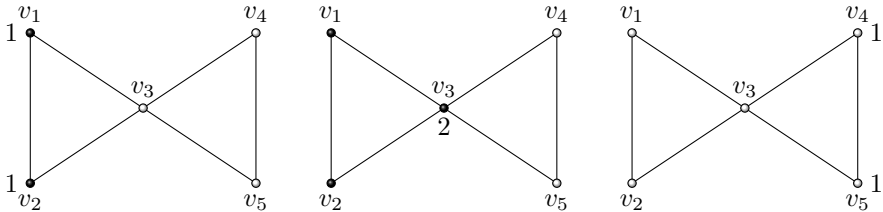
\begin{figure}
\begin{tikzpicture}

\draw (0,1)--(0,-1);
\draw (0,1)--(1.5,0);
\draw (0,-1)--(1.5,0);
\draw (1.5,0)--(3,1);
\draw (1.5,0)--(3,-1);
\draw (3,1)--(3,-1);
\draw [ball color=black] (0,1) circle (0.55mm);
\draw [ball color=black] (0,-1) circle (0.55mm);
\draw [ball color=white] (1.5,0) circle (0.55mm);
\draw [ball color=white] (3,1) circle (0.55mm);
\draw [ball color=white] (3,-1) circle (0.55mm);
\draw (0,1.25) node {$v_1$};
\draw (0,-1.25) node {$v_2$};
\draw (1.5,.25) node {$v_3$};
\draw (3,1.25) node {$v_4$};
\draw (3,-1.25) node {$v_5$};
\draw (-.25,1) node {1};
\draw (-.25,-1) node {1};

\draw (4,1)--(4,-1);
\draw (4,1)--(5.5,0);
\draw (4,-1)--(5.5,0);
\draw (5.5,0)--(7,1);
\draw (5.5,0)--(7,-1);
\draw (7,1)--(7,-1);
\draw [ball color=black] (4,1) circle (0.55mm);
\draw [ball color=black] (4,-1) circle (0.55mm);
\draw [ball color=black] (5.5,0) circle (0.55mm);
\draw [ball color=white] (7,1) circle (0.55mm);
\draw [ball color=white] (7,-1) circle (0.55mm);
\draw (4,1.25) node {$v_1$};
\draw (4,-1.25) node {$v_2$};
\draw (5.5,.25) node {$v_3$};
\draw (7,1.25) node {$v_4$};
\draw (7,-1.25) node {$v_5$};
\draw (5.5,-.25) node {2};

\draw (8,1)--(8,-1);
\draw (8,1)--(9.5,0);
\draw (8,-1)--(9.5,0);
\draw (9.5,0)--(11,1);
\draw (9.5,0)--(11,-1);
\draw (11,1)--(11,-1);
\draw [ball color=white] (8,1) circle (0.55mm);
\draw [ball color=white] (8,-1) circle (0.55mm);
\draw [ball color=white] (9.5,0) circle (0.55mm);
\draw [ball color=white] (11,1) circle (0.55mm);
\draw [ball color=white] (11,-1) circle (0.55mm);
\draw (8,1.25) node {$v_1$};
\draw (8,-1.25) node {$v_2$};
\draw (9.5,.25) node {$v_3$};
\draw (11,1.25) node {$v_4$};
\draw (11,-1.25) node {$v_5$};
\draw (11.25,1) node {1};
\draw (11.25,-1) node {1};

\end{tikzpicture}
\caption{Using Dhar's burning algorithm to compute the $v_5$-reduced divisor equivalent to $v_1 + v_2$.  The burnt vertices after each iteration are colored white.}
\label{Fig:Dhar}
\end{figure}
\end{example}

\subsection{Rank-determining sets}
\label{Subsection:RankDetermining}

The definition of the rank of a divisor on a finite graph $G$ implies easily that there is an algorithm for computing it.\footnote{Although there is no known {\em efficient} algorithm; indeed, it is proved in \cite{KissTothmeresz14} that this problem is NP-hard.}
Indeed, since there are only a finite number of effective divisors $E$ of a given degree on $G$, we are reduced to the problem of determining whether a given divisor is equivalent to an effective divisor or not.
This problem can be solved in polynomial time by using the iterated version of Dhar's algorithm described above to compute the $v$-reduced divisor $D'$ equivalent to $D$ for some vertex $v$.  If $D'(v) < 0$ then $|D| = \emptyset$, and otherwise $|D| \neq \emptyset$.

\medskip

If one attempts to generalize this algorithm to the case of metric graphs, there is an immediate problem, since there are now an infinite number of effective divisors $E$ to test.
The idea behind rank-determining sets is that it suffices, in the definition of $r(D)$, to restrict to a finite set of effective divisors $E$.

\begin{definition}
\label{Def:RankDetermining}
Let $\Gamma$ be a metric graph.  A subset $A \subseteq \Gamma$ is a \emph{rank-determining set} if, for any divisor $D$ on $\Gamma$, $D$ has rank at least $r$ if and only if $\vert D-E \vert \neq \emptyset$ for every effective divisor $E$ of degree $r$ supported on $A$.
\end{definition}

In \cite{Luo11}, Luo provides a criterion for a subset of a metric graph to be rank-determining.  A different proof of Luo's criterion has been given recently by Backman \cite{Backman14b}.
Luo defines a \emph{special open set} to be a connected open set $U \subseteq \Gamma$ such that every connected component $X \subseteq \Gamma \smallsetminus U$ contains a boundary point $v$ such that $\outdeg_X (v) \geq 2$.

\begin{theorem}[\cite{Luo11}]
\label{Thm:RankDetermining}
A subset $A \subseteq \Gamma$ is rank-determining if and only if all nonempty special open subsets of $\Gamma$ intersect $A$.
\end{theorem}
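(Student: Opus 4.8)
The plan is to prove both directions of Luo's criterion via the theory of reduced divisors, since reduced divisors give canonical representatives and interact cleanly with the combinatorial notion of a special open set. The key object to track is the \emph{reduced divisor test}: by Lemma~\ref{Lem:ReducedDivisors} (metric graph version) and Lemma~\ref{Lem:ReducedRank}, a divisor $F$ of nonnegative degree fails to be equivalent to an effective divisor precisely when the $v$-reduced divisor equivalent to $F$ has a negative coefficient at $v$. Thus checking $|D-E| \neq \emptyset$ for all $E$ supported on $A$ amounts to an assertion about $v$-reduced divisors as $v$ ranges over $A$.

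First I would handle the harder direction, namely that if every nonempty special open set meets $A$, then $A$ is rank-determining. Suppose $D$ is a divisor with the property that $|D-E| \neq \emptyset$ for every effective $E$ of degree $r$ supported on $A$, and suppose for contradiction that $r(D) < r$. Then there is \emph{some} effective divisor $E_0$ of degree $r$ (not necessarily supported on $A$) with $|D-E_0| = \emptyset$. The strategy is to show that one can ``push'' the support of $E_0$ into $A$ without losing the failure of effectivity. Concretely, I would consider a point $v$ in the support of $E_0$ lying outside $A$, form the $v$-reduced divisor $D_v$ equivalent to $D - (E_0 - v) + $ (adjustments), and use Dhar's burning algorithm to analyze where chips accumulate. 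The crux is that the set of points where the $v$-reduced representative ``wants to place its excess'' is governed exactly by outdegree-$\geq 2$ boundary conditions, which is precisely the combinatorial content of Luo's \emph{special open set} definition: the region not forced to be empty corresponds to a connected set whose complementary components each have a boundary point of outdegree at least $2$. I would argue that if $A$ avoided some special open set $U$, one could concentrate the test divisor $E_0$ inside $U$ where no point of $A$ can detect it, whereas if $A$ meets every special open set, every potential ``bad'' configuration is witnessed by an effective divisor supported on $A$.

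For the converse direction, that rank-determining forces $A$ to meet every nonempty special open set, I would argue by contrapositive: given a nonempty special open set $U$ disjoint from $A$, I would construct a divisor $D$ demonstrating that $A$ fails to determine rank. The idea is to build a divisor $D$ of rank exactly $r-1$ together with a point $w \in U$ such that $|D - rw| = \emptyset$, while arranging that $|D - E| \neq \emptyset$ for every effective $E$ of degree $r$ supported on $A$. The special open set hypothesis is what guarantees such a $w$ and such a $D$ exist: the outdegree-$\geq 2$ condition on boundary points of the complementary components is exactly the condition ensuring that chips cannot escape $U$ into $A$ under chip-firing, so a divisor localized near $U$ cannot be ``seen'' from $A$.

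The main obstacle I anticipate is the bookkeeping in the forward direction, specifically proving that one can move the support of a bad test divisor $E_0$ into $A$ one point at a time while preserving $|D - E'| = \emptyset$. This requires a careful induction using reduced divisors: at each step one replaces a point $x \notin A$ in the support by a nearby point, and one must verify that the relevant linear system stays empty, which is where the special open set / outdegree-$\geq 2$ condition is used to guarantee that the firing region containing $x$ extends to meet $A$. Making the ``firing region'' argument precise — identifying the connected component of the complement of the burnt set with a component $X$ of $\Gamma \smallsetminus U$ and extracting the boundary point of outdegree at least $2$ — is the technical heart of the proof, and I would lean heavily on the equivalence of (RD2$^\prime$) with the tropical-rational-function formulation stated after the definition of $v$-reduced divisors for metric graphs.
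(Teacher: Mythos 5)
A preliminary remark: the survey does not prove this theorem at all --- it is quoted from \cite{Luo11} (with an alternative proof in \cite{Backman14b}) --- so your proposal must be measured against Luo's original argument rather than anything in the text. Your outline does match the architecture of that argument: reducing the rank test to reduced divisors and Dhar's algorithm, replacing the support of a failing test divisor $E_0$ one point at a time into $A$ for the sufficiency direction, and exhibiting an explicit undetectable divisor for the necessity direction are all the right moves, and you correctly locate where the difficulty sits.

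The genuine gap is that the one statement carrying essentially all of the content of the theorem is never formulated, let alone proved. Your replacement step needs precisely the following: for a divisor $F$ with $|F| \neq \emptyset$, the locus $\{u \in \Gamma : |F - u| = \emptyset\}$ (equivalently $\{u : F_u(u) = 0\}$, where $F_u$ is the $u$-reduced representative) is \emph{open}, and each of its connected components is a \emph{special open set}. Granting this, the argument closes: if $v \in \mathrm{supp}(E_0) \smallsetminus A$ and $F = D - E_0 + v$ has $|F - v| = \emptyset$, then either $|F|=\emptyset$ (and any $w \in A$ works), or the component of $v$ in that locus is a nonempty special open set, hence meets $A$ in a point $w$ with $|F - w| = \emptyset$, and one replaces $v$ by $w$. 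But establishing that this locus is open requires a semicontinuity analysis of how $F_u$ transforms as the base point $u$ moves continuously, and establishing that each component is special --- extracting, for each complementary component $X$, a boundary point with outdegree at least $2$ --- is exactly the step you defer as ``the technical heart.'' The necessity direction has the same status: you assert that a special open set $U$ disjoint from $A$ supports a counterexample, but the construction of a divisor $D$ with $r(D)=0$, with $|D - w| \neq \emptyset$ for every $w \in \Gamma \smallsetminus U$, and with $|D - v| = \emptyset$ for some $v \in U$ is not carried out, and it is not automatic: one must place chips on the closure of each component $X$ of $\Gamma \smallsetminus U$ so that they reach every point of $X$ yet cannot be pushed into $U$, and the outdegree-$\geq 2$ hypothesis is used through Dhar's algorithm run from points of $U$. (Your phrasing also inverts the geometry slightly: the witnessing divisor lives \emph{outside} $U$, and it is the points of $U$ that it fails to cover.) As written, the proposal is a correct plan with its two load-bearing lemmas missing.
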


\begin{corollary}
\label{Cor:VerticesAreRankDetermining}
Let $G$ be a model for a metric graph $\Gamma$.  If $G$ has no loops, then the vertices of $G$ are a rank-determining set.
\end{corollary}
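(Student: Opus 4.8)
The plan is to deduce this directly from Luo's criterion (Theorem~\ref{Thm:RankDetermining}): to prove that the vertex set $V(G)$ is rank-determining it suffices to show that every nonempty special open subset $U \subseteq \Gamma$ meets $V(G)$. I would argue by contradiction, assuming there is a nonempty special open set $U$ with $U \cap V(G) = \emptyset$, and then show that $U$ cannot satisfy the defining property of a special open set, namely that each connected component $X$ of $\Gamma \smallsetminus U$ contains a boundary point of outdegree at least $2$.

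First I would pin down the location of such a $U$. Since $U$ is connected and disjoint from every vertex, and the connected components of $\Gamma \smallsetminus V(G)$ are exactly the relative interiors of the edges of $G$, the set $U$ must lie inside the interior of a single edge $e$; being connected and open, it is an open subinterval of that interior. Let $e$ join the vertices $a$ and $b$. Here the hypothesis that $G$ has no loops enters: it guarantees $a \neq b$ and, more importantly, that $e$ contributes exactly one tangent direction at each of $a$ and $b$.

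The heart of the argument is an outdegree computation at boundary points. Let $X$ be any connected component of $\Gamma \smallsetminus U$ and let $v$ be a boundary point of $X$, i.e.\ a point of $\overline{X} \cap \overline{U}$ (note $v \in X$, since $U$ open forces its components to be closed). Because $U$ is an interval lying inside the single edge $e$, its closure meets $X$ only at the one or two endpoints of that interval, and the only tangent direction at $v$ pointing into $U$ is the direction along $e$ toward $U$. When $v$ is an interior point of $e$ this is immediate, as $v$ then has only two tangent directions. When the interval $U$ abuts a vertex, say $v = a$, the no-loop hypothesis ensures that $e$ accounts for exactly one tangent direction at $a$, while every other edge at $a$ remains joined to $a$ after deleting the interior chunk $U$ and hence points into $X$. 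In either case exactly one tangent direction at $v$ leaves $X$, so $\outdeg_X(v) = 1$.

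Consequently every boundary point of every component of $\Gamma \smallsetminus U$ has outdegree $1 < 2$, contradicting the requirement defining a special open set. Hence no vertex-free nonempty special open set exists, so every nonempty special open set meets $V(G)$, and Theorem~\ref{Thm:RankDetermining} yields the claim. The main obstacle, and the only place the no-loops hypothesis is genuinely required, is precisely the case in which $U$ runs all the way up to a vertex $a$: without the hypothesis a loop at $a$ could send two tangent directions into $U$, giving $\outdeg_X(a) = 2$ and producing a vertex-avoiding special open set (as indeed happens on a circle with a single marked vertex). Ruling this out is exactly what the loop-free condition accomplishes.
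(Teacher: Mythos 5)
Your proof is correct and follows exactly the route the paper intends: the corollary is stated as an immediate consequence of Luo's criterion (Theorem~\ref{Thm:RankDetermining}), and your verification that a vertex-free connected open set must sit inside a single open edge and therefore has all boundary outdegrees equal to $1$ (using the no-loops hypothesis precisely where you say) is the implicit argument the paper leaves to the reader. Your closing remark about the circle correctly identifies why the loop-free hypothesis cannot be dropped.
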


There are lots of other interesting rank-determining sets besides vertices of models.

\begin{example}
\label{Ex:ComplementOfSpanningTree}
Let $G$ be a model for a metric graph $\Gamma$.
Choose a spanning tree of $G$ and let $e_1 , \ldots , e_g$ be the edges in the complement of the spanning tree.  For each such edge $e_i$, choose a point $v_i$ in its interior, and let $w$ be any other point of $\Gamma$.  Then the set $A = \{ v_1 , \ldots , v_g \} \cup \{ w \}$ is rank-determining.
This construction is used, for example, in \cite{Coppens14}.
\end{example}

\begin{example}
\label{Ex:BlueVerticesAreRankDetermining}
Let $G$ be a bipartite finite graph, and let $\Gamma$ be a metric graph having $G$ as a model.  If we fix a 2-coloring of the vertices of $G$, then the vertices of one color are a rank-determining set.  This is the key observation in \cite{Heawood}, in which the second author shows that the Heawood graph admits a divisor of degree 7 and rank 2, regardless of the choice of edge lengths.  The interest in this example arises because it shows that there is a non-empty open subset of the (highest-dimensional component of the) moduli space $M_8^{\rm trop}$ containing no Brill-Noether general metric graph. (See \S\ref{Subsection:Moduli} for a description of $M_g^{\rm trop}$.)
\end{example}

\subsection{Tropical independence}
\label{Subsection:TropicalIndependence}

Many interesting questions about algebraic curves concern the ranks of linear maps between the vector spaces $\cL(D)$.  For example, both the Gieseker-Petri Theorem and the Maximal Rank Conjecture are statements about the rank of the multiplication maps
$$ \mu : \mathcal{L} (D) \otimes \mathcal{L} (D') \to \mathcal{L} (D+D') $$
for certain pairs of divisors $D,D'$ on a general curve.

\medskip

One simple strategy for showing that a map, such as $\mu$, has rank at least $k$ is to carefully choose $k$ elements of the image, and then check that they are linearly independent.  To this end, we formulate a notion of \emph{tropical independence}, which gives a sufficient condition for linear independence of rational functions on a curve $C$ in terms of the associated piecewise linear functions on the metric graph $\Gamma$.

\begin{definition}[\cite{tropicalGP}]
\label{Def:TropicalIndependence}
A set of piecewise linear functions $\{ f_1, \ldots, f_k \}$ on a metric graph $\Gamma$ is \emph{tropically dependent} if there are real numbers $b_1, \ldots, b_k$ such that the minimum
\[
\min \{f_1(v) + b_1, \ldots, f_k(v) + b_k \}
\]
occurs at least twice at every point $v$ in $\Gamma$.
\end{definition}

If there are no such real numbers $b_1, \ldots, b_k$ then we say $\{ f_1, \ldots, f_k \}$ is \emph{tropically independent}.  We note that linearly dependent functions on $C$ specialize to tropically dependent functions on $\Gamma$.  Although the definition of tropical independence is merely a translation of linear dependence into tropical language, one can often check tropical independence using combinatorial methods.  The following lemma illustrates this idea.

\begin{lemma}[\cite{tropicalGP}]
\label{Lem:MinChips}
Let $D$ be a divisor on a metric graph $\Gamma$, with $f_1, \ldots, f_k$ piecewise linear functions in $R(D)$, and let
\[
\theta = \min \{ f_1, \ldots, f_k \}.
\]
Let $\Gamma_j \subset \Gamma$ be the closed set where $\theta = f_j$, and let $v \in \Gamma_j$.  Then the support of $\ddiv( \theta ) +D$ contains $v$ if and only if $v$ belongs to either
\begin{enumerate}
\item the support of $\ddiv( f_j ) + D$, or
\item the boundary of $\Gamma_j$.
\end{enumerate}
\end{lemma}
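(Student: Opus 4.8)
The plan is to carry out a purely local analysis at the point $v$, comparing the slopes of $\theta$ and of $f_j$ in each tangent direction. For a piecewise linear function $f$ and a tangent direction $\zeta$ emanating from $v$, write $s_\zeta(f)$ for the outgoing slope of $f$ along $\zeta$, i.e. the rate of change of $f$ as one moves away from $v$ in the direction $\zeta$. With the incoming-slope convention of the paper one then has $\ord_v(f) = -\sum_\zeta s_\zeta(f)$, the sum being over the finitely many tangent directions at $v$.

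First I would establish the basic slope formula for a minimum: if $v \in \Gamma_j$, so that $f_j(v) = \theta(v)$, then for every tangent direction $\zeta$,
\[
s_\zeta(\theta) = \min\{ s_\zeta(f_i) : f_i(v) = \theta(v)\} \le s_\zeta(f_j).
\]
This holds because, for a short distance along $\zeta$, only those $f_i$ attaining the minimum at $v$ can attain it just off $v$, and among these the one of least slope dominates. Summing over $\zeta$ and negating gives the order comparison $\ord_v(\theta) \ge \ord_v(f_j)$, so in particular $(\ddiv(\theta)+D)(v) \ge (\ddiv(f_j)+D)(v) \ge 0$, the last inequality because $f_j \in R(D)$; thus $\theta \in R(D)$ and the divisor $\ddiv(\theta)+D$ is effective, so its support at $v$ is detected by strict positivity of the coefficient.

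The second step is to identify the nonnegative defect $\delta := \ord_v(\theta)-\ord_v(f_j) = \sum_\zeta\bigl(s_\zeta(f_j)-s_\zeta(\theta)\bigr)$ with the boundary condition. Each summand is $\ge 0$ by the slope formula, so $\delta > 0$ if and only if there is a direction $\zeta$ with $s_\zeta(\theta) < s_\zeta(f_j)$. Since $\theta$ and $f_j$ agree at $v$, along such a direction $\theta$ drops strictly below $f_j$ immediately, so $v$ has points of $\Gamma \setminus \Gamma_j$ arbitrarily close to it; conversely, if $s_\zeta(\theta)=s_\zeta(f_j)$ in every direction, then $\theta = f_j$ on a whole neighborhood of $v$, using that both are piecewise linear with finitely many breakpoints. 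Hence $\delta > 0 \iff v \in \partial \Gamma_j$ and $\delta = 0 \iff v \in \operatorname{int}(\Gamma_j)$.

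Finally I would assemble the statement by bookkeeping. Set $a := (\ddiv(f_j)+D)(v) \ge 0$; then $(\ddiv(\theta)+D)(v) = a + \delta$ with both $a \ge 0$ and $\delta \ge 0$. A sum of two nonnegative integers is positive exactly when one of them is, so $(\ddiv(\theta)+D)(v) > 0$ if and only if $a > 0$ or $\delta > 0$, which by the previous step is precisely the condition that $v$ lies in the support of $\ddiv(f_j)+D$ or in the boundary of $\Gamma_j$. I expect the only delicate point to be the directional analysis underlying the boundary characterization—matching slope equalities in all tangent directions to membership in $\operatorname{int}(\Gamma_j)$—since this is where the piecewise-linear structure and the finiteness of tangent directions at $v$ are genuinely used; the rest is the slope-of-a-minimum computation and elementary sign bookkeeping.
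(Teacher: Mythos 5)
Your proof is correct and complete: the identity $s_\zeta(\theta)=\min\{s_\zeta(f_i): f_i(v)=\theta(v)\}$, the resulting decomposition $(\ddiv(\theta)+D)(v)=(\ddiv(f_j)+D)(v)+\delta$ with both terms nonnegative, and the identification of $\delta>0$ with $v\in\partial\Gamma_j$ together give exactly the stated equivalence. This survey only cites the lemma without proof, but the argument in the cited source \cite{tropicalGP} is the same local comparison of outgoing slopes of $\theta$ and $f_j$ at $v$, so your approach matches the original.
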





\subsection{Break divisors}
\label{sec:BreakDivisorSection}

Another useful combinatorial tool for studying divisor classes on graphs and metric graphs is provided by the theory of {\em break divisors}, which was initiated by Mikhalkin--Zharkov in \cite{MikhalkinZharkov08} and studied further by An--Baker--Kuperberg--Shokrieh in {\cite{ABKS13}}.
Given a metric graph $\Gamma$ of genus $g$, fix a model $G$ for $\Gamma$.
For each spanning tree $T$ of $G$,
let $\Sigma_T$ be the image of the canonical map
$$ \prod_{e \not\in T} {e} \to \Div^g_+(\Gamma) $$
sending $(p_1,\ldots,p_g)$ to $p_1 + \cdots + p_g$.  (Here $\Div^g_+(\Gamma)$ denotes the set of effective divisors of degree $g$ on $\Gamma$ and $e$ denotes a {\em closed} edge of $G$, so the points $p_i$ are allowed to be vertices of $G$.)
We call $B(\Gamma) := \bigcup_T \Sigma_T$ the set of \emph{break divisors} on $\Gamma$.
The set of break divisors does not depend on the choice of the model $G$.
The following result shows that the natural map $B(\Gamma) \subset \Div^g_+(\Gamma) \to \Pic^g(\Gamma)$ is bijective:

\begin{theorem}[\cite{MikhalkinZharkov08, ABKS13}]
\label{thm:BreakDivisormainthm1}
Every divisor of degree $g$ on $\Gamma$ is linearly equivalent to a unique break divisor.
\end{theorem}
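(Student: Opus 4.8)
The plan is to prove Theorem~\ref{thm:BreakDivisormainthm1} by establishing the two halves of the bijection separately: \emph{surjectivity} (every degree-$g$ class contains a break divisor) and \emph{injectivity} (no two distinct break divisors are linearly equivalent). A natural strategy is to count and compare. The set $B(\Gamma)$ decomposes as $\bigcup_T \Sigma_T$, and each $\Sigma_T \cong \prod_{e \notin T} e$ is a product of $g$ closed edges, hence a $g$-dimensional cube; thus $B(\Gamma)$ is a $g$-dimensional polyhedral complex sitting inside $\Div^g_+(\Gamma)$. Since $\Pic^g(\Gamma) \cong \Jac(\Gamma)$ is a $g$-dimensional real torus by the Tropical Abel-Jacobi Theorem, both source and target of the map $B(\Gamma) \to \Pic^g(\Gamma)$ have the same dimension, which makes a degree-counting or volume argument feasible.

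First I would handle \emph{injectivity}, which I expect to be the more combinatorial half. Suppose two break divisors $D = \sum_{e \notin T} p_e$ and $D' = \sum_{e \notin T'} p'_e$ are linearly equivalent. The key idea is that a break divisor records, for each non-tree edge, a single chip placed on that edge, and the data of which spanning tree $T$ is used is essentially recovered from the divisor itself. Concretely, I would use reduced divisors: fixing a base point $v$, run Dhar's burning algorithm to show that a break divisor interacts with the complement of a spanning tree in a rigid way, so that the $v$-reduced representative of the class determines both the combinatorial type (the tree $T$) and the positions $p_e$ uniquely. The uniqueness of the $v$-reduced divisor (the metric-graph analogue of Lemma~\ref{Lem:ReducedDivisors}) then forces $D = D'$. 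An alternative, following \cite{ABKS13}, is to characterize break divisors intrinsically by an \emph{orientability} condition---$D$ is a break divisor if and only if for every non-empty closed connected $A \subsetneq \Gamma$ one has $\deg(D|_A) \le g(A) - 1 + (\text{number of boundary edges})$, or some such inequality---and show this condition is incompatible with two distinct representatives in one class.

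Next I would address \emph{surjectivity}. Given an arbitrary divisor class of degree $g$, I would produce a break divisor in it by an explicit reduction: start from any effective representative (which exists by Riemann-Roch, since $\deg = g \ge g - 0$ guarantees $r(D) \ge 0$), choose a base point and pass to the $v$-reduced form, then fire suitable sets to push chips onto the complement of a spanning tree. The existence of the requisite spanning tree is exactly where the structure of $\Gamma$ enters, and one expects to build $T$ edge-by-edge as the chips are relocated. Combined with injectivity, surjectivity in fact follows formally from a cardinality/measure argument in the finite-length or integral setting (by Remark~\ref{Remark:CombLit}, $|\Jac(G)|$ equals the number of spanning trees), but a direct constructive argument via reduced divisors is cleaner for the metric case.

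The hard part will be the injectivity step, specifically showing that the spanning tree and the chip positions are genuinely recoverable from the linear equivalence class rather than merely from a chosen representative. The subtlety is that break divisors from \emph{different} trees $\Sigma_T, \Sigma_{T'}$ overlap along their boundaries (when chips land on vertices), so one must verify that these boundary identifications are precisely the ones induced by linear equivalence and no others. I would resolve this by carefully analyzing how firing moves transform a break divisor, showing that any chip-firing that keeps the divisor effective and of break type corresponds to sliding chips along edges in a way that preserves the underlying combinatorial data up to the known boundary identifications; the global consistency of these local moves is what ultimately pins down uniqueness, and this is the technical heart of the argument as carried out in \cite{MikhalkinZharkov08, ABKS13}.
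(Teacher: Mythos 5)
Your proposal does not close either half of the theorem, and it is worth noting that the survey itself does not prove this statement: it records that the two known proofs go through tropical theta functions and the tropical Riemann theta constant (Mikhalkin--Zharkov) or through the combinatorics of $q$-connected orientations (An--Baker--Kuperberg--Shokrieh), neither of which is the reduced-divisor route you lean on. The concrete gap in your injectivity argument is the implicit premise that a break divisor is pinned down by the $v$-reduced representative of its class. Break divisors need not be $q$-reduced for \emph{any} $q \in \Gamma$. Take $\Gamma$ to be a dumbbell: two loops joined by a bridge, with $u$ and $v$ the points where the loops meet the bridge. Then $D = u + v$ is a break divisor (each chip sits at an endpoint of a closed non-tree edge), but Dhar's burning algorithm started from any $q$ is blocked at whichever of $u,v$ it reaches with only the bridge direction burnt, so $D$ is not $q$-reduced for any $q$; indeed $u+v \sim 2q$ for $q$ in the bridge, and $2q$ is the reduced representative but is not a break divisor. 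So uniqueness of reduced divisors (Lemma~\ref{Lem:ReducedDivisors}) gives you no direct handle on uniqueness of break divisors, and the ``rigidity'' you invoke is exactly what has to be proved. Your alternative route via an intrinsic degree inequality $\deg(D|_A) \geq g(A)$ for closed connected $A$ is essentially the orientation-theoretic characterization that \cite{ABKS13} actually uses, but you leave the inequality unspecified and do not verify either direction of the characterization, so this remains a pointer rather than a proof.

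The surjectivity half has the same problem in sharper form. ``Fire suitable sets to push chips onto the complement of a spanning tree,'' with the tree built ``edge-by-edge as the chips are relocated,'' is the entire content of the existence statement and is not an argument; the dumbbell example again shows the reduction process naturally moves chips \emph{off} the non-tree edges (from $u+v$ toward $2q$), so the procedure you describe runs against the grain of Dhar's algorithm and needs a genuinely different mechanism (in \cite{ABKS13} this is supplied by producing a $q$-connected orientation in each degree $g-1$ class). The cardinality fallback is circular as stated: the equality between the number of integral break divisors and the number of spanning trees is a \emph{consequence} of Theorem~\ref{thm:BreakDivisormainthm2}, since the cells $\Sigma_T \cap \Div(G)$ for different trees $T$ overlap and cannot be counted by summing over $T$. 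The volume version of this idea (comparing $\sum_T \prod_{e \notin T} \ell(e)$ with the volume of $\Pic^g(\Gamma)$ via the weighted Matrix-Tree Theorem) can in fact be pushed through, but it requires showing that the Abel--Jacobi map is volume-preserving on each cell and that the images overlap in measure zero, neither of which you establish.
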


Since $B(\Gamma)$ is a compact subset of $\Div^g_+(\Gamma)$ and $\Pic^g(\Gamma)$ is also compact, it follows from general topology that there is a canonical continuous section $\sigma$ to the natural map $\pi: \Div^g(\Gamma) \to \Pic^g(\Gamma)$ whose image is precisely the set of break divisors.
In particular, every degree $g$ divisor class on a metric graph $\Gamma$ has a {\em canonical} effective representative.  The analogue of this statement in algebraic geometry is {\em false}: when $g=2$, for example, the natural map ${\rm Sym}^2(C) = \Div^2_+(C) \to \Pic^2(C)$ is a birational isomorphism which blows down
the $\PP^1$ corresponding to the fiber over the unique ${\mathfrak g}^1_2$, and this map has no section.  This highlights an interesting difference between the algebraic and tropical settings.

\medskip

The proof of Theorem~\ref{thm:BreakDivisormainthm1} in \cite{MikhalkinZharkov08} utilizes the theory of tropical theta functions and the tropical analogue of Riemann's theta constant.  A purely combinatorial proof based on the theory of {\em $q$-connected orientations} is given in \cite{ABKS13}, and the combinatorial proof yields an interesting
analogue of Theorem~\ref{thm:BreakDivisormainthm1} for finite graphs.  If $G$ is a finite graph and $\Gamma$ is its {\em regular realization}, in which all edges are assigned a length of 1, define the set of {\em integral break divisors} on $G$ to be $B(G) = B(\Gamma) \cap \Div(G)$.  In other words, $B(G)$ consists of all break divisors for the underlying metric graph $\Gamma$ which are supported on the vertices of $G$.

\begin{theorem}[\cite{ABKS13}]
\label{thm:BreakDivisormainthm2}
Every divisor of degree $g$ on $G$ is linearly equivalent to a unique integral break divisor.
\end{theorem}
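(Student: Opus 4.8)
The plan is to deduce the finite-graph statement from its metric counterpart, Theorem~\ref{thm:BreakDivisormainthm1}, by handling uniqueness directly and reducing existence to a cardinality count. Throughout I use the natural inclusion $\iota\colon \Pic(G) \hookrightarrow \Pic(\Gamma)$ recorded earlier: if two divisors supported on the vertices of $G$ are linearly equivalent on the regular realization $\Gamma$, then they are already linearly equivalent on $G$.

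First I would dispose of uniqueness. Suppose $B_1, B_2 \in B(G)$ are integral break divisors with $B_1 \sim B_2$ on $G$. Since $B(G) = B(\Gamma) \cap \Div(G) \subseteq B(\Gamma)$, both $B_1$ and $B_2$ are genuine break divisors on the metric graph $\Gamma$, and applying $\iota$ gives $B_1 \sim B_2$ on $\Gamma$ as well. But by the uniqueness half of Theorem~\ref{thm:BreakDivisormainthm1} the natural map $B(\Gamma) \to \Pic^g(\Gamma)$ is injective, so $B_1 = B_2$. Thus the natural map $B(G) \to \Pic^g(G)$ is injective; equivalently, no two distinct integral break divisors are linearly equivalent on $G$.

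For existence it is tempting to take the break-divisor representative $\sigma([D]) \in B(\Gamma)$ of a class $[D] \in \Pic^g(G)$ and ``round'' its chips to the nearest vertices, but this fails: sliding a chip from the interior of an edge to an endpoint changes its class on $\Gamma$, so the rounded divisor need not lie in $[D]$. Existence therefore requires genuinely new combinatorial input, and I would instead argue by counting. By Remark~\ref{Remark:CombLit} (the Matrix--Tree theorem), $|\Pic^g(G)| = |\Jac(G)|$ equals the number of spanning trees of $G$. If one can show that the integral break divisors are \emph{also} counted by spanning trees, that is $|B(G)| = \#\{\text{spanning trees of }G\}$, then the injection $B(G) \hookrightarrow \Pic^g(G)$ established above becomes a map between finite sets of equal cardinality, hence a bijection, and existence follows at once.

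The crux, and the step I expect to be the main obstacle, is thus the purely combinatorial identity $|B(G)| = \#\{\text{spanning trees}\}$. Naively each spanning tree $T$ yields up to $2^g$ integral divisors in $\Sigma_T \cap \Div(G)$ (one endpoint choice per non-tree edge), so enormous cancellation must occur, and one must pin down exactly which of these coincide. The right bookkeeping device is the theory of $q$-connected orientations: fixing a base vertex $q$, one associates to each orientation $\mathcal{O}$ a degree-$g$ divisor built from the indegrees $\indeg_{\mathcal{O}}(v)$, shows that it is an integral break divisor precisely for the $q$-connected orientations, and identifies exactly when two orientations produce the same divisor (reorientation of directed cycles). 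Disentangling these equivalence classes and matching them bijectively with spanning trees is the technical heart of the argument; once it is in place, the count above closes the proof.
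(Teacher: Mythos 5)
Your uniqueness argument is correct and clean: two integral break divisors that are linearly equivalent on $G$ remain equivalent on the regular realization $\Gamma$, both lie in $B(\Gamma)$ by definition of $B(G)$, and the uniqueness half of Theorem~\ref{thm:BreakDivisormainthm1} forces them to coincide. (For calibration: the survey itself gives no proof of either theorem; it cites \cite{ABKS13}, remarking only that the combinatorial proof via $q$-connected orientations yields the finite-graph analogue alongside the metric one.)

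The existence half, however, has a genuine gap. You reduce it to the identity $|B(G)| = \#\{\text{spanning trees of } G\}$, but you do not prove that identity --- you name it as ``the crux'' and gesture at $q$-connected orientations. This is not a small omission: in the logical order of both the survey and \cite{ABKS13}, the equality $|B(G)| = |\Pic^g(G)| = |\Jac(G)| = \#\{\text{spanning trees}\}$ is derived as a \emph{corollary} of Theorem~\ref{thm:BreakDivisormainthm2} combined with the Matrix--Tree theorem, not used as an input to it. Establishing the count independently is essentially as hard as the theorem itself; the explicit bijections between spanning trees and integral break divisors (e.g.\ those of \cite{BakerWang14}) are nontrivial constructions built on top of the completed theory, and the survey explicitly notes there is no canonical such bijection. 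The argument in \cite{ABKS13} does not proceed by counting at all: existence is proved directly by showing that every degree-$g$ divisor class contains the divisor associated to a suitable $q$-connected orientation and that such divisors are (integral) break divisors, with the equivalence classes of orientations (under reorienting directed cycles) controlled separately. So your outline correctly identifies the relevant machinery but stops exactly where the real work begins; as written, existence is not established.
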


Since $\Pic^g(G)$ and $\Pic^0(G)=\Jac(G)$ have the same cardinality (the former is naturally a torsor for the latter), it follows from Remark~\ref{Remark:CombLit} that
that the number of integral break divisors on $G$ is equal to the number of spanning trees of $G$, though there is in general no canonical bijection between the two.  A family of interesting combinatorial bijections is discussed in \cite{BakerWang14}.

\medskip

If we define $C_T = \pi(\Sigma_T)$, then $\Pic^g(\Gamma) = \bigcup_T C_T$ by Theorem~\ref{thm:BreakDivisormainthm1}.
It turns out that the relative interior of each cell $C_T$ is (the interior of) a parallelotope, and if $T \neq T'$ then the relative interiors of $C_T$ and $C_{T'}$ are disjoint.
Thus $\Pic^g(\Gamma)$ has a polyhedral decomposition depending only on the choice of a model for $\Gamma$.
The maximal cells in the decomposition correspond naturally to spanning trees, and the minimal cells (i.e. vertices) correspond naturally to integral break divisors, as illustrated in Figure~\ref{fig:PolyhedralDecomposition}.

\begin{figure}[h!]
\begin{center}
    \includegraphics[width=0.6\textwidth]{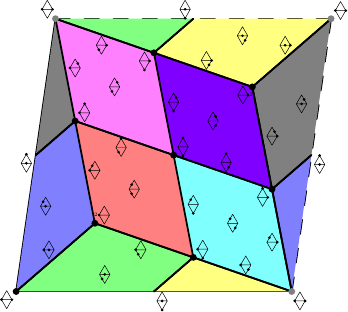}
\end{center}
  \caption{A polyhedral decomposition of $\Pic^2(\Gamma)$ for the metric realization of the graph $G$ obtained by deleting an edge from the complete graph $K_4$.}
  \label{fig:PolyhedralDecomposition}
\end{figure}

\begin{remark}
Mumford's non-Archimedean analytic uniformization theory for degenerating Abelian varieties \cite{Mumford72}, as recently refined by Gubler and translated into the language of tropical geometry \cite{Gubler07,Gubler10}, shows that if $G$ is the dual graph of the special fiber of a regular $R$-model ${\mathcal C}$ for a curve $C$, then the canonical polyhedral decomposition $\{ C_T \}$ of $\Pic^g(\Gamma)$ gives rise to a canonical proper $R$-model for $\Pic^g(C)$.  Sam Payne has asked \cite{PaynePersonal} whether (up to the identification of $\Pic^g$ with $\Pic^0$) this model coincides with the compactification of the N{\'e}ron model of $\Jac(C)$ introduced by Caporaso in \cite{Caporaso08}.
\end{remark}

Break divisors corresponding to the relative interior of some cell $C_T$ are called {\em simple break divisors}.  They can be characterized as the set of degree $g$ effective divisors $D$ on $\Gamma$ such that $\Gamma \backslash {\rm supp}(D)$ is connected and simply connected.
Dhar's algorithm shows that such divisors are {\em universally reduced}, i.e., they are $q$-reduced for all $q \in \Gamma$.
A consequence of this observation and the Riemann-Roch theorem for metric graphs is the following result, which is useful in tropical Brill-Noether theory (cf. \S\ref{Subsection:GPThm}).

\begin{proposition}
\label{prop:SimpleBreakDivisors}
Let $\Gamma$ be a metric graph and let $D$ be a simple break divisor (or more generally any universally reduced divisor) on $\Gamma$.  Then $D$ has rank $0$ and $K_\Gamma - D$ has rank $-1$.  Therefore:
\begin{enumerate}
\item The set of divisor classes in $\Pic^g(\Gamma)$ having rank at least $1$ is contained in the codimension one skeleton of the polyhedral decomposition $\bigcup_T C_T$.
\item If $T$ is a spanning tree for some model $G$ of $\Gamma$ and $D,E$ are effective divisors with $D+E$ linearly equivalent to $K_\Gamma$, then there must be an open edge $e^\circ$ in the complement of $T$ such that $D$ has no chips on $e^\circ$.
\end{enumerate}
\end{proposition}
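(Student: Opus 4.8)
The plan is to derive both rank assertions directly from the combinatorial machinery already in place, and then read off the two corollaries. Since $D$ is a (simple) break divisor it is effective of degree $g$, so $r(D) \geq 0$ automatically. To pin down $r(D) = 0$ I would invoke Lemma~\ref{Lem:ReducedRank}: because $D$ is universally reduced it is $v$-reduced for every $v \in \Gamma$, and since $\mathrm{supp}(D)$ is finite while $\Gamma$ is infinite (indeed, for a simple break divisor $\Gamma \setminus \mathrm{supp}(D)$ is connected and nonempty), I may choose a point $v$ with $D(v) = 0$. Lemma~\ref{Lem:ReducedRank} then forces $r(D) \leq D(v) = 0$, whence $r(D) = 0$. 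For the second rank I would simply apply the Tropical Riemann-Roch Theorem~\ref{Thm:RiemannRoch}: since $\deg(D) = g$, we have $r(D) - r(K_\Gamma - D) = \deg(D) - g + 1 = 1$, and substituting $r(D) = 0$ yields $r(K_\Gamma - D) = -1$.

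Statement (1) is then essentially a translation. By Theorem~\ref{thm:BreakDivisormainthm1} the relative interior of each maximal cell $C_T$ of the polyhedral decomposition consists precisely of classes of simple break divisors, each of which has rank $0$ by the main claim. Hence any class of rank at least $1$ must avoid the union of the relative interiors of the top-dimensional cells, and therefore lies in the codimension-one skeleton of $\bigcup_T C_T$.

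For statement (2), I would argue by contradiction. Suppose $D$ had at least one chip on every open edge in the complement of $T$; writing $e_1, \ldots, e_g$ for these $g$ edges, choose points $p_i \in e_i^\circ$ with $p_i \in \mathrm{supp}(D)$, and set $D' = p_1 + \cdots + p_g$. By construction $D' \leq D$ and $D' \in \Sigma_T$, and since each $p_i$ lies in the relative interior of its edge, $\Gamma \setminus \mathrm{supp}(D')$ deformation retracts onto the spanning tree $T$ (the two dangling half-edges left after removing each $p_i$ retract to their endpoints) and is thus connected and simply connected; that is, $D'$ is a simple break divisor. The main claim gives $r(K_\Gamma - D') = -1$. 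On the other hand $D + E \sim K_\Gamma$, so $K_\Gamma - D' \sim (D - D') + E$, which is effective because both $D - D' \geq 0$ and $E \geq 0$; hence $r(K_\Gamma - D') \geq 0$, a contradiction. Therefore some open complement edge carries no chips of $D$.

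I expect the only real content to lie in statement (2): the topological verification that removing one interior point from each non-tree edge leaves a connected, simply connected complement, so that the extracted $D'$ is genuinely a simple break divisor, is the step that actually exploits the structure of spanning trees, and one must also track the degree bookkeeping to ensure $D' \leq D$ is legitimate (the assumption that $D$ meets all $g$ complement edges guarantees $\deg D \geq g$). The two rank assertions themselves are immediate consequences of Lemma~\ref{Lem:ReducedRank} and Tropical Riemann-Roch, and statement (1) is a formal reformulation once the main claim is in hand.
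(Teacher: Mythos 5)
Your proposal is correct and follows essentially the same route the paper intends: the paper derives the proposition from the observation that simple break divisors are universally reduced (hence rank $0$ by Lemma~\ref{Lem:ReducedRank} applied at a point off the support) together with the tropical Riemann-Roch theorem, which is exactly your argument, and your deductions of (1) and (2) --- including the extraction of a simple break divisor $D' \leq D$ supported on the non-tree edges --- fill in the details as the paper leaves them to the reader. No gaps.
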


\begin{remark}
The set $B(\Gamma)$ of all break divisors on $\Gamma$ can be characterized as the topological closure in $\Div^g_+(\Gamma)$ of the set of universally reduced divisors.
\end{remark}

\begin{remark}
The real torus $\Pic^g(\Gamma)$ has a natural Riemannian metric.  One can compute the volume of $\Pic^g(\Gamma)$ in terms of a matrix determinant associated to $G$, and the volume of the cell $C_T$ is the product of the lengths of the edges not in $T$.  Comparing the volume of the torus to the sums of the volumes of the cells $C_T$ yields a {\em dual version} of a weighted form of Kirchhoff's Matrix-Tree Theorem.  See \cite{ABKS13} for details.
\end{remark}

\part{Advanced Topics}
\label{Part:Advanced}

We now turn to the more advanced topics of nonarchimedean analysis, tropical moduli spaces, and metrized complexes.  Each of these topics plays an important role in tropical Brill-Noether theory, and we would be remiss not to mention them here.  We note, however, that most of the applications we discuss in Part \ref{Part:Applications} do not require these techniques, and the casual reader may wish to skip this part on the first pass.

\section{Berkovich Analytic Theory}
\label{Section:Berkovich}

Rather than considering a curve over a discretely valued field and then examining its behavior under base change, we could instead start with a curve over an algebraically closed field and directly associate a metric graph to it.
We do this by making use of Berkovich's theory of analytic spaces.  In addition to being a convenient bookkeeping device for changes in dual graphs and specialization maps under field extensions, Berkovich's theory also allows for clean formulations of some essential results in the theory of tropical linear series, such as the Slope Formula
(Theorem~\ref{thm:SlopeFormula} below).
The theory also furnishes a wealth of powerful tools for understanding the relationship between algebraic curves and their tropicalizations.

\subsection{A quick introduction to Berkovich spaces}
\label{Subsection:BerkovichIntro}
We let $K$ be a field which is complete with respect to a non-Archimedean valuation
$$ \val : K^* \to \mathbb{R} . $$
We let $R \subset K$ be the valuation ring, $\kappa$ the residue field, and $| \cdot | = {\rm exp}(-{\rm val})$ the corresponding norm on $K$.

\medskip

A {\em multiplicative seminorm} on a nonzero ring $A$ is a function $| \cdot | : A \to \RR$ such that for all $x,y \in A$ we have $|0|=0, |1|=1, |xy|=|x| \cdot |y|$, and $|x+y| \leq |x| + |y|$.  A multiplicative seminorm is {\em non-Archimedean} if $|x+y| \leq \max \{ |x|,|y| \}$ for all $x,y \in A$, and is a {\em norm} if $|x|=0$ implies $x=0$.
If $L$ is a field, a function $| \cdot | : L \to \RR$ is a non-Archimedean norm if and only if $-\log | \cdot | : L \to \RR \cup \{ \infty \}$ is a {\em valuation} in the sense of Krull.

\medskip

If $X=\Spec(A)$ is an affine scheme over $K$, we define its {\em Berkovich analytification} $X^{\an}$ to be the set of non-Archimedean multiplicative seminorms on the $K$-algebra $A$ extending the given absolute value on $K$, endowed with the weakest topology such that the map $X^{\an} \to \RR$ defined by $| \cdot |_x \mapsto | f |_x$ is continuous for all $f \in A$.  One can globalize this construction to give a Berkovich analytification of an arbitrary scheme of finite type over $K$.  As we have defined it, the Berkovich analytification is merely a topological space, but it can be equipped with a structure similar to that of a locally ringed space and one can view $X^{\an}$ as an object in a larger category
of (not necessarily algebraizable) {\em Berkovich analytic spaces}.
The space $X^{\an}$ is locally compact and locally path-connected.
It is Hausdorff if and only if $X$ is separated, compact if and only if $X$ is proper, and path-connected if and only if $X$ is connected.
We refer the reader to \cite{Berkovich90,ConradAWS} for more background information on Berkovich spaces in general, and \cite{Baker08b,BPR13} for more details in the special case of curves.

\medskip

There is an alternate perspective on Berkovich spaces which is often useful and highlights the close analogy with schemes.  If $K$ is a field, points of an affine $K$-scheme $\Spec(A)$ can be identified with equivalence classes of pairs $(L,\phi)$ where $L$ is a field extension of $K$, $\phi : A \to L$ is a $K$-algebra homomorphism, and two pairs $(L_1,\phi_1)$ and $(L_2,\phi_2)$ are equivalent if there are embeddings of $L_1$ and $L_2$ into a common overfield $L'$ and a homomorphism $\phi': A \to L'$ such that the composition $\phi_i : A \to L_i \to L$ is $\phi'$.
Indeed, to a pair $(L,\phi)$ one can associate the prime ideal ${\rm ker}(\phi)$ of $A$, and to a prime ideal ${\mathfrak p}$ of $A$ one can associate the pair $(K({\mathfrak p}),\phi)$ where $K({\mathfrak p})$ is the fraction field of $A/{\mathfrak p}$ and $\phi : A \to K({\mathfrak p})$ is the canonical map.

\medskip

Similarly, if $K$ is a complete valued field, points of $\Spec(A)^{\an}$ can be identified with equivalence classes of pairs $(L,\phi)$, where $L$ is a complete valued field extension of $K$ and $\phi : A \to L$ is a $K$-algebra homomorphism.  The equivalence relation is as before, except that $L'$ should be complete and extend the valuation on the $L_i$.
Indeed, to a pair $(L,\phi)$ one can associate the multiplicative seminorm $a \mapsto |\phi(a)|$ on $A$, and to a multiplicative seminorm $| \cdot |_x$ one can associate the pair $(\mathcal{H}(x),\phi)$ where $\mathcal{H}(x)$ is the completion of the fraction field of $A/{\rm ker}(| \cdot |_x)$ and $\phi : A \to \mathcal{H}(x)$ is the canonical map.

\medskip

{\em We will assume for the rest of this section that $K$ is {\bf algebraically closed} and {\bf non-trivially valued}.}
This ensures, for example, that the set $X(K)$ is dense in $X^{\an}$.

\medskip

If $X/K$ is an irreducible variety, there is a dense subset of $X^{\an}$ consisting of the set ${\rm Val}_X$ of {\em norms}\footnote{Note that there is a one-to-one correspondence between norms on $K(X)$ and valuations on $K(X)$, hence the terminology ${\rm Val}_X$.  It is often convenient to work with (semi-)valuations rather than (semi-)norms.} on the function field $K(X)$ that extend the given norm on $K$.
Within the set ${\rm Val}_X$, there is a distinguished class of norms corresponding to {\em divisorial valuations}.  By definition, a valuation $v$ on $K(X)$ is {\em divisorial} if there is an $R$-model $\mathcal{X}$ for $X$ and an irreducible component $Z$ of the special fiber of $\mathcal{X}$ such that $v(f)$ is equal to the order of vanishing
of $f$ along $Z$.   The set of divisorial points is known to be dense in $X^{\an}$.

\begin{remark}
In this survey we have intentionally avoided the traditional perspective of tropical geometry, in which one considers subvarieties of the torus $(K^*)^n$, and the tropicalization is simply the image of coordinatewise valuation.  We refer the reader to \cite{MaclaganSturmfels} for a detailed account of this viewpoint on tropical geometry.  The Berkovich analytification can be thought of as a sort of intrinsic tropicalization -- one that does not depend on a choice of coordinates.  This is reinforced by the result that the Berkovich analytification is the inverse limit of all tropicalizations \cite{analytification, limits}.
\end{remark}

\subsection{Berkovich curves and their skeleta}
\label{section:Berkcurves}

If $C/K$ is a complete nonsingular curve, the underlying set of the Berkovich analytic space $C^{\an}$ consists of the points of $C(K)$ together with the set ${\rm Val}_C$.
We write $$ \val_y : K(C)^* \to \mathbb{R} $$
for the valuation corresponding to a point $y \in {\rm Val}_C = C^{\an} \smallsetminus C(K)$.
The points in $C(K)$ are called type-1 points, and the remaining points of $C^{\an}$ are classified into three more types.  We will not define points of type 3 or 4 in this survey article; see, e.g., \cite[\S{3.5}]{BPR11} for a definition.  Note, however, that every point of $C^\an$ becomes a type-1 point after base-changing to a suitably
large complete non-Archimedean field extension $L/K$.

If the residue field of $K(C)$ with respect to $\val_y$ has transcendence degree 1 over $\kappa$, then $y$ is called a {\em type-2 point}.  These are exactly the points corresponding to divisorial valuations.  Because it has transcendence degree 1 over $\kappa$, this residue field
corresponds to a unique smooth projective curve over $\kappa$, which we denote $C_y$.  A \emph{tangent direction} at $y$ is an equivalence class of continuous injections $\gamma: [0,1] \to C^{\an}$ sending $0$ to $y$, where $\gamma \sim \gamma'$ if $\gamma([0,1]) \cap \gamma'([0,1]) \supsetneq \{ y \}$.  Closed points of $C_y$ are in one-to-one correspondence with tangent directions at $y$ in $C^{\an}$.

\medskip

There is natural metric on the set ${\rm Val}_C$ which is described in detail in \cite[\S{5.3}]{BPR13}.  This metric induces a topology that is much finer than the subspace topology on ${\rm Val}_C \subset C^{\an}$, and with respect to this metric, ${\rm Val}_C$ is locally an {\em $\RR$-tree}\footnote{See \cite[Appendix B]{BakerRumely10} for an
introduction to the theory of $\RR$-trees.  For our purposes, what is most important about $\RR$-trees is that there is a unique path between any two points.} with branching precisely at the type-2 points.  The type-1 points should be thought of as
infinitely far away from every point of ${\rm Val}_C$.

\medskip

The local $\RR$-tree structure arises in the following way.  If $\cC$ is an $R$-model for $C$ and $Z$ is a reduced and irreducible irreducible component of the special fiber of $\cC$, then $Z$ corresponds to a type-2 point $y_Z$ of $\cC$.
Blowing up a nonsingular closed point of $Z$ (with respect to some choice of a uniformizer $\varpi \in {\mathfrak m}_R$) gives a new point $y_{Z'}$ of $C^{\an}$ corresponding to the exceptional divisor $Z'$ of the blowup.  We can then blow up a nonsingular closed point on the exceptional divisor $Z'$ to obtain a new point of $C^{\an}$, and so forth.
The resulting constellation of points obtained by all such sequences of blowups, and varying over all possible choices of $\varpi$, is an $\RR$-tree $T_Z$ rooted at $y_Z$, as pictured in Figure~\ref{fig:EllipticSkeleton}.  The distance between the points $y_Z$ and $y_{Z'}$ is ${\rm val}(\varpi)$.


\medskip

A \emph{semistable vertex set} is a finite set of type-2 points whose complement is a disjoint union of finitely many open annuli and infinitely many open balls.  There is a one-to-one correspondence between semistable vertex sets and semistable models of $C$.  More specifically, the normalized components of the central fiber of this semistable model are precisely the curves $C_y$ for $y$ in the semistable vertex set, and the preimages of the nodes under specialization are the annuli.  The annulus corresponding to a node where $C_y$ meets $C_{y'}$ contains a unique open segment with endpoints $y$ and $y'$, and its length (with respect to the natural metric on ${\rm Val}_C$) is the logarithmic modulus of the annulus.  The union of these open segments together with the semistable vertex set is a closed connected metric graph $\Gamma$ contained in $C^{\an}$, called the \emph{skeleton} of the semistable model ${\mathcal C}$.  If $C$ has genus at least 2, then there is a unique minimal semistable vertex set in $C^{\an}$ and a corresponding minimal skeleton.

\medskip

Fix a semistable model $\mathcal{C}$ of $C$ and a corresponding skeleton $\Gamma = \Gamma_{\mathcal C}$.
Each connected component of $C^\an \smallsetminus \Gamma$ has a unique boundary point in $\Gamma$, and there is a canonical retraction to the skeleton
\[
\tau: C^\an \rightarrow \Gamma
\]
taking a connected component of $C^\an \smallsetminus \Gamma$ to its boundary point.
There is a natural homeomorphism of topological spaces $C^\an \cong \varprojlim \Gamma_{\mathcal C}$, where the inverse limit is taken over all semistable models ${\mathcal C}$ (cf. \cite[Theorem 5.2]{BPR13}).

\begin{example}
Figure~\ref{fig:EllipticSkeleton} depicts the Berkovich analytification of an elliptic curve $E/K$ with non-integral $j$-invariant $j_E$.  In this case, the skeleton $\Gamma$ associated to a minimal proper semistable model of $R$ is isometric to a circle with circumference $-\val(j_E)$.
There are an infinite number of infinitely-branched $\RR$-trees emanating from the circle at each type-2 point of the skeleton.  The retraction map takes a point $x \in E^{\an}$ to the endpoint in $\Gamma$ of the unique path from $x$ to $\Gamma$.  The points of $E(K)$ lie ``out at infinity'' in the picture: they are ends of the $\RR$-trees.
\begin{figure}[h!]
\begin{center}
    \includegraphics[width=0.4\textwidth]{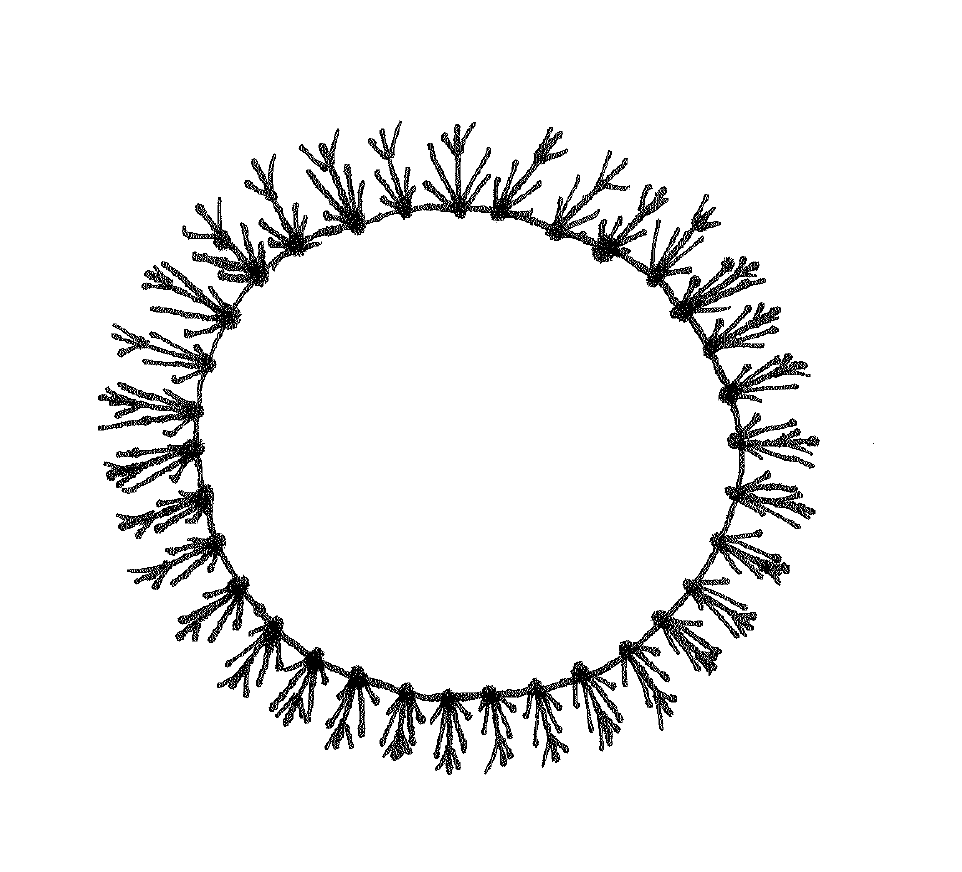}
\end{center}
  \caption{The skeleton of an elliptic curve with non-integral $j$-invariant.}
  \label{fig:EllipticSkeleton}
\end{figure}
\end{example}

\subsection{Tropicalization of divisors and functions on curves}
\label{Subsection:AnalyticTrop}

Restricting the retraction map $\tau$ to $C(K)$ and extending linearly gives the tropicalization map on divisors
\[
\Trop: \Div(C) \rightarrow \Div(\Gamma).
\]

If $K_0$ is a discretely valued subfield of $K$ over which $C$ is defined and has semistable reduction, and if $D$ is a divisor on $C$ whose support consists of $K_0$-rational points,
then the divisor ${\rm mdeg}(D)$ on $G$ (identified with a divisor on $\Gamma$ via the natural inclusion) coincides with the tropicalization $\Trop(D)$ defined via retraction to the skeleton.

\begin{example}
\label{Ex:Conics2}
Returning to Example \ref{Ex:FamilyOfConics}, in which the metric graph $\Gamma$ is a closed line segment of length 1, by considering the divisor cut out by $y^a = x^bz^{a-b}$ for positive integers $a>b$ we see that a divisor on $C_{\overline{K}}$ can tropicalize to any rational point on $\Gamma$.
\end{example}

Given a rational function $f \in K(C)^*$, we write $\trop(f)$ for the real valued function on the skeleton $\Gamma$ given by $y \mapsto \val_y(f)$.  The function $\trop(f)$ is piecewise linear with integer slopes, and thus we obtain a map
\[
\trop: K(C)^* \rightarrow \PL(\Gamma) .
\]

Moreover, this map respects linear equivalence of divisors, in the sense that if $D \sim D'$ on $C$ then $\trop(D) \sim \trop(D')$ on $\Gamma$.  In particular, the tropicalization map on divisors descends to a natural map on Picard groups
\[
\Trop: \Pic(C) \rightarrow \Pic(\Gamma).
\]

One can refine this observation as follows.  Let $x$ be a type-2 point in $C^{\rm an}$.  Given a nonzero rational function $f$ on $C$, one can define its {\em normalized reduction} $\bar{f}_x$ with respect to $x$ as follows.  Choose $c \in K^*$ such that $|f|_x=|c|$.  Define $\bar{f}_x \in \kappa(C_x)^*$ to be the image of $c^{-1}f$ in the residue field of $K(C)$ with respect to $\val_x$, which by definition is isomorphic to $\kappa(C_x)$.
Although $f_x$ is only well-defined up to multiplication by an element of $\kappa^*$, its divisor ${\rm div}(f_x)$ is completely well-defined.  We define the normalized reduction of the zero function to be zero.

\medskip

Given an $(r+1)$-dimensional $K$-vector space $H \subset K(C)$, its reduction $\bar{H}_x = \{ \bar{f}_x \; | \; f \in H \}$ is an $(r+1)$-dimensional vector space over $\kappa$.  Given $\tilde{f}$ in the function field of $C_x$ and a closed point $\nu$ of $C_x$, we let $s^\nu(\tilde{f}) := \ord_\nu(\tilde{f})$ be the order of vanishing of $\tilde{f}$ at $\nu$.  If $\tilde{f} = \bar{f}_x$ for $f \in K(C)^*$, then $s^\nu(\tilde{f})$ is equal to the slope of $\trop(f)$ in the tangent direction at $x$ corresponding to $\nu$.  This is a consequence of the nonarchimedean Poincare-Lelong formula, due to Thuiller \cite{ThuillierThesis}.
Using this observation, one deduces the following important result (cf. \cite[Theorem 5.15]{BPR13}):
\begin{theorem}[Slope Formula]
\label{thm:SlopeFormula}
For any nonzero rational function $f \in K(C)$,
\[
\Trop(\ddiv(f)) = \ddiv(\trop(f)).
\]
\end{theorem}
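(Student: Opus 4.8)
The plan is to verify the identity of divisors on $\Gamma$ coefficient by coefficient, reducing everything to the local slope formula at type-$2$ points recorded just above the statement. Since both $\Trop(\ddiv(f))$ and $\ddiv(\trop(f))$ are divisors on $\Gamma$, it suffices to fix a point $v \in \Gamma$ and show that the two coefficients at $v$ agree. First I would enlarge the semistable vertex set defining $\Gamma$ so that $v$ is one of its type-$2$ vertices, the finitely many zeros and poles of $f$ are separated by the model, and $\trop(f)$ is affine on every edge of $\Gamma$; this is harmless, since semistable vertex sets may be refined arbitrarily and the asserted identity is intrinsic. At any point of $\Gamma$ that is not a vertex of this model both coefficients vanish ($\trop(f)$ is locally linear there, and no type-$1$ point retracts there), so only the vertices need to be checked.

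For the left-hand coefficient, recall that $\Trop$ is the linear extension of the retraction $\tau$, so
\[
\Trop(\ddiv(f))(v) \;=\; \sum_{P \in C(K):\,\tau(P)=v} \ord_P(f).
\]
Each such $P$ lies in a connected component $U$ of $C^{\an} \smallsetminus \Gamma$ attached to $\Gamma$ at the single point $v$, i.e.\ in one of the open balls rooted at $v$; grouping the sum according to these balls, the left-hand coefficient becomes $\sum_U \bigl(\sum_{P \in U \cap C(K)} \ord_P(f)\bigr)$. For the right-hand coefficient, by definition $\ord_v(\trop(f))$ is the sum of the incoming slopes of $\trop(f)$ along the edges of $\Gamma$ at $v$, equivalently the negative of the sum of the outgoing slopes along those edges. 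Now extend $\trop(f)$ to the function $y \mapsto \val_y(f)$ on all of $C^{\an}$: by the slope formula of Thuillier stated above, for every tangent direction at the type-$2$ point $v$, corresponding to a closed point $\nu$ of the curve $C_v$, the outgoing slope equals $\ord_\nu(\bar f_v)$. Since $\bar f_v$ is a nonzero rational function on the smooth projective curve $C_v$, the sum of these orders over all $\nu$ is $\deg(\ddiv(\bar f_v)) = 0$. This is the crucial harmonicity relation: the outgoing slopes of $\val_\bullet(f)$ in all tangent directions at $v$ sum to zero. The tangent directions split into those running along edges of $\Gamma$ and those pointing into the balls $U$ rooted at $v$, so harmonicity reads $-\ord_v(\trop(f)) + \sum_U s_U = 0$, where $s_U$ is the outgoing slope into $U$.

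It therefore remains to prove the mass-balance identity $s_U = \sum_{P \in U \cap C(K)} \ord_P(f)$ for each ball $U$; granting this, the two coefficients coincide and the theorem follows. This identity is the heart of the matter and the step I expect to require the most care. I would prove it by a conservation-of-flux argument on the finite subtree $T' \subset \overline{U}$ spanned by $v$ together with the type-$2$ points separating the zeros and poles of $f$ inside $U$: orient the edges of $T'$ away from $v$, apply the harmonicity relation above at every \emph{interior} type-$2$ vertex of $T'$, and sum. The slope along each interior edge appears twice with opposite signs and telescopes away, leaving only the outgoing slope $s_U$ along the single root edge and the slopes toward the leaves. The slope of $\val_\bullet(f)$ toward a leaf $P \in C(K)$ is exactly $\ord_P(f)$ — near $P$ one writes $f = u z^{m}$ with $u$ a unit and $m = \ord_P(f)$, and applies the slope formula one last time — so the telescoped identity is precisely $s_U = \sum_{P \in U} \ord_P(f)$. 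The main obstacle is thus the bookkeeping on the infinitely-branched $\RR$-trees hanging off $\Gamma$: one must choose the model fine enough to corral the finitely many zeros and poles into separated leaves, while using that $\val_\bullet(f)$ is harmonic at all the intervening type-$2$ points, so that only the finitely many boundary contributions survive.
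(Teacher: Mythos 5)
Your argument is, in substance, the deduction the paper has in mind: the paper does not prove the Slope Formula itself but cites Thuillier's Poincar\'e--Lelong formula (the identification of the slope of $\trop(f)$ in the tangent direction $\nu$ at a type-$2$ point $x$ with $\ord_\nu(\bar f_x)$) and refers to \cite[Theorem~5.15]{BPR13} for the passage from this local statement to the global divisor identity. Your coefficient-by-coefficient reduction, the use of $\deg(\ddiv(\bar f_v))=0$ on the residue curve $C_v$ as the harmonicity relation, and the flux/telescoping argument on the finite tree spanned by $v$ and the zeros and poles of $f$ inside each ball $U$ is exactly that passage, and the sign conventions all check out against \S\ref{sec:MetricGraphDivisors}.

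There is one step you assert but do not justify, and it does not follow from harmonicity alone: at an interior branch point $x$ of your tree $T'$, the type-$2$ point $x$ has \emph{infinitely} many tangent directions, and the relation $\sum_\nu \ord_\nu(\bar f_x)=0$ runs over all of them, not just the finitely many along $T'$. For the telescoping sum to leave ``only the finitely many boundary contributions,'' you need to know that the slope of $\val_\bullet(f)$ vanishes in every tangent direction pointing into a ball containing no zero or pole of $f$ --- equivalently, that $|f|$ is constant on such a ball. This is a genuine non-Archimedean analytic input (a maximum-modulus statement for balls, or the fact that the support of $\ddiv(\bar f_x)$ consists precisely of the directions leading to zeros or poles of $f$); it is standard and is proved in \cite{BPR13}, but it is the one place where your proof needs an additional lemma rather than bookkeeping. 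The same fact is also what makes your preliminary refinement ``harmless'': pushing the identity forward from a refined skeleton to $\Gamma$ uses the same vanishing on the hanging trees. With that lemma supplied, the proof is complete and coincides with the one the paper defers to.
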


\subsection{Skeletons of higher-dimensional Berkovich spaces}
\label{Subsection:higherdimensionalskeletons}

The construction of the skeleton of a semistable model of a curve given in \S\ref{section:Berkcurves} can be generalized in various ways to higher dimensions.  For brevity, we mention just three such generalizations.  In what follows, $X$ will denote a proper variety of dimension $n$ over $K$.

\medskip

1. {\em Semistable models.} Suppose $\mathcal X$ is a strictly semistable model of $X$ over $R$.
Then the geometric realization of the dual complex $\Delta(\overline{\mathcal X})$ of the special fiber embeds naturally in the Berkovich analytic space $X^{\an}$,
and as in the case of curves there is a strong deformation retraction of $X^{\an}$ onto $\Delta(\overline{\mathcal X})$.  These facts are special cases of results due to Berkovich; see \cite{Nicaise11} for a lucid explanation of the constructions in the special case of strictly semistable models,
and \cite{GRW14} for a generalization to ``extended skeleta''.

\medskip

2. {\em Toroidal embeddings.} A \emph{toroidal embedding} is, roughly speaking, something which looks {\'e}tale-locally like a toric variety together with its dense big open torus.  When $K$ is trivially valued, Thuillier \cite{Thuillier07} associates a skeleton $\Sigma(X)$ of $U$ and an extended skeleton $\overline{\Sigma}(X)$ of $X$ to any toroidal embedding $U \subset X$ (see also \cite{acp}).
As in the case of semistable models, the skeleton
$\overline{\Sigma}(X)$ embeds naturally into $X^{\an}$ and there is a strong deformation retract $X^{\an} \to \overline{\Sigma}(X)$.

\medskip

3. {\em Abelian varieties.} If $E/K$ is an elliptic curve with non-integral $j$-invariant, the skeleton associated to a minimal proper semistable model of $R$ can also be constructed using Tate's non-Archimedean uniformization theory.
In this case, the skeleton of $E^{\an}$ is the quotient of the skeleton of ${\mathbf G}_m$ (which is isomorphic to $\RR$ and
consists of the unique path from $0$ to $\infty$ in $(\PP^1)^{\an}$) by the map $x \mapsto x - \val(j_E)$.  Using Mumford's higher-dimensional generalization of Tate's theory \cite{Mumford72}, one can define a skeleton associated to any totally degenerate abelian variety $A$;
it is a real torus of dimension $\dim(A)$.
This can be generalized further using Raynaud's uniformization theory to define a canonical notion of skeleton for an arbitrary abelian variety $A/K$ (see e.g. \cite{Gubler10}).
If $A$ is principally polarized, there is an induced {\em tropical principal polarization} on the skeleton of $A$, see \cite[\S{3.7}]{BakerRabinoff13} for a definition.
It is shown in \cite{BakerRabinoff13} that the skeleton of the Jacobian of a curve $C$ is isomorphic to the Jacobian of the skeleton as principally polarized real tori:

\begin{theorem}[\cite{BakerRabinoff13}]
\label{Thm:JacobianSkeleton}
Let $C$ be a curve over an algebraically closed field, complete with respect to a nontrivial valuation, such that the minimal skeleton of the Berkovich analytic space $C^{\an}$ is isometric to $\Gamma$.  Then there is a canonical isomorphism of principally polarized real tori $\Jac (\Gamma ) \cong \Sigma ( \Jac (C)^{\an} )$ making the following diagram commute.
\begin{displaymath}
\xymatrix{
C^{\an} \ar[r]^{AJ} \ar[d] & \Jac (C)^{\an} \ar[rd] & \\
\Gamma \ar[r]^{AJ} & \Jac ( \Gamma ) \ar[r]^{\sim} & \Sigma ( \Jac (C)^{\an} )  .  }
 \end{displaymath}
\end{theorem}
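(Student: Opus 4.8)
The plan is to build the isomorphism $\Jac(\Gamma) \cong \Sigma(\Jac(C)^{\an})$ out of two ingredients — the non-Archimedean uniformization of the Jacobian and the tropical Abel-Jacobi theorem — and then to verify that the resulting isomorphism is compatible with the two Abel-Jacobi maps and with the principal polarizations. First I would invoke Raynaud's non-Archimedean uniformization theory, in the form refined by Mumford and translated into tropical language by Gubler \cite{Mumford72, Gubler10}, to realize the skeleton $\Sigma(\Jac(C)^{\an})$ as a real torus $\RR^g/\Lambda$, where $\Lambda$ is the image under tropicalization of the period lattice of the uniformizing analytic torus. The point is that the character and cocharacter lattices of this torus are canonically identified with $H_1(\Gamma,\ZZ)$ and its dual, so that $\Sigma(\Jac(C)^{\an})$ is naturally identified with $H_1(\Gamma,\RR)/H_1(\Gamma,\ZZ)$, equivalently with $\Omega(\Gamma)^*/H_1(\Gamma,\ZZ)$. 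Composing this with the canonical isomorphism $\Jac(\Gamma) \cong \Omega(\Gamma)^*/H_1(\Gamma,\ZZ)$ furnished by the Tropical Abel-Jacobi Theorem (Theorem~\ref{Thm:AbelJacobi}) yields the desired isomorphism, at least at the level of abstract real tori.

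The heart of the argument is then to show that the diagram commutes, i.e.\ that the analytic Abel-Jacobi map $AJ : C^{\an} \to \Jac(C)^{\an}$ is compatible with retraction to skeleta and induces precisely the tropical Abel-Jacobi map on $\Gamma$. For this I would first establish that the retraction $\tau_J : \Jac(C)^{\an} \to \Sigma(\Jac(C)^{\an})$ composed with $AJ$ factors through the retraction $\tau_C : C^{\an} \to \Gamma$, using functoriality of skeleta under morphisms of analytic spaces together with the Slope Formula (Theorem~\ref{thm:SlopeFormula}). One then checks that the induced map on skeleta is given by integration of harmonic $1$-forms along paths in $\Gamma$: pulling back a harmonic form via the retraction and integrating reproduces exactly the periods defining the tropical Abel-Jacobi map. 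The balancing (harmonicity) of $AJ_{v_0}$ noted after Example~\ref{Ex:Genus2} is what guarantees that the two descriptions coincide.

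Finally I would verify that the isomorphism respects the principal polarizations. On the tropical side the polarization is the Mikhalkin-Zharkov polarization \cite{MikhalkinZharkov08}, determined by the edge-length inner product on $H_1(\Gamma,\ZZ)$. On the analytic side the polarization arises from Grothendieck's monodromy pairing on the character lattice of the uniformizing torus. These coincide because the monodromy pairing of a pair of cycles is computed by summing the lengths of the edges they share in $\Gamma$ — the logarithmic moduli of the corresponding annuli in $C^{\an}$ — which is exactly the edge-length pairing.

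The main obstacle will be the compatibility statement of the second paragraph: making rigorous the claim that $AJ$ tropicalizes to integration of harmonic forms. This requires a careful identification of the uniformization cocharacter lattice with $H_1(\Gamma,\ZZ)$ and a precise analysis of how $AJ$ interacts with the skeleton retraction, ultimately underpinned by the non-Archimedean Poincar\'e-Lelong formula. The second delicate point is the polarization check, since it bridges a purely algebraic invariant (the monodromy pairing) with the metric structure of the skeleton; here one must trace through the semistable model to see that the monodromy lengths are literally the edge lengths of $\Gamma$.
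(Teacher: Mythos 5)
The survey itself gives no proof of this statement---it is quoted directly from \cite{BakerRabinoff13}---but your outline faithfully reproduces the strategy of that reference: Raynaud--Mumford uniformization realizing $\Sigma(\Jac(C)^{\an})$ as the tropicalization of the toric part of the uniformizing semiabelian variety modulo the tropicalized period lattice, identification of that lattice with $H_1(\Gamma,\ZZ)$ so that Grothendieck's monodromy pairing becomes the edge-length pairing (which handles the polarizations), and compatibility of the two Abel-Jacobi maps via functoriality of skeleta and the non-Archimedean Poincar\'e--Lelong/Slope Formula. The one small imprecision is writing $\RR^g/\Lambda$: the skeleton of $\Jac(C)^{\an}$ has dimension equal to the first Betti number of $\Gamma$ (the toric rank of the reduction), which coincides with the genus of $C$ only in the totally degenerate case, though this does not affect the argument since $\Jac(\Gamma)$ has the same dimension.
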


\begin{remark} \label{rem:trop.of.aj}
Theorem~\ref{Thm:JacobianSkeleton} has the following interpretation in terms of tropical moduli spaces, which we discuss in greater detail in \S \ref{Section:ModuliSpaces}.  There is a map
$$ \trop: M_g \to M_{g}^{\trop} $$
from the moduli space of genus $g \geq 2$ curves to the moduli space of tropical curves of genus $g$ which takes a curve $C$ to its minimal skeleton, considered as a vertex-weighted metric graph.  There is also a map (of sets, for example)
$$ \trop: A_g \to A_{g}^{\trop} $$
from the moduli space of principally polarized abelian varieties of dimension $g$ to the moduli space of ``principally polarized tropical abelian varieties'' of dimension $g$, taking an abelian variety to its skeleton in the sense of Berkovich.
Finally, there are Torelli maps $M_g \to A_g$ (resp. $M_g^{\trop} \to A_g^{\trop}$) which take a curve (resp.\ metric graph) to its Jacobian \cite{BrannettiMeloViviani11}.
Theorem~\ref{Thm:JacobianSkeleton} implies that the following square commutes:
  \[\xymatrix @R=.25in{
    {M_g} \ar[r]^(.45){\trop} \ar[d] & {M_g^{\trop}} \ar[d] \\
    {A_g} \ar[r]^(.45){\trop} & {A_g^{\trop}}
  }\]
This is also proved, with slightly different hypotheses, in \cite[Theorem A]{Viviani13}.
  \end{remark}

\section{Moduli Spaces}
\label{Section:ModuliSpaces}

\subsection{Moduli of tropical curves}
\label{Subsection:Moduli}

The moduli space of tropical curves $M_g^{\trop}$ has been constructed by numerous authors \cite{GathmannKerberMarkwig09, Kozlov09, Caporaso11, acp}.  In this section, we give a brief description of this object, with an emphasis on applications to classical algebraic geometry.

\medskip

Given a finite vertex-weighted graph ${\mathbf G} = (G,\omega)$ in the sense of \S\ref{Subsection:vertexweight}, the set of all vertex-weighted metric graphs $(\Gamma,\omega)$ with underlying finite graph $G$ is naturally identified with
$$ M_{\mathbf G}^{\trop} := \mathbb{R}_{>0}^{\vert E(G) \vert} / \mathrm{Aut}({\mathbf G})  $$
with the Euclidean topology.  If ${\mathbf G}'$ is obtained from ${\mathbf G}$ by contracting an edge, then we may think of a metric graph in $M_{{\mathbf G}'}^{\trop}$ as a limit of graphs in $M_{\mathbf G}^{\trop}$ in which the length of the given edge approaches zero.
Similarly, if ${\mathbf G}'$ is obtained from $v$ by contracting a cycle to a vertex $v$ and augmenting the weight of $v$ by one, we may think of a metric graph in $M_{{\mathbf G}'}^{\trop}$ as a limit of graphs in $M_{\mathbf G}^{\trop}$.
In this way, we may construct the {\em moduli space of tropical curves}
$$ M_g^{\trop} :=\bigsqcup M_{\mathbf G}^{\trop} , $$
where the union is over all {\em stable}\footnote{A vertex-weighted finite graph $(G,\omega)$ is called {\em stable} if every vertex of weight zero has valence at least 3.} vertex-weighted graphs ${\mathbf G}$ of genus $g$, and the topology is induced by gluing $M_{{\mathbf G}'}^{\trop}$ to the boundary of $M_{\mathbf G}^{\trop}$ whenever
${\mathbf G}'$ is a contraction of ${\mathbf G}$ in one of the two senses above.

\medskip

We note that the moduli space $M_g^{\trop}$ is not compact, since edge lengths in a metric graph must be finite and thus there is no limit if we let some edge length tend to infinity.  There exists a compactification $\overline{M}_g^{\trop}$, known as the moduli space of {\em extended tropical curves}, which parameterizes vertex-weighted metric graphs with possibly infinite edges; we refer to \cite{acp} for details.

\medskip

Let $M_g$ be the (coarse) moduli space of genus $g$ curves and $\overline{M}_g$ its Deligne-Mumford compactification, considered as varieties over $\CC$ endowed with the trivial valuation.
Points of $M_g^{\an}$ can be identified with equivalence classes of points of $M_g(L)$, where $L$ is a complete non-archimedean field extension of $\CC$ (with possibly non-trivial valuation).
There is a natural map ${\rm Trop}: M_g^{\an} \to M_g^{\rm trop}$ which on the level of $L$-points takes a smooth proper genus $g$ curve $C/L$ to the minimal skeleton of its Berkovich analytification $C^{\an}$.
This map extends naturally to a map ${\rm Trop}: \overline{M}_g^{\an} \to \overline{M}_g^{\rm trop}$.

\medskip

Let $\Sigma(\overline{M}_g)$ (resp. $\overline{\Sigma}(\overline{M}_g)$) denote the skeleton, in the sense of Thuillier, of $M_g^{\an}$ (resp. $\overline{M}_g^{\an}$) with respect to the natural toroidal structure coming from the boundary strata of $\overline{M}_g \smallsetminus M_g$.
According to the main result of \cite{acp}, there is a very close connection between the moduli space of tropical curves $M_g^{\trop}$ and the Thuillier skeleton $\Sigma(\overline{M}_g)$:

\begin{theorem}[\cite{acp}]
\label{Thm:Moduli}
There is a canonical homeomorphism\footnote{This homeomorphism is in fact an isomorphism of ``generalized cone complexes with integral structure'' in the sense of \cite{acp}.}
$$ \Phi : \Sigma (\overline{M}_g^{\an}) \to M_g^{\trop}$$
which extends uniquely to a map
$$ \overline{\Phi} : \overline{\Sigma} (\overline{M}_g^{\an}) \to \overline{M}_g^{\trop}$$
of compactifications in such a way that
 \begin{displaymath}
\xymatrix{
\overline{M}_g^{\an} \ar[r]^{\overline{P}} \ar[rd]^{\Trop} & \overline{\Sigma} (\overline{M}_g^{\an} ) \ar[d]^{\overline{\Phi}} \\
 & \overline{M}_g^{\trop} }
 \end{displaymath}
commutes, where $\overline{P} : \overline{M}_g^{\an} \to \overline{\Sigma} (\overline{M}_g^{\an} )$ is the canonical deformation retraction.
\end{theorem}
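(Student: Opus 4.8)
The plan is to realize both $\overline{\Sigma}(\overline{M}_g^{\an})$ and $\overline{M}_g^{\trop}$ as the colimit of one and the same diagram of cones, indexed by the combinatorics of stable weighted graphs, and then to check that the resulting homeomorphism intertwines the two retractions. The starting point is the standard fact that $\overline{M}_g$ is a smooth Deligne--Mumford stack whose boundary $\overline{M}_g \smallsetminus M_g$ is a normal crossings divisor, and that its locally closed strata $M_{\mathbf G}$ are indexed by stable vertex-weighted graphs ${\mathbf G}$ of genus $g$: the stratum $M_{\mathbf G}$ parametrizes stable curves whose dual graph (in the sense of \S\ref{Subsection:vertexweight}) is ${\mathbf G}$, its codimension equals the number of edges $|E(G)|$, and the incidence of strata is governed by edge contractions, including the weight-increasing contractions of \S\ref{Subsection:Moduli} that arise when a nonseparating cycle of nodes is smoothed. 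This stratification endows $\overline{M}_g$ with a toroidal structure, so that the machinery of \S\ref{Subsection:higherdimensionalskeletons} applies.

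First I would analyze the local structure of $\overline{M}_g$ along a stratum $M_{\mathbf G}$. By deformation theory, {\'e}tale-locally near a stable curve $C_0$ with dual graph ${\mathbf G}$ the smoothing parameters $t_e$ of the nodes (one for each edge $e$ of $G$) serve as monomial coordinates transverse to the boundary, and the stabilizer $\mathrm{Aut}(C_0)$ acts by permuting them through its action on the nodes, which is exactly $\mathrm{Aut}({\mathbf G})$. Consequently Thuillier's construction assigns to $M_{\mathbf G}$ the cone $\sigma_{\mathbf G} = \RR_{\geq 0}^{E(G)}/\mathrm{Aut}({\mathbf G})$ of monomial valuations in the $t_e$, glued to the cones of adjacent strata by the face maps dual to edge contractions. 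The key bookkeeping step is to match this with the definition of $\overline{M}_g^{\trop}$ as $\bigsqcup M_{\mathbf G}^{\trop}$, where $M_{\mathbf G}^{\trop} = \RR_{>0}^{E(G)}/\mathrm{Aut}({\mathbf G})$ is glued along the same contraction maps (\S\ref{Subsection:Moduli}): both sides are the colimit, over the category whose objects are stable genus-$g$ weighted graphs and whose morphisms are automorphisms and contractions, of the functor ${\mathbf G} \mapsto \sigma_{\mathbf G}$. Functoriality of this colimit produces the canonical homeomorphism $\overline\Phi$, restricting to $\Phi$ on the open part.

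It then remains to verify that the square commutes, i.e. that $\overline\Phi \circ \overline P = \Trop$. This is a pointwise, essentially local computation. A point $x \in \overline{M}_g^{\an}$ corresponds to a stable curve $C/L$ over a complete valued extension $L/\CC$; after semistable reduction its dual graph is some ${\mathbf G}$, and by the theory of \S\ref{section:Berkcurves} the tropicalization $\Trop(x)$ is the minimal skeleton of $C^{\an}$, a tropical curve whose edge $e$ has length equal to the logarithmic modulus of the corresponding annulus, namely $\val(t_e)$. On the other hand, $\overline P(x)$ is the monomial valuation on $\overline{M}_g$ sending $t_e \mapsto \val(t_e)$, that is, the point of $\sigma_{\mathbf G}$ with coordinates $(\val(t_e))_e$; applying $\overline\Phi$ returns precisely the metric graph with those edge lengths. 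Thus the two maps agree on a dense set of divisorial points and hence everywhere.

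I expect the principal obstacle to lie in the second and third steps --- making the dictionary between the toroidal and tropical cone-complex structures fully precise. One must work with $\overline{M}_g$ as a stack rather than a scheme, which requires extending Thuillier's skeleton together with its integral and generalized-cone-complex structure to the toroidal Deligne--Mumford setting, and one must check that the automorphism groups and monodromy appearing there coincide with $\mathrm{Aut}({\mathbf G})$ and that all face maps, including the weight-increasing ones, are identified correctly with tropical edge contractions. Once this identification of generalized cone complexes is established, the compatibility with tropicalization follows from the already-cited description of skeleta of curves. This is precisely the program carried out in \cite{acp}.
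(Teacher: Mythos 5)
The survey does not prove this theorem; it is quoted directly from \cite{acp}, and your outline is an accurate reconstruction of the strategy of that source: realizing both $\overline{\Sigma}(\overline{M}_g^{\an})$ and $\overline{M}_g^{\trop}$ as the colimit of the cone diagram $\mathbf{G} \mapsto \RR_{\geq 0}^{E(G)}/\mathrm{Aut}(\mathbf{G})$ over the category of stable weighted graphs with contractions, and checking commutativity of the square via the identification of the edge lengths of the skeleton of a stable curve with the valuations of the node-smoothing parameters $t_e$. You correctly flag the genuine technical content (extending Thuillier's construction to toroidal Deligne--Mumford stacks and matching the monodromy on each cone with $\mathrm{Aut}(\mathbf{G})$), which is exactly where the work in \cite{acp} lies, so this is essentially the same approach as the cited proof.
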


It follows from Theorem~\ref{Thm:Moduli} that the map ${\rm Trop}: \overline{M}_g^{\an} \to\overline{M}_g^{\trop}$ is continuous, proper, and surjective.  From this, one easily deduces:

\begin{corollary}
\label{Cor:Surjective}
Let $K$ be a complete and algebraically closed non-Archimedean field with value group $\RR$, and let $\Gamma$ be a stable metric graph of genus at least 2.  Then there exists a curve $C$ over $K$ such that the minimal skeleton of the Berkovich analytic space $C^{\an}$ is isometric to $\Gamma$.
\end{corollary}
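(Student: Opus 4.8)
The plan is to deduce the statement from the surjectivity of the tropicalization map on moduli spaces supplied by Theorem~\ref{Thm:Moduli}. Regard $\Gamma$ as a vertex-weighted metric graph with all weights zero; since it is stable of genus $g\geq 2$ it determines a point $[\Gamma]$ of $M_g^{\trop}$, and because all of its edge lengths are finite this point lies in $M_g^{\trop}$ rather than in the boundary $\overline{M}_g^{\trop}\smallsetminus M_g^{\trop}$. By Theorem~\ref{Thm:Moduli} the map $\Trop$ factors as the canonical retraction onto the skeleton $\Sigma(\overline{M}_g^{\an})$ followed by the homeomorphism $\Phi$, so $\Trop$ carries $M_g^{\an}$ onto $M_g^{\trop}$. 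Hence there is a point $x\in M_g^{\an}$ with $\Trop(x)=[\Gamma]$.

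Next I would unwind what the point $x$ encodes. As recalled in \S\ref{Subsection:Moduli}, a point of $M_g^{\an}$ is represented by a complete valued field extension $L=\mathcal{H}(x)$ of $\CC$ (with its trivial valuation) together with an $L$-point of $M_g$, i.e.\ a smooth proper genus $g$ curve $C_x/L$; and by the definition of $\Trop$ on $L$-points, the equality $\Trop(x)=[\Gamma]$ says exactly that the minimal skeleton of $C_x^{\an}$ is isometric to $\Gamma$. Replacing $L$ by the completion of its algebraic closure changes neither the curve nor its minimal skeleton --- here $g\geq 2$ is used, as it guarantees a canonical minimal skeleton that is stable under extension of algebraically closed complete ground fields --- so I may assume $L$ is algebraically closed.

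It remains to produce such a curve over the prescribed field $K$. The edge lengths of $\Gamma$ are real numbers lying in $\val(L^*)\subseteq\RR$, and since $K$ is complete, algebraically closed, and has value group all of $\RR$, it is large enough to admit an isometric embedding $L\hookrightarrow K$ of valued fields. Base-changing, $C:=C_x\times_L K$ is a smooth proper curve of genus $g$ over $K$, and by the invariance of the minimal skeleton under extension of the algebraically closed complete base field its minimal skeleton is again isometric to $\Gamma$, as required.

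The hard part is this last step: the abstract surjectivity only yields a curve over the auxiliary field $\mathcal{H}(x)$, and transporting it to the specified $K$ relies both on the valued-field embedding $L\hookrightarrow K$ --- where the hypothesis $\val(K^*)=\RR$ is precisely what provides room for the prescribed real edge lengths --- and on base-change invariance of the minimal skeleton. An appealing alternative that avoids the embedding altogether is to argue locally on $\overline{M}_g$: near the boundary stratum indexed by the combinatorial type of $\Gamma$ the toroidal coordinates are node-smoothing parameters $t_e$, and choosing $t_e\in K$ with $\val(t_e)$ equal to the prescribed edge lengths (again possible exactly because $\val(K^*)=\RR$) yields an explicit $K$-point tropicalizing to $[\Gamma]$; one then checks that a suitable member of the corresponding family over $K$ is a smooth curve whose skeleton is $\Gamma$.
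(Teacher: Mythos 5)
Your overall route is the same as the paper's: Corollary~\ref{Cor:Surjective} is presented there as a consequence of Theorem~\ref{Thm:Moduli}, whose surjectivity produces a point $x \in M_g^{\an}$ with $\Trop(x) = [\Gamma]$, and the remaining deduction is left to the reader (``one easily deduces''). The genuine gap in your write-up is exactly the step you flag as the hard part: the assertion that $L = \mathcal{H}(x)$ ``is large enough to admit an isometric embedding $L \hookrightarrow K$.'' Nothing in the hypotheses on $K$ guarantees this. The analytification in \S\ref{Subsection:Moduli} is taken over $\CC$ with the trivial valuation, so $L$ contains $\CC$ with trivial induced valuation and hence $\CC$ embeds into the residue field of $L$; an isometric embedding $L \hookrightarrow K$ would therefore force $\CC$ to embed into the residue field of $K$. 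The corollary allows $K$ of positive residue characteristic, for which this is impossible. Even in equal characteristic zero, an isometric embedding must carry the residue field of $L$ (which can have positive transcendence degree over $\CC$) into the residue field of $K$, and producing such an embedding typically requires a maximality hypothesis on $K$ (spherical completeness, as in Kaplansky's embedding theorems) rather than mere completeness with value group $\RR$. So ``large enough'' is not a consequence of the stated hypotheses, and this step can fail for admissible $K$.

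The alternative you sketch in your final sentences is the correct repair, and it is essentially how the statement is actually established in the sources the paper points to in Remarks~\ref{Remark:Deformation} and~\ref{Remark:Deformation2}: one works formally or deformation-theoretically near the boundary stratum of $\overline{M}_g$ indexed by the combinatorial type of $\Gamma$, chooses node-smoothing parameters $t_e \in K$ with $\val(t_e)$ equal to the prescribed edge lengths --- possible precisely because $\val(K^*) = \RR$ --- and verifies that the resulting smooth curve over $K$ has minimal skeleton $\Gamma$. This is the content of Theorem~3.24 of \cite{ABBR14a} (via formal and rigid geometry, valid for any complete algebraically closed $K$ whose value group contains the edge lengths) and of Appendix~B of \cite{Baker08} (via deformation theory in the discretely valued case). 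As written, however, your main line of argument rests on the unproved embedding claim, while the argument that actually works appears only as a one-sentence aside.
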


\begin{remark}
\label{Remark:Deformation}
A more direct proof of Corollary~\ref{Cor:Surjective}, which in fact proves a stronger statement replacing $\Gamma$ with an arbitrary metrized complex of curves, and the field $K$ with any complete and algebraically closed non-Archimedean field whose value group contains all edge lengths in some model for $\Gamma$, can be found in Theorem 3.24 of \cite{ABBR14a}.  The proof uses formal and rigid geometry.
A variant of Corollary~\ref{Cor:Surjective} for discretely valued fields, proved using deformation theory, can be found in Appendix B of \cite{Baker08}.
\end{remark}

\begin{remark}
\label{Remark:Deformation2}
Let $R$ be a complete DVR with field of fractions $K$ and infinite residue field $\kappa$.  The argument in Appendix B of \cite{Baker08} shows that
for any finite connected graph $G$, there exists a regular, proper, flat curve $\mathcal{C}$ over $R$ whose generic fiber is smooth and whose special fiber is a maximally degenerate semistable curve with dual graph $G$.
One should note the assumption here that $\kappa$ has infinite residue field.  In the case where the residue field is finite -- for example, when $K = \mathbb{Q}_p$ -- the question of which graphs arise in this way remains an open problem.  The significance of this problem is its relation to the rational points of the moduli space of curves.
For example, the existence of Brill-Noether general curves defined over $\mathbb{Q}$ for large $g$ is a well-known open question.  Lang's Conjecture predicts that, for large $g$, such curves should be contained in a proper closed subset of the moduli space of curves.  One suggested candidate for this closed subset is the stable base locus of the canonical bundle, which is known to contain only Brill-Noether special curves.
\end{remark}

\subsection{Brill-Noether rank}
\label{Subsection:BNRank}

The motivating problem of Brill-Noether theory is to describe the variety $W^r_d (C)$ parameterizing divisors of a given degree and rank on a curve $C$.  A first step in such a description should be to compute numerical invariants of $W^r_d (C)$, such as its dimension.  Our goal is to use the combinatorics of the dual graph $\Gamma$ to describe $W^r_d ( \Gamma )$.  Combining this combinatorial description with the Specialization Theorem, we can then hope to understand the Brill-Noether locus of our original curve.  One might be tempted to think that the tropical analogue of $\dim W^r_d (C)$ should be $\dim W^r_d ( \Gamma )$, but as in the case of linear series, the dimension is not a well-behaved tropical invariant.  We note one example of such poor behavior.

\begin{example}
\label{Ex:NotSemicontinuous}
In \cite[Theorem 1.1]{LPP12}, it is shown that the function that takes a metric graph $\Gamma$ to $\dim W^r_d ( \Gamma )$ is not upper semicontinuous on $M_g^{\trop}$.  To see this, the authors construct the following example.  Let $\Gamma$ be the loop of loops of genus 4 depicted in Figure \ref{Fig:NotSemicontinuous}, with edges of length $\ell_1 < \ell_2 < \ell_3$ as pictured.  Suppose that $\ell_1 + \ell_2 > \ell_3$.  Then $\dim W^1_3 (\Gamma ) = 1$.  If, however, we consider the limiting metric graph $\Gamma_0$ as $\ell_1 , \ell_2$ and $\ell_3$ approach zero, then on this graph the only divisor of degree 3 and rank 1 is the sum of the three vertices, hence $\dim W^1_3 (\Gamma_0 ) = 0$.

\begin{figure}
\begin{tikzpicture}

\draw [ball color=black] (-1,3) circle (0.55mm);
\draw [ball color=black] (1,3) circle (0.55mm);
\draw (0,3) circle (1);
\draw [ball color=black] (3,1) circle (0.55mm);
\draw [ball color=black] (3,-1) circle (0.55mm);
\draw (3,0) circle (1);
\draw [ball color=black] (-3,1) circle (0.55mm);
\draw [ball color=black] (-3,-1) circle (0.55mm);
\draw (-3,0) circle (1);
\draw (1,3)--(3,1);
\draw (-1,3)--(-3,1);
\draw (3,-1)--(-3,-1);
\draw (2.25,2.25) node {$\ell_1$};
\draw (-2.25,2.25) node {$\ell_2$};
\draw (0,-1.25) node {$\ell_3$};
\draw (-3,-1.25) node {$w_3$};
\draw (3,-1.25) node {$v_3$};
\draw (-3,1.25) node {$v_2$};
\draw (3,1.25) node {$w_1$};
\draw (-1.25,3) node {$w_2$};
\draw (1.25,3) node {$v_1$};

\end{tikzpicture}
\caption{The metric graph $\Gamma$ from \cite{LPP12}.}
\label{Fig:NotSemicontinuous}
\end{figure}
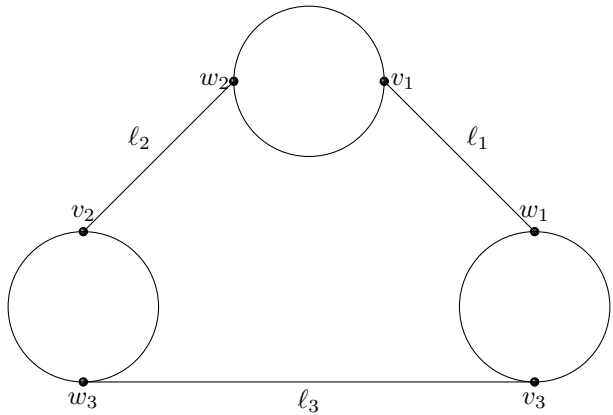
\end{example}

The solution to this problem has a very similar flavor to the definition of rank recorded in Definition \ref{Def:Rank}.  Specifically, given a curve $C$, consider the incidence correspondence
$$ \Phi = \{ (p_1 , \ldots , p_d , D) \in C^d \times W^r_d (C) \; \vert \; p_1 + \cdots + p_d \in \vert D \vert \} . $$
The forgetful map to $W^r_d (C)$ has fibers of dimension $r$, so $\dim \Phi = r + \dim W^r_d (C)$, and hence the image of $\Phi$ in $C^d$ has the same dimension.  This suggests the following surrogate for the dimension of $W^r_d (C)$.

\begin{definition}
\label{Def:BNRank}
Let $\Gamma$ be a metric graph, and suppose that $W^r_d (\Gamma)$ is nonempty.  The \emph{Brill-Noether rank} $w^r_d (\Gamma)$ is the largest integer $k$ such that, for every effective divisor $E$ of degree $r + k$, there exists a divisor $D \in W^r_d (\Gamma)$ such that $\vert D-E \vert \neq \emptyset$.
\end{definition}

\begin{example}
\label{Ex:BNRank}
Note that, in the previous example, although $\dim W^1_3 (\Gamma) = 1$, the Brill-Noether rank $w^1_3 (\Gamma) = 0$.  To see this, it suffices to find a pair of points such that no divisor of degree 3 and rank 1 passes through both points simultaneously.  Indeed, it is shown in \cite[Theorem 1.9]{LPP12} that no divisor of rank 1 and degree 3 contains $v_3 + w_3$.
\end{example}

The Brill-Noether rank is much better behaved than the dimension of the Brill-Noether locus; for example (cf. \cite[Theorem 1.6]{LPP12} and \cite[Theorem 5.4]{Len12}):

\begin{theorem}
\label{Thm:Semicontinuous}
The Brill-Noether rank is upper semicontinuous on the moduli space of tropical curves.
\end{theorem}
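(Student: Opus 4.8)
The plan is to show, for each integer $k$, that the set $\{ \Gamma \in M_g^{\trop} : w^r_d(\Gamma) \geq k \}$ is closed, which is equivalent to upper semicontinuity of $w^r_d$. Interpreting the rank on $M_g^{\trop}$ via the weighted rank $r^\#$ of \S\ref{Subsection:vertexweight}, I would first pass to the unweighting $\Gamma^\#$, attaching loops of some fixed length at each weighted vertex. This replaces every vertex-weighted metric graph of genus $g$ by an honest genus-$g$ metric graph on which $r^\#$ becomes the ordinary rank, and --- crucially --- it makes the family vary \emph{continuously} even across the boundary strata of $M_g^{\trop}$: a degeneration in which a cycle contracts to a weight-one vertex becomes, in the $\#$-picture, a family in which a small loop merely survives with its fixed length. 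It therefore suffices to prove ordinary upper semicontinuity along converging sequences inside a single cone $M_{\mathbf G}^{\trop}$ and along sequences running into its boundary, working throughout with genus-$g$ metric graphs whose groups $\Pic^d$ form a continuous family of compact real tori (cf. the tropical Abel--Jacobi picture of \S\ref{Subsection:AbelJacobi} and Theorem~\ref{Thm:JacobianSkeleton}).

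With this setup I would take a sequence $\Gamma_i \to \Gamma_0$ with $w^r_d(\Gamma_i) \geq k$ for all $i$ and verify $w^r_d(\Gamma_0) \geq k$ directly from Definition~\ref{Def:BNRank}. Fix an effective divisor $E_0$ of degree $r+k$ on $\Gamma_0$. The first step is to \emph{lift} $E_0$ to effective divisors $E_i$ of degree $r+k$ on $\Gamma_i$: chips in the relative interior of edges that persist lift uniquely, while chips at a point obtained by contracting edges or a loop may be placed anywhere along the corresponding collapsing piece of $\Gamma_i$, arranged so that $E_i \to E_0$. By hypothesis there exist $D_i \in W^r_d(\Gamma_i)$ with $|D_i - E_i| \neq \emptyset$. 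Using compactness of $\Pic^d$ in the continuous family of tori, after passing to a subsequence the classes $[D_i]$ converge to some $[D_0] \in \Pic^d(\Gamma_0)$; choosing canonical representatives (for instance $v$-reduced divisors, or break-divisor representatives via Theorem~\ref{thm:BreakDivisormainthm1}) lets me control the limit at the level of actual divisors. Two closedness statements then finish the argument: upper semicontinuity of the ordinary rank gives $r(D_0) \geq \liminf_i r(D_i) \geq r$, so $D_0 \in W^r_d(\Gamma_0)$; and the condition $|D - E| \neq \emptyset$ is closed, since effectivity is a closed condition and linear equivalence classes vary continuously, so $|D_0 - E_0| \neq \emptyset$. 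As $E_0$ was arbitrary, $w^r_d(\Gamma_0) \geq k$.

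The main obstacle, I expect, is the upper semicontinuity of the rank function $r$ itself as the combinatorial type degenerates at the boundary of a cone, since the rank can genuinely jump up in the limit (this is exactly what makes $\dim W^r_d$ jump \emph{down} in Example~\ref{Ex:NotSemicontinuous}). Making ``$r(D_0) \geq \liminf_i r(D_i)$'' precise requires comparing divisors on graphs of different combinatorial type, and the natural tool is the theory of reduced divisors and Dhar's burning algorithm of \S\ref{Subsection:ReducedDivisors}: one tests effectivity of $D_i - E_i$ through a reduced representative and then checks that reduced divisors, and the outcome of the burning algorithm, specialize correctly as edge lengths tend to zero. A secondary technical point is bookkeeping the degree under edge contraction --- chips on a contracting edge must accumulate on the limit point without altering the total degree --- together with the verification that the lifted divisors $E_i$ remain effective of the correct degree $r+k$. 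Once this semicontinuity of $r$ in degenerating families is in hand, the remainder is the soft limiting scheme above.
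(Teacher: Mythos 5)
A preliminary caveat: the survey does not actually prove Theorem~\ref{Thm:Semicontinuous}; it records the statement with citations to \cite{LPP12} and \cite{Len12}, so there is no in-paper argument to compare against, and your proposal has to be judged on its own terms. Its architecture is the standard one and matches the cited proofs: show each superlevel set $\{\Gamma : w^r_d(\Gamma) \geq k\}$ is closed by fixing a test divisor $E_0$ on the limit graph, lifting it to the approximating graphs, producing witnesses $D_i$ from the hypothesis, extracting a convergent subsequence, and passing to the limit. The problem is that all of the content is concentrated in the one step you defer to the last paragraph: the joint upper semicontinuity of the divisor rank, i.e.\ that $\{(\Gamma, D) : r_\Gamma(D) \geq r\}$ is closed in the universal family of divisors over $M_g^{\trop}$, including across boundary strata where the combinatorial type changes. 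Both of your closing appeals --- ``$r(D_0) \geq r$'' and ``$|D_0 - E_0| \neq \emptyset$,'' the latter being the $r=0$ case of the same statement applied to the limits of effective divisors $F_i \sim D_i - E_i$ --- are instances of this lemma, and the lemma is precisely what \cite{LPP12} and \cite{Len12} spend their effort proving. Your plan of ``checking that reduced divisors and Dhar's algorithm specialize correctly as edge lengths tend to zero'' is a plausible route but is exactly where the work lies: chips of $D_i$ and $F_i$ can accumulate on a contracting cycle, and the $v$-reduced representative of a converging sequence of classes need not converge to the $v$-reduced representative of the limit class when the combinatorial type jumps.

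A second, more specific gap concerns the genus-dropping degenerations. When a cycle of $\Gamma_i$ contracts and the limit vertex acquires weight, $b_1$ drops, so there is no continuous family of tori $\Pic^d(\Gamma_i) \to \Pic^d(\Gamma_0)$ and the statement to be proved on $\Gamma_0$ involves the weighted rank $r^\#$, not the ordinary rank. Your $\#$-trick is the right idea conceptually, but as described it does not actually restore continuity: $\Gamma_i^\# = \Gamma_i$ carries a cycle of length $\epsilon_i \to 0$, while $\Gamma_0^\#$ carries an attached loop of some fixed positive length, and these are not close in moduli. To repair this you need the additional (true, but unproved here) fact that the rank of a divisor is insensitive to the length of a leaf-loop --- morally the content of the identity $r^\#(D) = \min_{0 \leq E \leq \mathcal W}\bigl(\deg(E) + r_\Gamma(D - 2E)\bigr)$ quoted in \S\ref{Subsection:vertexweight} --- together with a recipe for transporting the chips of $D_i$ living on the shrinking cycle over to the fixed-length loop without changing the rank. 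In short: the skeleton of the argument is correct and is the one used in the literature, but the load-bearing semicontinuity lemma and the weighted-degeneration bookkeeping are asserted rather than proved.
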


The Brill-Noether rank also satisfies the following analogue of the Specialization Theorem (cf. \cite[Theorem 1.7]{LPP12} and \cite[Theorem 5.7]{Len12}):

\begin{theorem}
\label{Thm:BNRankSpecialization}
If $C$ is a curve over an algebraically closed field $K$ with nontrivial valuation, and the skeleton of the Berkovich analytic space $C^{\an}$ is isometric to $\Gamma$, then
$$ \dim W^r_d (C) \leq w^r_d (\Gamma ) . $$
\end{theorem}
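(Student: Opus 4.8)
The plan is to run the incidence-correspondence computation sketched just before Definition~\ref{Def:BNRank} on the curve side and then transport the resulting dimension count to $\Gamma$ by means of the Specialization Theorem. Set $k = \dim W^r_d(C)$ and let $Z \subseteq C^d$ be the image of the incidence correspondence $\Phi$, so that
\[
Z = \{(p_1,\ldots,p_d) \in C^d \;\vert\; r_C(p_1 + \cdots + p_d) \geq r \}
\]
and $\dim Z = r + k$ by the discussion preceding Definition~\ref{Def:BNRank}. Writing $\tau : C^{\an} \to \Gamma$ for the retraction, I would reduce the theorem to the following purely combinatorial assertion: \emph{for every effective divisor $E$ of degree $r+k$ on $\Gamma$, there is a point $(p_1,\ldots,p_d) \in Z$ with $\sum_i \tau(p_i) \geq E$.} Granting this, the inequality $w^r_d(\Gamma) \geq k$, and hence the theorem, is immediate from Definition~\ref{Def:BNRank}.

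The reduction itself is the easy half. Fix an effective $E$ of degree $r+k$ and suppose we have produced $(p_1,\ldots,p_d) \in Z$ with $\sum_i \tau(p_i) \geq E$ as above. Set $\mathsf{D} := \Trop(p_1 + \cdots + p_d) = \sum_i \tau(p_i)$, an effective divisor of degree $d$ on $\Gamma$. Since $(p_1,\ldots,p_d) \in Z$ we have $r_C(\sum_i p_i) \geq r$, so the Specialization Theorem gives $r_\Gamma(\mathsf{D}) \geq r$; thus the class of $\mathsf{D}$ lies in $W^r_d(\Gamma)$. On the other hand $\mathsf{D} \geq E$ by construction, so $\mathsf{D}-E$ is effective and $\vert \mathsf{D} - E \vert \neq \emptyset$. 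Taking $D = [\mathsf{D}]$ then witnesses the defining condition for $w^r_d(\Gamma) \geq k$. Here I use only that $\Trop$ is linear on divisors and carries effective divisors to effective divisors, together with the rank inequality of the Specialization Theorem.

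The hard part will be the combinatorial assertion, namely that the tropicalization $\Trop(Z) \subseteq \Div^d_+(\Gamma)$ \emph{dominates} every effective divisor of degree $r+k$. Dimension-counting alone does not suffice: projecting $Z$ onto its first $r+k$ coordinates need not be dominant (completing a general degree-$(r+k)$ divisor on $C$ to an effective degree-$d$ divisor of rank $\geq r$ typically fails outside the range $d \geq g+r$), so one cannot simply match the points of $E$ to a fixed block of coordinates. The genuine input is that $\Trop(Z)$ is a balanced polyhedral complex of dimension exactly $\dim Z = r+k$ inside $\Gamma^d / S_d$, and that points can be lifted along the large fibers of $\tau$. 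I would prove domination by prescribing the target points of $E$ and exploiting this lifting freedom \emph{across all} coordinates of $Z$ rather than a fixed subset, so as to realize an element of $\Trop(Z)$ bounding $E$ from above; the compatibility of $\Trop$ with the Abel--Jacobi map (Theorem~\ref{Thm:JacobianSkeleton}) and the dimension-preservation of tropicalization are the structural facts underlying this step. Making this lifting argument precise — in particular ruling out that the full-dimensional complex $\Trop(Z)$ could avoid a positive-codimension family of divisors $E$ — is the crux of the proof.
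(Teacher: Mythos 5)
The paper does not actually prove this statement itself --- it is quoted from \cite{LPP12} and \cite{Len12} --- so your proposal should be measured against the argument in those references. Your overall architecture is the right one and matches theirs: set $k = \dim W^r_d(C)$, fix an effective divisor $E$ of degree $r+k$ on $\Gamma$, produce a class $D \in W^r_d(C)$ together with an effective divisor in $|D|$ whose tropicalization dominates $E$, and conclude via the Specialization Theorem that $\Trop(D) \in W^r_d(\Gamma)$ and $|\Trop(D)-E| \neq \emptyset$. Your ``easy half'' is correct as written.

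The genuine gap is in where you locate the hard part and how you propose to attack it. The crux is not a surjectivity statement about $\Trop(Z)$ for the incidence locus $Z \subseteq C^d$; it is the purely algebro-geometric lemma, which is the key input in \cite{LPP12}: \emph{if $\dim W^r_d(C) \geq k$, then for every effective divisor $\tilde E$ of degree $r+k$ on $C$ there exists $D \in W^r_d(C)$ with $|D - \tilde E| \neq \emptyset$.} Granting this, one simply takes $\tilde E$ to be a lift of $E$ under the retraction $\tau$ (after arranging that the points of $E$ lie in the image of $\tau|_{C(K)}$ --- the value group need not be all of $\RR$, so this requires either enlarging $K$ or a density-plus-closedness argument, a point your proposal does not address) and then tropicalizes. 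The lemma itself is exactly where dimension counting fails, as you correctly observe, and the missing ingredient is \emph{positivity} on the algebraic side: one argues by induction, peeling off one point $q$ of $\tilde E$ at a time, and the nonemptiness of the locus $\{D : |D-q| \neq \emptyset\}$ inside a positive-dimensional family is forced by the ampleness of the divisor $q + \mathrm{Sym}^{d-1}(C)$ in $\mathrm{Sym}^d(C)$ (equivalently, of translates of the theta divisor). Your proposed substitute --- that $\Trop(Z)$ is a balanced polyhedral complex of dimension $r+k$ and must therefore ``dominate'' every configuration $E$ --- is not carried out and would not follow from the tools you invoke: $\Gamma^d$ is not a tropical torus, balancing gives no surjectivity of the domination relation, and Theorem~\ref{Thm:JacobianSkeleton} does not supply the needed positivity. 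The correct route is to do all of the work on the algebraic side, where ampleness is available, and only tropicalize at the very last step.
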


We note the following generalization of Theorem \ref{Thm:WrdNonempty}.

\begin{corollary}
\label{Cor:wrd}
Let $\Gamma$ be a metric graph of genus $g$.  Then $w^r_d (\Gamma) \geq \rho := g-(r+1)(g-d+r)$.
\end{corollary}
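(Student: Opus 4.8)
The plan is to mirror the proof of Theorem~\ref{Thm:WrdNonempty}, but to replace the mere nonemptiness of $W^r_d(C)$ used there with a lower bound on its \emph{dimension}, and then to feed this through the Brill--Noether-rank analogue of the Specialization Theorem rather than through the ordinary one. Concretely, the inequality $w^r_d(\Gamma) \geq \rho$ will be obtained as the composite of two inequalities: a classical lower bound $\dim W^r_d(C) \geq \rho$ on the algebraic side, and the specialization bound $\dim W^r_d(C) \leq w^r_d(\Gamma)$ on the tropical side.

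First I would realize $\Gamma$ as a skeleton of an algebraic curve. Assuming $g \geq 2$ and $\Gamma$ stable, Corollary~\ref{Cor:Surjective} produces a complete algebraically closed non-Archimedean field $K$ with value group $\RR$, together with a curve $C/K$ whose minimal skeleton is isometric to $\Gamma$; this is exactly the input required by Theorem~\ref{Thm:BNRankSpecialization}. (The cases $g \leq 1$, as well as non-stable $\Gamma$, can be treated separately: since $w^r_d$ depends only on $\Gamma$ as a metric graph, one reduces to a stable model, and the low-genus bounds can be checked directly.) Next I would invoke the classical Brill--Noether existence theorem of Kempf and Kleiman--Laksov~\cite{Kempf71, KleimanLaksov74} in its full strength: when $\rho \geq 0$ the locus $W^r_d(C)$ is not only nonempty but has the property that \emph{every} irreducible component has dimension at least $\rho$, so in particular $\dim W^r_d(C) \geq \rho$. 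Finally, applying Theorem~\ref{Thm:BNRankSpecialization}, which asserts $\dim W^r_d(C) \leq w^r_d(\Gamma)$, and chaining the two inequalities gives $w^r_d(\Gamma) \geq \dim W^r_d(C) \geq \rho$, as claimed. When $\rho < 0$ the asserted inequality is weaker, and follows from the same specialization bound together with the conventions governing $w^r_d$ for empty or nearly-empty Brill--Noether loci.

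Because both of the genuinely substantial ingredients --- the classical dimension estimate and the specialization inequality of Theorem~\ref{Thm:BNRankSpecialization} --- may be assumed, the remaining work is essentially bookkeeping, and the main obstacle is to align their hypotheses correctly. Two points require care: one must use the Kempf--Kleiman--Laksov bound in the form that controls the dimension of \emph{all} components of $W^r_d(C)$ (rather than merely nonemptiness, which is all that Theorem~\ref{Thm:WrdNonempty} needed), and one must ensure that the realization step of Corollary~\ref{Cor:Surjective} applies to the given $\Gamma$, handling stability and the genus restriction by passing to a stable model where necessary. Once these are in place, the corollary follows immediately from the displayed chain of inequalities.
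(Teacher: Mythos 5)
Your proposal is correct and follows essentially the same route as the paper: combine the classical fact that $W^r_d(C)$ is nonempty of dimension at least $\rho$ (nonemptiness from Kempf and Kleiman--Laksov, the dimension bound coming from the determinantal structure of $W^r_d$) with the specialization inequality $\dim W^r_d(C) \leq w^r_d(\Gamma)$ of Theorem~\ref{Thm:BNRankSpecialization}, after realizing $\Gamma$ as a skeleton via Corollary~\ref{Cor:Surjective}. The only cosmetic difference is that the paper attributes the component-wise dimension lower bound to the general theory of determinantal varieties rather than to the Kempf--Kleiman--Laksov existence theorem itself, but the argument is the same.
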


\begin{proof}
The general theory of determinantal varieties shows that, if $W^r_d (C)$ is nonempty, then its dimension is at least $\rho$.
The result then follows from \cite{Kempf71, KleimanLaksov74} and Theorem \ref{Thm:BNRankSpecialization}.
\end{proof}

\begin{remark}
\label{Remark:LocalDim}
It is unknown whether $W^r_d (\Gamma)$ must have local dimension at least $\rho$.  Note, however, that this must hold in a neighborhood of a divisor $D \in \Trop W^r_d (C)$.  Hence, if $W^r_d (\Gamma)$ has smaller than the expected local dimension in a neighborhood of some divisor $D$, then $D$ does not lift to a divisor of rank $r$
on a curve $C$ having $\Gamma$ as its tropicalization.
\end{remark}

\section{Metrized Complexes of Curves and Limit Linear Series}
\label{Section:MetrizedComplexes}

In this section we describe the work of Amini and Baker \cite{AminiBaker12} on the Riemann--Roch and Specialization Theorems for divisors on metrized complexes of curves, along with applications to the theory of limit linear series.

\subsection{Metrized complexes of curves}

Metrized complexes of curves can be thought of, loosely, as objects which interpolate between classical and tropical algebraic geometry.
More precisely, a {\em metrized complex of algebraic curves} over an algebraically closed field $\kappa$ is a finite metric graph $\Gamma$ together with a fixed model $G$ and a collection of marked complete nonsingular algebraic curves $C_v$ over $\kappa$, one for each vertex $v$ of $G$; the set ${\mathcal A}_v$ of marked points on $C_v$ is in bijection with the edges of $G$ incident to $v$.  A metrized complex over $\CC$ can be visualized as a collection of compact Riemann surfaces connected together via real line segments, as in Figure~\ref{fig:mcsmall}.

\begin{figure}[!h]
\includegraphics[width=6cm]{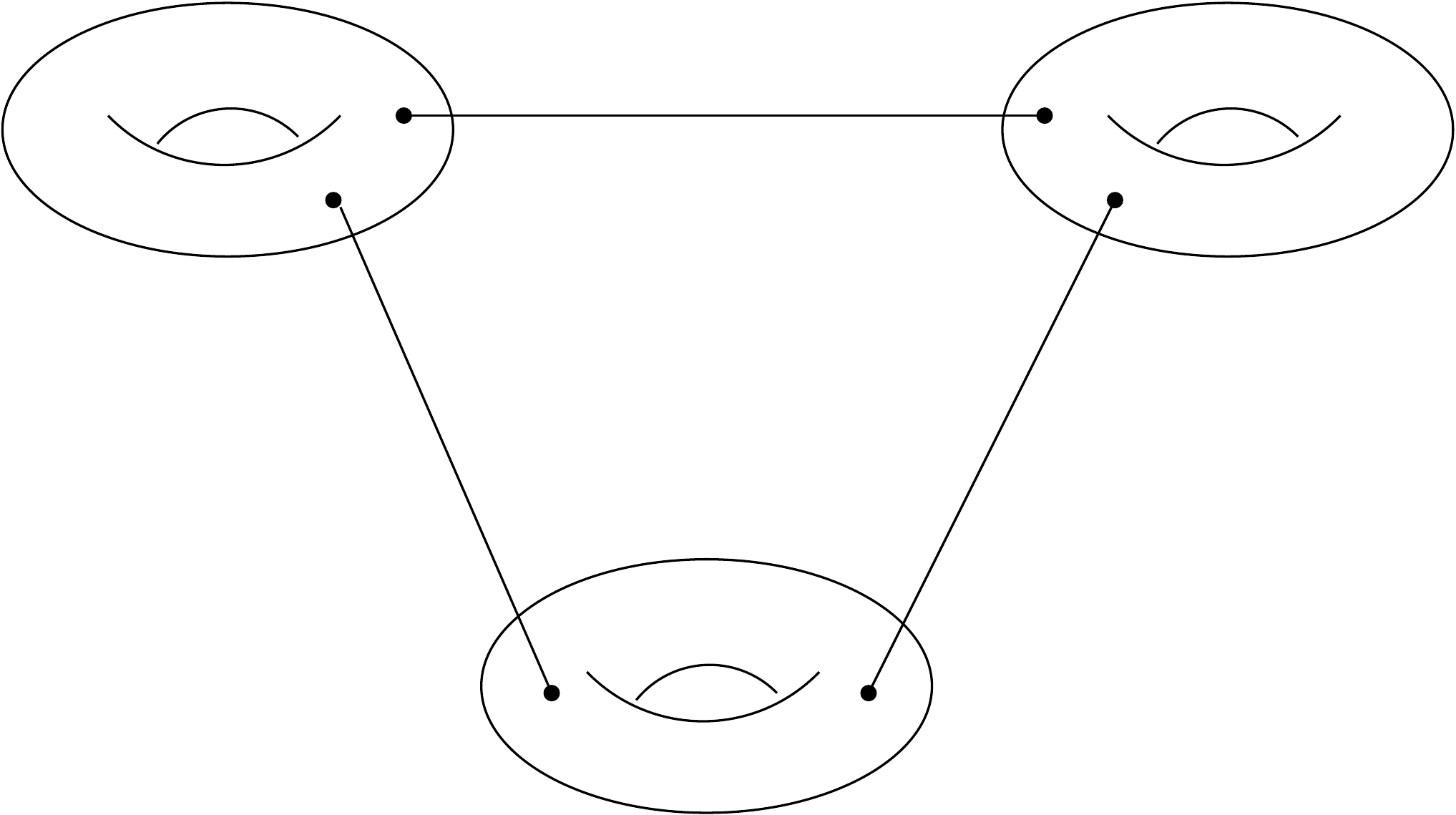}
\caption{An Example of a Metrized Complex}
 \label{fig:mcsmall}
 \end{figure}

\medskip

The {\em geometric realization} $|\fC|$ of a metrized complex of curves $\fC$ is defined as the topological space given by the union of the edges of $G$
and the collection of curves $C_v$, with each endpoint $v$ of an edge $e$ identified with the corresponding marked point $x^e_v$ (as suggested by Figure~\ref{fig:mcsmall}).
The {\em genus} of a metrized complex of curves $\fC$, denoted $g(\fC)$, is by definition
$g(\fC)=g(\Gamma)+\sum_{v\in V}g_v$, where $g_v$ is the genus of $C_v$ and $g(\Gamma)$ is the first Betti number of $\Gamma$.


\medskip

A {\em divisor} on a metrized complex of curves $\fC$ is an element $\mathcal D$ of the free abelian group on $|\fC|$.
Thus a divisor on $\fC$ can be written uniquely as $\mathcal D = \sum_{x \in |\fC|} a_x \, x$ where $a_x \in \ZZ$, all but finitely many of the $a_x$ are zero,
and the sum is over all points of $\Gamma \backslash V$ as well as $C_v(\kappa)$ for $v \in V$.
The {\em degree} of $\mathcal D$ is defined to be $\sum a_x$.

\medskip


A {\em nonzero rational function} $\f$ on a metrized complex of curves $\fC$ is the data of a rational function $f_\Gamma \in \PL (\Gamma)$ and nonzero rational functions $f_v$ on $C_v$ for each $v\in V$.
We call $f_\Gamma$ the {\em $\Gamma$-part} of $\f$ and $f_v$ the {\em $C_v$-part} of $\f$.
The {\em divisor} of a nonzero rational function $\f$ on $\fC$ is defined to be
\[
{\rm div}(\f) := \sum_{x \in |\fC|} \ord_x(\f) \, x,
\]
where $\ord_x(\f)$ is defined as follows\footnote{Note that our sign convention here for the divisor of a rational function on $\Gamma$, which coincides with the one used in \cite{AminiBaker12}, is the opposite of the sign convention used in Section \ref{sec:MetricGraphDivisors}, which is also used in a number of other papers in the subject.
This should not cause any confusion, but it is good for the reader to be aware of this variability when perusing the literature.}:
\begin{itemize}
\item If $x \in \Gamma \backslash V$, then $\ord_x(\f) = \ord_x(f_\Gamma)$, where
$\ord_x(f_\Gamma)$ is the sum of the slopes of $f_\Gamma$ in all tangent directions emanating from $x$.
\item If $x \in C_v(\kappa) \backslash \mathcal A_v$, then $\ord_x(\f) = \ord_x(f_v)$.
\item If $x = x^e_v \in \mathcal A_v$, then $\ord_x(\f) = \ord_x(f_v) + \mathrm{slp}_e(f_\Gamma)$, where
$\mathrm{slp}_e(f_\Gamma)$ is the outgoing slope of $f_\Gamma$ at $v$ in the direction of $e$.
\end{itemize}


Divisors of the form ${\rm div}(\f)$ are called {\em principal}, and the principal divisors form a subgroup of
$\Div^0(\fC)$, the group of divisors of degree zero on $\fC$.
Two divisors in $\Div(\fC)$ are called {\em linearly equivalent} if they differ by a principal divisor.
Linear equivalence of divisors on $\fC$ can be understood rather intuitively in terms of ``chip-firing moves'' on $\fC$.  We refer the reader to \S{1.2} of \cite{AminiBaker12} for details.

\medskip



A divisor $\mathcal E = \sum_{x \in |\fC|} {a_x (x)}$ on $\fC$ is called {\em effective} if $a_x \geq 0$ for all $x$.
The {\em rank} $r_\fC$ of a divisor $\mathcal D \in \Div(\fC)$ is defined to
be the largest integer $k$ such that $\mathcal D - \mathcal E$ is linearly equivalent to an effective divisor
for all effective divisors $\mathcal E$ of degree $k$ on $\fC$ (so in particular
$r_{\fC}(\mathcal D) \geq 0$ if and only if $\mathcal D$ is linearly equivalent to
an effective divisor, and otherwise $r_{\fC}(\mathcal D)=-1$).

\medskip

The theory of divisors, linear equivalence, and ranks on metrized complexes of curves generalizes both the classical theory
for algebraic curves and the corresponding theory for metric graphs.
The former corresponds to the case where $G$ consists of a single vertex $v$ and no edges and $C=C_v$ is an arbitrary smooth curve.  The latter corresponds to the case where the curves $C_v$ have genus zero for all $v \in V$.
Since any two points on a curve of genus zero are linearly equivalent, it is easy to see that the divisor theories and rank functions on $\fC$ and $\Gamma$ are essentially the same.

\medskip


The {\em canonical divisor} on $\fC$ is
$$ \mathcal{K} = \sum_{v \in V} (K_v + \sum_{w \in \mathcal{A}_v} w) , $$
 where $K_v$ is a canonical divisor on $C_v$.

\medskip

The following result generalizes both the classical Riemann-Roch theorem for algebraic curves and the Riemann-Roch theorem for metric graphs:

\begin{MetrizedRR}\label{thm:RR-metrizedcomplexes}
Let $\fC$ be a metrized complex of algebraic curves over $\kappa$.  For any divisor $\mathcal D \in \Div(\fC)$, we have
\[r_\fC(\mathcal D) - r_\fC(\mathcal K -\mathcal D) = \deg(\mathcal D) - g(\fC)+1. \]
\end{MetrizedRR}

As with the tropical Riemann-Roch theorem, the proof of this theorem makes use of a suitable notion of {\em reduced divisors}
for metrized complexes of curves.  We note that the proof of the Riemann-Roch theorem for metrized complexes uses the Riemann-Roch theorem for algebraic curves and does not furnish a new proof of that result.

\subsection{Specialization of divisors from curves to metrized complexes}

Let $K$ be a complete and algebraically closed non-Archimedean field with valuation ring $R$ and residue field $\kappa$, and let $C$ be a smooth proper curve over $K$.  As in \S\ref{Section:Berkovich}, there is a metrized complex $\fC$ canonically associated to any strongly semistable model ${\cC}$ of $C$ over $R$.
The specialization map $\Trop$ defined in Sections \ref{Section:Finite} and \ref{Section:Metric} can be enhanced in a canonical way to a map from divisors on $C$ to divisors on $\fC$.
The enhanced specialization map, which by abuse of terminology we continue to denote by $\Trop$, is obtained by linearly extending a map $\tau : C(K) \to |\fC|$.  The map $\tau$ is defined as follows:
\begin{itemize}
\item For $P \in C(K)$ reducing to a smooth point ${\rm red}(P)$ of the special fiber $C_0$ of $\cC$, $\tau(P)$ is just the point ${\rm red}(P)$.
\item For $P \in C(K)$ reducing to a singular point, $\tau(P)$ is the point ${\Trop (P)}$ in the relative interior of the corresponding edge of the skeleton $\Gamma$ of ${\cC}$.
\end{itemize}

\medskip

The motivation for the definitions of $\Trop : C(K) \to |\fC|$ and ${\rm div}(\f)$ come in part from the following extension of the Slope Formula (Theorem~\ref{thm:SlopeFormula}):

\begin{proposition}
Let $f$ be a nonzero rational function
on $C$ and let $\f$ be the corresponding nonzero rational function on $\fC$, where $f_{\Gamma}$ is the restriction
to $\Gamma$ of the piecewise linear function $\log|f|$ on $C^{\an}$ and $f_v \in \kappa(C_v)$ for $v \in V$ is the
normalized reduction of $f$ to $C_v$ (cf. \S\ref{Subsection:BerkovichIntro}).
Then
\[
\Trop ({\rm div}(f)) ={\rm div}(\f).
\]
\end{proposition}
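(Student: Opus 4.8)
The plan is to verify the identity $\Trop(\ddiv(f)) = \ddiv(\f)$ coefficient by coefficient at each point $x$ of the geometric realization $|\fC|$. Since $\Trop$ is by definition the linear extension of $\tau\colon C(K)\to|\fC|$, the left-hand side is $\sum_{P}\ord_P(f)\,\tau(P)$, so the coefficient of $x$ on the left is $\sum_{P\colon \tau(P)=x}\ord_P(f)$, the sum taken over type-$1$ points $P$ of $C$. Throughout I must keep track of two sign conventions that are in force: the $\Gamma$-part is $f_\Gamma=\log|f|=-\trop(f)$, and (per the footnote in this section) the convention for $\ddiv$ on $\Gamma$ used for metrized complexes is the negative of the one in Section~\ref{sec:MetricGraphDivisors}. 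The single analytic input I will use repeatedly is the nonarchimedean Poincar\'e--Lelong formula recorded in \S\ref{Subsection:AnalyticTrop}: for a type-$2$ point $x$ with residue curve $C_x$ and a tangent direction $\nu$, the order $\ord_\nu(\bar f_x)$ of the normalized reduction equals the outgoing slope of $\trop(f)$ in the direction $\nu$; since $\trop(f)=-f_\Gamma$ this says $\ord_\nu(\bar f_x)=-\mathrm{slp}_\nu(f_\Gamma)$.

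I would then split into three cases according to the location of $x$. If $x$ lies in the relative interior of an edge of $\Gamma$, then $\tau$ agrees near $x$ with the retraction to the skeleton (only points reducing to nodes map there, and they do so identically in both pictures), so the coefficient $\sum_{\tau(P)=x}\ord_P(f)$ is the coefficient of $\Trop(\ddiv(f))$ computed by the metric-graph Slope Formula (Theorem~\ref{thm:SlopeFormula}). Because $f_\Gamma=-\trop(f)$ and the metrized-complex sign convention is opposite to the one used there, the two sign reversals cancel and this coefficient equals $\ord_x(f_\Gamma)$, which is exactly what $\ddiv(\f)$ prescribes at an interior point. If instead $x=\nu$ is a closed point of some $C_v$ that is not a node, then the points $P$ with $\tau(P)=\nu$ are precisely the type-$1$ points reducing to $\nu$, i.e. those in the open ball $B_\nu$ retracting to $v$ through the tangent direction $\nu$. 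The standard fact that the divisor of the reduction records the zeros and poles of $f$ specializing to a given smooth point---equivalently, Poincar\'e--Lelong applied to the direction $\nu$, since the entering slope of $\trop(f)$ counts zeros minus poles in $B_\nu$---gives $\sum_{{\rm red}(P)=\nu}\ord_P(f)=\ord_\nu(\bar f_v)=\ord_\nu(f_v)$, matching $\ddiv(\f)$ at a non-marked point of $C_v$.

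The remaining, and genuinely delicate, case is $x=x_v^e$, a marked point of $C_v$, i.e. the node at which the edge $e$ meets $C_v$. Here no type-$1$ point $P$ satisfies $\tau(P)=x_v^e$: points reducing to a node are sent by $\tau$ into the relative interior of $e$, while points reducing to smooth points are sent into $C_v(\kappa)\smallsetminus\mathcal A_v$. Hence the coefficient of $\Trop(\ddiv(f))$ at $x_v^e$ is $0$. On the other side, the definition gives $\ord_{x_v^e}(\f)=\ord_{x_v^e}(f_v)+\mathrm{slp}_e(f_\Gamma)$, and applying Poincar\'e--Lelong to the tangent direction along $e$ yields $\ord_{x_v^e}(f_v)=\ord_{x_v^e}(\bar f_v)=-\mathrm{slp}_e(f_\Gamma)$, so the two terms cancel and $\ord_{x_v^e}(\f)=0$ as well. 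This cancellation is the main obstacle, and the heart of the statement: it is precisely the compatibility---forced by the sign conventions together with Poincar\'e--Lelong---between the edge-slopes of $f_\Gamma$ and the orders of the reductions $\bar f_v$ at the marked points that makes the data $(f_\Gamma,(f_v)_{v})$ glue into a rational function on $\fC$ whose divisor is supported away from the nodes. As a consistency check one may note that, summing over all closed points of a fixed $C_v$, the contributions at smooth points together with the (cancelling) contributions at the nodes are compatible with $\deg\ddiv(\bar f_v)=0$, so that no mass is lost in refining the metric-graph specialization at the vertex $v$ to the component $C_v$.
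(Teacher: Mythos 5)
Your argument is correct, and it is exactly the intended one: the paper states this proposition without proof (deferring to \cite{AminiBaker12}), but frames it precisely as an ``extension of the Slope Formula,'' and your pointwise verification via the nonarchimedean Poincar\'e--Lelong formula --- with the careful bookkeeping of the two sign reversals ($f_\Gamma = -\trop(f)$ versus the outgoing-slope convention for $\ddiv$ on $\fC$) and the cancellation $\ord_{x_v^e}(f_v) + \mathrm{slp}_e(f_\Gamma) = 0$ at the marked points --- is the substance of that claim. The only quibble is the phrase ``entering slope'' in the second case, where you mean the slope of $\trop(f)$ outgoing from the type-2 point $y_v$ into the open ball $B_\nu$; your conclusion there is nonetheless correct.
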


In particular, we have $\Trop({\rm Prin}(C)) \subseteq {\rm Prin}(\fC)$.

\medskip

The Specialization Theorem from \S \ref{Subsection:Specialization} generalizes to metrized complexes as follows:

\begin{MetrizedSpecialization}
\label{thm:MCSpecialization}
For all $D \in \Div(C)$, we have
\[
r_C(D) \leq r_{\fC}({\rm trop}(D)).
\]
\end{MetrizedSpecialization}

Since $r_{\fC}(\trop(D)) \leq r_\Gamma(\trop(D))$, the specialization theorem for metrized complexes is a strengthening of the analogous specialization result for metrized graphs.
In conjunction with a simple combinatorial argument, this theorem also refines the Specialization Theorem for vertex-weighted graphs 

\medskip

A simple consequence of the Riemann-Roch and Specialization Theorems for metrized complexes is that
for any canonical divisor $K_C$ on $C$, the divisor $\trop(K_C)$ belongs to the canonical class on $\fC$.
Indeed, the Specialization Theorem shows that $r_{\fC}(\trop(K_C)) \geq g-1$, while Riemann-Roch shows that a divisor of degree $2g-2$ and rank at least $g-1$ must be equivalent to $\mathcal{K}$.

\medskip

There is also a version of specialization in which one has {\em equality} rather than just an inequality.
One can naturally associate to a rank $r$ divisor $D$ on $C$ a collection  ${\mathcal H} = \{ H_v \}_{v \in V}$ of $(r+1)$-dimensional subspaces of $\kappa(C_v)$, where $H_v$ is the normalized reduction of $\mathcal L(D)$ to $C_v$ (cf.~\S\ref{Section:Berkovich}).
If ${\mathcal F} = \{ F_v \}_{v \in V}$, where $F_v$ is any $\kappa$-subspace of the function field $\kappa(C_v)$,
then for $\cD \in \Div(\fC)$ we define the {\em ${\mathcal F}$-restricted rank} of $\cD$, denoted $r_{\fC,{\mathcal F}}(\cD)$, to be the largest integer $k$
such that for any effective divisor ${\mathcal E}$ of degree $k$ on $\fC$, there is a rational function $\f$ on $\fC$ whose $C_v$-parts
$f_v$ belong to $F_v$ for all $v \in V$, and such that $\cD - {\mathcal E} + {\rm div}(\f) \geq 0$.

\begin{theorem}[Specialization Theorem for Restricted Ranks]
\label{thm:RestrictedSpecialization}
With notation as above, the ${\mathcal H}$-restricted rank of the specialization of $D$ is equal to the rank of $D$, i.e.,
$r_{\fC,{\mathcal H}}(\trop(D)) = r$.
\end{theorem}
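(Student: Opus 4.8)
The plan is to prove the two inequalities $r_{\fC,\mathcal H}(\trop(D)) \geq r$ and $r_{\fC,\mathcal H}(\trop(D)) \leq r$ separately. Throughout I write $\Trop$ for the enhanced specialization map $\Div(C) \to \Div(\fC)$ (so $\Trop(D) = \trop(D)$), and I use repeatedly that $\Trop$ carries effective divisors to effective divisors and that, by the extended Slope Formula, $\Trop(\ddiv(f)) = \ddiv(\f)$ whenever $\f$ is the rational function on $\fC$ associated to $f \in K(C)^*$. The first inequality is essentially formal; the second is the substantive one and rests on a lifting argument.

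For the lower bound, fix an arbitrary effective divisor $\mathcal E$ of degree $r$ on $\fC$. Since $K$ is algebraically closed with value group $\RR$, the map $\tau \colon C(K) \to |\fC|$ is surjective: a point of some $C_v(\kappa)$ is the reduction of a type-$1$ point, and a point in the interior of an edge is $\Trop(P)$ for a suitable type-$1$ point $P$. Lifting each point of $\mathcal E$ (with multiplicity) therefore produces an effective divisor $E$ of degree $r$ on $C$ with $\Trop(E) = \mathcal E$. Since $r_C(D) = r$, we have $|D - E| \neq \emptyset$, so there is a nonzero $f \in \cL(D)$ with $\ddiv(f) + D \geq E$. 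Let $\f = (f_\Gamma, \{f_v\})$ be the associated rational function on $\fC$. Because $f \in \cL(D)$ and $H_v$ is by definition the normalized reduction of $\cL(D)$ to $C_v$, each $C_v$-part $f_v = \bar f_v$ lies in $H_v$. Applying $\Trop$ to the effective divisor $\ddiv(f) + D - E$ and invoking the Slope Formula gives $\ddiv(\f) + \Trop(D) - \mathcal E \geq 0$. As $\mathcal E$ was arbitrary of degree $r$, this shows $r_{\fC,\mathcal H}(\Trop(D)) \geq r$.

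For the upper bound I argue by contradiction, assuming $r_{\fC,\mathcal H}(\Trop(D)) \geq r+1$. Since $r_C(D) = r$, there is an effective divisor $E$ of degree $r+1$ on $C$ with $|D-E| = \emptyset$; put $\mathcal E = \Trop(E)$. By the standing assumption there is a rational function $\f$ on $\fC$ with every $C_v$-part $f_v \in H_v$ and $\Trop(D) - \mathcal E + \ddiv(\f) \geq 0$. The goal is to lift $\f$ to a function $F \in \cL(D)$ satisfying $\ddiv(F) + D \geq E$, which contradicts $|D-E| = \emptyset$ and finishes the proof. This lifting step is the crux, and I expect it to be the main obstacle. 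One must produce a single global $F$ whose tropicalization realizes the prescribed $\Gamma$-part $f_\Gamma$ and whose normalized reduction on each $C_v$ agrees up to a scalar with the prescribed $f_v \in H_v = \overline{\cL(D)}_v$, simultaneously over all vertices; the difficulty is that each $f_v$ is determined only up to scalar and these local data must be reconciled into one section of $\cL(D)$, and moreover when several points of $E$ lie in a common residue disk the vanishing of $\bar F_v$ at the associated point of $C_v$ must be upgraded to vanishing of $F$ at each individual lift. The mechanism for overcoming this is the structure of the reduction map on the linear series $\cL(D)$ --- precisely the fact that $H_v$ is its reduction --- combined with the non-Archimedean Poincar\'e--Lelong formula, which identifies the slopes of $\Trop(F)$ in the tangent directions at a type-$2$ point with the orders of vanishing of $\bar F_v$; iterating this control one tangent direction and one residue disk at a time yields the required lift $F$.
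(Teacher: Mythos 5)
The survey states this theorem without proof (it is quoted from \cite{AminiBaker12}), so I am evaluating your argument against the one in that reference. Your lower bound $r_{\fC,\mathcal H}(\trop(D)) \geq r$ follows the standard route: lift the test divisor $\mathcal E$ to an effective divisor $E$ on $C$, use $r_C(D)=r$ to produce $f \in \cL(D)$ with $\ddiv(f)+D \geq E$, and push down to $\fC$ via the extended Slope Formula, noting that the $C_v$-parts land in $H_v$ by the very definition of $\mathcal H$. This is correct in outline, but you gloss over the fact that $\tau\colon C(K) \to |\fC|$ is \emph{not} surjective: its image misses the marked points $x^e_v \in \mathcal A_v$ (a $K$-point reducing to a node is sent to the interior of the corresponding edge, never to the marked point itself), and unless the value group is all of $\RR$ it also misses most interior points of edges. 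One must either perturb $\mathcal E$ or invoke a density/semicontinuity argument (the set of $\mathcal E$ for which a suitable $\f$ exists is closed, and liftable divisors are dense). This is a real but fixable gap.

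The upper bound is where the proposal genuinely fails. You reduce $r_{\fC,\mathcal H}(\trop(D)) \leq r$ to lifting a given rational function $\f$ on $\fC$ with $C_v$-parts in $H_v$ to a single $F \in \cL(D)$ with $\ddiv(F)+D \geq E$, and you acknowledge but do not close this step. It cannot be closed as stated: the data $(f_\Gamma,\{f_v\})$ of a rational function on $\fC$ with $f_v \in H_v$ can be chosen essentially independently at the different vertices (subject only to slope compatibility along edges), whereas the reductions $\bar F_v$ of a single global $F \in \cL(D)$ are rigidly correlated --- the failure of such lifting is exactly the central difficulty emphasized throughout the paper (e.g.\ the strict containment $\trop(\cL(D)) \subsetneq R(\Trop(D))$). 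The actual proof of the upper bound avoids lifting entirely and is a short dimension count. Suppose $r_{\fC,\mathcal H}(\trop(D)) \geq r+1$ and test against $\mathcal E = x_1 + \cdots + x_{r+1}$ with the $x_i$ general points of a single $C_v(\kappa)$, chosen away from $\mathcal A_v$ and from the support of $\trop(D)$. The hypothesized $\f$ then has $C_v$-part $f_v \in H_v$ vanishing at each $x_i$. Since $\dim_\kappa H_v = r+1$ and general points impose independent conditions on a fixed finite-dimensional space of rational functions, this forces $f_v = 0$, contradicting the requirement that every $C_v$-part of a nonzero rational function on $\fC$ be nonzero. You should replace your lifting step with this argument.
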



\subsection{Connections with the theory of limit linear series}

The theory of linear series on metrized complexes of curves has close connections with the Eisenbud-Harris theory
of limit linear series for strongly semistable curves of compact type, and allows one to generalize the basic definitions in the Eisenbud-Harris theory to more general semistable curves.
The Eisenbud--Harris theory, which they used to settle a number of longstanding open problems in the theory of algebraic curves, only applies to a rather restricted class of reducible curves, namely those of {\em compact type} (i.e., nodal curves whose dual graph is a tree).
It has been an open problem for some time to generalize their theory to more general semistable curves.\footnote{Brian Osserman \cite{Osserman14} has recently proposed a different framework for doing this.}

\medskip

Recall that the \emph{vanishing sequence} of a linear series $L = (L,W)$ at $p \in C$, where $W \subset H^0(C,L)$, is the ordered sequence
$$ a^{L}_0 (p) < \cdots < a^{L}_r (p) $$
of integers $k$ with the property that there exists some $s \in W$ vanishing to order exactly $k$ at $p$.
For strongly semistable curves of compact type, Eisenbud and Harris define a notion of {\em crude limit $\g^r_d$} $L$ on $C_0$, which is the data of a (not necessarily complete) degree $d$ and rank $r$ linear series $L_v$ on $C_v$ for each vertex $v \in V$ with the following property:  if two components $C_u$ and $C_v$ of $C_0$ meet at a node $p$, then for any $ 0\leq i \leq r,$
\begin{equation*}
 a^{L_v}_{i}( p ) + a^{L_u}_{r-i}( p ) \, \geq \, d\, .
 \end{equation*}

\medskip

We can canonically associate to a proper strongly semistable curve $C_0$ a metrized complex $\fC$ of $\kappa$-curves, called the {\em regularization} of $C_0$, by assigning a length of $1$ to each edge of $G$.
This is the metrized complex associated to any regular smoothing ${\mathfrak C}$ of $C_0$ over any discrete valuation ring $R$ with residue field $\kappa$.

\begin{theorem}
\label{thm:LLS}
Let $\fC$ be the metrized complex of curves associated to a strongly semistable curve $C_0 / \kappa$ of compact
type.  Then there is a bijective correspondence between the following:

\begin{enumerate}
\item Crude limit $\g^r_d$'s on $C_0$ in the sense of Eisenbud and Harris.
\item Equivalence classes of pairs $({\mathcal H},\cD)$, where ${\mathcal H} = \{ H_v \}$, $H_v$ is an $(r+1)$-dimensional subspace of $\kappa(C_v)$ for each $v \in V$, and
$\cD$ is a divisor of degree $d$ supported on the vertices of $\fC$ with
$r_{\fC,{\mathcal H}}(\cD) = r$.  Here we say that $({\mathcal H},\cD) \sim ({\mathcal H}', \cD')$ if there is a rational function $\f$
on $\fC$ such that $D' = D + {\rm div}(\f)$ and $H_v = H'_v \cdot f_v$ for all $v \in V$, where $f_v$ denotes the $C_v$-part of $\f$.
\end{enumerate}
\end{theorem}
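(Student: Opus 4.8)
The plan is to set up an explicit dictionary between the two kinds of data and then check that it respects both the defining numerical conditions and the two equivalence relations. The feature I will exploit throughout is that $C_0$ has compact type, so its dual graph $G$ is a tree; as recalled in the discussion of the Eisenbud--Harris theory, this means $\Jac(G)$ is trivial and one has complete freedom to twist. Concretely, for a divisor $\cD$ of degree $d$ supported on the vertices of $\fC$ and any vertex $v$, there is a rational function $\f^{(v)}$ on $\fC$ whose $\Gamma$-part lives on the tree and which concentrates the multidegree of $\cD$ entirely on $C_v$; writing $\cD^{(v)}:=\cD+\ddiv(\f^{(v)})$, the restriction $\cD^{(v)}|_{C_v}$ is then a genuine degree-$d$ divisor on the smooth curve $C_v$. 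I will take the component series to be $L_v:=(\mathcal O_{C_v}(\cD^{(v)}|_{C_v}),\, W_v)$, where $W_v$ is the $(r+1)$-dimensional space of sections obtained from $H_v$ after twisting by the $C_v$-part of $\f^{(v)}$. This is the map from pairs to collections of linear series; the inverse map reconstructs $\cD$ and $\mathcal H$ by choosing, on each $C_v$, a divisor in the class of $L_v$ and letting $H_v$ record the corresponding rational functions.

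The heart of the argument is the translation of the single global condition $r_{\fC,\mathcal H}(\cD)=r$ into the Eisenbud--Harris compatibility inequalities at the nodes. The key computational input is the order formula $\ord_{x^e_v}(\f)=\ord_{x^e_v}(f_v)+\mathrm{slp}_e(f_\Gamma)$, which couples the vanishing order of the $C_v$-part $f_v$ at a node to the outgoing slope of the $\Gamma$-part $f_\Gamma$ along the incident edge. Because each $H_v$ has dimension $r+1$, the restricted-rank problem posed by an effective divisor supported in the interior of a single component is automatically solvable by linear algebra, with $f_\Gamma$ taken constant; the real content therefore appears when the test divisor $\mathcal E$ forces degree to be transferred across a node $p=x^e_v=x^{e}_u$. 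For such $\mathcal E$, effectivity of $\cD-\mathcal E+\ddiv(\f)$ on both sides of $p$ requires $\ord_p(f_v)+\mathrm{slp}_e(f_\Gamma)\ge 0$ and $\ord_p(f_u)-\mathrm{slp}_e(f_\Gamma)\ge 0$ simultaneously; eliminating the common slope shows that a section of $L_v$ vanishing to order $i$ at $p$ can be matched with a section of $L_u$ vanishing to order $r-i$ exactly when $a^{L_v}_i(p)+a^{L_u}_{r-i}(p)\ge d$. Thus node-feasibility of the restricted-rank problem is precisely the crude limit $\g^r_d$ compatibility condition.

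With this translation in hand, the two directions of the bijection follow. Starting from a crude limit $\g^r_d$, I will verify $r_{\fC,\mathcal H}(\cD)\ge r$ by showing that for an arbitrary effective $\mathcal E$ of degree $r$ one can assemble an admissible $\f$ from component-wise data, the node inequalities guaranteeing that the pieces glue. The reverse inequality $r_{\fC,\mathcal H}(\cD)\le r$ is obtained by exhibiting a single effective $\mathcal E$ of degree $r+1$ for which no admissible $\f$ exists, using that each $L_v$ has rank exactly $r$. Finally, I will check that the pair-to-series and series-to-pair constructions are mutually inverse and intertwine the two equivalence relations: a twist $\f$ realizing $\cD'=\cD+\ddiv(\f)$ and $H_v=H_v'\cdot f_v$ induces for every $v$ an isomorphism $L_v\cong L_v'$ preserving the vanishing sequences at all nodes, hence the same crude limit series, and conversely any identification of crude limit series arises from such a twist.

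The main obstacle is the forward construction in the ``$\ge r$'' direction: for a general effective $\mathcal E$ of degree $r$ whose points are distributed across many components, one must choose all the $C_v$-parts $f_v$ and all the edge slopes of $f_\Gamma$ coherently and globally, and confirm that the Eisenbud--Harris inequalities, which are conditions at individual nodes, assemble into a global solution. I expect this to reduce, using the tree structure, to an induction that roots $G$ at a leaf and carries inward a ``budget'' of admissible vanishing, where at each node the compatibility inequality is exactly the certificate that the incoming budget can be met and passed along; verifying that this induction closes up consistently across all branches of the tree is the technical crux of the proof.
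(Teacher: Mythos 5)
This survey states Theorem~\ref{thm:LLS} without proof (it is quoted from \cite{AminiBaker12}), so there is no in-paper argument to compare against; judged on its own terms, your dictionary is the right one and is essentially the mechanism behind the Amini--Baker correspondence: because the dual graph is a tree, $\cD$ can be twisted by a $\Gamma$-part supported on the tree so that its full degree $d$ sits on any chosen $C_v$, the resulting divisor class together with the twisted $H_v$ is the aspect $L_v$, and the order formula $\ord_{x^e_v}(\f)=\ord_{x^e_v}(f_v)+\mathrm{slp}_e(f_\Gamma)$ at the two marked points of an edge, after eliminating the common slope, turns node-feasibility into the Eisenbud--Harris inequality $a^{L_v}_i(p)+a^{L_u}_{r-i}(p)\ge d$.

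That said, the proposal has a genuine gap precisely where you flag it: the forward inequality $r_{\fC,\mathcal H}(\cD)\ge r$ is only announced as an induction you ``expect'' to close, and this is where all the content lies. The single-node inequality does not immediately globalize: a component $C_v$ may carry several points of $\mathcal E$ \emph{and} several nodes, and you must produce one element of the $(r+1)$-dimensional space $H_v$ that simultaneously vanishes on $\mathcal E|_{C_v}$ and achieves prescribed vanishing orders at every marked point of $C_v$, with those prescriptions chosen coherently over the whole tree; that these linear conditions are compatible is exactly the budget-passing argument, and it has to be written out (this is the bulk of the proof in \cite{AminiBaker12}). Two further points need attention. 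First, the test divisor $\mathcal E$ may have support in the interiors of edges of $\Gamma$, a case your node analysis does not cover; on a tree such chips can be moved to the endpoints by adjusting the slopes of $f_\Gamma$, but this interacts with the slope bookkeeping at the adjacent nodes and should be said explicitly. Second, your inverse map as written (``choosing, on each $C_v$, a divisor in the class of $L_v$'') produces a divisor of degree $d\,|V|$ rather than $d$; the correct construction fixes a base vertex $v_0$, takes $\cD$ to be a degree-$d$ representative of $L_{v_0}$ supported on $C_{v_0}$, and encodes the remaining aspects in the spaces $H_v$ via the tree twists, after which independence of the base point is exactly what the stated equivalence relation on pairs $(\mathcal H,\cD)$ absorbs.
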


Theorem~\ref{thm:LLS}, combined with the Riemann-Roch theorem for metrized complexes of curves, provides a new proof of the fact, originally established in \cite{EisenbudHarris86}, that limit linear series satisfy
analogues of the classical theorems of Riemann and Clifford.
The point is that $r_{\fC, {\mathcal H}}(\cD) \leq r_{\fC}(\cD)$ for all $\cD \in \Div(\fC)$, and therefore upper
bounds on $r_{\fC}(\cD)$ which follow from Riemann-Roch imply corresponding upper bounds on the restricted rank
$r_{\fC, {\mathcal H}}(\cD)$.

\medskip

Motivated by Theorem~\ref{thm:LLS}, Amini and Baker propose the following definition.

\begin{definition} \label{def:limitgrd}
Let $C_0$ be a strongly semistable (but not necessarily compact type) curve over $\kappa$ with regularization $\fC$.
A {\em limit $\g^r_d$} on $C_0$ is an equivalence class of pairs $(\{ H_v \},\cD)$ as above,
where $H_v$ is an $(r+1)$-dimensional subspace of $\kappa(C_v)$ for each $v \in V$,
and $\cD$ is a degree $d$ divisor on $\fC$ with $r_{\fC,{\mathcal H}}(\cD) = r$.
\end{definition}

\medskip

As partial additional justification for Definition~\ref{def:limitgrd},
Amini and Baker prove, using specialization, that a $\g^r_d$ on the smooth general fiber $C$ of a semistable family $\cC$ gives rise in a natural way to a crude limit $\g^r_d$ on the central fiber.




\part{Applications}
\label{Part:Applications}

In this part, we discuss several recent applications of tropical Brill-Noether theory to problems in algebraic and arithmetic geometry.  These sections are largely independent of each other, so the reader should be able to peruse them according to his or her interest.

\section{Applications of Tropical Linear Series to Classical Brill-Noether Theory}
\label{Section:Applications}

Recent years have witnessed several applications of tropical Brill-Noether theory to problems in classical algebraic geometry.  In this section, we survey the major recent developments in the field.

\subsection{The Brill-Noether Theorem}
\label{Subsection:BNThm}

The Brill-Noether Theorem predicts the dimension of the space $W^r_d(C)$ parameterizing divisor classes (or, equivalently, complete linear series) of a given degree and rank on a general curve $C$.

\begin{BNthm}[\cite{GriffithsHarris80}]
\label{Thm:BNThm}
Let $C$ be a general curve of genus $g$ over $\CC$.   Then $W^r_d (C)$ has pure dimension $\rho(g,r,d) = g-(r+1)(g-d+r)$, if this is nonnegative, and is empty otherwise.
\end{BNthm}

The original proof of the Brill-Noether Theorem, due to Griffiths and Harris, involves a subtle degeneration argument \cite{GriffithsHarris80}.  The later development of limit linear series by Eisenbud and Harris led to a simpler proof of this theorem \cite{EisenbudHarris83c, EisenbudHarris86}.  The literature contains several other proofs, some
of which work in any characteristic.  One that is often referenced is due to Lazarsfeld, because rather than using degenerations, Lazarsfeld's argument involves vector bundles on K3 surfaces \cite{Lazarsfeld86}.

\medskip

The first significant application of tropical Brill-Noether theory was the new proof of the Brill-Noether Theorem by Cools, Draisma, Payne and Robeva \cite{tropicalBN}, which successfully realized the program laid out in \cite{Baker08}.  In \cite{tropicalBN}, the authors consider the family of graphs pictured in Figure \ref{Fig:ChainOfLoops}, colloquially known as the chain of loops.\footnote{In fact, they consider the graph in which the lengths of the bridge edges between the loops are all zero.  There is, however, a natural rank-preserving isomorphism between the Jacobian of a metric graph with a bridge and the Jacobian of the graph in which that bridge has been contracted, so their argument works equally well in this case.  We consider the graph with bridges because of its use in \cite{tropicalGP} and \cite{MRC}.}  The edge lengths are further assumed to be generic, which in this case means that, if $\ell_i , m_i$ are the lengths of the bottom and top edges of the $i$th loop, then $\ell_i /m_i$ is not equal to the ratio of two positive integers whose sum is less than or equal to $2g-2$.

\begin{figure}
\begin{tikzpicture}

\draw [ball color=black] (-1.7,-0.45) circle (0.55mm);
\draw (-1.95,-0.65) node {\footnotesize $v_1$};
\draw (-1.5,0) circle (0.5);
\draw (-1,0)--(0,0.5);
\draw [ball color=black] (-1,0) circle (0.55mm);
\draw (-0.85,0.3) node {\footnotesize $w_1$};
\draw (0.7,0.5) circle (0.7);
\draw (1.4,0.5)--(2,0.3);
\draw [ball color=black] (1.4,0.5) circle (0.55mm);
\draw [ball color=black] (0,0.5) circle (0.55mm);
\draw (-0.2,0.75) node {\footnotesize $v_2$};
\draw (2.6,0.3) circle (0.6);
\draw (3.2,0.3)--(3.87,0.6);
\draw [ball color=black] (2,0.3) circle (0.55mm);
\draw [ball color=black] (3.2,0.3) circle (0.55mm);
\draw [ball color=black] (3.87,0.6) circle (0.55mm);
\draw (4.5,0.3) circle (0.7);
\draw (5.16,0.5)--(5.9,0);
\draw (6.4,0) circle (0.5);
\draw [ball color=black] (5.16,0.5) circle (0.55mm);
\draw (5.48,0.74) node {\footnotesize $w_{g-1}$};
\draw [ball color=black] (5.9,0) circle (0.55mm);
\draw [ball color=black] (6.9,0) circle (0.55mm);
\draw (5.7,-.2) node {\footnotesize $v_g$};
\draw (7.3,-.2) node {\footnotesize $w_g$};

\draw [<->] (3.3,0.4) arc[radius = 0.715, start angle=10, end angle=170];
\draw [<->] (3.3,0.2) arc[radius = 0.715, start angle=-9, end angle=-173];

\draw (2.5,1.25) node {\footnotesize$\ell_i$};
\draw (2.75,-0.7) node {\footnotesize$m_i$};
\end{tikzpicture}
\caption{The graph $\Gamma$.}
\label{Fig:ChainOfLoops}
\end{figure}
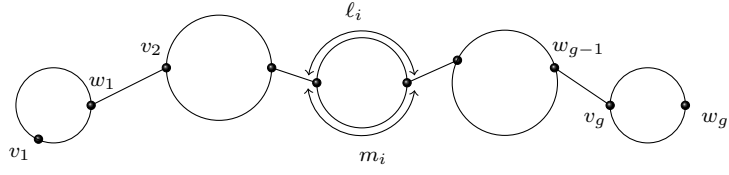

\medskip

Using Theorem~\ref{Thm:WrdNonempty} as the only input from algebraic geometry, the authors of \cite{tropicalBN} employ an intricate combinatorial argument to prove the following:

\begin{theorem}[\cite{tropicalBN}]
\label{Thm:TropicalBNThm}
Let $C$ be a smooth projective curve of genus $g$ over a discretely valued field with a regular, strongly semistable model whose special fiber is a generic chain of loops $\Gamma$.  Then $\dim W^r_d (C) = \rho (g,r,d)$ if this number is nonnegative, and $W^r_d (C) = \emptyset$ otherwise.
\end{theorem}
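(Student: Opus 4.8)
The plan is to transfer the statement about the algebraic curve $C$ to a purely combinatorial computation on the generic chain of loops $\Gamma$, using the Specialization Theorem~\ref{Thm:Specialization} and its refinement for the Brill--Noether rank (Theorem~\ref{Thm:BNRankSpecialization}) as the bridge between the two sides. After base change to a complete algebraically closed field, $\Gamma$ is the minimal skeleton of $C^{\an}$, so these tools apply. The emptiness half is then immediate: once I show combinatorially that $W^r_d(\Gamma) = \emptyset$ when $\rho < 0$, the inclusion $\Trop(W^r_d(C)) \subseteq W^r_d(\Gamma)$ forces $W^r_d(C) = \emptyset$ as well. For the dimension statement when $\rho \geq 0$ I would prove the two inequalities separately. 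The lower bound $\dim W^r_d(C) \geq \rho$ is the general theory of determinantal varieties: $W^r_d(C)$ is nonempty by Kempf--Kleiman--Laksov and each of its components has dimension at least $\rho$. The upper bound $\dim W^r_d(C) \leq \rho$ I would obtain from $\dim W^r_d(C) \leq w^r_d(\Gamma)$ (Theorem~\ref{Thm:BNRankSpecialization}) together with the combinatorial fact that $w^r_d(\Gamma) = \rho$; since the reverse inequality $w^r_d(\Gamma) \geq \rho$ is already Corollary~\ref{Cor:wrd}, the genuine content is the bound $w^r_d(\Gamma) \leq \rho$.

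The combinatorial heart is a classification of the divisor classes of rank at least $r$ on $\Gamma$. First I would invoke the theory of reduced divisors (Lemma~\ref{Lem:ReducedDivisors}): fixing the leftmost vertex $v_1$ as base point, every degree-$d$ class has a unique $v_1$-reduced representative, and on the chain of loops such a representative places exactly one chip on each loop $\gamma_i$, at a point recorded by a coordinate $\xi_i$, together with a pile of chips at $v_1$. This coordinatizes $\Pic^d(\Gamma)$ essentially as a product of $g$ circles. To each class I would attach a lattice path $p_0, p_1, \ldots, p_g$ in $\ZZ^r$, where the step across loop $\gamma_i$ is dictated by $\xi_i$: for all but finitely many values of $\xi_i$ the step is \emph{lingering}, meaning $p_i = p_{i-1}$, while for $r+1$ distinguished positions it is one of the admissible lattice moves. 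Running Dhar's burning algorithm (\S\ref{Subsection:ReducedDivisors}) loop by loop, I would show that the class has rank at least $r$ if and only if its associated path never leaves the open dominant region where the coordinates are strictly decreasing, i.e. is an \emph{open lingering lattice path}. This is precisely where the genericity hypothesis on the ratios $\ell_i/m_i$ is used: it guarantees that the finitely many distinguished loop positions are distinct and that the reduction on each loop behaves generically, with no accidental coincidences.

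Granting this dictionary, the numerics fall out. A lingering step contributes one continuous degree of freedom, since $\xi_i$ then ranges over an interval, whereas a moving step is rigid. For the path to remain in the chamber and realize degree $d$ and rank $r$, exactly $(r+1)(g-d+r)$ of the $g$ steps must be moving steps, leaving $g-(r+1)(g-d+r) = \rho$ lingering steps; hence the rank-$\geq r$ locus is carved into cells of dimension at most $\rho$, and is empty precisely when $\rho < 0$, there being too few steps to accommodate the required moves. The same analysis lets me exhibit, when $\rho \geq 0$, a single effective divisor $E$ of degree $r + \rho + 1$ (a generic sum of points on distinct loops) through which no rank-$r$, degree-$d$ class passes, which is exactly the inequality $w^r_d(\Gamma) \leq \rho$ needed to close the upper bound via Theorem~\ref{Thm:BNRankSpecialization}.

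I expect the combinatorial classification of the rank-$\geq r$ locus to be the main obstacle. The difficulty is twofold. First, rank is a \emph{global} invariant, so the loop-by-loop Dhar computation must be assembled into a statement about all of $\Gamma$ at once; the lingering-lattice-path formalism is exactly the bookkeeping device that makes this assembly work, and verifying that precisely the paths in the open chamber arise is delicate. Second, one must track carefully how genericity of the edge lengths excludes the degenerate phenomena—such as the failure of semicontinuity of $\dim W^r_d(\Gamma)$ seen in Example~\ref{Ex:NotSemicontinuous}—that occur on special metric graphs. Establishing that each admissible path is realized by an honest $\rho$-dimensional family of reduced divisors, and no larger, is the crux of the argument.
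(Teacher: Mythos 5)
Your proposal follows essentially the same route as the paper: reduce to the metric graph via the Specialization Theorem and its Brill--Noether-rank refinement (Theorem~\ref{Thm:BNRankSpecialization}), get the lower bound from Kempf--Kleiman--Laksov and determinantal varieties, and establish $w^r_d(\Gamma) = \dim W^r_d(\Gamma) = \rho$ by classifying $v_1$-reduced divisors on the generic chain of loops via Dhar's algorithm and lingering lattice paths confined to the open Weyl chamber. The only slight imprecision is that a $v_1$-reduced divisor has \emph{at most} (not exactly) one chip on each half-open loop --- the chipless loops account for the $(-1,\ldots,-1)$ steps of the lattice path --- but this does not affect the structure of the argument.
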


We note that such a curve $C$ exists by Corollary \ref{Cor:Surjective}.  The Brill-Noether Theorem (over an arbitrary algebraically closed field) then follows from Theorem~\ref{Thm:TropicalBNThm} using
the theory of Brill-Noether rank discussed in \S \ref{Subsection:BNRank}.

\medskip

In fact, \cite{tropicalBN} proves more.  Theorem 4.6 of \cite{tropicalBN} completely describes $W^r_d (\Gamma)$, explicitly classifying all divisors of given degree and rank on a generic chain of loops.  Indeed, it is shown that $W^r_d (\Gamma)$ is a union of $\rho$-dimensional tori.  The set of tori is in bijection with so-called ``lingering lattice paths'', which in turn are in bijection with standard Young tableaux on a rectangle with $r+1$ columns and $g-d+r$ rows containing the numbers $1, \ldots , g$.  From this, one can compute the number of tori to be $$ {{g}\choose{\rho}} (g-\rho)! \prod_{i=0}^r \frac{i!}{(g-d+r+i)!}$$
if $\rho \geq 0$, and $0$ if $\rho < 0$.

\medskip

We briefly discuss the argument here.  Given an effective divisor $D$, we may assume that $D$ is $v_1$-reduced.  The divisor $D$ then has some number $d_1$ of chips at $v_1$, and by Dhar's burning algorithm $D$ has at most 1 chip on each of the half-open loops $\gamma_k$ pictured in Figure \ref{Figure:Cells}, and no chips on the half-open bridges $\br_k$.

\begin{figure}

\begin{tikzpicture}
\matrix[column sep=0.5cm] {
\begin{scope}[baseline]
\draw [ball color=black] (-1.7,-0.45) circle (0.55mm);
\draw (-1.95,-0.65) node {\footnotesize $v_1$};
\draw (-1.5,0) circle (0.5);
\draw [ball color=white] (-1,0) circle (0.55mm);
\draw (-1.5,1.0) node {\footnotesize $\gamma_1$};
\end{scope}
&
\begin{scope}[grow=right,baseline]
\draw [ball color=black] (-1,0) circle (0.55mm);
\draw (-1,0)--(0,0.5);
\draw (-0.8,0.4) node {\footnotesize $w_1$};
\draw [ball color=white] (0,0.5) circle (0.55mm);
\draw (-0.5,1.2) node {\footnotesize $\br_1$};
\end{scope}
&
\begin{scope}[grow=right,baseline]
\draw node at (.5,0.48) {$\cdots$};
\end{scope}
&
\begin{scope}[grow=right,baseline]
\draw (0.7,0.5) circle (0.7);
\draw [ball color=white] (1.4,0.5) circle (0.55mm);
\draw [ball color=black] (0,0.5) circle (0.55mm);
\draw (0.7,1.5) node {\footnotesize $\gamma_i$};
\end{scope}
&
\begin{scope}[grow=right,baseline]
\draw (1.4,0.5)--(2,0.3);
\draw [ball color=white] (2,0.3) circle (0.55mm);
\draw [ball color=black] (1.4,0.5) circle (0.55mm);
\draw (1.7,1.1) node {\footnotesize $\br_i$};
\end{scope}
&
\begin{scope}[grow=right,baseline]
\draw node at (2,0.3) {$\cdots$};
\end{scope}
&
\begin{scope}[grow=right,baseline]
\draw (2.6,0.3) circle (0.6);
\draw [ball color=black] (2,0.3) circle (0.55mm);
\draw [ball color=white] (3.2,0.3) circle (0.55mm);
\draw (2.6,1.2) node {\footnotesize $\gamma_g$};
\end{scope}
&
\begin{scope}[grow=right, baseline]
\draw [ball color = black] (5.16,0.3) circle (0.55mm);
\draw node at (5.16,0.05) {\footnotesize $w_g$};
\end{scope}
\\};
\end{tikzpicture}
\caption{A decomposition of $\Gamma$.}
\label{Figure:Cells}
\end{figure}
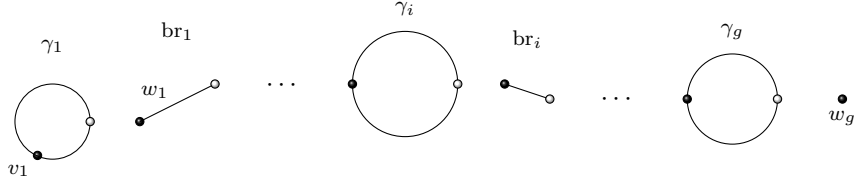

\medskip

The associated \textit{lingering lattice path} is a sequence of vectors $p_i \in \ZZ^r$, starting at $p_1 = (d_1 , d_1 -1, \ldots , d_1 -r+1)$, with the $i$th step given by
\begin{displaymath}
p_{i+1} - p_i = \left\{ \begin{array}{ll}
(-1,-1, \ldots , -1) & \textrm{if $D$ has no chip on $\gamma_i$}\\
e_j & \textrm{if $D$ has a chip on $\gamma_i$, the distance from $v_i$} \\
 & \textrm{to this chip is precisely } (p_i(j) +1)m_{i+1}, \\
  & \textrm{and both $p_i$ and $p_i + e_j$ are in $\mathcal{W}$}\\
0 & \textrm{otherwise}
\end{array} \right\}
\end{displaymath}
Here, the distance from $v_i$ is in the counterclockwise direction.  Since the chip lies on a circle of circumference $m_i + \ell_i$, this distance should be understood to be modulo $(m_i + \ell_i)$.  The symbols $e_0 , \ldots e_{r-1}$ represent the standard basis vectors in $\mathbb{Z}^r$ and $\mathcal{W}$ is the open Weyl chamber
$$ \mathcal{W} = \{ y \in \mathbb{Z}^r \vert y_0 > y_1 > \cdots > y_{r-1} > 0 \}. $$

\medskip

The steps where $p_{i+1} = p_i$ are known as \emph{lingering steps}.  The basic idea of the lingering lattice path is as follows.  By Theorem \ref{Thm:RankDetermining}, the set $\{ v_1 , v_2 , \ldots , v_g , w_g \}$ is rank-determining.  Hence, if $D$ fails to have rank $r$, there is an effective divisor $E$ of degree $r$, supported on these vertices, such that $\vert D-E \vert = \emptyset$.  Starting with the $v_1$-reduced divisor $D$, we move chips to the right and record the $v_i$-degree of the equivalent $v_i$-reduced divisor.  The number $p_i (j)$ is then the minimum, over all effective divisors $E$ of degree $j$ supported at $v_1 , \ldots , v_i$, of the $v_i$-degree of the $v_i$-reduced divisor equivalent to $D-E$.  From this it follows that, when $D$ has rank at least $r$, we must have $p_i (j) \geq r-j$, so the corresponding lingering lattice path must lie in the open Weyl chamber $\mathcal{W}$.

\medskip

The corresponding tableau is constructed by placing the moves in the direction $e_i$ in the $i$th column of the rectangle, and the moves in the direction $(-1 , \ldots , -1)$ in the last column.  If the $k$th step is lingering, then the integer $k$ does not appear in the tableau.  Given this description, we see that each tableau determines the existence and position of the chip on the half-open loop $\gamma_k$ if and only if the integer $k$ appears in the tableau.  Otherwise, the chip on the $k$th loop is allowed to move freely.  The number of chips that are allowed to move freely is therefore $\rho = g-(r+1)(g-d+r)$.  Indeed, we see that not only is the Brill-Noether rank $w^r_d (\Gamma)$ equal to $\rho$, but in fact $\dim W^r_d (\Gamma) = \rho$ as well.  Theorem \ref{Thm:TropicalBNThm} then follows from the specialization result for Brill-Noether rank, Theorem \ref{Thm:BNRankSpecialization}.

\subsection{The Gieseker-Petri Theorem}
\label{Subsection:GPThm}

Assume that $\rho(g,r,d) \geq 0$.
The variety $W^r_d (C)$ is singular along $W^{r+1}_d (C)$.  Blowing up along this subvariety yields the variety $\cG^r_d (C)$ parameterizing (not necessarily complete) linear series of degree $d$ and rank $r$ on $C$.  A natural generalization of the Brill-Noether Theorem is the following:

\begin{GPthm}[\cite{Gieseker82}]
\label{Thm:GPThm}
Let $C$ be a general curve of genus $g$.  If $\rho (g,r,d) \geq 0$, then $\cG^r_d (C)$ is smooth of dimension $\rho(g,r,d)$.
\end{GPthm}

It is a standard result, following \cite[\S IV.4]{ACGH}, that the Zariski cotangent space to $\cG^r_d (C)$ at a point corresponding to a complete linear series $\cL(D)$ is naturally isomorphic to the cokernel of the adjoint multiplication map
\[
\mu_D : \cL(D) \otimes \cL(K_C - D) \to \cL( K_C ).
\]
Thus the cotangent space has dimension $\rho(g,r,d) + \dim \ker \mu_D$, and in particular, $\cG^r_d(C)$ is smooth of dimension $\rho(g,r,d)$ at such a point if and only if the multiplication map $\mu_D$ is injective.
More generally, if $P \in \cG^r_d (C)$ corresponds to a possibly incomplete linear series $W \subset \cL(D)$, then $\cG^r_d(C)$ is smooth of dimension $\rho(g,r,d)$ at $P$ if and only if the multiplication map $W \otimes \cL(K_C - D) \to \cL( K_C )$ is injective.
One deduces that the Gieseker-Petri Theorem is equivalent to the assertion that if $C$ is a general curve of genus $g$, then $\mu_D$ is injective for all divisors $D$ on $C$.

\medskip

A recent application of tropical Brill-Noether theory is the following result \cite{tropicalGP}, which yields a new proof of the Gieseker-Petri Theorem:

\begin{theorem}[\cite{tropicalGP}]
\label{Thm:TropicalGPThm}
Let $C$ be a smooth projective curve of genus $g$ over a discretely valued field with a regular, strongly semistable model whose special fiber is a generic chain of loops $\Gamma$.  Then the multiplication map
$$\mu_D : \cL(D) \otimes \cL(K_C - D) \to \cL(K_C ) $$
is injective for all divisors $D$ on $C$.
\end{theorem}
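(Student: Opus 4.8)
The statement to prove is exactly that $\mu_D$ is injective for every $D$, so the plan is to argue by contradiction using the machinery of tropical independence (Definition~\ref{Def:TropicalIndependence}). Suppose $\ker \mu_D \neq 0$. Writing a nonzero kernel element in reduced form $\sum_{i=1}^n f_i \otimes g_i$ with the $f_i \in \cL(D)$ linearly independent and the $g_i \in \cL(K_C - D)$ nonzero, we get a nontrivial relation $\sum_{i=1}^n f_i g_i = 0$ in $\cL(K_C)$; since $C$ is integral each $f_i g_i$ is a nonzero function, so $n \geq 2$ and the functions $\{f_i g_i\}$ are genuinely linearly dependent on $C$. As noted after Definition~\ref{Def:TropicalIndependence}, linearly dependent functions on $C$ specialize to tropically dependent functions on $\Gamma$, so the piecewise linear functions $\psi_i := \trop(f_i g_i) = \trop(f_i) + \trop(g_i)$ are tropically dependent on $\Gamma$. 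Thus the entire theorem reduces to the combinatorial assertion that, on a generic chain of loops, no collection of functions of this shape can be tropically dependent.

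First I would pin down the possible shapes of the building blocks. Each $\trop(f_i)$ lies in $R(\Trop(D))$ and each $\trop(g_j)$ in $R(\Trop(K_C - D))$, and by the Slope Formula (Theorem~\ref{thm:SlopeFormula}) these are piecewise linear functions with integer slopes whose divisors are controlled by $\Trop(D)$ and $\Trop(K_C - D) \sim K_\Gamma - \Trop(D)$. Using the explicit classification of $W^r_d(\Gamma)$ on the generic chain of loops (the lingering-lattice-path description recalled in \S\ref{Subsection:BNThm}), together with reduced divisors and Dhar's burning algorithm, I would describe the chip configurations of these functions loop-by-loop along the decomposition of $\Gamma$ into the half-open loops $\gamma_k$ and bridges $\br_k$ of Figure~\ref{Figure:Cells}. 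The genericity of the edge lengths $\ell_i, m_i$ is essential here: it forces each relevant divisor class to have an essentially rigid reduced representative and pins down the locations of chips, so that the slopes of the various $\trop(f_i)$ and $\trop(g_j)$ entering and leaving each loop are determined.

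With these blocks in hand, set $\theta := \min_i \{ \psi_i + b_i \}$ for the constants $b_i$ witnessing the hypothetical tropical dependence, so the minimum is attained at least twice at every point of $\Gamma$. Writing $\Gamma_j$ for the closed set on which $\theta = \psi_j + b_j$, I would apply Lemma~\ref{Lem:MinChips} with the divisor $\Trop(K_C)$ to control the support of $\ddiv(\theta) + \Trop(K_C)$ in terms of the supports of the $\ddiv(\psi_j) + \Trop(K_C)$ and the boundaries $\partial \Gamma_j$. Since each $f_i g_i \in \cL(K_C)$ we have $\psi_i \in R(\Trop(K_C))$ and $\Trop(K_C) \sim K_\Gamma$ has degree $2g - 2$, so the effective divisor $\ddiv(\theta) + K_\Gamma$ has degree $2g-2$, and the combinatorial constraints on where chips of the $\psi_i$ can sit must be reconciled with the requirement that every region where $\theta$ is attained by a single index be empty.

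The crux, and the step I expect to be the main obstacle, is the final contradiction: showing that on at least one loop $\gamma_k$ the minimum defining $\theta$ is in fact attained \emph{uniquely}, contradicting tropical dependence. This requires a delicate traversal of the chain of loops, tracking how the winning index in $\min_i\{\psi_i + b_i\}$ can change only at points forced by the chip data, and using the generic incommensurability of the ratios $\ell_i/m_i$ to rule out the coincidences that tropical dependence would demand. Organizing this analysis uniformly over \emph{all} divisors $D$, rather than a single class with $\rho = 0$, is the most demanding aspect, since one must simultaneously bound the combinatorial complexity of the interaction between the two linear series $\cL(D)$ and $\cL(K_C - D)$ across every loop.
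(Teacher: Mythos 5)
There is a genuine gap, and it occurs at the very first step: your reduction sends you after a false statement. From a nonzero kernel element in reduced form you correctly extract a relation $\sum_{i=1}^n f_i g_i = 0$ and conclude that the functions $\trop(f_i)+\trop(g_i)$ are tropically dependent; but to get a contradiction you then need the converse, namely that \emph{every} collection of functions of the shape $\trop(f_i)+\trop(g_i)$ with $f_i \in \cL(D)$ linearly independent and $g_i \in \cL(K_C-D)$ nonzero is tropically independent. That is false: tropicalization is far from injective on a linear series (two linearly independent elements of $\cL(D)$ generically have the \emph{same} tropicalization, namely the pointwise minimum of the tropicalizations of a basis), so one can easily produce $f_1 \neq f_2$ linearly independent and $g_1=g_2$ with $\trop(f_1)+\trop(g_1) = \trop(f_2)+\trop(g_2)$ --- a tropically dependent pair with no kernel element behind it. Tropical independence is a sufficient but not necessary condition for linear independence, so one cannot certify injectivity of $\mu_D$ by testing all collections arising from hypothetical kernel elements; one must instead exhibit a \emph{single, carefully chosen} collection. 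This is what the actual proof does: it fixes tropicalizations $f_0,\dots,f_r$ of a basis of $\cL(D)$ and $g_0,\dots,g_{g-d+r-1}$ of a basis of $\cL(K_C-D)$ and proves that the full set of $(r+1)(g-d+r)$ pairwise sums $\{f_i+g_j\}_{i,j}$ is tropically independent, which forces the products of the basis elements to be linearly independent and hence $\mu_D$ to be injective.

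The second gap is that the combinatorial heart of the argument is not supplied, and your sketch of it does not match the mechanism that actually works. The proof first restricts to the dense open locus of \emph{vertex-avoiding} divisors, for which each $\trop(f_i)$ is pinned down explicitly by $\ddiv(f_i) = D_i - D$ where $D_i$ is the unique representative with $D_i - iw_g - (r-i)v_1 \geq 0$; the tableau description of $W^r_d(\Gamma)$ and the fact that $K_\Gamma - D$ corresponds to the \emph{transpose} tableau then show that for each loop $\gamma_k$ at most one of the divisors $D_i + E_j$ misses $\gamma_k$. Consequently, if $\theta = \min\{f_i+g_j+b_{i,j}\}$ achieved its minimum twice everywhere, Lemma~\ref{Lem:MinChips} would force $\Delta = \ddiv(\theta)+K_\Gamma$ to have a chip on every loop; choosing one such chip $p_k$ per loop gives a degree-$g$ divisor $D' = p_1+\cdots+p_g$ with $K_\Gamma - D'$ effective (so $r(D')\geq 1$ by Riemann--Roch) which is nonetheless universally reduced by Dhar's algorithm (so $r(D')=0$ by Proposition~\ref{prop:SimpleBreakDivisors}) --- the contradiction is global and shape-theoretic, not a local ``the minimum is attained uniquely on some loop'' traversal. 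Finally, for non-vertex-avoiding $D$ the containment $\trop(\cL(D)) \subseteq R(\Trop(D))$ is strict and one needs the patching construction gluing tropicalizations of different elements of the algebraic linear series over different parts of $\Gamma$; your proposal does not address this case at all.
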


The argument has much in common with the tropical proof of the Brill-Noether Theorem, using the same metric graph with the same genericity conditions on edge lengths.  The new ingredient is the idea of tropical independence, as defined in \S \ref{Subsection:TropicalIndependence}.  Given a divisor $D \in W^r_d (C)$, the goal is to find functions
$$ f_0 , \ldots , f_r \in \trop (\mathcal{L} (D)) $$
$$ g_0 , \ldots , g_{g-d+r-1} \in \trop (\mathcal{L} (K_C - D)) $$
such that $\{ f_i + g_j  \}_{i,j}$ is tropically independent.

\medskip

There is a dense open subset of $W^r_d (\Gamma)$ consisting of divisors $D$ with the following property:  given an integer $0 \leq i \leq r$, there exists a unique divisor $D_i \sim D$ such that
$$ D_i - iw_g - (r-i)v_1 \geq 0 . $$
These are the divisors referred to as \emph{vertex-avoiding} in \cite{lifting}.

\medskip

We first describe the proof of Theorem~\ref{Thm:TropicalGPThm} in the case that $D$ is vertex-avoiding.  If $D$ is the specialization of a divisor $\mathcal{D} \in W^r_d (C)$, and $p_1 , p_g \in C$ are points specializing to $v_1, w_g$, respectively, then there exists a divisor $\mathcal{D}_i \sim \mathcal{D}$ such that
$$ \mathcal{D}_i - ip_g - (r-i)p_1 \geq 0, $$
and, by the uniqueness of $D_i$, $\mathcal{D}_i$ must specialize to $D_i$.  It follows that there is a function $f_i \in \trop (\mathcal{L} (\mathcal{D}))$ such that $\ddiv (f_i) = D_i - D$.

\medskip

For this open subset of divisors, the argument then proceeds as follows.  By the classification in \cite{tropicalBN}, the divisor $D_i$ fails to have a chip on the $k$th loop if and only if the integer $k$ appears in the $i$th column of the corresponding tableau.  The adjoint divisor $E = K_{\Gamma} - D$ corresponds to the transpose tableau \cite[Theorem 39]{AMSW}, so the divisor $D_i + E_j$ fails to have a chip on the $k$th loop if and only if $k$ appears in the $(i,j)$ position of the tableau.  Since for each $k$ at most one of these divisors fails to have a chip on the $k$th loop, we see that if
$$ \theta = \min \{ f_i + g_j + b_{i,j} \} $$
occurs at least twice at every point of $\Gamma$, then the divisor
$$ \Delta = \ddiv (\theta) + K_{\Gamma} $$
must have a chip on the $k$th loop for all $k$.

\medskip

To see that this is impossible, let $p_k$ be a point of $\Delta$ in $\gamma_k$, and let
$$ D' = p_1 + \cdots + p_g . $$
Then by construction $K_\Gamma-D'$ is equivalent to an effective divisor, so by the tropical Riemann-Roch Theorem we see that $r(D') \geq 1$.
On the other hand, Dhar's burning algorithm shows that $D'$ is universally reduced, so by Proposition~\ref{prop:SimpleBreakDivisors} we have $r(D') = 0$, a contradiction.

\medskip

It is interesting to note that this obstruction is, at heart, combinatorial.  Unlike the earlier proofs via limit linear series, which arrive at a contradiction by constructing a canonical divisor of impossible \emph{degree} (larger than $2g-2$), this argument arrives at a contradiction by constructing a canonical divisor of impossible \emph{shape}.

\medskip

The major obstacle to extending this argument to the case where $D$ is not vertex-avoiding is that the containment $\trop (\mathcal{L} (D)) \subseteq R( \Trop (D))$ is often strict.  Given an arbitrary divisor $D \in W^r_d (C)$ and function $f \in R(\Trop(D))$, it is difficult to determine whether $f$ is the specialization of a function in $\mathcal{L} (D)$.  To avoid this issue, the authors make use of a patching construction, gluing together tropicalizations of different rational functions in a fixed algebraic linear series on different parts of the graph, to arrive at a piecewise linear function in $R(K_{\Gamma})$ that may not be in $\trop ( \mathcal{L} (K_C))$.  Once this piecewise linear function is constructed, the argument proceeds very similarly to the vertex-avoiding case.

\subsection{The Maximal Rank Conjecture}
\label{Subsection:MRC}

One of the most well-known open problems in Brill-Noether theory is the Maximal Rank Conjecture, which predicts the Hilbert function for sufficiently general embeddings of sufficiently general curves.  This conjecture is attributed to Noether in \cite[p.\,4]{ArbarelloCiliberto83}, (see \cite[\S8]{Noether82} and \cite[pp.\,172--173]{CES25} for details) was studied classically by Severi \cite[\S10]{Severi15}, and popularized by Harris \cite[p.\,79]{Harris82}.

\begin{MRC}
\label{Conj:MRC}
Fix nonnegative integers $g,r,d$, let $C$ be a general curve of genus $g$, and let $V \subset \cL(D)$ be a general linear series of rank $r$ and degree $d$ on $C$.  Then the multiplication maps
\[
\mu_m: {\rm Sym}^m V \rightarrow \cL(mD)
\]
have maximal rank for all $m$.  That is, each $\mu_m$ is either injective or surjective.
\end{MRC}

While the Maximal Rank Conjecture remains open in general, several important cases are known \cite{BE87, Voisin92, TiB03, Farkas09}.  For example, it is shown in \cite{BE87} that the Maximal Rank Conjecture holds in the non-special range $d \geq g+r$.  When $d < g+r$, the general linear series of degree $d$ and rank $r$ on a general curve is complete, and for this reason, most of the work in the subject focuses on the case where $V = \cL (D)$.  We note that the arguments of \cite{BE87} and \cite{Farkas09} involve degenerations to unions of two curves that meet in more than one point.  Since such curves are not of compact type, the arguments do not make use of limit linear series.

\medskip

In \cite{MRC}, tropical Brill-Noether theory is used to prove the $m=2$ case of the Maximal Rank Conjecture.

\begin{theorem}[\cite{MRC}]
\label{Thm:TropicalMRC}
Let $C$ be a smooth projective curve of genus $g$ over a discretely valued field with a regular, strongly semistable model whose special fiber is a generic chain of loops $\Gamma$.  For a given $r$ and $d$, let $D$ be a general divisor of rank $r$ and degree $d$ on $C$.  Then the multiplication map
\[
\mu_2: {\rm Sym}^2 \cL(D) \rightarrow \cL(2D)
\]
has maximal rank.
\end{theorem}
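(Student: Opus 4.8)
The plan is to follow the template of the tropical proof of the Gieseker-Petri Theorem (Theorem~\ref{Thm:TropicalGPThm}): translate the maximal rank statement into a statement about tropical independence of piecewise linear functions on the generic chain of loops $\Gamma$, and then resolve that statement combinatorially. First I would make the standard reductions. The non-special range $d \geq g+r$ is already known \cite{BE87}, and when $d < g+r$ a general linear series of degree $d$ and rank $r$ is complete, so I may assume $V = \cL(D)$ with $D$ a general divisor of rank $r$ and degree $d$. Since $D$ is general, I may take its specialization to $\Gamma$ to be a vertex-avoiding divisor in the sense of \S\ref{Subsection:GPThm}, so that $R(\Trop(D))$ has the distinguished basis $f_0, \ldots, f_r$ with $\ddiv(f_i) = D_i - D$, and each $f_i = \trop(s_i)$ for a section $s_i \in \cL(D)$ cutting out $D_i$. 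Assuming moreover that $2D$ is nonspecial, maximal rank for $\mu_2$ means precisely that $\dim \operatorname{im}(\mu_2) = \min\left(\binom{r+2}{2},\, 2d-g+1\right)$.

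The key reduction is the following. The products $s_i s_j$ (for $0 \le i \le j \le r$) span $\operatorname{im}(\mu_2)$ and satisfy $\trop(s_i s_j) = f_i + f_j$, which lies in $R(2D)$ since $\ddiv(f_i + f_j) + 2D = D_i + D_j \geq 0$. Because linearly dependent functions on $C$ specialize to tropically dependent functions on $\Gamma$ (the principle underlying Definition~\ref{Def:TropicalIndependence} and the Specialization Theorem), it suffices to exhibit a subset $S$ of the index pairs, of cardinality exactly $\min\left(\binom{r+2}{2},\, 2d-g+1\right)$, for which $\{f_i + f_j : (i,j) \in S\}$ is tropically independent; this forces $\dim\operatorname{im}(\mu_2) \ge |S|$, and the reverse inequality is automatic. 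In the injective regime $S$ is the whole index set, while in the surjective regime $S$ must be a carefully chosen subset of the correct size.

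The combinatorial heart is to prove tropical independence of the chosen collection. Following the Gieseker-Petri argument, I would suppose that $\theta = \min_{(i,j)\in S}(f_i + f_j + b_{ij})$ achieves its minimum at least twice at every point of $\Gamma$ and derive a contradiction. Using the tableau classification of \cite{tropicalBN}, the behavior of each $f_i$ on the $k$th loop $\gamma_k$ --- whether $D_i$ carries a chip there, and where --- is read off from the position of $k$ in the tableau of $D$, so the break points of each $f_i + f_j$ are explicitly controlled. Lemma~\ref{Lem:MinChips} then locates the support of $\Delta = \ddiv(\theta) + 2D$ loop by loop, and the aim is to show that the doubled-minimum hypothesis forces $\Delta$ (or a related effective divisor of degree $g$) to be a universally reduced, or simple break, divisor of impossible shape, contradicting Proposition~\ref{prop:SimpleBreakDivisors}. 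As in the Gieseker-Petri proof, the contradiction is combinatorial: an effective divisor of impossible \emph{shape} rather than impossible degree.

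The main obstacle, I expect, is the combinatorics of the last step together with the choice of $S$ in the surjective regime. Unlike the Gieseker-Petri setting, where the two factors come from $D$ and its adjoint $K_\Gamma - D$ (governed by transpose tableaux), here both factors $f_i, f_j$ come from the same tableau, so the break loci of the symmetric products $f_i + f_j$ overlap in intricate ways and the bookkeeping of which loops carry chips of $\Delta$ is substantially more delicate. Selecting, in the surjective case, a subset $S$ of exactly the right cardinality $2d-g+1$ for which the doubled-minimum hypothesis still yields a contradiction --- essentially matching the number of ``free'' chips of the associated break divisor against the number of loops --- is where the essential difficulty lies. A secondary technical point, should one wish to drop the vertex-avoiding hypothesis, is that $\trop(\cL(2D))$ is in general strictly smaller than $R(2D)$, which would require the patching construction of \S\ref{Subsection:GPThm}; but for a \emph{general} divisor $D$ the vertex-avoiding case suffices.
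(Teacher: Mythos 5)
Your setup coincides with the paper's: reduce to a single divisor using generality, invoke the lifting theorem of \cite{lifting} to work with a vertex-avoiding divisor $D$ on $\Gamma$ whose distinguished functions $f_0,\dots,f_r$ are tropicalizations of a basis of $\cL(D)$, and reduce maximal rank to the tropical independence of $\{f_i+f_j\}_{i\le j}$, or of a suitably chosen subset $S$ of cardinality $2d-g+1$ in the surjective regime. Up to that point you are on track.

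The gap is in your endgame. You propose to conclude as in the Gieseker--Petri argument, by forcing $\Delta=\ddiv(\theta)+2D$ to yield a universally reduced divisor of ``impossible shape'' and contradicting Proposition~\ref{prop:SimpleBreakDivisors}. That mechanism does not transfer. In the Gieseker--Petri proof one has $\Delta=\ddiv(\theta)+K_\Gamma$, so choosing one chip $p_k$ of $\Delta$ on each loop produces $D'=p_1+\cdots+p_g$ with $K_\Gamma-D'$ equivalent to an effective divisor; Riemann--Roch then forces $r(D')\ge 1$ while universal reducedness forces $r(D')=0$. Here $\Delta$ is linearly equivalent to $2D$, not to $K_\Gamma$, so extracting a degree-$g$ subdivisor $D'$ tells you nothing about $K_\Gamma-D'$, and part (2) of Proposition~\ref{prop:SimpleBreakDivisors} --- a statement about effective decompositions of the canonical class --- simply does not apply. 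The actual argument of \cite{MRC} is the opposite of what you describe: it reaches a contradiction of impossible \emph{degree}, not shape. One sets $\delta_k=\deg(\Delta|_{\gamma_k})$, first shows $\delta_k\ge 2$ for every $k$, and then identifies intervals of loops on which this inequality must be strict at least once; traversing the chain from left to right, such intervals occur often enough that the total degree of $\Delta$ would exceed $2\deg D$, contradicting $\deg\Delta=\deg(2D)$. This degree-counting step --- together with the choice of $S$ in the surjective case so that the count closes --- is the real content of the proof, and it is also where the stricter genericity hypotheses enter (bridges much longer than the loops, and nonvanishing of certain integer linear combinations of edge lengths), which your proposal omits.
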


The genericity conditions placed on the edge lengths of $\Gamma$ in Theorem~\ref{Thm:TropicalMRC} are stricter than those appearing in the tropical proofs of the Brill-Noether and Gieseker-Petri Theorems.  First, the bridges between the loops are assumed to be much longer than the loops themselves, and second, one must assume that certain integer linear combinations of the edge lengths do not vanish.

\medskip

A simplifying aspect of the Maximal Rank Conjecture is that it concerns a general, rather than arbitrary, divisor.  It therefore suffices to prove that the maximal rank condition holds for a single divisor of the given degree and rank on $C$.  The main result of \cite{lifting} is that every divisor on the generic chain of loops is the specialization of a divisor of the same rank on $C$.  We are therefore free to choose whatever divisor we wish to work with, and in particular we may choose one of the vertex-avoiding divisors described in the previous section.  Recall that, if $D \in W^r_d (\Gamma)$ is vertex-avoiding, then we have an explicit set of piecewise linear functions $f_i \in R(D)$ that are tropicalizations of a basis for the linear series on the curve $C$.  The goal, in the case where the multiplication map is supposed to be injective, is to show that the set $\{ f_i + f_j \}_{i \leq j}$ is tropically independent.  In the surjective case, we must choose a subset of the appropriate size, and then show that this subset is tropically independent.

\medskip

The basic idea of the argument is as follows.  Assume that
$$ \theta = \min \{ f_i + f_j + b_{i,j} \} $$
occurs at least twice at every point of $\Gamma$, and consider the divisor
$$ \Delta = \ddiv (\theta) + 2D . $$
To arrive at a contradiction, one studies the degree distribution of the divisor $\Delta$ across the loops of $\Gamma$.  More precisely, one defines
$$ \delta_k := \deg ( \Delta \vert_{\gamma_k} ) . $$
The first step is to show that $\delta_k \geq 2$ for all $k$.  One then identifies intervals $[a,b]$ for which this inequality must be strict for at least one $k \in [a,b]$.  As one proceeds from left to right across the graph, one encounters such intervals sufficiently many times to obtain $\deg \Delta > 2 \deg D$, a contradiction.

\section{Lifting Problems for Divisors on Metric Graphs}
\label{Section:Lifting}

In this section we discuss the lifting problem in tropical Brill-Noether theory: given a divisor of rank $r$ on a metric graph $\Gamma$, when is it the tropicalization of a rank $r$ divisor on a smooth curve $C$?
There are essentially two formulations of this problem, one in which the curve $C$ is fixed, and one in which it is not.

\medskip

Throughout this section, we let $K$ be a complete and algebraically closed non-trivially valued non-Archimedean field.

\begin{question}
Given a metric graph $\Gamma$ and a divisor $D$ on $\Gamma$, under what conditions do there exist a curve $C/K$ (together with a semistable model $R$) and a divisor of the same rank as $D$ tropicalizing to $\Gamma$ and $D$, respectively?
\end{question}

\begin{question}
Given a curve $C/K$ (together with a semistable model $R$) tropicalizing to a metric graph $\Gamma$, and given a divisor $D$ on $\Gamma$, under what conditions does there exist a divisor on $C$ of the same rank as $D$ tropicalizing to $D$?
\end{question}

These are very difficult questions.  Even for the earlier theory of limit linear series on curves of compact type, the analogous questions remain open.  A partial answer in that setting is given by the Regeneration Theorem of Eisenbud and Harris \cite{EisenbudHarris86}, which says that
if the space of limit linear series has local dimension equal to the Brill-Noether number $\rho$, then the given limit linear series lifts in any one-parameter smoothing.  At the time of writing, there is no corresponding theorem in the tropical setting.\footnote{Amini has apparently made substantial progress in this direction.}

\subsection{Specialization of hyperelliptic curves}

One of the first results concerning lifting of divisors is the classification of vertex-weighted metric graphs that are the specialization of a hyperelliptic curve.
Recall from \S\ref{Subsection:vertexweight} that given a curve $C/K$ and a semistable model $\cC / R$ for $C$, there is a natural way to associate to $\cC$ a vertex-weighted metric graph $(\Gamma, \omega)$.
We call such a pair {\em minimal} if there is no vertex $v$ with ${\rm val}(v)=1$ and $\omega(v)=0$.

\begin{theorem}[\cite{Caporaso12,ABBR14b}]
Let $(\Gamma,\omega)$ be a minimal vertex-weighted metric graph.
There is a smooth projective hyperelliptic curve over a discretely valued field with a regular, strongly semistable model whose special fiber has dual graph $\Gamma$ if and only if the following conditions hold:
\begin{itemize}
\item[(HYP1)] there exists an involution $s$ on $\Gamma$ such that the quotient $\Gamma / s$ is a tree and $s(v)=v$ for all $v \in \Gamma$ with $\omega(v)>0$, and
\item[(HYP2)] for every point $v \in \Gamma$, the number of bridge edges adjacent to $v$ is at most $2\omega(v)+2$.
\end{itemize}
\end{theorem}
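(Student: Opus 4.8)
The plan is to pass through the theory of finite harmonic morphisms of degree $2$ to metric trees, which is the tropical shadow of a hyperelliptic structure, and then to invoke the lifting results of \cite{ABBR14b}. Concretely, a smooth projective hyperelliptic curve $C$ is exactly one admitting a degree $2$ morphism $C \to \PP^1$, and the content of the theorem is that such a morphism specializes to, and conversely can be reconstructed from, the combinatorial data on $(\Gamma,\omega)$ recorded by (HYP1) and (HYP2). I would therefore prove the two implications separately, using specialization of morphisms in one direction and a lifting theorem for harmonic morphisms in the other.

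For the forward direction, suppose $C$ is hyperelliptic with hyperelliptic involution $\sigma$ and a semistable model $\cC$ realizing $(\Gamma,\omega)$. After a finite base change and further blowups (to make the model semistable and $\sigma$-equivariant), $\sigma$ acts on $C^\an$ preserving the skeleton, hence induces an isometric involution $s$ on $\Gamma$ respecting the vertex weights. The quotient $C \to C/\sigma \cong \PP^1$ induces a finite harmonic morphism $\Gamma \to \Gamma/s$ of degree $2$ onto the skeleton of $(\PP^1)^\an$; since that skeleton is a tree, $\Gamma/s$ is a tree, which is the first half of (HYP1). At a vertex $v$ with $\omega(v) = g_v > 0$ the reduction $C_v$ has positive genus, and the restriction of $\sigma$ is its (unique) hyperelliptic involution, so $\sigma$ fixes $C_v$ and thus $s(v)=v$; this is the second half of (HYP1). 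Finally, (HYP2) is a Riemann--Hurwitz constraint: a bridge edge at $v$ is forced to be fixed by $s$ (it is the unique edge in its cut, so it cannot be swapped with another edge), so the degree-$2$ morphism folds along it and is ramified there, and each such folded direction requires a branch point of the degree $2$ cover $C_v \to \PP^1$; Riemann--Hurwitz allows at most $2g_v+2 = 2\omega(v)+2$ of these.

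For the converse, assume (HYP1) and (HYP2). Using $s$ I would build a finite harmonic morphism $\phi \colon \Gamma \to T := \Gamma/s$ of degree $2$ to the metric tree $T$, declaring the local degree to be $2$ along the fixed locus (in particular along the bridges) and $1$ elsewhere; (HYP2) is precisely the inequality needed to distribute the prescribed ramification so that the Riemann--Hurwitz balancing holds at every vertex, while $s(v)=v$ for $\omega(v)>0$ guarantees that the positive-genus vertices are compatible with the cover. One then promotes $\phi$ to an \emph{enhanced} harmonic morphism of metrized complexes by choosing, at each vertex $v$, a degree $2$ cover $C_v \to \PP^1$ of genus $\omega(v)$ whose branch points match the incident bridges (possible by (HYP2) and Riemann--Hurwitz). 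The lifting theorem for harmonic morphisms of \cite{ABBR14b} then produces a curve $C$ over a suitable discretely valued field together with a degree $2$ map $C \to \PP^1$ whose skeleton realizes $\phi$; such a $C$ is hyperelliptic with dual graph $(\Gamma,\omega)$.

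The main obstacle is the lifting step in the converse. It is not automatic that a tropical degree $2$ harmonic morphism lifts to an honest degree $2$ cover with the correct skeleton: one must control the local lifting at each vertex and node, match the branch divisors globally, and in particular avoid the pathologies of wild ramification, so one works in residue characteristic different from $2$ (or treats characteristic $2$ separately). Verifying that the data assembled from (HYP1) and (HYP2) satisfies exactly the hypotheses of the lifting criterion---so that the local covers glue to a global hyperelliptic curve of the right genus---is where the real work lies, and is precisely the content imported from \cite{Caporaso12,ABBR14b}.
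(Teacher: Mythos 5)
Your proposal is correct in outline, but the necessity direction is argued by a genuinely different route than the paper's. The paper only sketches necessity, and only in the special case $\omega=0$, and it does so divisor-theoretically: the $\g^1_2$ on $C$ specializes to a degree-$2$ divisor class $D$ on $\Gamma$ of rank exactly $1$ (at least $1$ by the Specialization Theorem, at most $1$ by Tropical Clifford), the involution $s$ is defined combinatorially by sending each point $P$ to the $P$-reduced divisor equivalent to $D-P$, and (HYP2) follows because the reduction of the $\g^1_2$ to each component $C_v$ is again a degree-$2$, rank-$1$ series whose ramification points (at most $2\omega(v)+2$ of them) must account for all adjacent bridges. You instead obtain $s$ from the action of the hyperelliptic involution $\sigma$ on $C^{\an}$ after an equivariant semistable reduction and read off both conditions from the induced degree-$2$ harmonic morphism $\Gamma \to \Gamma/s$; this is closer to the viewpoint of \cite{ABBR14b}, and it buys a uniform framework that also sets up the converse, at the cost of importing equivariance of models and functoriality of skeleta under finite morphisms rather than just the Specialization Theorem. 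One local repair: your argument that $s(v)=v$ when $\omega(v)>0$ is circular as written, since you restrict $\sigma$ to $C_v$ before knowing $\sigma$ preserves $C_v$; the correct point is that if $\sigma$ moved $v$, the quotient $C/\sigma \cong \PP^1$ would acquire a component of positive genus, which is absurd. As for sufficiency, the paper proves nothing and says only that it ``requires significantly more work,'' so your sketch via enhanced harmonic morphisms of metrized complexes and the lifting theorem of \cite{ABBR14b} goes beyond the text; you correctly locate the entire burden in that lifting step, which is indeed where the cited references do the work.
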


Kawaguchi and Yamaki show moreover that, when $\Gamma$ satisfies these conditions, there is a smoothing $C$ for which every divisor on $\Gamma$ lifts to a divisor of the same rank on $C$ \cite{KawaguchiYamaki14}.

\medskip

We outline the necessity of the conditions above in the special case where $\omega=0$, which is equivalent to requiring that $g(C)=g(\Gamma)$.  Note that if $C$ is a hyperelliptic curve, then by the Specialization Theorem any divisor of degree 2 and rank 1 on $C$ specializes to a divisor $D$ of rank at least 1 on $\Gamma$, and by Tropical Clifford's Theorem $D$ must have rank exactly 1.  Now, if $P \in \Gamma$ is not contained in a bridge, then $\vert P \vert = \{ P \}$.  On the other hand, if $P \in \Gamma$ is contained in a bridge, a simple analysis reveals that $D \sim 2P$.  In this way we obtain an involution $s$ on $\Gamma$ mapping each point $P$ to the $P$-reduced divisor equivalent to $D-P$.

\medskip

To see why (HYP2) holds, note that for each type-2 point $v \in \Gamma$, the linear series of degree 2 and rank 1 on $C$ specializes to a linear series of degree 2 and rank 1 on the corresponding curve $C_v$.  Each of the bridges adjacent to $v$ correspond to ramification points of this linear series, but such a linear series has only
$2g(C_v)+2 = 2$ ramification points.

\medskip

To see that the conditions (HYP1) and (HYP2) are sufficient requires significantly more work.

\begin{example}
Consider the metric graph $\Gamma$ pictured in Figure \ref{Fig:NonLiftable}, consisting of a tree with a loop attached to each leaf, with all edge lengths being arbitrary.  Then $\Gamma$ is hyperelliptic, because any divisor of degree 2 supported on the tree has rank 1.  On the other hand, this graph is not the dual graph of the limit of any family of genus 3 hyperelliptic curves, because the vertex of valence 3 in the tree is adjacent to more than 2 bridges.

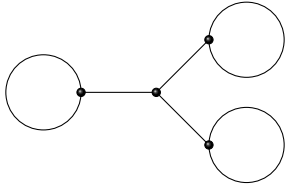
\begin{figure}
\begin{tikzpicture}

\draw (-1.5,0) circle (0.5);
\draw (1.2,.7) circle (0.5);
\draw (1.2,-.7) circle (0.5);
\draw (-1,0)--(0,0);
\draw (0,0)--(.7,.7);
\draw (0,0)--(.7,-.7);
\draw [ball color=black] (-1,0) circle (0.55mm);
\draw [ball color=black] (0,0) circle (0.55mm);
\draw [ball color=black] (.7,.7) circle (0.55mm);
\draw [ball color=black] (.7,-.7) circle (0.55mm);

\end{tikzpicture}
\caption{A hyperelliptic metric graph of genus 3 that is not the skeleton of any hyperelliptic curve of genus 3.}
\label{Fig:NonLiftable}
\end{figure}
\end{example}

For metric graphs of higher gonality, the lifting problem is significantly harder.  In \cite{LuoManjunath14}, Luo and Manjunath describe an algorithm for smoothability of rank one generalized limit linear series on metrized complexes.

\subsection{Lifting divisors on the chain of loops}

For some specific families of graphs, such as the chain of loops discussed \S \ref{Section:Applications}, one can show that the lifting problem is unobstructed.

\begin{theorem}[\cite{lifting}]
\label{Thm:Lifting}
Let $C/K$ be a smooth projective curve of genus~$g$.  If the dual graph of the central fiber of some regular model of $C$ is isometric to a generic chain of loops $\Gamma$ of genus $g$, then every divisor class on $\Gamma$ that is rational over the value group of $K$ lifts to a divisor class of the same rank on $C$.
\end{theorem}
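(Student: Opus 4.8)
The plan is to deduce the theorem from the surjectivity of the tropicalization map $\Trop \colon W^r_d(C) \to W^r_d(\Gamma)$ onto the value-group-rational points of the target. Suppose this is known, fix $r,d$, and let $D$ be a value-group-rational divisor class on $\Gamma$ with $r(D)=r$ and $\deg D = d$, so $D \in W^r_d(\Gamma)$. By the asserted surjectivity there is a class $\tilde D \in W^r_d(C)$ with $\Trop(\tilde D)=D$; by definition of $W^r_d(C)$ we have $r(\tilde D) \geq r$, while the Specialization Theorem gives $r(\tilde D) \leq r(\Trop(\tilde D)) = r(D) = r$. Hence $r(\tilde D)=r$, and $\tilde D$ is the desired lift. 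That value-group-rational classes lift to $\Pic^d(C)$ \emph{at all} is not the issue: this follows from Theorem~\ref{Thm:JacobianSkeleton}, which identifies $\Jac(\Gamma)$ with the skeleton of $\Jac(C)^{\an}$ and makes the retraction surjective. The content is to arrange that some lift lands in $W^r_d(C)$.

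Everything thus reduces to showing $\Trop\bigl(W^r_d(C)\bigr)$ contains the value-group-rational points of $W^r_d(\Gamma)$. First I would invoke the Cools--Draisma--Payne--Robeva classification recalled in \S\ref{Subsection:BNThm}: $W^r_d(\Gamma) = \bigcup_T C_T$ is a finite union of $\rho$-dimensional tori $C_T$, indexed by standard Young tableaux, which are the maximal cells of a pure $\rho$-dimensional polyhedral subset of the real torus $\Pic^d(\Gamma)$. On the algebraic side, the Brill--Noether Theorem (valid for $C$, since $C$ specializes to a generic chain of loops) gives $\dim W^r_d(C)=\rho$, and by the structure theory of tropicalizations of subvarieties of abelian varieties (Bieri--Groves, Gubler), the image $\Trop\bigl(W^r_d(C)\bigr)$ is a balanced weighted polyhedral complex of pure dimension $\rho$ contained in $\bigcup_T C_T$. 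Since a pure $\rho$-dimensional balanced subcomplex of a pure $\rho$-dimensional complex is a union of maximal cells, we get $\Trop\bigl(W^r_d(C)\bigr)=\bigcup_{T\in S} C_T$ for some set $S$ of tableaux, and it remains only to prove $S$ is everything.

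\textbf{The main obstacle} is precisely this last point: that \emph{every} cell $C_T$ actually occurs. I see two complementary routes. The first is a degree count anchored at $\rho=0$: there $W^r_d(C)$ is reduced of degree the Castelnuovo number, which by the Cools--Draisma--Payne--Robeva formula equals the number of tableaux, i.e. the number of points of $W^r_d(\Gamma)$; a multiplicity-one analysis of $\Trop$ then forces a bijection, so $S$ is all of $T$. For $\rho>0$ one bootstraps by imposing $\rho$ generic base-point conditions to reduce cell by cell to the zero-dimensional case, using upper semicontinuity of the Brill--Noether rank (Theorem~\ref{Thm:Semicontinuous}) and Theorem~\ref{Thm:BNRankSpecialization} to track ranks. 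The second, purely tropical, route uses the balancing of $\Trop\bigl(W^r_d(C)\bigr)$ across the codimension-one walls of $\bigcup_T C_T$ together with connectedness of $W^r_d(C)$ (Fulton--Lazarsfeld, when $\rho>0$) to propagate a nonzero weight from one cell to every adjacent one; the combinatorics of tableau adjacency, which encodes how the $C_T$ meet along walls, then forces all cells to carry positive weight.

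Finally, to pass from set-theoretic covering of cells to surjectivity onto value-group-rational points, I would note that any value-group-rational $D$ lies in some $C_T \subseteq \Trop\bigl(W^r_d(C)\bigr)$, and that the tropicalization of a $K$-subvariety surjects onto the value-group-rational points of its tropicalization. The dense vertex-avoiding divisors in each $C_T$, which have rank exactly $r$ and an explicit generating set of piecewise linear functions (as exploited in \S\ref{Subsection:GPThm}), are the most convenient classes for which to control a direct lift, and the remaining classes then follow from the same surjectivity plus the rank comparison above. Throughout, the genuinely delicate bookkeeping is ensuring the lift does not drop rank, and this is exactly where the numerical coincidence $\dim W^r_d(C)=\rho=\dim C_T$, the balancing of the tropicalized Brill--Noether locus, and the Specialization Theorem do the essential work.
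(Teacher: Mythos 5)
Your reduction of the theorem to surjectivity of $\Trop \colon W^r_d(C) \to W^r_d(\Gamma)$, together with the rank sandwich $r \le r(\tilde D) \le r(\Trop(\tilde D)) = r$ from the Specialization Theorem, matches the paper's strategy, as does the reliance on the Cools--Draisma--Payne--Robeva description of $W^r_d(\Gamma)$ as a union of $\rho$-dimensional tori $C_T$ and on $\dim W^r_d(C) = \rho$. But the step you correctly flag as the main obstacle --- that every cell $C_T$ is hit --- is not actually resolved by either of your two routes. Your ``purely tropical'' route fails outright: each $C_T$ is a $\rho$-dimensional real torus, i.e.\ a closed manifold without boundary, so \emph{any} nonempty union of whole cells with constant weights is already balanced (at a wall where two tori cross, each torus contributes a pair of opposite primitive normal vectors, so the balancing condition is vacuous there). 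Balancing therefore cannot propagate a positive weight from one cell to an adjacent one, and connectedness of $W^r_d(C)$ upstairs does not descend to a statement about which cells its tropicalization covers.

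Your counting route is closer to the truth but hides the essential difficulty in the phrase ``a multiplicity-one analysis of $\Trop$ then forces a bijection.'' Equality of the cardinalities of the finite sets $W^r_d(C)$ and $W^r_d(\Gamma)$ (both the Castelnuovo number, i.e.\ the number of tableaux) does not give bijectivity of a map between them: the fibers of $\Trop \colon \Jac(C) \to \Jac(\Gamma)$ are enormous, and nothing a priori prevents two points of $W^r_d(C)$ from colliding tropically while another point of $W^r_d(\Gamma)$ is missed. This is precisely where the paper brings in its key tool, which your proposal never supplies: one writes $W^r_d(\Gamma)$ locally as a transverse intersection of tropicalizations of translates of the theta divisor $\Theta_C$, cuts down by $\rho$ further translates to isolate a point of tropical multiplicity one, and invokes Rabinoff's lifting theorem, which guarantees that an isolated point of a tropical complete intersection lifts to the analytic intersection with the correct multiplicity. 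It is this multiplicity bookkeeping --- not the cardinality count alone --- that yields injectivity of $\Trop$ on the relevant zero-dimensional slice and hence surjectivity. Your bootstrapping by $\rho$ generic base-point conditions is the right geometric instinct (it mirrors intersecting with $\rho$ extra translates of $\Theta_C$), but without Rabinoff's theorem or an equivalent lifting statement for tropical complete intersections, the argument does not close.
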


The general strategy for proving Theorem~\ref{Thm:Lifting} is to study the Brill-Noether loci as subschemes
\[
W^r_d(C) \subset \Jac(C).
\]
Since $C$ is maximally degenerate, the universal cover of $\Jac(C)^{\an}$ gives a uniformization
\[
T^{\an} \rightarrow \Jac(C)^{\an}
\]
by an algebraic torus $T$ of dimension $g$.  The tropicalization of this torus is the universal cover of the skeleton of $\Jac(C)$, which, as discussed in \S \ref{Section:Berkovich} is canonically identified with the tropical Jacobian of $\Gamma$ \cite{BakerRabinoff13}.

\medskip

A key tool in the proof of Theorem~\ref{Thm:Lifting} is Rabinoff's lifting theorem~\cite{Rabinoff12}, which can be applied to the analytic preimages in~$T$ of algebraic subschemes of $\Jac(C)$. This lifting theorem says that isolated points in complete intersections of tropicalizations of analytic hypersurfaces lift to points in the analytic intersection with appropriate multiplicities.  This theorem can be applied to translates of the preimage of the theta divisor $\Theta_{\Gamma} = W^0_{g-1} (\Gamma)$, as follows.

\medskip

When $\Gamma$ is the generic chain of loops, one can use the explicit description of $W^r_d (\Gamma)$ from \cite{tropicalBN} to produce explicit translates of $\Theta_C$ whose tropicalizations intersect transversally and locally cut out $W^r_d(\Gamma)$.  By intersecting with $\rho$ additional translates of $\Theta_C$, one obtains an isolated point in a tropical complete intersection, to which we may apply Rabinoff's lifting theorem.  This complete intersection is typically larger than $W^r_d(\Gamma)$, but the argument shows that the tropicalization map from a 0-dimensional slice of $W^r_d(C)$ to the corresponding slice of $W^r_d(\Gamma)$ is injective.  Using again the explicit description of $W^r_d (\Gamma)$, one then shows that the two finite sets have the same cardinality, and hence the map is bijective.

\begin{remark}
As mentioned in the section on the Maximal Rank Conjecture, Theorem \ref{Thm:Lifting} is one of the key ingredients in the proof of Theorem \ref{Thm:TropicalMRC} (the Maximal Rank Conjecture for quadrics).  In particular, in order to show that the maximal rank condition holds for a generic line bundle of a given degree and rank, it suffices to show that it holds for a single line bundle.  Since every divisor of a given rank on the chain of loops lifts to a line bundle on $C$ of the same rank, one is free to work with any divisor of this rank on the chain of loops.
\end{remark}

\subsection{Examples of divisors that do not lift}

Among the results on lifting divisors, there is a plethora of examples of divisors that do not lift.  For example, in \cite{Coppens14b}, Coppens defines a base-point free divisor on a metric graph $\Gamma$ to be a divisor $D$ such that $r (D-p) < r(D)$ for all $p \in \Gamma$.  He then shows that the Clifford and Riemann-Roch bounds are the only obstructions to the existence of base-point free divisors on metric graphs of arbitrary genus.  This is in contrast to the case of algebraic curves, where for example a curve of genus greater than 6 cannot have a base-point free divisor of degree 5 and rank 2.

\medskip

Another example of divisors that do not lift comes from the theory of matroids.

\begin{theorem}[\cite{Cartwright15}]
\label{Thm:Matroids}
Let $M$ be any rank 3 matroid.  Then there exists a graph $G_M$ and a rank 2 divisor $D_M$ on $G_M$ such that, for any infinite field $k$, there are a curve $C$ over $k((t))$ (together with a semistable model $\cC$ of $C$ over $k[[t]]$) and a rank 2 divisor on $C$ tropicalizing to $G_M$ and $D_M$, respectively, if and only if $M$ is realizable over $k$.
\end{theorem}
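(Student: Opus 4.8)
The plan is to exploit the dictionary between rank $2$ divisors and maps to the projective plane: a base-point-free $\g^2_d$ on a curve $C$ is the same data as a degree $d$ morphism $\phi \colon C \to \PP^2$, while a realization of a rank $3$ matroid $M$ over $k$ is precisely a configuration of points in $\PP^2(k)$ whose linear dependencies are governed by the rank $2$ flats (lines) of $M$. The theorem is therefore an \emph{encoding problem}: one must build a single graph $G_M$ together with a distinguished rank $2$ divisor $D_M$ so that the combinatorics of lifting $(G_M, D_M)$ to a rank $2$ divisor on a smooth curve reproduces, faithfully, the incidence geometry of a point configuration realizing $M$. I would fix the ground set $E = \{1, \dots, n\}$ of $M$, arrange $G_M$ to contain $n$ distinguished loops (or marked points) $q_1, \dots, q_n$, one per element, joined by a tree-like backbone of incidence gadgets, and choose $D_M$ of the smallest degree admitting rank $2$. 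The reductions of the $q_i$ will furnish the configuration points.

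For the direction \emph{realizable $\Rightarrow$ liftable}, I would begin from a realization $p_1, \dots, p_n \in \PP^2(k)$ exhibiting exactly the collinearities prescribed by the rank $2$ flats of $M$, and build a totally degenerate plane curve (a union of rational components through the $p_i$ with the prescribed incidences) whose dual graph is $G_M$. Pulling back $\cO_{\PP^2}(1)$ gives a line bundle specializing to $D_M$. A one-parameter smoothing over $k[[t]]$, whose existence one can extract from the degeneration-theoretic construction underlying Remark~\ref{Remark:Deformation2} (here the hypothesis that $k$ is infinite is used to spread the configuration points and nodes into sufficiently general position), then produces a curve $C/k((t))$ with a divisor $\cD$. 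The pullback carries at least three independent sections, so $r_C(\cD) \ge 2$, while the Specialization Theorem (Theorem~\ref{Thm:Specialization}) gives $r_C(\cD) \le r(D_M) = 2$; hence $r_C(\cD) = 2$ and $\cD$ tropicalizes to $D_M$ with the same rank.

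For the converse \emph{liftable $\Rightarrow$ realizable}, suppose $(G_M, D_M)$ lifts to $(C, \cD)$ with $r_C(\cD) = 2$. Then $\cD$ defines a map $\phi \colon C \to \PP^2_{k((t))}$, and passing to the normalized reductions of its sections as in \S\ref{Subsection:AnalyticTrop} produces a configuration $\bar p_1, \dots, \bar p_n \in \PP^2(k)$ from the images of the $q_i$. The task is to show this configuration realizes $M$, i.e. that a subset is collinear precisely when the corresponding elements lie in a common rank $2$ flat. The collinearities forced by the flats of $M$ are automatic from the gadget structure of $G_M$; the reverse inclusion — that no \emph{extra} collinearity occurs — is where tropical independence (\S\ref{Subsection:TropicalIndependence}) enters: the backbone must be engineered so that any spurious collinearity among the $\bar p_i$ produces a nontrivial tropical linear dependence among the relevant tropicalized sections, contradicting $r_C(\cD) = 2$ through the Slope Formula (Theorem~\ref{thm:SlopeFormula}) together with the bound $r(D') \le \tfrac{1}{2}\deg(D')$ of Tropical Clifford's Theorem applied to the auxiliary divisors cut out on the graph.

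The hardest part will be designing $G_M$ so that the encoding is \emph{rigid}, i.e. proving that tropical incidence data determines, and is determined by, genuine algebraic incidences over $k$, with no degenerate ``phantom'' configurations surviving the specialization. Concretely one must verify (i) that $D_M$ has rank exactly $2$ on $G_M$, (ii) that the $n$ distinguished points are forced into position general enough that only the collinearities coming from the flats of $M$ can arise, and (iii) that each prescribed collinearity lifts from chip-firing data on $G_M$ to an honest linear relation in $\PP^2(k)$. This last point is essentially a von Staudt ``algebra of throws'' construction transported into the language of divisors on graphs: addition and multiplication of projective coordinates must be implemented by incidence gadgets, and the verification that these gadgets neither collapse nor acquire extra solutions under specialization is the technical core that ultimately makes the lifting condition equivalent to realizability of $M$ over the given infinite field $k$.
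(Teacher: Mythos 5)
There is a genuine gap: your proposal never actually produces $G_M$ or $D_M$. Everything is deferred to an unspecified ``tree-like backbone of incidence gadgets'' and, in the converse direction, to a von Staudt coordinatization of addition and multiplication, which you yourself identify as ``the technical core.'' But that core is precisely what a proof of this theorem must supply, and it is also not how Cartwright's argument goes. The actual construction is direct and gadget-free: $G_M$ is the \emph{Levi graph} (incidence graph) of $M$, with one vertex per element of the ground set and one per rank $2$ flat and an edge for each incidence, and $D_M$ is the sum of the element vertices. That $r(D_M)=2$ is a combinatorial argument resting on the defining property of a rank $3$ simple matroid that every pair of elements lies in a unique flat; no Clifford bound or tropical independence is needed. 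The von Staudt machinery you invoke is what one would need to prove Mn{\"e}v universality \emph{directly} at the level of graphs; here universality is instead imported afterwards from Lafforgue's scheme-theoretic version applied to matroid realization spaces, so the theorem itself only requires encoding the matroid, not encoding arithmetic.

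Two further points where your sketch would not survive being made precise. First, you have the duality backwards: since $D_M$ places one chip on each element vertex, the corresponding components of the central fiber have degree $1$ under the induced map to $\PP^2_k$ and hence image a \emph{line} each, while the flat vertices carry degree $0$ and map to \emph{points}; so elements furnish lines and flats furnish the points of a dual line arrangement, not the other way around. This is exactly the mechanism of the converse direction: the Levi-graph incidences force the line of element $e$ through the point of flat $F$ whenever $e \in F$, yielding a realization of $M$ with no auxiliary tropical-independence argument. Second, in the forward direction your ``union of rational components through the $p_i$ with the prescribed incidences'' is not a semistable curve whenever three or more components meet at a configuration point, which happens for every flat of size at least $3$. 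One must start from the arrangement of lines dual to a realization and blow up $\PP^2_k$ at the intersection points corresponding to the flats; the strict transforms together with the exceptional divisors form a nodal curve whose dual graph is the Levi graph, and the pullback of the hyperplane class has multidegree $D_M$. Only then do the deformation-theoretic arguments (where the infinitude of $k$ enters) produce the smoothing over $k[[t]]$. Your rank computation $2 \le r_C(\cD) \le r(D_M) = 2$ via the Specialization Theorem is fine once that construction is in place.
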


Combining this with the scheme-theoretic analogue of Mn{\"e}v universality, due to Lafforgue \cite{Lafforgue03}, one obtains the following.

\begin{corollary}[\cite{Cartwright15}]
Let $X$ be a scheme of finite type over $\Spec \ZZ$.  Then there exists a graph $G$ and a rank 2 divisor $D$ on $G$ such that, for any infinite field $k$, $G$ and $D$ are the tropicalizations of a curve $C / k((t))$ and a rank 2 divisor on $C$ if and only if $X$ has a $k$-point.
\end{corollary}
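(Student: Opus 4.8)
The plan is to derive the corollary by composing Cartwright's Theorem~\ref{Thm:Matroids} with the scheme-theoretic form of Mnëv universality, with the \emph{realization scheme} of a matroid serving as the bridge between the two. Recall that to a rank $3$ matroid $M$ on a ground set of size $n$ one associates a scheme $R(M)$ of finite type over $\Spec \ZZ$, namely a locally closed subscheme of the space of $n$-tuples of points in $\PP^2$, whose $k$-points for any field $k$ are exactly the configurations in $\PP^2_k$ realizing the prescribed collinearities of $M$. Thus $M$ is realizable over $k$ if and only if $R(M)(k) \neq \emptyset$, and with this reformulation Theorem~\ref{Thm:Matroids} reads: the graph $G_M$ and divisor $D_M$ are the tropicalizations of a curve over $k((t))$ and a rank $2$ divisor precisely when $R(M)(k) \neq \emptyset$.

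First I would invoke Lafforgue's universality theorem~\cite{Lafforgue03}, the scheme-theoretic refinement of Mnëv's theorem, which asserts that for every scheme $X$ of finite type over $\Spec \ZZ$ there is a rank $3$ matroid $M$ whose realization scheme $R(M)$ is related to $X$ by a chain of elementary operations (products with affine space and smooth maps admitting sections, i.e. a form of stable equivalence). The one feature of this equivalence that I actually need is that it preserves the existence of $k$-rational points: for every infinite field $k$, one has $X(k) \neq \emptyset$ if and only if $R(M)(k) \neq \emptyset$, equivalently if and only if $M$ is realizable over $k$. With such an $M$ fixed, I would then apply Theorem~\ref{Thm:Matroids} to produce the graph $G := G_M$ and the rank $2$ divisor $D := D_M$.

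Chaining the two equivalences now finishes the argument: for every infinite field $k$, the scheme $X$ has a $k$-point if and only if $M$ is realizable over $k$, which holds if and only if $(G,D)$ is the tropicalization of a curve over $k((t))$ together with a rank $2$ divisor. This is exactly the assertion of the corollary, so the composition step is purely formal once the inputs are in place.

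The heart of the matter lies entirely in the statement of Lafforgue's theorem rather than in the chaining. The main obstacle is verifying that the universality construction can be arranged so that, simultaneously, (i) the output matroid has rank exactly $3$, as required by Theorem~\ref{Thm:Matroids}, and (ii) the elementary operations relating $R(M)$ to $X$ preserve the existence of $k$-points for \emph{all} infinite fields $k$ at once, rather than merely over $\RR$ or an algebraically closed field as in the classical topological form of Mnëv's theorem. Both features are in fact intrinsic to the point-configuration formulation: the rank $3$ condition is built into the planar configuration spaces, and since affine spaces always acquire $k$-points and the fibrations occurring in the stable equivalence admit sections, an $\mathbb{A}^n$-bundle $Z \to W$ satisfies $Z(k) \neq \emptyset \iff W(k) \neq \emptyset$ over any field. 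Once these properties of~\cite{Lafforgue03} are confirmed, the corollary follows immediately.
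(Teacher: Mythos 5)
Your proposal is correct and follows the same route as the paper, which simply combines Theorem~\ref{Thm:Matroids} with Lafforgue's scheme-theoretic form of Mn{\"e}v universality; your additional discussion of why the stable equivalence preserves the existence of $k$-points is a reasonable elaboration of what the paper leaves implicit.
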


In other words, the obstructions to lifting over a valued field of the form $k((t))$ are essentially as general as possible.

\medskip

Cartwright's construction is as follows.  Recall that a rank 3 simple matroid on a finite set $E$ consists of a collection of subsets of $E$, called \emph{flats}, such that every pair of elements is contained in exactly one flat.  (Here we are abusing language, using the word flat to refer to refer to the maximal, or rank 2, flats.)  The bipartite graph $G_M$ is the \emph{Levi graph} of the matroid $M$, where the vertices correspond to elements and flats, and there is an edge between two vertices if the corresponding element is contained in the corresponding flat.  The divisor $D_M$ is simply the sum of the vertices corresponding to elements of $E$.  A combinatorial argument then shows that the rank of $D_M$ is precisely 2.

\medskip

If $M$ is realizable over $k$, then by definition, there exists a configuration of lines in $\PP^2_k$ where the lines correspond to the elements of $E$, and the flats correspond to points where two or more of the lines intersect.  If we blow up the plane at the intersection points, the dual graph of the resulting configuration is the Levi graph $G_M$, and the pullback of the hyperplane class specializes to the divisor $D_M$.  After some technical deformation arguments, one then sees that the pair $(G_M , D_M)$ admits a lifting when $M$ is realizable over $k$.

\medskip

For the converse, one must essentially show that the above construction is the only possibility.  That is, if $\mathcal{C}$ is a regular semistable curve over $k[[t]]$, the dual graph of the central fiber is $G_M$ and the divisor $D_M$ is the specialization of a rank 2 divisor on $\mathcal{C}$, then in fact the image of the central fiber under the corresponding linear series must provide a realization of the matroid $M$ in $\PP^2_k$.

\section{Bounding the Number of Rational Points on Curves}
\label{Section:Arithmetic}

By Faltings' Theorem (n{\'e}e the Mordell Conjecture), if $C$ is a curve of genus $g \geq 2$ over a number field $K$ then the set $C(K)$ of rational points on $C$ is finite.
Shortly after Faltings proved this theorem, Vojta published a new proof which furnishes an effective upper bound on the number of points in $C(K)$.
However, the Vojta bound is completely theoretical --- to our knowledge no one has ever written down the bound explicitly (and the bound is surely quite far from
optimal).  None of the existing proofs of the Mordell Conjecture gives an algorithm --- even in theory! --- to compute the set $C(K)$.  And in practice the situation is even worse ---
it seems safe to say that no one has ever used the Faltings or Vojta proofs of the Mordell Conjecture to compute $C(K)$ in a single non-trivial example.

\subsection{The Katz--Zureick-Brown refinement of Coleman's bound}
\label{Subsection:KZB}

One of the first significant results in the direction of the Mordell Conjecture was Chabauty's theorem that $C(K)$ is finite provided that the rank of the finitely generated abelian
group $\Jac(C)(K)$ is less than $g$.  Much later, Coleman used his theory of $p$-adic integration to give an effective upper bound on $C(K)$ in this situation.  Coleman's bound
has the advantage of being sharp in certain cases, and the method of proof can be used to compute $C(K)$ in a wide range of concrete examples.
For simplicity, we state the results in this section for $K=\QQ$ only, but everything extends with minor modifications to curves over a number field $K$.  Coleman's theorem is as follows.

\begin{theorem}[\cite{Coleman85}]
\label{Thm:ChabautyColeman}
Let $C$ be a curve of genus $g$ over $\QQ$, and
suppose that the Mordell-Weil rank $r$ of $\Jac (C)(\QQ)$ is strictly less than the genus $g$.  Then
for every prime $p > 2g$ of good reduction for $C$, we have
\begin{equation} \label{eq:ChabautyColemanBound}
\# C(\QQ) \leq \# C( \mathbb{F}_p ) + 2g - 2 .
\end{equation}
\end{theorem}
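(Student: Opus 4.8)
The plan is to carry out the Chabauty--Coleman method of $p$-adic integration. We may assume $C(\QQ) \neq \emptyset$, since otherwise the left-hand side is $0$ and the inequality is trivial; so fix a base point $b \in C(\QQ)$ and use it to define an Abel--Jacobi embedding $\iota : C \hookrightarrow J := \Jac(C)$. The first idea is Chabauty's: the space of global regular $1$-forms $H^0(C_{\QQ_p}, \Omega^1)$ is $g$-dimensional, and each $\omega$ in it integrates $p$-adically to a locally analytic homomorphism $\lambda_\omega : J(\QQ_p) \to \QQ_p$ (Coleman integration), with $\lambda_\omega(\iota(x) - \iota(b)) = \int_b^x \omega$. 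Requiring $\lambda_\omega$ to vanish on the finitely generated group $J(\QQ)$ imposes at most $\mathrm{rank}\,J(\QQ) = r$ linear conditions on the $g$-dimensional space of forms (torsion elements are automatically killed, since $\QQ_p$ is torsion-free). Since $r < g$ by hypothesis, there is a nonzero form $\omega$ with $\lambda_\omega|_{J(\QQ)} = 0$. Consequently the locally analytic function $f(x) := \int_b^x \omega$ on $C(\QQ_p)$ vanishes at every $P \in C(\QQ)$.

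Next I would normalize and pass to the reduction. Scaling $\omega$ by an element of $\QQ_p^*$, we may assume it is integral with nonzero reduction $\bar\omega \in H^0(C_{\mathbb{F}_p}, \Omega^1)$, where $C_{\mathbb{F}_p}$ is the good reduction of $C$ modulo $p$. Because $C$ has good reduction at $p$, the reduction map $C(\QQ_p) \to C(\mathbb{F}_p)$ is surjective, and its fibers are the residue disks, each analytically isomorphic to $\{ t : \val(t) > 0 \}$ via a local coordinate. Every rational point lies in exactly one residue disk, so it suffices to bound, for each $Q \in C(\mathbb{F}_p)$, the number of zeros of $f$ in the disk over $Q$, and then sum.

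The technical heart --- and the main obstacle --- is the zero count on a single residue disk. Writing $\omega = w(t)\,dt$ with $w(t) = \sum_{i \geq 0} a_i t^i \in \ZZ_p[[t]]$, the integral is $f(t) = \mathrm{const} + \sum_{i \geq 0} \tfrac{a_i}{i+1}\, t^{i+1}$, and I would estimate its number of zeros in $\val(t) > 0$ via the Newton polygon of its coefficients. Let $n_Q = \ord_Q(\bar\omega)$, so that $\val(a_i) \geq 1$ for $i < n_Q$ and $\val(a_{n_Q}) = 0$. The hypothesis $p > 2g$ is exactly what is needed here: since $n_Q \leq 2g-2 < p-1$, each denominator $i+1$ with $i \leq n_Q$ is a $p$-adic unit and does not disturb the valuations, so the dominant coefficient of $f$ sits at index $n_Q + 1$ and the Newton polygon forces at most $n_Q + 1$ zeros in the open disk. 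Summing over all $Q \in C(\mathbb{F}_p)$ then gives
\[
\# C(\QQ) \leq \sum_{Q \in C(\mathbb{F}_p)} (1 + n_Q) = \# C(\mathbb{F}_p) + \sum_{Q \in C(\mathbb{F}_p)} n_Q \leq \# C(\mathbb{F}_p) + 2g - 2,
\]
where the last inequality holds because the $n_Q$ are among the (nonnegative) multiplicities of the effective canonical divisor $\ddiv(\bar\omega)$ on $C_{\mathbb{F}_p}$, of total degree $2g-2$. The subtle points I expect to need care with are the well-definedness and additivity of Coleman integration (so that $\lambda_\omega$ is genuinely a homomorphism annihilating $J(\QQ)$) and the precise Newton-polygon bound in this large-residue-characteristic regime.
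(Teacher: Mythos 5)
Your proposal is correct and follows essentially the same route as the outline given in the paper: produce a nonzero $\omega$ whose Coleman integral kills $J(\QQ)$ via the rank condition, then bound zeros residue disk by residue disk using the $p$-adic Rolle/Newton polygon estimate (valid since $p>2g$), and finally sum the orders of vanishing of $\bar\omega$ against the degree $2g-2$ of the canonical divisor. The only cosmetic difference is that you impose vanishing directly on generators of $J(\QQ)$ rather than on the $p$-adic closure $\overline{J(\QQ)}$, which amounts to the same linear-algebra count.
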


Coleman's theorem was subsequently strengthened in different ways.  In \cite{LorenziniTucker02}, Lorenzini and Tucker (see also McCallum--Poonen \cite{McCallumPoonen12}) generalized Theorem~\ref{Thm:ChabautyColeman} to primes of bad reduction, replacing $C( \mathbb{F}_p )$ in (\ref{eq:ChabautyColemanBound}) by the smooth $\mathbb{F}_p$-points of the special fiber of the minimal proper regular model for $C$ over $\ZZ_p$.  Stoll replaced the quantity $2g-2$ in (\ref{eq:ChabautyColemanBound}) by $2r$ when $C$ has good reduction at $p$, and asked if this improvement could be established in the bad reduction case as well.
Stoll's question was answered affirmatively by Katz and Zureick-Brown in \cite{KZB13} by supplementing Stoll's method with results from the theory of linear series on tropical curves:

\begin{theorem}[\cite{KZB13}]
\label{Thm:ChabautyColemanBadReduction}
Let $C$ be a curve of genus $g$ over $\QQ$ and suppose that the rank $r$ of $\Jac (C)(\QQ)$ is less than $g$.   Then
for every prime $p > 2r+2$, we have
$$ \# C(\QQ) \leq \# \mathcal{C}^{\rm sm}( \mathbb{F}_p ) + 2r, $$
where $\mathcal{C}$ denotes the minimal proper regular model of $C$ over $\ZZ_p$.
\end{theorem}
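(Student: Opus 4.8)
The plan is to prove Theorem~\ref{Thm:ChabautyColemanBadReduction} by running the Chabauty--Coleman method in the strengthened form due to Stoll, and then handling the prime of bad reduction using the tropicalization of differentials together with the combinatorial structure of the skeleton $\Gamma$ of $C$.

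First I would invoke Chabauty's input. Since the Mordell--Weil rank $r$ of $\Jac(C)(\QQ)$ is strictly less than $g$, the $p$-adic closure of $\Jac(C)(\QQ)$ inside $\Jac(C)(\QQ_p)$ has dimension at most $r$. Dualizing via Coleman's $p$-adic integration pairing between $\Jac(C)(\QQ_p)$ and $H^0(C_{\QQ_p}, \Omega^1)$, this produces a $\QQ_p$-subspace $V \subseteq H^0(C_{\QQ_p}, \Omega^1)$ of dimension at least $g - r > 0$ consisting of \emph{vanishing differentials}: for each $\omega \in V$ the Coleman integral $\int_{P}^{Q} \omega$ vanishes whenever $P,Q \in C(\QQ)$. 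Consequently every rational point is a zero of the locally analytic function $Q \mapsto \int_{P_0}^{Q} \omega$ for any fixed basepoint $P_0$, and the problem reduces to bounding the zeros of these Coleman integrals.

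Next I would pass to a semistable model over a finite extension, with skeleton the metric graph $\Gamma$, and organize the rational points by their reduction. Each residue disk lying over a smooth point $\bar{x} \in \mathcal{C}^{\rm sm}(\mathbb{F}_p)$ of the minimal regular model contributes, by a Newton-polygon count, at most $1 + n_{\bar x}$ rational points, where $n_{\bar x}$ is the order of vanishing at $\bar x$ of a chosen $\omega \in V$; following Stoll, one selects for each disk the differential in $V$ minimizing this local order. The remaining rational points reduce into the residue annuli corresponding to the edges of $\Gamma$, and here the number of zeros of the Coleman integral is governed by the slopes of the piecewise-linear function $\trop(\omega)$ along those edges --- this is exactly where the tropical theory from \S\ref{Section:Berkovich}, in particular the Slope Formula and the structure of the tropicalization of a differential, controls the analytic count.

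The decisive step is the global bookkeeping. Summing the local contributions, the total excess over $\#\mathcal{C}^{\rm sm}(\mathbb{F}_p)$ is governed by the reduction of $V$ to the components of the special fiber and by the canonical divisor $K_\Gamma$, which has degree $2g-2$. The key tropical and combinatorial estimate shows that the $(g-r)$-dimensional space of vanishing differentials forces the relevant sum of local vanishing orders and annulus slopes to drop from $2g-2$ to $2r$; morally, each independent vanishing differential removes two from the naive Coleman bound. I expect the main obstacle to be precisely this matching: bounding the zeros of Coleman integrals on the bad-reduction residue annuli uniformly, and proving the combinatorial inequality on $\Gamma$ that converts the $(g-r)$ conditions into the saving of $2g-2-2r$, so that the final count is $\#\mathcal{C}^{\rm sm}(\mathbb{F}_p) + 2r$. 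The hypothesis $p > 2r+2$ enters to guarantee that the Newton-polygon estimates are sharp and that no wild contributions spoil the count.
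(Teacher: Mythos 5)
Your setup of the Chabauty--Coleman--Stoll framework is correct, but there are two problems, one of routing and one that is a genuine gap. First, the residue annuli play no role in this theorem: since $\mathcal{C}$ is a \emph{regular} model over $\ZZ_p$, every point of $C(\QQ_p)$, and in particular every rational point, reduces to a smooth point of the special fiber, so the entire count takes place on residue discs over $\mathcal{C}^{\rm sm}(\mathbb{F}_p)$ and there are no ``remaining rational points'' in the annuli. Bounding zeros of Coleman integrals on annuli via slopes of $\trop(\omega)$ is the mechanism of the later uniformity theorem of Katz--Rabinoff--Zureick-Brown (where one must also compare two different $p$-adic integrals), not of this statement; importing it here misplaces where the tropical input actually occurs.

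Second, and decisively, you leave the key step as an expectation rather than an argument. After Stoll's optimization one has the Chabauty divisor $D_{\rm chab} = \sum_{\overline{Q}} n_{\overline{Q}}(\overline{Q})$ supported on the smooth locus of the special fiber and the bound $\# C(\QQ) \leq \# \mathcal{C}^{\rm sm}(\mathbb{F}_p) + \deg D_{\rm chab}$, so everything reduces to proving $\deg D_{\rm chab} \leq 2r$. In the good-reduction case Stoll gets this from semicontinuity of $h^0$ together with Clifford's inequality on the smooth special fiber; the whole difficulty at a prime of bad reduction is that Clifford's inequality fails for singular reducible curves in the form needed. The content of the Katz--Zureick-Brown theorem is the replacement: pass to a regular semistable model over an extension (which only increases the Chabauty divisor), view $D_{\rm chab}$ as a divisor $\mathcal{D}$ on the associated metrized complex $\fC$, promote $K_{\fC} - \mathcal{D}$ to a limit linear series by reducing the space $V_{\rm chab}$ of vanishing differentials to each component $C_v$, and then combine the Specialization Theorem for metrized complexes, which gives $r_{\fC}(K_{\fC} - \mathcal{D}) \geq g - r - 1$, with Clifford's inequality for metrized complexes, which gives $r_{\fC}(K_{\fC} - \mathcal{D}) \leq \frac{1}{2}\bigl(2g-2-\deg \mathcal{D}\bigr)$; together these yield $\deg D_{\rm chab} \leq 2r$. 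Your heuristic that ``each independent vanishing differential removes two from the naive Coleman bound'' gestures at this, but without the hybrid Clifford inequality (or some substitute for it on the singular special fiber) the proof does not close.
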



In order to explain the relevance of linear series on tropical curves to such a result, we need to briefly explain the basic ideas underlying the previous work of Coleman {\it et. al.}
Let us first outline a proof of Theorem~\ref{Thm:ChabautyColeman}.
Fix a rational point $P \in C(\QQ)$ (if no such point exists, then the theorem is vacuously true) and let $\iota : C \hookrightarrow J$ be the corresponding Abel-Jacobi embedding.
Coleman's theory of $p$-adic integration of $1$-forms associates to each $\omega \in H^0(C,\Omega^1_C)$ and $Q \in C(\QQ_p)$ a (definite) $p$-adic integral $\int_P^Q \omega \in \QQ_p$, obtained by pulling back a corresponding $p$-adic integral on $J$ via the map $\iota$.
Locally on $C$, such $p$-adic integrals can be computed by formally integrating a power series expansion $f_\omega(T)$ for $\omega$ with respect to a local parameter $T$ on some residue disc $U$.
One can show fairly easily that the $p$-adic closure $\overline{J(\QQ)}$ of $J(\QQ)$ in $J(\QQ_p)$ has dimension at most $r$ as a $p$-adic manifold.
The formalism of Coleman's theory implies that forcing the $p$-adic integral of a $1$-form on $J$ to vanish identically on $\overline{J(\QQ)}$ imposes at most $r$ linear conditions on $H^0(J,\Omega^1_J)$.
The functoriality of Coleman integration implies that the $\QQ_p$-vector space $V_{\rm chab}$ of all $\omega \in H^0(C,\Omega^1_C)$ such that $\int_P^Q \omega = 0$ for all $Q \in C(\QQ)$ has dimension at least $g-r > 0$.

\medskip

The condition $p>2g$ implies, by a $p$-adic analogue of Rolle's theorem which can be proved in an elementary way with Newton polygons, that if $f_\omega(T)$ has $n$ zeroes on $U$ then $\int f_\omega(T) \; dT$ has at most $n+1$ zeroes on $U$.
Using this observation, Coleman deduces, by summing over all residue classes, that if $\omega$ is a nonzero $1$-form in $H^0(C,\Omega^1_C)$ vanishing on all of $C(\QQ)$ then
\[
\# C(\QQ) \leq \sum_{\overline{Q} \in \bar{C}({\mathbf F}_p)} \left( 1 +  {\rm ord}_{\overline{Q}} \overline{\omega}  \right),
\]
where $\overline{\omega}$ denotes the reduction of $\omega$ to $\bar{C}$.
Since the $1$-form $\overline{\omega}$ on $\bar{C}$ has a total of $2g-2$ zeros counting multiplicity, we have
\[
\sum_{\overline{Q} \in \bar{C}({\mathbf F}_p)} {\rm ord}_{\overline{Q}} \overline{\omega} \leq 2g - 2,
\]
which yields Coleman's bound.

\medskip

Stoll observed in \cite{Stoll06} that one could do better than this by adapting the differential $\omega$ to the point $\overline{Q}$ rather than using the same differential $\omega$ for all residue classes.
Define the {\it Chabauty divisor}
\[
D_{\rm chab} = \sum_{\overline{Q} \in \bar{C}({\mathbf F}_p)} n_{\overline{Q}} (\overline{Q}),
\]
where $n_{\overline{Q}}$ is the minimum over all nonzero $\omega$ in $V_{\rm chab}$ of  ${\rm ord}_{\overline{Q}} \overline{\omega}$, and let $d$ be the degree of $D_{\rm chab}$.
Since $D_{\rm chab}$ and $K_{\bar{C}} - D_{\rm chab}$ are both equivalent to effective divisors, Clifford's inequality (applied to the smooth proper curve $\bar{C}$)
implies that
\[
h^0(K_{\bar{C}} - D_{\rm chab}) - 1 \leq \frac{1}{2}(2g-2-d).
\]
On the other hand, the semicontinuity of $h^0(D) = r(D)+1$ under specialization shows that $h^0(D_{\rm chab}) \geq {\rm dim} V_{\rm chab} \geq g-r$.
Combining these inequalities gives
\[
g-r-1 \leq \frac{1}{2} (2g-2-d)
\]
and thus $d \leq 2r$, giving Stoll's refinement of Coleman's bound.

\medskip

Lorenzini and Tucker \cite{LorenziniTucker02} had shown earlier that one can generalize Coleman's bound to the case of bad reduction as follows.
Since points of $C(\QQ)$ specialize to the set $\bar{\cC}^{\rm sm}({\mathbf F}_p)$ of smooth ${\mathbf F}_p$-points on the special fiber of $\cC$ under the reduction map,
one obtains by an argument similar to the one above the bound
\begin{equation}
\label{eq:LTinequality}
\# C(\QQ) \leq \sum_{\overline{Q} \in \bar{\cC}^{\rm sm}({\mathbf F}_p)} \left( 1 + n_{\overline{Q}} \right),
\end{equation}
where $\overline{\omega}$ denotes the reduction of $\omega$ to the unique irreducible component of the special
fiber of ${\cC}$ containing $\overline{Q}$.
Choosing a nonzero $\omega \in V_{\rm chab}$ as in Coleman's bound, the fact that the relative dualizing sheaf for ${\cC}$ has degree $2g-2$
gives the Lorenzini-Tucker bound.  A similar argument was found independently by McCallum and Poonen \cite{McCallumPoonen12}.

\medskip

We now explain where the subtlety occurs when one tries to combine the bounds of Stoll and Lorenzini--Tucker.
As above, we define the Chabauty divisor
\[
D_{\rm chab} = \sum_{\overline{Q} \in \bar{\cC}^{\rm sm}({\mathbf F}_p)} n_{\overline{Q}} (\overline{Q})
\]
and we let $d$ be its degree.  As in the case where $C$ has good reduction, the goal is to show that $d \leq 2r$.
When $C$ has good reduction, Stoll proves this by combining the semicontinuity of $h^0$ and Clifford's inequality.
For singular curves, one can still define $h^0$ of a line bundle and it satisfies the desired semicontinuity theorem.
However, even when $C$ has semistable reduction, it is well-known that Clifford's inequality does not hold in the form needed here.
Katz and Zureick-Brown replace the use of Clifford's inequality in Stoll's argument by a hybrid between the classical Clifford inequality and Clifford's inequality for linear series on tropical curves.
In this way, they are able to obtain the desired bound $d \leq 2r$.

\medskip

We briefly highlight the main steps in the argument, following the reformulation in terms of metrized complexes given in \cite{AminiBaker12}.

\medskip

1. As noted by Katz and Zureick-Brown, if one makes a base change from $\QQ_p$ to an extension field over which there is a regular semistable model ${\cC'}$ for $C$ dominating the base change of ${\cC}$,
the corresponding Chabauty divisors satisfy $D'_{\rm chab} \geq D_{\rm chab}$.  We may therefore assume that $\cC$ is a regular semistable model for $C$.

\medskip

2. Let $s = \dim V_{\rm chab} - 1$.  We can identify $V_{\rm chab}$ with an $(s+1)$-dimensional space $W$ of rational functions on $C$ in the usual way by identifying $H^0(C,\Omega^1_C)$ with $\mathcal{L}(K_C)$.
The divisor $D_{\rm chab}$ on $\bar{\cC}^{\rm sm}$ defines in a natural way a divisor ${\mathcal D}$ of degree $d$ on the metrized complex ${\fC}$ associated to $\cC$.
We can promote the divisor $K_{\fC} - {\mathcal D}$ to a limit linear series $(K_{\fC}- {\mathcal D}, \{ H_v \})$ by defining $H_v$ to be the reduction of $W$ to $C_v$ for each $v \in V(G)$.
By the definition of $D_{\rm chab}$, each element of $H_v$ vanishes to order at least $n_{\overline{Q}}$ at each point $\overline{Q}$ in ${\rm supp}(D_{\rm chab}) \cap C_v$.
The Specialization Theorem for limit linear series on metrized complexes then shows that
\[
r_{\fC}(K_{\fC} - {\mathcal D}) \geq s \geq g-r-1.
\]

\medskip

3. On the other hand, Clifford's inequality for metrized complexes implies that
\[
r_{\fC}(K_{\fC} - \mathcal{D}) \leq \frac{1}{2}(2g-2-d).
\]
Combining these inequalities gives $d \leq 2r$ as desired.

\subsection{The uniformity theorems of Katz--Rabinoff--Zureick-Brown}
\label{Section:KRZB}

Together with Rabinoff, Katz and Zureick-Brown have recently used linear series on tropical curves to refine another result due to Stoll.
In \cite{CHM97}, Caporaso, Harris, and Mazur proved that if one assumes the Bombieri--Lang conjecture then there is a uniform bound $M(g,K)$ depending only on $g$ and the number field $K$ such that $|C(K)| \leq M(g,K)$ for every curve $C$ of genus $g\geq 2$ over $K$.
The Bombieri--Lang conjecture, which asserts that the set of rational points on a variety of general type over a number field is not Zariski dense, remains wide open, and until recently little progress had been made in the direction of unconditional proofs of the Caporaso--Harris--Mazur result.
In \cite{Stoll13}, Stoll proved that a uniform bound $M(g,K)$ exists for {\em hyperelliptic curves} provided that one assumes in addition that the Mordell--Weil rank of
$\Jac(C)(K)$ is at most $g-3$.  Katz, Rabinoff, and Zureick-Brown succeeded in removing the hypothesis in Stoll's theorem that $C$ is hyperelliptic, obtaining the following result.

\begin{theorem}[\cite{KRZB15}]
\label{thm:KRZB}
There is an explicit bound $N(g,d)$ such that if $C$ is a curve of genus $g\geq 3$ defined over a number field $K$ of degree $d$ over $\QQ$ and having Mordell-Weil rank $r \leq g-3$, then
$$ \# C(K) \leq N(g,d).$$
When $K=\QQ$, one can take $N(g,1)=76g^2 -82g + 22$.
\end{theorem}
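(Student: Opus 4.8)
The plan is to run the Chabauty--Coleman--Stoll strategy outlined in \S\ref{Subsection:KZB}, but to carry out the $p$-adic integration over the \emph{entire} Berkovich analytification rather than componentwise, so that every estimate is controlled by the combinatorics of the skeleton $\Gamma$ of $C^{\an}$ alone. I would first fix a place $v$ of $K$ over a small rational prime $p$, pass to a finite extension over which $C$ acquires a regular semistable model, and form the Chabauty subspace $V_{\mathrm{chab}} \subseteq H^0(C_{K_v}, \Omega^1)$ of $1$-forms annihilating $\overline{\Jac(C)(K)}$. As before $\dim V_{\mathrm{chab}} \geq g - r$, and the hypothesis $r \leq g-3$ is exactly what guarantees $\dim V_{\mathrm{chab}} \geq 3$. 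Every $P \in C(K)$ is a zero of all the Berkovich--Coleman integrals $\int \omega$ with $\omega \in V_{\mathrm{chab}}$, so it suffices to bound the number of common zeros of these integrals on $C^{\an}$.

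The second step is to decompose $C^{\an}$ into the open balls and open annuli cut out by a semistable vertex set, as in \S\ref{section:Berkcurves}, and to bound $\#C(K)$ by the sum over these pieces of the number of zeros of a suitably chosen integral. The number of pieces is uniformly bounded: a stable skeleton of genus $g$ has at most $2g-2$ vertices and at most $3g-3$ edges, so there are $O(g)$ annuli and $O(g)$ families of balls to consider. On a residue ball the classical argument survives essentially verbatim: a $p$-adic analogue of Rolle's theorem bounds the number of zeros of $\int\omega$ by $1$ plus the order of vanishing of the reduction $\bar\omega$, and summing these orders of vanishing over a component contributes at most that component's share of the $2g-2$ zeros of a global differential.

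The genuinely new ingredient, and the step I expect to be the main obstacle, is the bound on the number of zeros of a Berkovich--Coleman integral on an \emph{open annulus}. Writing $\omega = \big( \sum_n a_n t^n \big)\,\tfrac{dt}{t}$ in an annulus coordinate, the integral acquires a multivalued logarithmic term $a_0 \log t$ whose slope is exactly the slope of $\trop(\omega)$ along the corresponding edge of $\Gamma$, by the Slope Formula (Theorem~\ref{thm:SlopeFormula}). I would bound the zeros of this log-Laurent series by a Newton-polygon argument, obtaining an estimate that grows \emph{linearly in the slopes} of $\trop(\omega)$ at the two ends of the edge. The crucial point is then that these slopes are not arbitrary: the image $\trop(V_{\mathrm{chab}})$ is a tropical linear series on $\Gamma$ of rank at least $g-r-1 \geq 2$ lying in the canonical class, so Tropical Clifford's Theorem (Theorem~\ref{Thm:Clifford}) together with the theory of reduced divisors bounds the degrees, and hence the admissible slopes, on any single edge by $O(g)$.

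Assembling the pieces, the total count becomes a sum of $O(g)$ annulus contributions, each of size $O(g)$, together with ball contributions of total size $O(g)$; this is what produces a bound quadratic in $g$, and tracking the explicit constants in the Rolle estimate for a small prime and in the tropical slope bounds yields the stated value $N(g,1) = 76g^2 - 82g + 22$. One subtlety to handle along the way is that a single $\omega \in V_{\mathrm{chab}}$ need not be well adapted to every piece simultaneously, so I would choose the differential piece-by-piece (as in Stoll's adapted-differential refinement) and use the rank-$\geq 2$ property of $\trop(V_{\mathrm{chab}})$ to ensure that on each annulus and each component there is a form whose reduction is nonzero with controlled vanishing. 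The crux throughout is the annulus zero bound, since it fuses genuinely nonarchimedean input---Berkovich--Coleman integration of a form with a nonzero residue-type term---with the tropical control of slopes, precisely the hybrid of $p$-adic analysis and tropical Brill--Noether theory that distinguishes this argument from the purely componentwise bounds of Coleman and Lorenzini--Tucker.
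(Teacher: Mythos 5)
Your overall architecture --- Chabauty--Coleman run on the ball/annulus decomposition of $C^{\an}$, with the Slope Formula and tropical control of $|K_\Gamma^\#|$ replacing Stoll's explicit Newton polygons --- matches the strategy of \cite{KRZB15}, but there is a genuine gap at the step where you assert that ``every $P \in C(K)$ is a zero of all the Berkovich--Coleman integrals $\int\omega$ with $\omega \in V_{\mathrm{chab}}$.'' That assertion fails when $C$ has bad reduction at $p$. The subspace $V_{\mathrm{chab}}$ is cut out by the \emph{abelian} $p$-adic integrals (Colmez, Zarhin, Vologodsky), obtained by pulling back the $p$-adic logarithm on $\Jac(C)(\QQ_p)$; those are the integrals that vanish on $\overline{\Jac(C)(K)}$ and hence at the rational points. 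The integrals you can actually analyze on an open annulus by formally antidifferentiating a Laurent expansion --- the Berkovich/Coleman--de Shalit integrals --- agree with the abelian integrals on residue discs, but on annuli they differ by a period term coming from the uniformization of $\Jac(C)^{\an}$. So the function whose zeros your Newton-polygon/slope estimate bounds on an annulus is not, a priori, a function vanishing at the rational points in that annulus.

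Reconciling the two integration theories is precisely the central new difficulty identified in \S\ref{Section:KRZB}, and it is resolved tropically: Katz, Rabinoff, and Zureick-Brown show that the discrepancy between the two integrals is controlled by the tropical Abel--Jacobi map, by combining Theorem~\ref{Thm:JacobianSkeleton} with Raynaud's uniformization theory. One then kills the discrepancy on a given annulus by imposing additional linear conditions on $V_{\mathrm{chab}}$, and this is exactly why the hypothesis is $r \leq g-3$ (so that $\dim V_{\mathrm{chab}} \geq 3$) rather than the weaker bound your argument as written would require: after cutting down to the forms whose Berkovich--Coleman integral on the given annulus genuinely vanishes at the rational points there, one must still be left with a nonzero form with controlled slopes. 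You correctly flag that $\dim V_{\mathrm{chab}} \geq 3$ is used, but you do not supply the reason, and without the comparison of the two integrals the annulus bound applies to the wrong function and the proof does not close. The rest of your outline --- the Rolle-type bound on discs, the slope control coming from $\log|\omega| \in R(K_\Gamma^\#)$, and the count of $O(g)$ annuli each contributing $O(g)$ zeros to yield a quadratic bound --- is consistent with the argument of \cite{KRZB15}.
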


Note that the bound in Theorem~\ref{Thm:ChabautyColemanBadReduction} is not uniform, because the
quantity $\vert \mathcal{C}^{\rm sm}( \mathbb{F}_p ) \vert$ can be arbitrarily large for a given prime $p$ of bad reduction, and the smallest prime $p$ of good
reduction can be arbitrarily large as a function of $g$.
Stoll's main new idea was to apply the Chabauty-Coleman method on residue {\em annuli} instead of just on discs.
Stoll's proof exploits the concrete description of differentials on a hyperelliptic curve as $f(x)dx/y$; the restriction of such a differential to an annulus has a bounded numerator, and Stoll is able to analyze the zeroes of the resulting $p$-adic integral via explicit computations with Newton polygons.

\medskip

For general curves, such an explicit description of differentials and the Newton polygons of their $p$-adic integrals is not possible.  This is where the theory of linear systems on metric graphs becomes useful.
To circumvent the difficulty posed by not having an explicit description of differentials on $C$, Katz, Rabinoff, and Zureick-Brown generalize the Slope Formula (Theorem~\ref{thm:SlopeFormula}) to sections of a metrized line bundle.
For a differential $\omega$, the associated tropical function $F = \log|\omega|$ on the skeleton $\Gamma$ of $C$ belongs to the space $R(K_\Gamma^\#)$ of tropical rational functions $G$ with $K_\Gamma^\# + {\rm div}(G) \geq 0$.
(The absolute value here comes from a natural formal metric on the canonical bundle.)
Belonging to $R(K_\Gamma^\#)$ gives strong constraints on the slopes of $F$, and hence on the number of zeroes of the $p$-adic integral of $\omega$.
The Slope Formula thus replaces the Newton polygons in Stoll's arguments, and estimates on the slopes of the Newton polygon are replaced by
properties of the tropical linear series $\vert K_\Gamma^\# \vert$.

\medskip

A major issue one faces in trying to establish Theorem~\ref{thm:KRZB} (which also shows up in the earlier work of Stoll) is that when $C$ has bad reduction at $p$, there are two different kinds of $p$-adic integrals which need to be considered.
On the one hand, there are the $p$-adic Abelian integrals studied by Colmez, Zarhin, and Vologodsky, which have no periods and are obtained by pulling back the logarithm map on the $p$-adic Lie group ${\rm Jac}(C)(\QQ_p)$ to $C$.
These are the integrals for which one knows that ${\rm dim} (V_{\rm chab}) \geq g-r$.
On the other hand, there are the $p$-adic integrals of Berkovich and Coleman--de Shalit which do have periods but also have better functoriality properties.  These are the integrals which are given locally on residue annuli of a semistable model ${\mathcal C}$ by formally integrating a local Laurent series expansion of $\omega \in H^0(C,\Omega^1)$.
In order to prove Theorem~\ref{thm:KRZB}, one needs to study the difference between the two kinds of $p$-adic integrals.
One of the new discoveries of Katz, Rabinoff, and Zureick-Brown is that the difference can be understood quite concretely using tropical geometry by combining Theorem~\ref{Thm:JacobianSkeleton} with Raynaud's uniformization theory.

\medskip

The methods used by Katz--Rabinoff--Zureick-Brown in \cite{KRZB15} also provide new results in the direction of a ``uniform Manin-Mumford conjecture''.  The Manin--Mumford conjecture, proved by Raynaud, asserts that if $C$ is a curve of genus at least 2 embedded in its Jacobian via an Abel-Jacobi map
$\iota : C \to {\rm Jac}(C)$, then $\iota(C) \cap {\rm Jac}(C)(\overline{K})_{\rm tors}$ is finite.  One can ask whether there is a uniform bound on the size of this intersection as one varies over all curves of a fixed genus $g$.   The following uniform result for the number
of {\em $K$-rational points} on $C$ which are torsion on $J$ is proved in \cite{KRZB15}:

\begin{theorem}
\label{thm:KRZBrationaltorsion}
There is an explicit bound $N(g,d)_{\rm tors}$ (which one can equal to the bound $N(g,d)$ above) such that if $C$ is a curve of genus $g\geq 3$ defined over a number field $K$ of degree $d$ over $\QQ$ and $\iota : C \to {\rm Jac}(C)$ is an Abel-Jacobi embedding defined over $K$, then
$$ \# \iota(C) \cap {\rm Jac}(C)(K)_{\rm tors} \leq N(g,d)_{\rm tors}. $$
\end{theorem}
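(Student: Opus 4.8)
The plan is to observe that the $K$-rational torsion points lying on $\iota(C)$ are precisely the points to which the Chabauty--Coleman method of \S\ref{Subsection:KZB} applies with \emph{effective Mordell--Weil rank zero}, and then to run the argument behind Theorem~\ref{thm:KRZB} with $r$ replaced by $0$. The key input is that the $p$-adic abelian logarithm $\log \colon \Jac(C)(\QQ_p) \to H^0(C,\Omega^1_C)^*$ of Colmez--Zarhin--Vologodsky kills all torsion. Concretely, if $Q \in C(K)$ satisfies $\iota(Q) \in \Jac(C)(K)_{\rm tors}$, then $\log(\iota(Q)) = 0$, so the abelian integral $\int_{P_0}^Q \omega$ (with $P_0$ the base point of $\iota$) vanishes for \emph{every} $\omega \in H^0(C,\Omega^1_C)$, not merely for $\omega$ in the rank-dependent subspace $V_{\rm chab}$. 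In other words, for the purpose of bounding torsion points the relevant space of annihilating differentials is all of $H^0(C,\Omega^1_C)$, which is exactly the $r=0$ situation. Since $g \geq 3$ we have $0 \leq g-3$, so the hypotheses under which Theorem~\ref{thm:KRZB} was established are met with $r = 0$.

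First I would fix a prime $p$ and, after a base change, a regular semistable model $\cC$ with skeleton $\Gamma$ and associated metrized complex $\fC$, exactly as in Step 1 of the proof of Theorem~\ref{Thm:ChabautyColemanBadReduction}. Next I would record, for each $\omega \in H^0(C,\Omega^1_C)$, the tropical function $F = \log|\omega| \in R(K_\Gamma^\#)$ furnished by the Slope Formula for sections of metrized line bundles, and use the constraints that membership in $R(K_\Gamma^\#)$ places on the slopes of $F$ to bound, annulus by annulus and disc by disc, the number of zeros of the corresponding $p$-adic integral. This is the same local-to-global count that produces the bound $N(g,d)$; since the number of residue annuli and discs and the relevant combinatorics of $\Gamma$ depend only on $g$ (the degree $d$ entering through the base field), the resulting estimate is again $N(g,d)$, so one may take $N(g,d)_{\rm tors} = N(g,d)$.

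The main obstacle, just as in \cite{KRZB15} itself, is the comparison between the two kinds of $p$-adic integrals. The vanishing obtained for free from torsion is a statement about the \emph{abelian} (Colmez--Zarhin--Vologodsky) integral, which is single-valued but not directly computable on annuli; the zero-counting, by contrast, is carried out for the \emph{Berkovich--Coleman} integral, which has periods but good local functoriality. I would control their difference exactly as in \cite{KRZB15}, by combining Theorem~\ref{Thm:JacobianSkeleton} --- which identifies the skeleton of $\Jac(C)$ with $\Jac(\Gamma)$ as principally polarized real tori and makes the tropical Abel--Jacobi map compatible with the algebraic one --- with Raynaud's non-Archimedean uniformization theory. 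The discrepancy between the two integrals is then a harmonic (tropical) correction term supported on $\Gamma$, and bounding it is precisely what forces one to work with the full tropical linear series $|K_\Gamma^\#|$ rather than with a single differential. Once this comparison is in hand, the $r=0$ specialization of the Katz--Rabinoff--Zureick-Brown estimate yields the stated uniform bound on $K$-rational torsion points.
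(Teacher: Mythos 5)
Your proposal is correct and follows essentially the same route as the source: the paper only cites \cite{KRZB15} for this theorem, but its discussion in \S\ref{Section:KRZB} (the Slope Formula for metrized line bundles, the comparison of abelian and Berkovich--Coleman integrals via Theorem~\ref{Thm:JacobianSkeleton} and Raynaud uniformization) is exactly the machinery you invoke. Your key observation --- that the Colmez--Zarhin--Vologodsky logarithm annihilates torsion, so all of $H^0(C,\Omega^1_C)$ serves as the space of annihilating differentials and the argument for Theorem~\ref{thm:KRZB} runs with $r=0$ (permitted since $g\geq 3$ gives $0\leq g-3$) --- is precisely why the torsion bound needs no Mordell--Weil rank hypothesis, as the paper remarks.
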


Note that in Theorem~\ref{thm:KRZBrationaltorsion} there is no restriction on the Mordell--Weil rank of ${\rm Jac}(C)$.
It has been conjectured that $\# A(K)_{\rm tors}$ is bounded uniformly in terms of $[K:\QQ]$ and $g$ for all abelian varieties of dimension $g$ over $K$, which would of course imply Theorem~\ref{thm:KRZBrationaltorsion} as a special case, but this is known only for $g=1$ \cite{Merel96} and
the general case seems far out of reach at present.

\medskip

Katz, Rabinoff, and Zureick-Brown also prove a uniformity result concerning the number of {\em geometric} ($\overline{K}$-rational) torsion points lying on $C$, under a technical assumption about the structure of the stable model at some prime $\mathfrak{p}$.
We refer to \cite{KRZB15} for the precise statement.

\section{Limiting Behavior of Weierstrass Points in Degenerating Families}
\label{Section:Weierstrass}

The theory discussed in this paper has interesting applications to the behavior of Weierstrass points under specialization.
To motivate this kind of question, we begin with a seemingly unrelated classical result due to Andrew Ogg \cite{Ogg78}.

\subsection{Weierstrass points on modular curves}

Let $N$ be a positive integer. The finite-dimensional space $S = S_2(\Gamma_0(N))$ of weight $2$ cusp forms for the congruence subgroup $\Gamma_0(N)$ of ${\rm SL}_2(\ZZ)$ is an important object in number theory.
An element $f \in S$ has a {\em $q$-expansion} of the form $f = \sum_{n=1}^\infty a_n q^n$ with $a_n \in \CC$, which uniquely determines $f$.
For $f \neq 0$ in $S$, define
\begin{equation}
\label{eq:ord}
{\rm ord}(f) = \inf \{ n \; \vert \; a_n \neq 0 \}- 1.
\end{equation}
If $g = g_0(N) = {\rm dim}(S)$, then by Gaussian elimination there exists an element $f \in S$ with ${\rm ord}(f) \geq g-1$.  Is there any unexpected cancellation?  Under certain restrictions on the level $N$, the answer is no:

\begin{theorem}[\cite{Ogg78}]
\label{theorem:Ogg}
If $N=pM$ with $p$ prime, $p \nmid M$, and $g_0(M)=0$ then there is no nonzero element $f$ of $S_2(\Gamma_0(N))$ with ${\rm ord}(f) \geq g$.  (In particular, this holds if $N=p$ is prime.)
\end{theorem}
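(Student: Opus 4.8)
\medskip

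\textbf{Proof proposal.} The plan is to reinterpret Theorem~\ref{theorem:Ogg} as the statement that the cusp $\infty$ is \emph{not} a Weierstrass point of the modular curve $X = X_0(N)$, and then to deduce this from the Specialization Theorem (Theorem~\ref{Thm:Specialization}) applied to the reduction of $X$ modulo $p$.

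First I would translate the analytic condition into divisor-theoretic language. A nonzero $f=\sum_{n\ge 1} a_n q^n$ in $S_2(\Gamma_0(N))=H^0(X,\Omega^1)$ corresponds to a holomorphic differential $\omega_f = f\,\frac{dq}{q}$, and if $f$ has ${\rm ord}(f)=m-1$ (so $a_m\ne 0$ and $a_n=0$ for $n<m$), then $\omega_f$ vanishes to order exactly $m-1$ at $\infty$, since $q$ is a local parameter at the (width-one) cusp $\infty$. Thus an $f$ with ${\rm ord}(f)\ge g$ is precisely a nonzero element of $H^0(X,\Omega^1(-g\infty))$. Riemann--Roch on $X$ gives $h^0(K_X-g\infty)=h^0(g\infty)-1=r_X(g\infty)$, so the nonexistence of such an $f$ is equivalent to the assertion $r_X(g\infty)=0$. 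As $g\infty$ is effective we have $r_X(g\infty)\ge 0$, so it suffices to prove $r_X(g\infty)\le 0$.

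The key geometric input is the Deligne--Rapoport description of the reduction of $X_0(pM)$ at $p$: after base change to a complete valued field with algebraically closed residue field (which leaves $r_X(g\infty)$ unchanged, $h^0$ being stable under field extension), $X$ has a strongly semistable model whose special fiber is two copies of $X_0(M)$ meeting transversally at the supersingular points. Because $g_0(M)=0$, both components are copies of $\PP^1$, so we are in the maximally degenerate situation of Section~\ref{Section:Finite}: the dual graph $G$ is the \emph{banana graph} with two vertices $v_0,v_1$ joined by $n$ edges, one per supersingular point, and its genus is $n-1=g$. The cusp $\infty$ reduces to a smooth point on one of the two components, hence $\Trop(\infty)$ is one of the vertices, say $v_0$. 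Now I would compute $r_G(g\cdot v_0)$ combinatorially: the divisor $g v_0$ carries $g=n-1$ chips at $v_0$ while $n$ edges run from $v_0$ to $v_1$, so Dhar's burning algorithm started at $v_1$ burns through all $n$ edges and then consumes $v_0$ as well (the $n$ burnt edges exceed $D(v_0)=n-1$); hence $g v_0$ is $v_1$-reduced. Lemma~\ref{Lem:ReducedRank} then forces $r_G(g v_0)\le (g v_0)(v_1)=0$, and the Specialization Theorem yields $r_X(g\infty)\le r_G(g v_0)=0$, as required.

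The combinatorial computation is trivial; the genuine work lies in the geometric input. The main obstacle is producing a \emph{regular} strongly semistable model realizing the banana graph $G$: the naive Deligne--Rapoport model acquires $A_{e-1}$-type singularities at the supersingular points with extra automorphisms ($j=0,1728$), and resolving these subdivides the corresponding edges into chains of $\PP^1$'s. This subdivision changes neither the first Betti number nor the valence bounds used in the reduced-divisor argument, so the rank computation on the associated metric graph is unaffected; but this point, together with the verification that $\infty$ specializes to a vertex rather than into the interior of an edge, is exactly where the argument must be handled with care.
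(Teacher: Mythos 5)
Your proposal is correct and follows essentially the same route as the paper's argument: reformulating the statement as ``$\infty$ is not a Weierstrass point of $X_0(N)$,'' invoking the Deligne--Rapoport description to get the banana graph as skeleton, and combining the Specialization Theorem with a reduced-divisor/Dhar computation. The only (inessential) difference is that you apply Riemann--Roch on $X$ first and then specialize $g\infty$, showing $g v_0$ is $v_1$-reduced, whereas the paper specializes $K_X - g\infty$ directly and checks that $K_\Gamma - gP = (g-1)Q - P$ is $P$-reduced and non-effective; these are the same computation viewed through the two sides of Riemann--Roch duality.
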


One can give an enlightening proof of Ogg's theorem using specialization of divisors from curves to metric graphs; the following argument is taken from \cite{Baker08}.

\medskip

First of all, Ogg's theorem can be recast in the following purely geometric way, which is in fact how Ogg formulated and proved the result in \cite{Ogg78}:

\begin{theorem}
\label{theorem:Oggbis}
If $N=pM$ with $p$ prime, $p \nmid M$, and $g_0(M)=0$ then the cusp $\infty$ is not a Weierstrass point on the modular curve $X_0(N)$.
\end{theorem}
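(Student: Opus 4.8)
The plan is to reformulate the claim that $\infty$ is a Weierstrass point as the assertion that one explicit divisor has positive rank, and then to refute that assertion by specializing to the dual graph of a model of $C = X_0(N)$ over $\ZZ_p$. Recall that $\infty$ is a Weierstrass point of the genus $g$ curve $C$ precisely when $h^0(C, g\cdot\infty) \geq 2$, i.e.\ when $r_C(g\cdot\infty) \geq 1$; equivalently, by Riemann--Roch, when some nonzero holomorphic differential --- hence some nonzero weight $2$ cusp form --- vanishes to order at least $g$ at $\infty$, which is exactly Ogg's original formulation. It therefore suffices to prove $r_C(g\cdot\infty) = 0$. The Specialization Theorem reduces this to a purely combinatorial inequality, since $r_C(g\cdot\infty) \leq r_\Gamma(\Trop(g\cdot\infty))$ for the skeleton $\Gamma$ of any semistable model.

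Next I would identify $\Gamma$. The Deligne--Rapoport model of $X_0(N)$ over $\ZZ_p$ has special fiber consisting of two copies of $X_0(M)_{\mathbb{F}_p}$ crossing transversally at the supersingular points; after a base change and blowups to obtain a regular strongly semistable model, the associated skeleton $\Gamma$ has two essential vertices $v$ and $w$, one for each copy of $X_0(M)$, joined by several arcs. Since $g_0(M) = 0$, both components are rational, so the entire genus of $\Gamma$ equals its first Betti number; as $g(\Gamma) = g(C) = g$, there are exactly $g+1$ arcs between $v$ and $w$, so $\Gamma$ is metrically a ``banana graph'' $B_{g+1}$ (the base change merely subdivides these arcs, which is immaterial for what follows). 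The cusp $\infty$ is not supersingular, so it reduces to a smooth point on one component, say that of $v$; hence $\Trop(g\cdot\infty) = g\cdot v$, and it remains only to show $r_\Gamma(g\cdot v) = 0$.

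This last point follows quickly from Dhar's burning algorithm and Lemma~\ref{Lem:ReducedRank}. I would first check that the effective divisor $D = g\cdot v$ is $w$-reduced: a fire lit at $w$ spreads along all $g+1$ arcs to $v$, and because $v$ thereby acquires $g+1$ burnt tangent directions while carrying only $g$ chips, $v$ burns as well and the fire consumes all of $\Gamma$. As $D$ is effective it has nonnegative rank, so Lemma~\ref{Lem:ReducedRank} gives $r_\Gamma(D) \leq D(w) = 0$, whence $r_\Gamma(g\cdot v) = 0$. Combined with the Specialization Theorem, this forces $r_C(g\cdot\infty) = 0$, so $\infty$ is not a Weierstrass point.

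The genuinely substantive input, and the step demanding the most care, is the arithmetic geometry underlying the second paragraph: identifying the reduction of $X_0(N)$ modulo $p$ as two copies of $X_0(M)$ meeting at supersingular points, ensuring the existence of a strongly semistable model after base change, and verifying that $\infty$ specializes to a smooth point rather than to a node. These classical facts, due to Deligne--Rapoport, are precisely where the hypotheses that $p$ exactly divides $N$ and $g_0(M) = 0$ are used, and they are what guarantee that $\Gamma$ is a banana graph with exactly one more edge than its genus. Once this geometric picture is in hand, the tropical part of the argument is entirely formal.
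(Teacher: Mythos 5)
Your proof is correct and follows essentially the same route as the paper: both reduce via the Deligne--Rapoport model to the banana graph with $g+1$ edges joining two vertices, and both conclude with the Specialization Theorem plus a reduced-divisor computation via Dhar's burning algorithm. The only (immaterial, Riemann--Roch-dual) difference is that you show $r_\Gamma(g\cdot v)=0$ directly by checking that $g\cdot v$ is $w$-reduced, whereas the paper shows $r_\Gamma(K_\Gamma - g\cdot v)=-1$ by checking that $(g-1)w - v$ is $v$-reduced and non-effective.
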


Recall that a point $P$ on a genus $g$ curve $X$ is called a {\em Weierstrass point} if there exists a holomorphic differential $\omega \in H^0(X,\Omega^1_X)$ vanishing to order at least $g$ at $P$.
To see the equivalence between Theorems~\ref{theorem:Ogg} and \ref{theorem:Oggbis}, recall that $q$ is an analytic local parameter on $X_0(N)$ at the cusp $\infty$ and the map $f \mapsto f(q)\frac{dq}{q}$ gives an isomorphism between
$S_2(\Gamma_0(N))$ and the space of holomorphic differentials on $X_0(N)$.  Under this isomorphism, the function ${\rm ord}$ defined in (\ref{eq:ord}) becomes the order of vanishing of the corresponding differential at $\infty$.  So there is a nonzero element $f$ of $S_2(\Gamma_0(N))$ with ${\rm ord}(f) \geq g$ if and only if
there is a nonzero holomorphic differential $\omega$ vanishing to order at least $g$ at $\infty$.

\medskip

The reduction of $X = X_0(N)$ modulo $p$ when $p$ exactly divides $N=pM$ is well-understood; the special fiber of the so-called {\em Deligne-Rapoport model} for $X_0(N)$ over $\ZZ_p$ consists of two copies of $X_0(M)$ intersecting transversely at the supersingular points in characteristic $p$.
This model is always semistable but is not in general regular.  (It is very easy to describe the minimal regular model, but we will not need this here.)
In any case, the skeleton $\Gamma$ of $X_0(N)$ over $\QQ_p$ is a ``metric banana graph'' consisting of two vertices connected by a number of edges, as pictured in Figure \ref{Fig:Banana}, and the cusp $\infty$ specializes to one of the two vertices, call it $P$.
Under the hypotheses of Theorem~\ref{theorem:Ogg}, each $X_0(M)$ is a rational curve and so the genus of $\Gamma$ is equal to the genus of $X_0(N)$.  That is, there are $g+1$ edges.
By the Specialization Theorem, if there is a nonzero global section of $K_X$ vanishing to order at least $g$ at $\infty$, then $r(K_\Gamma - gP) \geq 0$.
However, since $K_\Gamma = (g-1)P + (g-1)Q$, where $Q$ is the other vertex, we have $K_\Gamma - gP = (g-1)Q - P$, which is $P$-reduced by Dhar's algorithm and non-effective.  Therefore $r(K_\Gamma - gP) = -1$, and Ogg's theorem is proved.

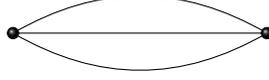
\begin{figure}[h!]
\begin{tikzpicture}[scale=1.5]

\coordinate (A) at (-2,0);
\coordinate (B) at (-4.25,0);


\path (A) edge [bend left] (B);
\path (A) edge [bend right] (B);
\path (A) edge (B);

\draw[ball color = black] (A) circle (0.5mm);
\draw[ball color = black] (B) circle (0.5mm);
\end{tikzpicture}
\caption{The ``Banana'' Graph of Genus 2}
\label{Fig:Banana}
\end{figure}

\medskip
\subsection{Specialization of Weierstrass points}

The essence of the above argument is that if $C$ is a {\em totally degenerate} curve, meaning that the genus of its minimal skeleton $\Gamma$ equals the genus of $C$, then the Weierstrass points on $C$ must specialize to Weierstrass points on $\Gamma$, where
a Weierstrass point on $\Gamma$ is a point $P$ such that $r(K_\Gamma - gP) \geq 0$.
It follows from the Specialization Theorem and the corresponding fact from algebraic geometry that if $\Gamma$ is a metric graph of genus $g \geq 2$ then the set of Weierstrass points on $\Gamma$ is non-empty.  A purely combinatorial proof of this fact was given by Amini \cite{Amini13}.

\medskip

The specialization of Weierstrass points is also a natural thing to study from the purely algebro-geometric point of view, where one is asking about the limiting behavior of the Weierstrass points in a semistable one-parameter family of curves.
This subject, which was previously studied by Eisenbud-Harris \cite{EisenbudHarris87}, Esteves-Medeiros \cite{EstevesMedeiros00}, and several other authors, has seen important recent advances by Amini \cite{Amini14}.
We now summarize the main results proved in Amini's paper.

\medskip

Let $L$ be a line bundle of degree $d$ and rank $r \geq 0$ on a curve $C$ of genus $g$ over an algebraically closed field $k$ of characteristic zero.
Given a point $P \in C(k)$, we define the {\em vanishing set} $S_P(L)$ of $L$ at $P$ to be the set of orders of vanishing of global sections of $L$ at $P$.
We have $|S_P(L)| = r+1$ for all $P \in C(k)$, and for all but finitely many $P \in C(k)$ the vanishing set is $[r] := \{ 0,1,\ldots,r \}$.
A point $P \in C(k)$ whose vanishing set is not $[r]$ is called a {\em Weierstrass point} for $L$.  Equivalently, $P$ is a Weierstrass point for $L$ if there exists a global section of $L$ vanishing to order at least $r+1$ at $P$.
A Weierstrass point of $C$ is by definition a Weierstrass point for the canonical bundle $K_C$.

\medskip

The {\em $L$-weight} of a point $P \in C(k)$ is
\[
{\rm wt}_P(L) = \left( \sum_{m \in S_P(L)} m \right) - {{r+1}\choose{2}} = \sum_{m \in S_P(L)} m - \sum_{i \in [r]} i.
\]
Thus ${\rm wt}_P(L) \geq 0$ for all $P \in C(k)$ and ${\rm wt}_P(L) > 0$ if and only if $P$ is a Weierstrass point for $L$.
The {\em Weierstrass divisor} for $L$ is $\cW = \cW(L) = \sum_{P \in C(k)} {\rm wt}_P(L) (P)$.
If we fix a basis $\cF$ for $H^0(C,L)$, the corresponding {\em Wronskian} ${\rm Wr}_{\cF}$ is a nonzero global section of
$L^{\otimes (r+1)} \otimes K_C^{\otimes \frac{r(r+1)}{2}}$ whose divisor is precisely $\cW(L)$.
In particular, the degree of $\cW(L)$ (i.e., the total number of Weierstrass points counted according to their weights) is $W(L) := d(r+1) + (g-1)r(r+1)$.

\medskip

We seek an explicit formula for $\Trop (\cW)$.
For this, it is convenient to fix a divisor with $L = L(D)$, and to define as usual $\cL(D) = \{ f \in k(C)^* \; | \; {\rm div}(f) + D \geq 0 \}$.
Let $D_\Gamma = \sum_{x \in \Gamma} d_x (x)$ be the specialization $\Trop (D)$ of $D$ to $\Gamma$.
Let $K^\#_\Gamma$ be the canonical divisor of $\Gamma$ considered as a vertex-weighted metric graph, as in \S\ref{Subsection:vertexweight}.
Concretely, we have $K^\#_\Gamma = \sum_{x \in \Gamma} \left( 2g_x - 2 + {\rm val}(x) \right) x$.

\medskip

For a tangent direction $\nu$ at $x$, define $S^\nu(D)$ to be the set of integers occurring as $s^\nu(f)$ for some $f \in \cL(D)$,
where $s^\nu(f)$ is defined as in \S\ref{Section:Berkovich} to be the slope of ${\rm trop}(f)$ in the tangent direction $\nu$.
Since $s^\nu(f)$ coincides with the order of vanishing of the normalized reduction $\bar{f}_x$ at the point of $C_x$ corresponding to $\nu$,
one sees easily that $|S^\nu(D)| = r+1$.

\medskip

For $x \in \Gamma$, let
\[
S_x(D) =  \left\{ \begin{array}{ll} \sum_{\nu \in T_x(\Gamma)} \left( \sum_{s \in S^\nu(D)} s \right) & \textrm{if $x$ is of type-2} \\
0 & \textrm{otherwise,} \\
\end{array} \right.
\]
where $T_x(\Gamma)$ denotes the set of tangent directions at $x$ in $\Gamma$, and let
\[
S(D) = \sum_{x \in \Gamma} S_x(D) \, x.
\]

Note that ${\rm deg}(S(D)) = 0$, since if  $f \in k(C)^*$ then the slope of $F = -\log|f|$ along an oriented edge $\vec{e}$ of $\Gamma$ is the negative of the slope of $F$ along the same edge with the orientation reversed.

\medskip

The following formula is due to Amini.  When $\Gamma$ is the skeleton of a semistable $R$-model $\cC$ for $C$, the formula shows how the Weierstrass points of the generic fiber $C$ specialize to the various components of the special fiber of $\cC$, providing a simple and satisfying answer to a question of
Eisenbud and Harris.

\begin{theorem}[\cite{Amini13}]
Let $\Trop : \Div(C_{\bar{K}}) \to \Div(\Gamma)$ be the natural map.  Then
\begin{equation}
\label{eq:Amini_Formula}
\Trop (\cW(L)) = (r+1) \Trop (D) + {{r+1}\choose{2}} K^\#_\Gamma - S(D).
\end{equation}
\end{theorem}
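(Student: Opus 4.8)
The plan is to realize $\cW(L)$ as the divisor of a global section of an explicit metrized line bundle and then to apply the Slope Formula in the form valid for sections of metrized line bundles (the generalization of Theorem~\ref{thm:SlopeFormula} used in \cite{KRZB15}). First I would fix a basis $\cF = \{f_0,\dots,f_r\}$ of $\cL(D) = H^0(C,L)$. As recalled above, the Wronskian $\mathrm{Wr}_\cF$ is a nonzero global section of $M := L^{\otimes(r+1)} \otimes K_C^{\otimes\binom{r+1}{2}}$ whose divisor is exactly $\cW(L)$; since any two bases differ by an element of $\mathrm{GL}_{r+1}(K)$, the section $\mathrm{Wr}_\cF$ is well-defined up to a scalar in $K^*$, which does not affect the computation of $\ddiv$ on $\Gamma$. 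I would equip $M$ with the tensor product of the canonical formal metrics on $L = L(D)$ and on $K_C$ coming from a fixed semistable model $\cC$, these being the metrics whose associated curvature on the skeleton is computed by the Slope Formula.

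Applying the generalized Slope Formula to the section $\mathrm{Wr}_\cF$ of $M$ would yield
\[
\Trop(\cW(L)) = \ddiv_\Gamma\!\big(-\log\|\mathrm{Wr}_\cF\|\big) + c_1(M)\big|_\Gamma ,
\]
where the restricted first Chern term is additive over tensor products. For the canonical metric on $K_C$ this contribution is $K^\#_\Gamma$ (the section-theoretic incarnation of the fact, recalled in \S\ref{Subsection:RiemannRoch}, that $\trop(K_C) \sim K^\#$), and for $L(D)$ it is $\Trop(D)$. Hence $c_1(M)|_\Gamma = (r+1)\Trop(D) + \binom{r+1}{2}K^\#_\Gamma$, reproducing the first two terms of \eqref{eq:Amini_Formula}. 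As a consistency check, $\ddiv_\Gamma$ of a piecewise linear function has degree zero and $\deg S(D) = 0$, while $\deg\!\big((r+1)\Trop(D) + \binom{r+1}{2}K^\#_\Gamma\big) = (r+1)d + r(r+1)(g-1) = W(L) = \deg\cW(L)$, so the degrees on both sides already agree.

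It would then remain to identify the piecewise linear correction, i.e.\ to prove $\ddiv_\Gamma(-\log\|\mathrm{Wr}_\cF\|) = -S(D)$. This is the heart of the argument and is entirely local on $\Gamma$. At a type-$2$ point $x$ with associated curve $C_x$, the reductions $\bar f_{i,x}$ span the $(r+1)$-dimensional reduced linear series $\bar H_x \subset \kappa(C_x)$, and the slope $s^\nu(f)$ of $\trop(f)$ in a tangent direction $\nu$ equals $\ord_\nu(\bar f_x)$ by the non-Archimedean Poincar\'e--Lelong formula recalled in \S\ref{Subsection:AnalyticTrop}. Choosing, for each direction $\nu$, a basis of $\bar H_x$ adapted to $\nu$ with vanishing orders listed by $S^\nu(D)$, one computes that the outgoing slope of $\log\|\mathrm{Wr}_\cF\|$ at $x$ along $\nu$ is governed by $\sum_{s\in S^\nu(D)} s$; summing incoming slopes over all tangent directions gives $\ord_x = \pm S_x(D)$, and at interior edge points the two directions cancel, since opposite slopes sum to zero, so the correction is supported on the vertices exactly as $S(D)$ is.

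The main obstacle will be precisely this local slope computation: I must control how the leading term of the Wronskian of $\cF$ reduces to the classical Wronskian of the adapted basis of $\bar H_x$ on $C_x$, keeping track of the $\binom{r+1}{2}$ coming from the orders $0,1,\dots,r$ of the derivatives and separating the part already accounted for by the curvature term $\binom{r+1}{2}K^\#_\Gamma$ from the genuinely new contribution $S(D)$. Once this is carried out for every type-$2$ point, summing over $\Gamma$ would produce \eqref{eq:Amini_Formula}. A useful sanity check along the way is that when $\bar H_x$ is unramified in every edge direction, the local weight at $x$ collapses to the degree $(r+1)d_x + r(r+1)(g_x-1)$ of the classical Weierstrass divisor of $\bar H_x$ on $C_x$, as it must, since in that case the Weierstrass points of $L$ specializing to $x$ are exactly those reducing to smooth non-nodal points of $C_x$.
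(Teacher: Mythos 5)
The survey states this theorem without proof, citing Amini, so there is no in-text argument to compare against; your outline does follow the strategy the surrounding text sets up (the Wronskian as a section of $L^{\otimes(r+1)}\otimes K_C^{\otimes\binom{r+1}{2}}$, normalized reductions, and $s^\nu(f)=\ord_\nu(\bar f_x)$ via Poincar\'e--Lelong), and this is indeed the strategy of Amini's proof. But the step you defer --- identifying $\ddiv_\Gamma(-\log\|\mathrm{Wr}_\cF\|)$ with $-S(D)$ --- is the entire content of the theorem, and the specific claims you make about it contain a genuine gap. The normalized reductions $\bar f_{0,x},\dots,\bar f_{r,x}$ of a \emph{fixed} global basis of $\cL(D)$ are in general linearly \emph{dependent} in $\kappa(C_x)$ (only the reduction of the whole space $\cL(D)$ is guaranteed to be $(r+1)$-dimensional), so the reduction of $\mathrm{Wr}_\cF$ at $x$ is not simply the Wronskian of the $\bar f_{i,x}$ and may even vanish identically before renormalization. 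One must pass to a basis adapted to $x$, and the valuation of the determinant of the change-of-basis matrix, as $x$ varies, is precisely the piecewise linear function whose slopes you need to compute; controlling it uniformly, and proving that the reduction of the Wronskian of the adapted basis is the Wronskian of $\bar H_x$ (which is where residue characteristic $0$ enters --- the paper explicitly warns that the positive-characteristic statement is different), is the heart of the matter and is absent from the proposal.

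A second, concrete error: the claim that ``at interior edge points the two directions cancel \dots so the correction is supported on the vertices exactly as $S(D)$ is'' is wrong on both ends. By its definition in the paper, $S(D)$ is supported on \emph{all} type-$2$ points of $\Gamma$, not on the vertices of a model; and at an interior point $x$ of an edge the quantity $\sum_{s\in S^{\nu_1}(D)}s+\sum_{s\in S^{\nu_2}(D)}s$ vanishes only for all but finitely many $x$ per edge. The slope sets $S^\nu(D)$ jump at finitely many interior points, and these jumps are exactly what accounts for Weierstrass points of $C$ tropicalizing to the interiors of edges --- the phenomenon the formula is designed to capture. The single-function cancellation you invoke (opposite slopes of one $\trop(f)$ summing to zero) only yields $\deg S(D)=0$ after summing over a whole edge, which is the consistency check the paper records, not a statement about the support of $S(D)$. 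So the local analysis must be carried out at every type-$2$ point, and the reduction to vertices that your sketch relies on does not hold.
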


Note that since ${\rm deg}(S(D)) = 0$, the degree of the right-hand side of (\ref{eq:Amini_Formula}) is $W(L) = {\rm deg}(\cW(L))$ as expected.
Amini also proves an analogue of (\ref{eq:Amini_Formula}) when the residue field of $k$ has positive characteristic.  As this is more technical to state, we will not discuss this here.

\begin{remark}
A metric graph can have infinitely many Weierstrass points; this happens, for example, with the banana graphs of genus $g \geq 3$ discussed above (see \cite{Baker08}).
In general, the set of Weierstrass points on a metric graph $\Gamma$ is a finite disjoint union of closed connected sets.  It is an open problem to determine whether there are intrinsic multiplicities $m(A)$ attached to each connected component $A$ of the Weierstrass locus on a metric graph $\Gamma$ such
that for any curve $C$ having $\Gamma$ as a skeleton, exactly $m(A)$ Weierstrass points of $C$ tropicalize to $A$.
\end{remark}

\subsection{Distribution of Weierstrass points}

Amini uses formula (\ref{eq:Amini_Formula}) to prove a non-Archimedean analogue of the Mumford-Neeman equidistribution theorem, previously conjectured by Baker. We first recall the statement of the latter result, and then present Amini's analogous theorem.

\medskip

Let $C$ be a compact Riemann surface of genus at least 1.  There is a natural volume form $\omega_{\rm Ar}$ on $C$, called the {\em Arakelov form}, which can be defined as follows.
Let $\omega_1,\ldots,\omega_g$ be a orthonormal basis of $\cL (K_C)$ with respect to the Hermitian inner product
\[
\langle \omega, \nu \rangle = \frac{i}{2} \int_C \omega \wedge \bar{\nu}.
\]
Then the $(1,1)$-form $ \omega_C= \frac{i}{2} \sum_{j=1}^g \omega_j \wedge \bar{\omega}_j$
does not depend on the choice of $\omega_1,\ldots,\omega_g$ and has total mass $g$.  We define
\[
\omega_{\rm Ar} := \frac{1}{g} \omega_C.
\]

Geometrically, the curvature form of $\omega_C$ is the pullback of the curvature form of the flat metric on the Jacobian $J$ of $C$ with respect to any Abel-Jacobi map $C \to J$.  Since the flat metric on $J$ is translation-invariant, the pullback in question is independent of the choice of base point in the definition of the Abel-Jacobi map.

\medskip

The Mumford-Neeman theorem \cite{Neeman84} asserts that for any ample line bundle $L$ on $C$, the Weierstrass points of $L^{\otimes n}$ become equidistributed with respect to $\omega_{\rm Ar}$ as $n$ tends to infinity:

\begin{theorem}
\label{thm:Mumford-Neeman}
Let $C$ be a compact Riemann surface of genus at least 1 and let $L$ be an ample line bundle on $C$.
Let
\[
\delta_n = \frac{1}{W(L^{\otimes n})} \sum_{P \in C} {\rm wt}_P(L^{\otimes n}) \delta_P
\]
be the probability measure supported equally on the Weierstrass points of $L^{\otimes n}$.
Then as $n$ tends to infinity, the measures $\delta_n$ converge weakly\footnote{This means that for every continuous function $f : C \to \RR$, we have $\int_C f \, \delta_n = \int_C f \, \omega_{\rm Ar}$.} to the Arakelov metric $\omega_{\rm Ar}$.
\end{theorem}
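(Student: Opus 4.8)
The plan is to recast the statement as a convergence of normalized divisor currents and then to reduce it, via the Wronskian and the Poincar\'e--Lelong formula, to the vanishing of a single potential term. Write $r_n + 1 = \dim H^0(C, L^{\otimes n})$, so that by Riemann--Roch and Serre vanishing $r_n+1 = nd+1-g$ for $n \gg 0$, where $d = \deg L$. Fixing a basis $\cF$ of $H^0(C,L^{\otimes n})$, the Wronskian ${\rm Wr}_{\cF}$ is a nonzero global section $w_n$ of the line bundle
\[
M_n := (L^{\otimes n})^{\otimes (r_n+1)} \otimes K_C^{\otimes \binom{r_n+1}{2}},
\]
and its divisor is exactly the Weierstrass divisor $\cW(L^{\otimes n})$, as recalled in the discussion preceding this theorem. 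In particular $\deg M_n = W(L^{\otimes n})$, and since the zero set of $w_n$ is independent of $\cF$, the probability measure $\delta_n$ equals $\frac{1}{\deg M_n}[\cW(L^{\otimes n})]$. Thus it suffices to show that $\frac{1}{\deg M_n}[\mathrm{div}(w_n)] \to \omega_{\rm Ar}$ weakly.

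First I would equip $L$ and $K_C$ with their \emph{admissible} (Arakelov) metrics, whose curvature forms are $d\,\omega_{\rm Ar}$ and $(2g-2)\,\omega_{\rm Ar}$ respectively, and give $M_n$ the induced metric $\|\cdot\|_n$. Writing $dd^c = \frac{i}{\pi}\partial\bar\partial$, the Poincar\'e--Lelong formula gives
\[
[\mathrm{div}(w_n)] = c_1(M_n, \|\cdot\|_n) + dd^c \log \|w_n\|_n .
\]
Because the admissible metrics have curvature proportional to $\omega_{\rm Ar}$, and $M_n$ is built only from tensor powers of $L^{\otimes n}$ and $K_C$, additivity of the curvature shows $c_1(M_n,\|\cdot\|_n) = \bigl((r_n+1)nd + \binom{r_n+1}{2}(2g-2)\bigr)\omega_{\rm Ar} = (\deg M_n)\,\omega_{\rm Ar}$ \emph{exactly}, for every $n$. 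Dividing by $\deg M_n$ yields
\[
\delta_n - \omega_{\rm Ar} = dd^c \psi_n, \qquad \psi_n := \tfrac{1}{\deg M_n} \log \|w_n\|_n .
\]
The whole problem is therefore reduced to proving that $dd^c \psi_n \to 0$ as currents, i.e. that $\int_C \psi_n \, dd^c\varphi \to 0$ for every smooth $\varphi$; equivalently, that $\psi_n$ converges in $L^1(C)$ to a harmonic (hence constant) function. Note that the convenient bookkeeping above also confirms the mass normalization, since $\omega_{\rm Ar}$ has total mass $1$.

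The main obstacle is precisely this last step: controlling the pointwise norm $\|w_n\|_n$ of the Wronskian on the exponential scale $n^2 = \Theta(\deg M_n)$. A naive bound, estimating each entry of the Wronskian determinant by sup-norm and derivative estimates for sections of $L^{\otimes n}$, only gives $\log\|w_n\|_n = O(n^2 \log n)$, which is too crude. What is needed is the sharp comparison, uniform in $x \in C$, between $\log\|w_n(x)\|_n$ and its spatial average, showing that $\psi_n$ differs from a constant by a term that is $o(1)$ in $L^1$. This is supplied by off-diagonal Bergman kernel asymptotics together with their derivatives (jet-bundle distortion estimates): one replaces $\cF$ by an $L^2$-orthonormal basis for $\langle s,t\rangle = \int_C \langle s,t\rangle_n\,\omega_{\rm Ar}$, which alters $w_n$ only by a multiplicative constant and hence leaves $dd^c\psi_n$ unchanged, and then exploits that the rescaled near-diagonal derivatives of the Bergman kernel are governed by a universal model. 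This analytic input is the heart of Neeman's argument; establishing its non-Archimedean counterpart, with $\omega_{\rm Ar}$ replaced by the Zhang measure and the Wronskian controlled through the Slope Formula and the explicit specialization identity \eqref{eq:Amini_Formula} for $\Trop(\cW)$, is exactly the contribution of Amini, and it is where the tropical theory of linear series enters decisively.
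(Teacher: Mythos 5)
You should first note that the paper does not actually prove this statement: it is recalled verbatim as a classical theorem of Neeman \cite{Neeman84}, purely to motivate the non-Archimedean analogue (Theorem~\ref{thm:Amini_Equidistribution}), so there is no in-paper argument to measure your proposal against. Judged on its own terms, your reduction is correct as far as it goes. Identifying $\delta_n$ with the normalized current of integration over $\mathrm{div}(w_n)$, equipping $L$ and $K_C$ with admissible metrics whose curvature forms are the stated multiples of $\omega_{\rm Ar}$, and applying Poincar\'e--Lelong to obtain $\delta_n - \omega_{\rm Ar} = dd^c\psi_n$ with $\psi_n = (\deg M_n)^{-1}\log\|w_n\|_n$ is sound bookkeeping, and the degree computation $\deg M_n = W(L^{\otimes n})$ checks out against the weight formula recalled in the paper.

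The difficulty is that the argument stops exactly where the theorem begins. The entire content of the Mumford--Neeman theorem is the assertion that $dd^c\psi_n \to 0$, i.e.\ that $\psi_n$ converges in $L^1$ to a constant, and you do not establish this: you assert it is ``supplied by off-diagonal Bergman kernel asymptotics together with their derivatives.'' That gesture does not close the gap. Near-diagonal Bergman and jet asymptotics for $L^{\otimes n}$ govern derivatives of \emph{bounded} order after the standard $\sqrt{n}$ rescaling, whereas the Wronskian $w_n$ involves jets of order up to $r_n \sim nd$, growing linearly in $n$; the universal Bargmann--Fock model does not directly control such high-order jets, which is precisely why (as you yourself observe) the naive entrywise bound only yields $\log\|w_n\|_n = O(n^2\log n)$ and misses the required scale $O(\deg M_n) = O(n^2)$. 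So the route you indicate for the key estimate is not known to work as stated, and no substitute is offered. For the record, Neeman's actual proof predates modern Bergman kernel asymptotics and proceeds on the Jacobian: the weights ${\rm wt}_P(L^{\otimes n})$ are analyzed via the Abel--Jacobi map and translates of the theta divisor, which is consistent with the fact (recalled in the paper) that $\omega_{\rm Ar}$ is the pullback of the flat form on $\Jac(C)$. As it stands, your proposal is a correct reformulation of the theorem plus an unproven (and, as phrased, doubtful) analytic lemma, so it does not constitute a proof.
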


In order to state Amini's non-Archimedean analogue of Theorem~\ref{thm:Mumford-Neeman} , we will first define the analogue of the Zhang measure on vertex-weighted metric graphs / Berkovich curves following \cite{BakerFaber11}.

\medskip

Let $\Gamma$ be a metric graph of genus $g$.   We fix a weighted graph model $G$ of $\Gamma$ and for each edge $e$ of $G$ let $\ell(e)$ denote the length of $e$.
For each spanning tree $T$ of $G$, let $e_1,\ldots,e_g$ denote the edges of $G$ not belonging to $T$, and let
\[
\mu_T = \sum_{j=1}^g \lambda(e_j)
\]
where $\lambda(e)$ is Lebesgue measure along $e$, normalized to have total mass $1$ (so that $\mu_T$ has total mass $g$).
We also let $w(T) = \prod_{j=1}^g \ell(e_j)$, and let
\[
w(G) = \sum_T w(T)
\]
be the sum of $w(T)$ over all spanning trees $T$ of $G$.
Then the measure
\[
\mu_\Gamma = \sum_{T} \frac{w(T)}{w(G)} \mu_T
\]
is a weighted average of the measures $\mu_T$ over all spanning trees $T$, and in particular has total mass $g$.

In other words, a random point in the complement of a random spanning tree of $G$ is distributed according to the probability measure $\frac{1}{g} \mu_\Gamma$.

\medskip

Now let $(\Gamma,\omega)$ be a vertex-weighted metric graph, in the sense of \S\ref{Subsection:vertexweight}, of genus $g = g(\Gamma) + \sum_{x \in \Gamma} \omega(x)$.
Then the measure
\[
\mu_{(\Gamma,\omega)} := \mu_\Gamma +  \sum_{x \in \Gamma} \omega_x  \delta_x
\]
has total mass $g$.
If $g \geq 1$, we define the {\em Zhang measure} on $(\Gamma,\omega)$ to be the probability measure
\[
\mu_{\rm Zh} := \frac{1}{g} \mu_{(\Gamma,\omega)}.
\]

\begin{theorem}[\cite{Amini14}]
\label{thm:Amini_Equidistribution}
Let $C$ be an algebraic curve of genus at least 1 over the non-Archimedean field $k$ of equal characteristic 0, and let $L$ be an ample line bundle on $C$.
Let ${\mathcal C}$ be a strongly semistable model of $C$ over the valuation ring of $k$, let $(\Gamma,\omega)$ be the weighted graph associated to ${\mathcal C}$ in the sense of \S\ref{Subsection:vertexweight},
and let $\mu_{\rm Zh}$ be the Zhang measure associated to $(\Gamma,\omega)$.
Finally, let
\[
\delta_n = \frac{1}{W(L^{\otimes n})}  \sum_{P \in C} {\rm wt}_P(L^{\otimes n}) \delta_{{\rm Trop}(P)}
\]
be the probability measure on $\Gamma$ supported equally on the tropicalizations of the Weierstrass points of $L^{\otimes n}$ (taken with multiplicities).
Then as $n$ tends to infinity, the measures $\delta_n$ converge weakly on $\Gamma$ to $\mu_{\rm Zh}$.
\end{theorem}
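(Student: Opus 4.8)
The plan is to rewrite $\delta_n$ directly in terms of Amini's formula for the tropicalized Weierstrass divisor and then analyze the asymptotics of each term. Pushing the Weierstrass divisor forward under $\Trop$ identifies the measure $\sum_{P} {\rm wt}_P(L^{\otimes n})\delta_{\Trop(P)}$ with the atomic measure attached to the divisor $\Trop(\cW(L^{\otimes n}))$ on $\Gamma$, so that $\delta_n = \frac{1}{W(L^{\otimes n})}[\Trop(\cW(L^{\otimes n}))]$, where $[E]$ denotes the measure $\sum_x E(x)\delta_x$ associated to a divisor $E$. Writing $L = L(D)$ so that $L^{\otimes n} = L(nD)$, for $n$ large the bundle $nD$ is nonspecial of rank $r_n = nd - g$, hence $r_n+1 = nd-g+1$ and $W(L^{\otimes n}) = g(nd-g+1)^2$. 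Applying the formula (\ref{eq:Amini_Formula}) to $nD$ and using $\Trop(nD) = n\Trop(D)$ gives
\[
\delta_n = \frac{(r_n+1)\,n}{W(L^{\otimes n})}[\Trop(D)] + \frac{\binom{r_n+1}{2}}{W(L^{\otimes n})}[K^\#_\Gamma] - \frac{1}{W(L^{\otimes n})}[S(nD)],
\]
so everything reduces to understanding these three terms as $n \to \infty$.

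The first two terms are the easy ones. Their scalar coefficients converge, namely $\frac{(r_n+1)n}{W(L^{\otimes n})} \to \frac{1}{gd}$ and $\frac{\binom{r_n+1}{2}}{W(L^{\otimes n})}\to\frac{1}{2g}$, while the divisors $\Trop(D)$ and $K^\#_\Gamma$ are fixed and supported at finitely many points (vertices, together with the support of $\Trop(D)$). Hence these terms converge, even in total variation on their fixed finite support, to the discrete measures $\frac{1}{gd}[\Trop(D)]$ and $\frac{1}{2g}[K^\#_\Gamma]$; one checks that their total masses sum to $1$, consistent with each $\delta_n$ being a probability measure (recall $\deg S(nD) = 0$).

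The heart of the argument is the third term $-\frac{1}{W(L^{\otimes n})}[S(nD)]$, and this is where I expect the main difficulty. Its support consists of the finitely many \emph{slope-break} points in the interiors of the edges of $\Gamma$ --- the points where the set of achievable slopes $S^\nu(nD)$ of tropicalized sections fails to be symmetric in the two edge directions --- and the number of these points grows linearly in $n$ as the complete series $|nD|$ expands. This is precisely the mechanism producing the \emph{continuous} part of $\mu_{\rm Zh}$ out of a sequence of purely atomic measures. The plan is to prove an ``equidistribution of slopes'' statement: on each edge $e$, the sets $S^\nu(nD)$ are computed from the reduced-divisor structure of $nD$ via Dhar's algorithm, and after normalizing by $n^2$ the contribution $\sum_{x \in e} S_x(nD)\,\delta_x$ converges to a constant multiple of Lebesgue measure on $e$. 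The constant is governed by an effective-resistance / weighted-spanning-tree computation on $\Gamma$, and matching it against the spanning-tree definition of $\mu_\Gamma$ shows that the edge-interior contributions reassemble to $\frac{1}{g}\mu_\Gamma$, while the vertex contributions of $S(nD)$ combine with the two discrete terms above to yield exactly the atomic part $\frac{1}{g}\sum_{x}\omega_x\delta_x$ of the Zhang measure. Carrying out this slope analysis uniformly over all edges, and checking that the vertex bookkeeping is compatible with $K^\#_\Gamma = \sum_x(2\omega_x - 2 + {\rm val}(x))x$, is the technical core.

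The main obstacle is therefore the asymptotic analysis of the slope sets $S^\nu(nD)$ and the identification of the resulting density with $\mu_\Gamma$. A robust way to organize this, and a useful fallback if the direct density computation proves unwieldy, is a potential-theoretic characterization: by weak compactness of probability measures on the compact space $\Gamma$, any subsequence of $(\delta_n)$ has a weak limit $\mu$, and it suffices to show every such $\mu$ equals $\mu_{\rm Zh}$. Since $\mu_{\rm Zh}$ is the Zhang (canonical) measure, it is the unique probability measure satisfying the defining Laplacian equation for the canonical metric, and one can hope to verify this equation for $\mu$ by controlling the $\Trop$-asymptotics furnished by the formula above, pinning down the limit without an explicit edgewise density. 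Throughout, the equal-characteristic-zero hypothesis ensures the Slope Formula and the reduction of sections behave as in \S\ref{Section:Berkovich}, so that $S^\nu(nD)$ genuinely records orders of vanishing of normalized reductions; sanity checks on the circle (where $\mu_{\rm Zh}$ is uniform and one recovers equidistribution on the skeleton of an elliptic curve) and on trees (where $\mu_\Gamma = 0$ and all mass sits at the vertex weights) confirm that the bookkeeping is correct.
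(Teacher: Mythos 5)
Your overall skeleton coincides with the paper's account of Amini's proof: apply formula (\ref{eq:Amini_Formula}) to $nD$, observe that the first two terms have coefficients converging to $\frac{1}{gd}$ and $\frac{1}{2g}$ (your mass bookkeeping, using $W(L^{\otimes n}) = g(nd-g+1)^2$ and $\deg S(nD) = 0$, is correct), and reduce everything to the asymptotics of $\frac{1}{W(L^{\otimes n})}S(nD)$. The gap is in your proposed mechanism for that key step. You assert that ``the sets $S^\nu(nD)$ are computed from the reduced-divisor structure of $nD$ via Dhar's algorithm,'' i.e., that the slope data is a combinatorial invariant of $(\Gamma, \Trop(nD))$. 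It is not: $S^\nu(nD)$ records the orders of vanishing, at the point of $C_x$ corresponding to $\nu$, of the normalized reductions of actual elements of $\cL(nD)$, and the containment $\trop(\cL(nD)) \subseteq R(\Trop(nD))$ is in general strict (as the paper itself stresses in \S\ref{Subsection:GPThm}). No amount of Dhar's algorithm or effective-resistance computation on $\Gamma$ alone can determine these sets. The missing input in the actual proof is the theory of \emph{Okounkov bodies}: since $s^\nu(fg) = s^\nu(f) + s^\nu(g)$, the sets $S^\nu(nD)$ form a graded semigroup, and the Okounkov-body machinery yields that $\frac{1}{n}S^\nu(nD)$ becomes equidistributed in a real interval of length $d = \deg L$ as $n \to \infty$. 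This local equidistribution, combined with (\ref{eq:Amini_Formula}) and a careful analysis of the variation of the minimum slope along each edge, is what produces the continuous part $\frac{1}{g}\mu_\Gamma$ and the correct atomic part.

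Your potential-theoretic fallback does not close this gap either: to verify a Laplacian characterization of $\mu_{\rm Zh}$ for a subsequential limit of the $\delta_n$ you would still need exactly the asymptotic control of $S(nD)$ that is at issue. The final identification of the limiting edge densities with the spanning-tree/effective-resistance description of $\mu_\Gamma$ is indeed part of the argument, but it comes after the Okounkov-body equidistribution, not in place of it.
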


\medskip

A new and concrete consequence of Theorem~\ref{thm:Amini_Equidistribution} is the following:

\begin{corollary}
Let $C$ be an algebraic curve of genus at least 1 over a non-Archimedean field $k$ of equal characteristic 0, and let $L$ be an ample line bundle on $C$.  Fix a strongly semistable model $\cC$ for $C$ over the valuation ring of $k$, let $Z$ be an irreducible component of the special fiber of $\cC$, and let $g_Z$ be the genus of $Z$.
Let $\cW_Z(L^{\otimes n})$ be the set of Weierstrass points of $L^{\otimes n}$ specializing to a nonsingular point of $Z$.  Then
\[
\lim_{n \to \infty} \frac{|\cW_Z(L^{\otimes n})|}{|\cW(L^{\otimes n})|} = \frac{g_Z}{g}.
\]
\end{corollary}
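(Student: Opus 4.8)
The plan is to interpret the quantity in question as the mass that Amini's Weierstrass measure assigns to a single vertex, and then to extract that mass from the explicit formula (\ref{eq:Amini_Formula}). Let $v_Z$ be the vertex of the skeleton $\Gamma$ corresponding to the component $Z$, so that $\omega(v_Z) = g_Z$. A point $P \in C(k)$ reduces to a nonsingular point of $Z$ precisely when $\Trop(P) = v_Z$, so under this identification the weighted proportion $\bigl(\sum_{\Trop P = v_Z}\mathrm{wt}_P(L^{\otimes n})\bigr)/W(L^{\otimes n})$ is exactly $\delta_n(\{v_Z\})$, the mass the probability measure $\delta_n$ places at $v_Z$. (The cardinality version in the statement then follows once one checks that the Weierstrass points of weight $\geq 2$ contribute asymptotically negligibly, i.e.\ at lower order than the total weight.) Thus the corollary is the assertion that $\delta_n(\{v_Z\}) \to g_Z/g$.

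First I would dispose of the upper bound using Theorem~\ref{thm:Amini_Equidistribution} directly. The Zhang measure $\mu_{\rm Zh} = \frac{1}{g}\bigl(\mu_\Gamma + \sum_x \omega_x \delta_x\bigr)$ has an atom of mass $\omega(v_Z)/g = g_Z/g$ at $v_Z$, while its continuous part $\frac1g\mu_\Gamma$ charges no point; hence $\mu_{\rm Zh}(\{v_Z\}) = g_Z/g$. Since $\{v_Z\}$ is closed, weak convergence $\delta_n \to \mu_{\rm Zh}$ gives $\limsup_n \delta_n(\{v_Z\}) \leq g_Z/g$ by the portmanteau theorem.

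The hard part is the matching lower bound, and weak convergence alone does not suffice: a priori the contribution of Weierstrass points reducing to nodes (which tropicalize to edge interiors) could accumulate at $v_Z$ in the limit, and the portmanteau inequality runs the wrong way for an open condition at a single point. To pin the atom down I would appeal to the formula (\ref{eq:Amini_Formula}) applied to $L^{\otimes n} = \cO(nD)$. Writing $r_n = dn - g$ for $n \gg 0$ (where $d = \deg L$), the coefficient of $v_Z$ in $\Trop(\cW(L^{\otimes n}))$ equals
\begin{equation*}
w_Z(n) = (r_n+1)\,n\,d_{v_Z} + \binom{r_n+1}{2}\bigl(2g_Z - 2 + \val(v_Z)\bigr) - S_{v_Z}(nD),
\end{equation*}
where $d_{v_Z} = [\Trop(D)]_{v_Z}$. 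Since $W(L^{\otimes n}) \sim g\,d^2 n^2$, it suffices to show that $w_Z(n)$ is quadratic in $n$ with leading coefficient $g_Z d^2$; the factor $g_Z$ is produced by the $2g_Z$ inside the canonical term $\binom{r_n+1}{2}(2g_Z - 2 + \val(v_Z))$.

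The main obstacle is therefore the asymptotic analysis of the slope term $S_{v_Z}(nD) = \sum_{\nu}\sum_{s \in S^\nu(nD)} s$. Each set $S^\nu(nD)$ is the vanishing sequence at the marked point $p_\nu$ of the reduction to $C_Z$ of the linear series $\cL(nD)$, so $\sum_{s \in S^\nu(nD)} s$ records the local ramification of that series through a sum of $r_n+1$ integers. I would compute its leading $n^2$-behavior by relating it to the Weierstrass-weight bookkeeping of the reduced series on the genus-$g_Z$ curve $C_Z$, and then verify --- using the global constraint $\deg S(nD) = 0$ together with the local degrees $d_{v_Z}$ --- that the quadratic contributions coming from $(r_n+1)\,n\,d_{v_Z}$, from $\binom{r_n+1}{2}(\val(v_Z)-2)$, and from $S_{v_Z}(nD)$ cancel, leaving exactly $\binom{r_n+1}{2}\cdot 2g_Z \sim g_Z d^2 n^2$. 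This component-wise slope computation, which is precisely the content underlying Amini's formula, is the crux; once it is in hand, dividing by $W(L^{\otimes n})$ yields the limit $g_Z/g$ and closes both inequalities.
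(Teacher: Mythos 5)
Your reduction of the statement to the convergence of the atom, $\delta_n(\{v_Z\}) \to g_Z/g$, is right, and your observation that weak convergence alone pins down only the upper bound is a genuinely good one: $\{v_Z\}$ is closed with $\mu_{\rm Zh}(\{v_Z\}) = g_Z/g > 0$, so the portmanteau theorem gives $\limsup_n \delta_n(\{v_Z\}) \leq g_Z/g$ but nothing in the other direction, and a priori a positive fraction of the Weierstrass weight could sit at points of the adjacent open edges (Weierstrass points reducing to nodes) that converge to $v_Z$ without equalling it --- a scenario consistent with Theorem~\ref{thm:Amini_Equidistribution} but violating the corollary. The paper itself offers no argument here, simply asserting the corollary as a consequence of the equidistribution theorem, so your identification of this subtlety is more careful than the text; the missing information has to come either from the explicit formula (\ref{eq:Amini_Formula}) or from the stronger convergence established inside Amini's proof.

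The problem is that your proposal does not actually supply that missing information. Everything is shifted onto the claim that $S_{v_Z}(nD)$ has quadratic asymptotics producing exactly the cancellation $(r_n+1)\,n\,d_{v_Z} + \binom{r_n+1}{2}(\val(v_Z)-2) - S_{v_Z}(nD) = o(n^2)$, and this is precisely the hard part. Writing $\tfrac{1}{n}S^\nu(nD)$ as asymptotically equidistributed in an interval $[\alpha_\nu,\alpha_\nu+d]$ gives $S_{v_Z}(nD) \sim n^2 d\sum_\nu \alpha_\nu + \tfrac{n^2d^2}{2}\val(v_Z)$, so the cancellation you need is equivalent to the identity $\sum_{\nu \in T_{v_Z}(\Gamma)} \alpha_\nu = d_{v_Z} - d$ for the limiting minimal normalized slopes; knowing only that each interval has length $d$ says nothing about its endpoints, and determining them is exactly the ``careful analysis of the variation of the minimum slope along edges of $\Gamma$'' that the paper identifies as the technical heart of Amini's proof. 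Your plan to ``relate it to the Weierstrass-weight bookkeeping of the reduced series on $C_Z$'' names the right object but proves nothing about it, so the lower bound --- and hence the corollary --- is not established. (Two smaller loose ends: computing the coefficient of $v_Z$ in $\Trop(\cW(L^{\otimes n}))$ controls the atom but you should also note it already subsumes your portmanteau upper bound, making that step redundant; and the passage from weighted to unweighted counts, which you flag parenthetically, is likewise asserted rather than argued.)
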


\medskip

It is convenient and enlightening to rephrase Theorem~\ref{thm:Amini_Equidistribution} in terms of the Berkovich analytic space $C^{\rm an}$.
If $\Gamma$ is any skeleton of $C^{\rm an}$ and $\omega$ is the corresponding weight function defined by $\omega(x) = g_x$,
we define the {\em Zhang measure} on $C^{\rm an}$ to be the probability measure
\[
\mu_{\rm Zh} := \frac{1}{g} \iota_* \mu_{(\Gamma,\omega)}
\]
with respect to the natural inclusion $\iota : \Gamma \to C^{\rm an}$.
This measure on $C^{\rm an}$ is easily seen to be independent of the choice of $\Gamma$, and has total mass equal to the genus $g$ of $C$.
Using the fact that $C^\an \cong \varprojlim \Gamma_{\mathcal C}$
(cf. Section~\ref{section:Berkcurves}), one deduces using standard results from real analysis that Theorem~\ref{thm:Amini_Equidistribution} is equivalent to the following reformulation, which more closely resembles Theorem~\ref{thm:Mumford-Neeman}:

\begin{theorem}[\cite{Amini14}]
\label{thm:Amini_Equidistribution_bis}
Let $C$ be an algebraic curve of genus at least 1 over the non-Archimedean field $k$ of equal characteristic 0, and let $L$ be an ample line bundle on $C$.
Let
\[
\delta_n = \frac{1}{W(L^{\otimes n})}  \sum_{P \in C} {\rm wt}_P(L^{\otimes n}) \delta_P
\]
be the probability measure on $C^{\rm an}$ supported equally on the Weierstrass points of $L^{\otimes n}$.
Then as $n$ tends to infinity, the measures $\delta_n$ converge weakly on $C^{\rm an}$ to the Zhang measure $\mu_{\rm Zh}$.
\end{theorem}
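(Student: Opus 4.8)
The plan is to deduce the Berkovich reformulation directly from the skeleton-level equidistribution statement (Theorem~\ref{thm:Amini_Equidistribution}), exploiting the homeomorphism $C^{\an} \cong \varprojlim_{\cC} \Gamma_{\cC}$ recorded in \S\ref{section:Berkcurves}, where $\cC$ ranges over all strongly semistable models and $\tau_{\cC}: C^{\an} \to \Gamma_{\cC}$ denotes the canonical retraction. Since $C$ is proper, $C^{\an}$ is compact, so weak convergence $\delta_n \to \mu_{\rm Zh}$ amounts to checking that $\int_{C^{\an}} f\,\delta_n \to \int_{C^{\an}} f\,\mu_{\rm Zh}$ for every $f \in C(C^{\an})$, where all the measures in play are probability measures of total mass $1$.

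First I would reduce to test functions pulled back from a finite skeleton. The functions of the form $g \circ \tau_{\cC}$ with $g \in C(\Gamma_{\cC})$ form a subalgebra of $C(C^{\an})$: it contains the constants, it is closed under sums and products (pass to a common refinement of two models, which exists because the system of skeleta is directed), and it separates points precisely because $C^{\an} \cong \varprojlim \Gamma_{\cC}$. By the Stone--Weierstrass theorem this subalgebra is dense in $C(C^{\an})$ for the sup norm, and since every $\delta_n$ and $\mu_{\rm Zh}$ has total mass $1$, a uniform approximation of $f$ by some $g\circ\tau_{\cC}$ controls the difference of integrals uniformly in $n$. It therefore suffices to establish convergence for $f = g\circ\tau_{\cC}$.

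For such a function the two integrals collapse onto the skeleton via the change-of-variables formula $\int_{C^{\an}} (g\circ\tau_{\cC})\, \nu = \int_{\Gamma_{\cC}} g\, (\tau_{\cC})_*\nu$, and I then identify the pushforwards. The Weierstrass divisor $\cW(L^{\otimes n})$ is supported on type-$1$ points, and on these $\tau_{\cC}$ coincides with $\Trop$; hence $(\tau_{\cC})_*\delta_n$ is exactly the graph-level measure $\tfrac{1}{W(L^{\otimes n})}\sum_P {\rm wt}_P(L^{\otimes n})\,\delta_{\Trop(P)}$ of Theorem~\ref{thm:Amini_Equidistribution}. On the other side, $\mu_{\rm Zh} = \tfrac1g\,\iota_*\mu_{(\Gamma,\omega)}$ with $\tau_{\cC}\circ\iota = {\rm id}$, and the Zhang measure is compatible with subdivision of the model, so $(\tau_{\cC})_*\mu_{\rm Zh}$ is the Zhang measure on $(\Gamma_{\cC},\omega)$ for every $\cC$. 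Theorem~\ref{thm:Amini_Equidistribution}, which holds for every strongly semistable model, then yields $\int_{\Gamma_{\cC}} g\,(\tau_{\cC})_*\delta_n \to \int_{\Gamma_{\cC}} g\,\mu_{\rm Zh}$, i.e. exactly the desired convergence for $f = g\circ\tau_{\cC}$; combining with the density step finishes the deduction.

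The genuinely hard input is the skeleton-level statement itself, which rests on Amini's explicit formula \eqref{eq:Amini_Formula} for $\Trop(\cW(L))$. Applying it to $L^{\otimes n}$ and normalizing by the total weight $W(L^{\otimes n})$, one must analyze the limiting distribution of the atomic measures $\tfrac{1}{W(L^{\otimes n})}\Trop(\cW(L^{\otimes n}))$. Since the limit $\mu_{\rm Zh}$ depends only on $(\Gamma,\omega)$ and not on $L$, the $L$-dependent terms of \eqref{eq:Amini_Formula} must wash out asymptotically: one expects the atoms of the slope-discrepancy divisor $S(nD)$ to equidistribute along the edges so as to reproduce the continuous part $\mu_\Gamma$, while the term $\binom{r+1}{2}K^\#_\Gamma$ supplies, after normalization, the atomic mass $\sum_x\omega_x\delta_x$ concentrated at the positive-genus points. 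Granting Theorem~\ref{thm:Amini_Equidistribution}, the only remaining obstacle in the reduction above is the bookkeeping needed to check that both the Weierstrass measures and the Zhang measure are genuinely compatible with passage to a finer semistable model, so that one Stone--Weierstrass density argument controls all continuous test functions on $C^{\an}$ simultaneously.
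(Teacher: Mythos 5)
Your proposal is correct and follows essentially the same route as the paper, which deduces Theorem~\ref{thm:Amini_Equidistribution_bis} from the skeleton-level statement (Theorem~\ref{thm:Amini_Equidistribution}) via the homeomorphism $C^{\an} \cong \varprojlim \Gamma_{\mathcal C}$ and ``standard results from real analysis.'' Your Stone--Weierstrass reduction to test functions of the form $g \circ \tau_{\cC}$, together with the identification of the pushforward measures and the compatibility of the Zhang measure under refinement of models, is precisely the content of that appeal to standard real analysis, here spelled out correctly.
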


\begin{remark}
The measure $\mu_{\rm Zh}$, which was first introduced in the context of vertex-weighted metric graphs by Shouwu Zhang in \cite{Zhang93}, plays the role in the non-Archimedean setting of the Arakelov volume form.
Using a result of Heinz \cite{Heinz04} and the recent work of Chambert-Loir--Ducros \cite{CLD12} and Gubler--Kunnemann \cite{GublerKunnemann14} on non-Archimedean Arakelov theory, one can show that, as in the Archimedean case, $\mu_C$ is obtained by pulling back the curvature form of a canonical translation-invariant metric on $J$ via an Abel-Jacobi map.
There is also evidently a close connection between the measure $\mu_\Gamma$ and the polyhedral decomposition $\{ C_T \}$ of $\Pic^g(\Gamma)$ associated to $G$ (cf. \S\ref{sec:BreakDivisorSection}) which is deserving of further study.
\end{remark}

The proof of Theorem~\ref{thm:Amini_Equidistribution} (and its equivalent formulation Theorem~\ref{thm:Amini_Equidistribution_bis}) is based on formula (\ref{eq:Amini_Formula}) together with the theory of {\em Okounkov bodies}.
The rough idea is that fixing a type-2 point $x$ of $C^{\rm an}$ and a tangent direction $\nu$ at $x$, as well as a divisor D with $L = L(D)$, the rational numbers $\frac{1}{n} S^\nu(nD)$ defined above become equidistributed in a real interval of length $d = {\rm deg}(L)$ as $n \to \infty$.
Combining this ``local'' equidistribution result with (\ref{eq:Amini_Formula}) and  a careful analysis of the variation of the minimum slope along edges of $\Gamma$ gives the desired result.


\section{Further Reading}

There are many topics closely related to the contents of this paper which we have not had space to discuss.  Here is a brief and non-exhaustive list of some related topics and papers which we recommend to the interested reader:

\medskip

1. {\em Harmonic morphisms.}  In algebraic geometry, a base-point free linear series of rank $r$ on a curve $C$ is more or less the same thing as a morphism $C \to \PP^r$.  In tropical geometry, the situation is much more subtle, and no satisfactory analogue of this correspondence is known.
For $r=1$, there is a close relationship (although not a precise correspondence) between tropical $\g^1_d$'s on a metric graph $\Gamma$ and degree $d$ harmonic morphisms from $\Gamma$ to a metric tree.  The theory of harmonic morphisms of metric graphs and metrized complexes of curves is explored in detail
in the papers \cite{ABBR14a,ABBR14b,Chan13,LuoManjunath14}, among others.

\smallskip

2. {\em Spectral bounds for gonality.} In \cite{CFK13}, Cornelissen et. al. establish a spectral lower bound for the {\em stable gonality} (in the sense of harmonic morphisms) of a graph $G$ in terms of the smallest nonzero eigenvalue of the Laplacian of $G$.  This is a tropical analogue of the Li-Yau inequality for Riemann surfaces.
They give applications of their tropical Li-Yau inequality to uniform boundedness of torsion points on rank two Drinfeld modules, as well as to lower bounds on the modular degree of elliptic curves over function fields.  The spectral bound from \cite{CFK13} was subsequently refined by Amini and Kool in \cite{AK14}
to a spectral lower bound for the {\em divisorial gonality} (i.e., the minimal degree of a rank 1 divisor) of a metric graph $\Gamma$.  In \cite{AK14}, as well as in the related paper \cite{RandomGonality}, this circle of ideas is applied to show that the expected gonality of a random graph is asymptotic to the number of vertices.

\smallskip

3. {\em Tropical complexes.} In \cite{Cartwright13}, Cartwright formulates a higher-dimensional analogue of the basic theory of linear series on graphs, including a Specialization Theorem for the rank function.  He calls the objects on which his higher-dimensional linear series live {\em tropical complexes}.
A generalization of the Slope Formula to the context of non-Archimedean varieties and tropical complexes is proved in \cite{GRW14}.

\smallskip

4. {\em Abstract versus embedded tropical curves.}  In this paper we have dealt exclusively with linear series on abstract tropical curves (thought of as metric graphs) and have eschewed the more traditional perspective of tropical varieties as non-Archimedean amoebas associated to subvarieties of tori.  The two approaches are closely related, however: see for example \cite{BPR11, GRW14, CDMY14}.  The theory of linear series on abstract tropical curves has concrete consequences for embedded tropical curves, e.g. with respect to the theory of bitangents and theta characteristics as in \cite{BLMPR14, CJ15}.

\smallskip

5. {\em Algebraic rank.}  In \cite{Caporaso13}, Caporaso introduces a notion of rank for divisors on graphs known as the \emph{algebraic rank}, which is defined geometrically by varying over all curves with the given dual graph and all line bundles with the given specialization.  The algebraic rank differs in general from the combinatorial rank \cite{CLM14}, but the two invariants agree for hyperelliptic graphs and graphs of genus 3 \cite{KY14b}.  Many of the results we have discussed also hold for the algebraic rank.  For example, there are specialization, Riemann-Roch, and Clifford's theorems for algebraic rank \cite{Caporaso13}, and Mn{\"e}v universality holds for obstructions to the lifting problem for algebraic rank \cite{Len14}.


\bibliography{math}

\end{document}